\DeclareFontShape{T1}{lmr}{b}{sc}{<->ssub*cmr/bx/sc}{}
\DeclareFontShape{T1}{lmr}{bx}{sc}{<->ssub*cmr/bx/sc}{}
\def\C{\mathbb{C}}
\def\N{\mathbb{N}}
\def\R{\mathbb{R}}
\def\Z{\mathbb{Z}}
\def\kk{\mathfrak{k}}
\def\pp{\mathfrak{p}}
\def\hh{\mathfrak{h}}
\def\uu{\mathfrak{u}}
\renewcommand\gg{\mathfrak{g}}
\def\Kk{\mathcal{K}}
\def\Bb{\mathcal{B}}
\def\Ee{\mathcal{E}}
\def\Ff{\mathcal{F}}
\def\Ww{\mathcal{W}}
\def\Tt{\mathcal{T}}
\def\Vv{\mathcal{V}}
\def\Hha{\mathcal{H}}
\newcommand{\Lsg}[1][]{\Lambda_\sigma^{#1} \gg}
\def\LsG{\Lambda_\sigma G}
\def\LG{\Lambda G}
\def\Lg{\Lambda \gg}
\def\LK{\Lambda K}
\def\Lk{\Lambda \kk}
\def\Gg{\mathcal{G}}
\def\Mm{\mathcal{M}}
\def\Nn{\mathcal{N}}
\def\Ff{\mathcal{F}}
\def\Kk{\mathcal{K}}
\newtheorem{thmintro}{Theorem}
\newtheorem{thm}{Theorem}[section]
\newtheorem{lem}[thm]{Lemma}
\newtheorem{prop}[thm]{Proposition}
\newtheorem{conj}[thm]{Conjecture}
\newtheorem{cor}[thm]{Corollary}
\theoremstyle{definition}
\newtheorem{rem}[thm]{Remark}
\newtheorem{defn}[thm]{Definition}
\newtheorem{qtn}[thm]{Question}
\newcommand{\comm}[1]{}
\DeclareMathOperator{\Hom}{Hom}
\DeclareMathOperator{\sh}{sh}
\DeclareMathOperator{\Sh}{Sh}
\DeclareMathOperator{\Tr}{Tr}
\DeclareMathOperator{\Ad}{Ad}
\DeclareMathOperator{\End}{End}
\DeclareMathOperator{\Aut}{Aut}
\DeclareMathOperator{\Gr}{Gr}
\DeclareMathOperator{\Id}{Id}
\DeclareMathOperator{\tot}{tot}
\renewcommand{\Im}{\operatorname{Im}}
\def\Oo{\mathcal{O}}
\def\Aa{\mathcal{A}}
\def\CP{\mathbb{CP}}
\def\Dd{\mathcal{D}}
\def\Tt{\mathcal{T}}
\def\Ee{\mathcal{E}}
\def\Ddc{\mathcal{\check{D}}}
\title{Loop Hodge structures and harmonic bundles}
\author{Jeremy DANIEL}
\def\d{\partial}
\begin{document}

\maketitle

\begin{abstract} We define the notion of a loop Hodge structure -- an infinite dimensional generalization of a Hodge structure -- and prove that a suitable variation of this object over a complex manifold is equivalent to the datum of a harmonic bundle. Hence one can study harmonic bundles using classical tools of Hodge theory, especially the existence of a period map (with values in an infinite dimensional period domain). Among other applications, we prove an integrality result for the Hitchin energy class of a harmonic bundle.
\end{abstract}

\section*{Introduction}

A loop Hodge structure is an infinite dimensional vector space with some extra structure that incorporates features from both Hodge theory and loop group theory. In this paper, we define these objects and prove that a harmonic bundle over a complex manifold can be thought as a variation of loop Hodge structures. In particular we associate a period map to any harmonic bundle, with values in an infinite dimensional period domain. 

\subsection{Non-abelian Hodge theory}

Let $X$ be a connected compact K\"ahler manifold. The classical Hodge theory of $X$ studies the interaction between topological and holomorphic cohomological invariants of $X$. It starts with the Hodge decomposition theorem (\cite{GrHar}, p.116) which asserts that the Betti cohomology of $X$ can be written as a direct sum of its Dolbeault cohomology spaces. 

Non-abelian Hodge theory extends this correspondence to non-abelian invariants. On the topological side, one considers the complex linear representations of the fundamental group $\pi_1(X)$; by the Riemann-Hilbert correspondence, they can be thought as complex flat vector bundles $(\Ee,D)$ over $X$. On the holomorphic side, one is interested in Higgs bundles $(\Ee,\theta)$, where $\Ee$ is a holomorphic vector bundle on $X$ and $\theta$ is a holomorphic $1$-form with values in $\End(\Ee)$, satisfying $\theta \wedge \theta = 0$. The fundamental theorem in non-abelian Hodge theory asserts that there is a 1--1 correspondence between semisimple representations of $\pi_1(X)$ and polystable Higgs bundles with vanishing Chern classes. We refer to \cite{Sim_higloc} or \cite{LePot} for more details.

This correspondence is established \emph{via} the notion of harmonic bundle, which is a flat bundle endowed with a particular Hermitian metric, see definition \ref{def_harm}. Any harmonic bundle gives naturally rise to both a flat bundle and a Higgs bundle. Conversely, theorems of Corlette \cite{Cor} and Donaldson \cite{Don} on the one hand, and of Hitchin \cite{Hitch} and Simpson \cite{Sim_hig} on the other hand, characterize respectively the flat and Higgs bundles that appear in this way. This suggests a deeper study of harmonic bundles.\\

Variations of (complex polarized) Hodge structures (cf. definition \ref{VHS}) provide the simplest examples of harmonic bundles. Let $(\Ee,D,h, \Ee = \oplus_p \Ee^p)$ be such a variation. Then the Hermitian inner product $\hat{h}$ defined by changing the sign of the Hermitian form $h$ on the bundles $\Ee^p$, for odd $p$, is a harmonic metric on $(\Ee,D)$. 

The flat bundle associated with the variation of Hodge structures is simply $(\Ee,D)$. In order to define the Higgs bundle associated with it, we recall that the Hodge (decreasing) filtration $F^p \Ee := \oplus_{q \geq p} \Ee^q$ is holomorphic and satisfies the so-called Griffiths transversality condition with respect to $D$. Then, the Higgs bundle is $(\Gr_{F^\bullet}\Ee, \Gr_{F^\bullet} D)$: since the filtration $F^\bullet$ is holomorphic, the graded space carries a holomorphic structure and the transversality condition implies that $D$ induces an operator on the graded space, which is in fact a holomorphic $1$-form with values in $\End(\Gr_{F^\bullet} \Ee)$. We refer to \cite{Sim_higloc}, p.44, for more details.\\

Harmonic bundles can thus be considered as generalizations of variations of Hodge structures. This is quite surprising for two reasons. On the one hand, a harmonic bundle is not really defined in a variational way: the relation between the flat connection and the metric is quite intricate and does not look like the holomorphicity and transversality conditions encountered in classical Hodge theory. On the other hand, the geometry of a variation of Hodge structures is governed by a complex homogeneous space (the period domain), whereas it is the symmetric space of Hermitian metrics on $\C^n$ that appears for harmonic bundles. These two kind of homogenous spaces are very different in nature: the symmetric space is not even a complex manifold.

\begin{qtn} \label{qtn_intro}
 Is there a natural way to understand harmonic bundles as variations of some structure ?
\end{qtn}

One fruitful approach has been to consider harmonic bundles as variations of twistor structures \cite{Sim_twis}. However, as noticed by Mochizuki in \cite{Mo1}, 1.3.2, there is still no analogue for the period domain in this context.

\subsection{Summary of our results}

In section \ref{VLHS}, we define the fundamental object of this paper: a \emph{loop Hodge structure}. Let us give now an overview of it; precise definitions are in subsection \ref{LHS}. 

A loop Hodge structure is a Hilbert space $K$ endowed with some additional structures, among which the most important one is a non-degenerate and indefinite Hermitian form, as for (classical) Hodge structures. A subspace of $K$ is also part of the datum of a loop Hodge structure; it is the analogue of the Hodge filtration in a classical Hodge structure. Any loop Hodge structure $K$ is naturally isomorphic to $L^2(S^1,V)$, where $V$ is some finite-dimensional complex vector space, hence as a space of \emph{loops}, explaining the terminology and the relation with loop groups.

We then define variations of loop Hodge structures (subsection \ref{VLHS_harm}) and prove:

\begin{thmintro}\label{thm_introA}
 Let $X$ be a complex manifold. The category of variation of loop Hodge structures on $X$ is equivalent to the category of harmonic bundles on $X$.
\end{thmintro}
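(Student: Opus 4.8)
The plan is to construct mutually quasi-inverse functors between the two categories; since both notions are of a local analytic nature, the absence of any compactness or K\"ahler hypothesis on $X$ causes no difficulty.

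\emph{From harmonic bundles to variations of loop Hodge structures.} Let $(\Ee,D,h)$ be a harmonic bundle and write the induced decomposition $D=\partial_E+\bar\partial_E+\theta+\theta^{\dagger}$, where $\bar\partial_E$ is the holomorphic structure attached to the metric, $\partial_E+\bar\partial_E$ its Chern connection, $\theta$ the Higgs field and $\theta^{\dagger}$ its $h$-adjoint (definition~\ref{def_harm}). By the theorems of Hitchin and Simpson (\cite{Hitch},\cite{Sim_hig}), pluriharmonicity of $h$ is equivalent to the flatness of every member of the Hitchin family $D^{\lambda}:=\lambda^{-1}\theta+\partial_E+\bar\partial_E+\lambda\,\theta^{\dagger}$, $\lambda\in\C^{*}$, with $D^{1}=D$. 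I would assemble this family into a Hilbert bundle $\Kk\to X$ with fibre $\Kk_{x}=L^{2}(S^{1},\Ee_{x})$: reading the connections $D^{\lambda}$ fibrewise over $\lambda\in S^{1}$ produces a flat connection on $\Kk$; the harmonic metric $h$, combined with the circle and the standard twistorial reality structure $\lambda\mapsto-\bar\lambda^{-1}$, furnishes the remaining data of a loop Hodge structure, in particular the non-degenerate indefinite Hermitian form; and an extended-frame / Hardy-space construction --- the boundary values of maps holomorphic on the disc, transported by $\bar\partial_E$ --- supplies the distinguished subbundle playing the role of the Hodge filtration. It then remains to check the axioms of subsection~\ref{VLHS_harm}: flatness of the Hermitian form, a holomorphicity statement for the distinguished subbundle, and a Griffiths-type transversality for the flat connection on $\Kk$. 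This verification is exactly where the harmonic bundle equations $\theta\wedge\theta=0$, $\bar\partial_E\theta=0$ and $F_{\partial_E+\bar\partial_E}+[\theta,\theta^{\dagger}]=0$ enter.

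\emph{From variations to harmonic bundles, and quasi-inversion.} Conversely, given a variation $\Kk\to X$ of loop Hodge structures, I would recover a finite-rank bundle $\Ee$ as the canonical finite-dimensional subspace of ``constant loops'' inside each fibre --- available because every loop Hodge structure is naturally isomorphic to $L^{2}(S^{1},V)$, and singled out by the distinguished subspace together with the reality structure --- on which the flat connection of $\Kk$ induces a flat connection $D$, while the interplay of the indefinite form with the distinguished subbundle determines a positive Hermitian metric $h$ on $\Ee$. The holomorphicity and transversality axioms, decomposed Fourier mode by Fourier mode, become precisely Simpson's system for $(\Ee,D,h)$, so that $h$ is harmonic. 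One then verifies that the two constructions are mutually quasi-inverse: starting from a harmonic bundle, the metric is recovered from the position of the distinguished subbundle, which by construction encodes the harmonic splitting of $D$; starting from a variation, faithfulness of the reconstruction follows from the classification of loop Hodge structures recalled in subsection~\ref{LHS}. Compatibility on morphisms --- flat bundle maps on one side, maps respecting connection, Hermitian form and distinguished subbundle on the other --- follows from the same dictionary.

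\emph{Main obstacle.} The crux is not formal but structural and analytic. One must first equip the Hilbert bundle $\Kk$ with the correct infinite-dimensional holomorphic and flat structures: the connections $D^{\lambda}$ involve multiplication by the unbounded symbol $\lambda^{-1}$, so ``flat'', ``holomorphic'' and ``transverse'' must be given functional-analytically meaningful definitions --- this is the point of the definitions set up in subsection~\ref{LHS}. Granting this, the heart of the proof is to show, \emph{under this dictionary}, that the single clean assertion ``the variation is holomorphic and transverse'' is equivalent to the full nonlinear Hitchin--Simpson system; making this correspondence hold on the nose, uniformly in $\lambda$ and controlling the behaviour at the junction between positive and negative Fourier modes, is where the real work lies.
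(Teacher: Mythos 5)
Your proposal follows essentially the same route as the paper's proof (theorem \ref{thm_fund}): the Hilbert bundle $L^2(S^1,\Ee)$ with Krein form $B(f,g)=\int_{S^1}h(f(\lambda),g(-\lambda))\,d\nu(\lambda)$, the right-shift operator, the Fourier-nonnegative (Hardy) subbundle, and the flat connection assembled from the circle $D_\lambda=\nabla+\lambda^{-1}\theta+\lambda\theta^\ast$; conversely, the orthocomplement of $\Tt\Ww$ in $\Ww$ (the ``constant loops'') with the restricted, now positive, form, followed by a Fourier-mode analysis of the connection form that recovers the harmonic bundle equations. Two sketch-level caveats, both resolved as in the paper: the equivalence ``harmonic $\Leftrightarrow$ all $D_\lambda$ flat'' is an elementary curvature computation (lemmas \ref{Dlamb_harm} and \ref{crit_harm}), not a consequence of the Hitchin--Simpson existence theorems; and in the converse direction holomorphicity and transversality only kill the Fourier modes $\leq -2$ and the $(0,1)$-part of the mode $-1$, the positive modes being eliminated by the $D$-flatness of the Krein form and of $\Tt$ (moreover multiplication by $\lambda^{-1}$ is a bounded operator on $L^2$, so the analytic ``unboundedness'' worry does not arise).
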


The set of variations of loop Hodge structures over a complex manifold $X$ is naturally endowed with an action of $S^1$. Then:

\begin{thmintro}\label{thm_introB}
 The variations of classical Hodge structures can be identified with the variations of loop Hodge structures which are isomorphic to every variation of loop Hodge structures in their $S^1$-orbit.
\end{thmintro}

In section \ref{sec_per_dom} we construct a classifying space for loop Hodge structures of fixed type: the \emph{period domain} $\Dd$. Its geometry is similar to the one of classical period domains but it is infinite dimensional: it is a complex manifold,  homogeneous for the action of some real form of the loop group $$\Lambda GL(n,\C) = \{\gamma: S^1 \rightarrow GL(n,\C), \text{ with some regularity}\}.$$ Moreover the tangent bundle of $\Dd$ carries a natural (finite-dimensional) horizontal distribution. As in classical Hodge theory we prove:

\begin{thmintro}\label{thm_introC}
 The datum of a harmonic bundle over $X$ is equivalent to the datum of a period map: a holomorphic and horizontal map from $\tilde{X}$ to $\Dd$, that obeys a certain equivariance condition.
\end{thmintro}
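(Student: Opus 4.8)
The plan is to deduce Theorem~\ref{thm_introC} from Theorem~\ref{thm_introA}: since harmonic bundles on $X$ are the same as variations of loop Hodge structures (VLHS, cf.\ subsection~\ref{VLHS_harm}), it suffices to establish an equivalence between the category of VLHS on $X$ and the category of period maps $\varphi\colon \tilde X \to \Dd$ that are holomorphic, horizontal, and equivariant, where $\tilde X \to X$ is the universal cover. The dictionary is the exact analogue of the one in classical Hodge theory: the flat structure underlying a VLHS becomes, after pull-back to $\tilde X$, a trivialization $\tilde{\Kk} \cong \tilde X \times K$ of the flat bundle of Hilbert spaces, for the fixed model loop Hodge structure $K\cong L^2(S^1,V)$; the varying subspace (the analogue of the Hodge filtration) then defines the map $\varphi \colon \tilde X \to \Dd$ into the period domain of the appropriate type; and the monodromy representation $\rho$ of $\pi_1(X)$ into the real form of $\Lambda GL(n,\C)$ acting on $\Dd$ (constructed in Section~\ref{sec_per_dom}) encodes the equivariance condition $\varphi\circ\gamma = \rho(\gamma)\circ\varphi$ for $\gamma \in \pi_1(X)$.

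First I would construct the functor from VLHS to period maps. Starting from a VLHS on $X$, pulling back to $\tilde X$ kills the monodromy, so one gets a flat trivialization $\tilde{\Kk}\cong \tilde X\times K$, canonical up to the action of $\Aut(K)$; the \emph{type} of the loop Hodge structure is locally constant by flatness, hence constant on the connected manifold $\tilde X$, so the varying subspace lands in one fixed period domain $\Dd$, yielding $\varphi\colon \tilde X\to \Dd$. I would then verify: (i) $\varphi$ is holomorphic --- this is precisely the holomorphicity axiom of a VLHS, transported through the identification of $\Dd$ with a space of subspaces of $K$ and the complex-manifold charts on $\Dd$ from Section~\ref{sec_per_dom}; (ii) $\varphi$ is horizontal --- this is the Griffiths-transversality-type condition in the definition of a VLHS, matched against the natural horizontal distribution in $T\Dd$; (iii) $\varphi$ is $\rho$-equivariant --- changing the chosen lift to $\tilde X$ by a deck transformation changes the flat trivialization by the monodromy, and one must check that this monodromy actually takes values in the prescribed real form of $\Lambda GL(n,\C)$, which follows from its compatibility with the indefinite Hermitian form and the reality/loop structure that are part of a loop Hodge structure.

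Conversely, given a period map $\varphi\colon \tilde X\to \Dd$ together with an equivariance homomorphism $\rho$ into that real form, I would set $\Kk := (\tilde X\times K)/\pi_1(X)$, with $\pi_1(X)$ acting diagonally by deck transformations and via $\rho$; since $\rho$ preserves the Hermitian form and the reality structure of $K$, these descend and make $\Kk$ a flat bundle of loop Hodge structures. The $\rho$-equivariance of $\varphi$ forces the subspace $\varphi(\tilde x)\subset K$ to descend to a subbundle of $\Kk$, and holomorphicity and horizontality of $\varphi$ give back exactly the holomorphicity and transversality axioms, so one recovers a VLHS. I would then check that the two constructions are mutually quasi-inverse and that they respect morphisms --- a morphism of VLHS restricts on $\tilde X$ to a flat map of trivial bundles preserving the subspaces, i.e.\ to a family of morphisms of loop Hodge structures intertwining the period maps, and conversely --- which upgrades the bijection to an equivalence of categories; composing with Theorem~\ref{thm_introA} yields the statement.

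The main obstacle I anticipate is analytic rather than formal. One must ensure that ``holomorphic'' for the subspace-valued datum in the definition of a VLHS genuinely coincides with holomorphicity of $\varphi$ as a map into the infinite-dimensional complex manifold $\Dd$; this requires controlling the topology and transition maps of $\Dd$ and checking, for instance, that the pointwise condition ``$\overline\partial$ annihilates the filtration'' is equivalent to $d\varphi$ being complex-linear in the manifold sense, a point that is invisible in finite dimensions. A second genuinely substantive step is the reduction of the structure group: proving that the monodromy of a VLHS lands in the specified real form of $\Lambda GL(n,\C)$, and not merely in the full isometry group of the Hermitian form, is what singles out $\Dd$ --- rather than a larger homogeneous space --- as the correct target, and this has to be read off carefully from the loop Hodge structure axioms together with the realization $K\cong L^2(S^1,V)$.
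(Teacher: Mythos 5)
Your overall route coincides with the paper's: factor through Theorem~\ref{thm_introA} (theorem~\ref{thm_fund}), trivialize the flat Hilbert bundle over $\tilde X$, read off the varying outgoing subspace as an equivariant map to the period domain, identify the equivariance group as the real form because the operators commuting with the shift $T$ and preserving the Krein form $B$ are exactly the loops in $\Lambda_\sigma^\infty G$ (propositions~\ref{comm_shift} and~\ref{B_isom}), and translate the two differential axioms of a variation into holomorphy and horizontality of the map; the paper carries out this last translation by computing the differential explicitly, $df=\gamma\circ\alpha$ with $\alpha=\lambda^{-1}\theta+\lambda\theta^\ast$ (propositions~\ref{beta}, \ref{expr_diff_per} and theorem~\ref{holo_permap}).

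There is, however, one concrete missing step, and it is not the one your final paragraph points at. The developing map you construct a priori takes values in the outgoing Grassmannian $\Gr(H)\cong\Lambda_\sigma^\infty G/K$, since the automorphisms of the model loop Hodge structure are the essentially bounded loops; and this space is \emph{not} a complex manifold: the candidate complex structure does not preserve $\Lambda_\sigma^\infty\gg$ (the square-signal example in remark~\ref{pb_comp}). So ``holomorphic and horizontal map to $\Dd$'' is not even well posed until one shows that the developing map and the total monodromy take values in the $H^s$ (indeed smooth) loop group, i.e.\ in $\Dd=\Lambda_\sigma^s G/K$ with $s>1/2$. The paper does this in lemma~\ref{reg_permap}, and the argument is not formal: it uses that, via theorem~\ref{thm_fund}, the flat connection is a circle of connections $\nabla+\lambda^{-1}\theta+\lambda\theta^\ast$ depending smoothly on $\lambda$, so parallel transport preserves smooth loops by smooth dependence of solutions of ODEs on parameters. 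Since your plan is to argue from the abstract VLHS axioms after invoking Theorem~\ref{thm_introA}, you must insert this regularity upgrade (which genuinely uses the harmonic-bundle normal form of the connection) before holomorphy and horizontality can be discussed; otherwise your $\varphi$ lands in the wrong homogeneous space. A smaller point: holomorphy of $\varphi$ is not quite ``precisely the $\bar\partial$-axiom'' alone; one also needs the $D$-flatness of the Krein form (equivalently, that $\varphi$ lands in the real-form orbit $\Dd$ rather than merely in $\check{\Dd}$) to control the positive Fourier modes of the connection form, as in relation~(\ref{conn_met}) which forces $\omega_1$ to be of type $(0,1)$; in your setup this is implicitly supplied by the reduction to the real form, but it should be said.
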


These results give a satisfactory answer to question \ref{qtn_intro}. In this paper we derive two consequences of this approach.\\

Let $(\Ee,\theta)$ be a Higgs bundle over a complex manifold $X$, which comes from a harmonic bundle. The vanishing of the Higgs field $\theta$ defines a holomorphic distribution on $X$. The involutivity of this distribution was previously obtained in \cite{Mok} and we call this distribution the \emph{Higgs foliation} on $X$. The following theorem is proved in subsection \ref{hig_fol}:

\begin{thmintro}\label{thm_introD}
 The Higgs foliation is integrable.
\end{thmintro}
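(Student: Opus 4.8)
The plan is to read the statement off the period map of Theorem~\ref{thm_introC}. A harmonic bundle yields a holomorphic, horizontal and equivariant map $\Phi\colon\tilde X\to\Dd$, and, as is implicit in the construction underlying that theorem (compare classical Hodge theory, where the differential of the period map is the Kodaira--Spencer map), the horizontal component of $d\Phi$ is, under the canonical identifications defining $\Dd$, the Higgs field $\theta\colon TX\to\End(\Ee)$. Since $\Phi$ is horizontal, $d\Phi$ has no other component, so $\ker\theta=\ker d\Phi$ pointwise on $\tilde X$; the equivariance $\Phi(\gamma x)=\rho(\gamma)\Phi(x)$ makes this distribution $\pi_1(X)$-invariant and hence descends to an equality of distributions on $X$ itself. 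Thus the Higgs distribution is the kernel distribution of the holomorphic map $\Phi$, and, knowing it is involutive by Mok's theorem (\cite{Mok}), I must show it is the tangent sheaf of a (possibly singular) holomorphic foliation of $X$.

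The first step is to record that $\ker\theta$ is a \emph{saturated} coherent subsheaf of $TX$: since $\theta$ is a morphism of locally free sheaves, the quotient $TX/\ker\theta$ injects into $\End(\Ee)$ and is therefore torsion-free (in fact $\ker\theta$ is then reflexive). Two consequences follow. First, the locus $S\subset X$ where $\ker\theta$ fails to be a subbundle --- equivalently, where $\theta$, hence $d\Phi$, does not have locally maximal rank --- is contained in the non-locally-free locus of the torsion-free sheaf $TX/\ker\theta$, hence is an analytic subset of codimension at least $2$. Second, because $TX/\ker\theta$ is torsion-free, a local holomorphic vector field lying in $\ker\theta$ over $X\setminus S$ lies in $\ker\theta$ everywhere; in particular Mok's relation $[\ker\theta,\ker\theta]\subset\ker\theta$, which a priori holds on the open set where the rank is locally constant, propagates to an inclusion of coherent sheaves over all of $X$. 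On $X\setminus S$ the holomorphic Frobenius theorem then produces a regular holomorphic foliation, whose leaves are locally the connected components of the fibres of $\Phi$ (near such points $\Phi$ has constant rank, so by the rank theorem its fibres are coordinate slices).

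It remains to extend this foliation across $S$ and to check that what results is a genuine singular holomorphic foliation of $X$, and this is the step I expect to be the main obstacle. At the sheaf level there is nothing more to do, since the reflexive, involutive subsheaf $\ker\theta\subset TX$ is essentially by definition the tangent sheaf of a singular holomorphic foliation; the substance is the structure of the leaves along $S$. Here the essential input --- and the reason the period map, rather than Mok's computation alone, is used --- is the holomorphy of $\Phi$: near a point of $S$ the map $\Phi$ is still holomorphic and its fibres are complex analytic, so the closures of the Frobenius leaves are complex analytic and any additional leaves appearing along $S$ are again immersed complex submanifolds, of possibly smaller dimension. I would organize the verification around the partition of $\tilde X$ into connected components of the fibres of $\Phi$: it is $\pi_1(X)$-equivariant, hence descends to $X$, restricts to the Frobenius foliation on $X\setminus S$, and --- using the holomorphy of $\Phi$, the integrability $\theta\wedge\theta=0$, and the codimension estimate on $S$ --- can be refined along $S$ into the leaves of a singular holomorphic foliation integrating $\ker\theta$. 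Establishing that this refinement genuinely partitions $X$ into immersed complex leaves of the expected dimensions is precisely the delicate point, and it is what the passage through the complex homogeneous space $\Dd$ and the holomorphic, horizontal period map makes tractable; this yields the assertion that the Higgs foliation is integrable.
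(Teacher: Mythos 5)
Your opening move is the same as the paper's: by Proposition \ref{id_diff} the Higgs distribution is, locally on $X$, exactly $\ker df$ for the holomorphic period map $f$ into $\Dd$. But from there your argument does not close. First, the notion you aim at is not the one the theorem asserts: in subsection \ref{hig_fol} \emph{integrable} means that the distribution is locally defined by finitely many \emph{exact} holomorphic $1$-forms $df_1,\dots,df_p$ (existence of local first integrals cutting out the distribution at every point, singular ones included), not that $X$ is partitioned into immersed leaves. Your remark that a reflexive involutive subsheaf of $TX$ ``is essentially by definition the tangent sheaf of a singular holomorphic foliation'' conflates involutivity with integrability, which is precisely the distinction the theorem is about (and why Mok's result alone does not suffice). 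Second, even for your chosen goal the decisive step is missing: Frobenius plus the constant-rank theorem only handle $X\setminus S$, and the extension across the singular locus $S$ --- the construction of leaves, or of defining first integrals, at points where the rank of $\theta$ drops --- is exactly what you defer with ``precisely the delicate point \dots\ makes tractable''. No mechanism is given for it; the codimension-$2$ estimate on $S$ and the analyticity of the fibres of $f$ do not by themselves produce either the leaf structure or the local exact defining forms at a singular point (note also that Malgrange's criterion would require $q(\Dd_x)\geq 3$, which is a condition on the vanishing set of $\omega_1\wedge\dots\wedge\omega_p$, not on the codimension of $S$, and is not available here).

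The missing idea is short and replaces everything after your first paragraph. Work locally, so that $f$ takes values in a Hilbert space $H$, and let $(f_i)_{i\in\N}$ be its coordinates in a Hilbert basis. Then the distribution is the common kernel of the \emph{exact} holomorphic $1$-forms $df_i$ --- a priori infinitely many. The germ of the distribution at $x$ is determined by the germs at $(x,0)$ of the functions $df_i\colon TX\to\C$, and since the local ring of germs of holomorphic functions on the complex manifold $TX$ is Noetherian, finitely many $df_{i_1},\dots,df_{i_p}$ already define it near $x$. This proves integrability in the paper's sense at every point, singular or not, in one stroke, with no need for the Frobenius theorem, the codimension estimate, or the involutivity input from \cite{Mok}.
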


Involutivity is indeed a weaker notion than integrability since the Higgs foliation is singular in general.\\

Finally, in section \ref{det_line}, we define a certain holomorphic line bundle on the period domain $\Dd$ and use it to deduce some properties of harmonic bundles over a complex manifold $X$. If $(\Ee,D,h)$ is a harmonic bundle over $X$, we write $\theta$ for the Higgs field it induces and $\theta^\ast$ for its adjoint with respect to the harmonic metric. Then, the $(1,1)$-form $\beta_X = \frac {1}{4i\pi} \Tr(\theta \wedge \theta^\ast)$ is closed and defines a class in $H^2(X,\R)$. This energy class was already considered in \cite{Hitch}; in subsection \ref{hitch_energy}, we prove:

\begin{thmintro}\label{thm_introE}
 The pullback of the class of $\beta_X$ to the universal cover of $X$ lives in $H^2(\tilde{X},\Z)$.
\end{thmintro}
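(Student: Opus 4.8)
The plan is to realise the pullback of $\beta_X$ to $\tilde X$ as the Chern curvature form of a Hermitian holomorphic line bundle, so that its class automatically lies in the image of $H^2(\tilde X,\Z)$. The ingredients are already in place. By Theorem~\ref{thm_introC}, the harmonic bundle $(\Ee,D,h)$ determines a period map $f\colon \tilde X \to \Dd$ which is holomorphic, horizontal, and equivariant for the monodromy. On the period domain $\Dd$ one has, by the construction of Section~\ref{det_line}, a holomorphic line bundle $\mathcal{L}$ — a determinant-type line bundle attached to the tautological subspace $W \subset K$ of a loop Hodge structure, viewed inside a Grassmannian of subspaces of $K = L^2(S^1,V)$ — together with a natural Hermitian metric $\|\cdot\|$. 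Write $\omega_\Dd$ for its Chern curvature form; it is a closed $(1,1)$-form with $[\omega_\Dd] = c_1(\mathcal{L}) \in \Ima\big(H^2(\Dd,\Z)\to H^2(\Dd,\R)\big)$.

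The heart of the argument is the identity
\begin{equation*}
 f^\ast \omega_\Dd \;=\; \pi^\ast \beta_X \;=\; \tfrac{1}{4i\pi}\,\Tr(\theta\wedge\theta^\ast)\quad\text{on }\tilde X,
\end{equation*}
where $\pi\colon \tilde X \to X$ is the covering map. To establish it one argues locally over a simply connected coordinate chart: choose a holomorphic frame of $\Ee$ adapted to the tautological filtration and a local nonvanishing holomorphic section $s$ of $\mathcal{L}$, compute $\omega_\Dd = \tfrac{i}{2\pi}\partial\bar\partial\log\|s\|^2$, and contract with $df$ and $\overline{df}$. Since $f$ is holomorphic and horizontal, $df$ takes values in the finite-dimensional horizontal distribution of $T\Dd$, and there the first-order variation of the metric of $\mathcal{L}$ is governed precisely by the shift of $W$ that encodes the Higgs field $\theta$; performing this contraction, using the curvature formula for $\mathcal{L}$ obtained in Section~\ref{det_line}, reproduces $\Tr(\theta\wedge\theta^\ast)$ with the stated normalisation.

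Granting the identity, the conclusion is immediate: $\tilde\beta := f^\ast\omega_\Dd$ is a closed $(1,1)$-form on $\tilde X$ whose de Rham class is $f^\ast c_1(\mathcal{L})$ — equivalently, $\tilde\beta$ is the Chern curvature of the Hermitian holomorphic line bundle $f^\ast\mathcal{L}$ — hence $[\tilde\beta]$ lies in the image of $H^2(\tilde X,\Z)$ in $H^2(\tilde X,\R)$; and by the identity $\tilde\beta = \pi^\ast\beta_X$, which is the assertion. The main obstacle is the curvature identity itself: it requires careful bookkeeping of the Hermitian metric on the determinant line bundle over the infinite-dimensional, indefinite Grassmannian model of $\Dd$ — in particular the use of a regularised (Fredholm) determinant — and of the way the horizontal differential of the period map records $\theta$ and $\theta^\ast$. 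Finally, the statement is genuinely one about $\tilde X$ rather than $X$: the monodromy acts on $\mathcal{L}$ only up to a cocycle, so $f^\ast\mathcal{L}$ need not descend to $X$, which is exactly why the Hitchin energy class fails to be integral on $X$ in general.
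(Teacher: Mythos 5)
Your proposal is correct and follows essentially the paper's own route: the paper proves Corollary \ref{int_class} exactly by writing $\pi^\ast\beta_X=\beta_{\tilde X}=\tfrac{i}{2\pi}f^\ast(\beta_\Dd)$ (Proposition \ref{rel_beta}), i.e.\ as the Chern form of the pullback under the period map of the determinant line bundle of Section \ref{det_line}, whence integrality of the class on $\tilde X$. The only difference is one of bookkeeping in the key identity: you would verify it through a local $\partial\bar\partial\log\|s\|^2$ computation in a Grassmannian/regularised-determinant model, whereas the paper computes the invariant curvature of the homogeneous determinant bundle via the fundamental cocycle and identifies $df$ with $\lambda^{-1}\theta+\lambda\theta^\ast$ (Propositions \ref{expr_diff_per} and \ref{curv_det} feeding into Proposition \ref{rel_beta}); your closing remark on non-descent to $X$ matches the paper's discussion around Theorem \ref{crit_int}.
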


We then use this result to give another proof of a known case of the Carlson-Toledo conjecture.

\subsection{Further remarks}

\paragraph{Nilpotent and $SL(2)$-orbit theorems}

In the study of classical Hodge structures, the period domain has been used by Schmid to obtain the nilpotent and $SL(2)$-orbit theorems \cite{Schm}. It is thus natural to ask for an analogue of these theorems for general harmonic bundles. As remarked by Mochizuki in the introduction of \cite{Mo1}, some new phenomena appear when one considers general harmonic bundles; for instance the Higgs field $\theta$ is not nilpotent in general but it is for variations of Hodge structures. 

Restricting to the class of tame nilpotent harmonic bundles with trivial parabolic structure over the puncture disk $\Delta^\ast$, the author has proved an analogue of the nilpotent orbit theorem in \cite{my_thesis}; moreover an analogue of the $SL(2)$-theorem is stated there without proof. It is not clear to the author whether this approach will give new results on the asymptotics of harmonic bundles; hence we prefer not to publish this work yet.

\paragraph{The infinite dimensional period domain}

The infinite dimensional period domain $\Dd$ already appeared in the literature (in particular in \cite{DPW} and \cite{EschTrib}). In these references, it is shown that any pluriharmonic map from a (simply-connected) manifold to some symmetric space can be lifted to a holomorphic and horizontal map to the period domain. The precise relation between their construction and ours is explored in \cite{my_thesis}. From this point of view, our major constribution is the intrinsic definition of this period domain as a classifying space for loop Hodge structures.

\subsection{Acknowledgments}

This work is based on the author's Ph.D. thesis \cite{my_thesis}, done at Universit\'e Paris Diderot. I am very grateful to my supervisor Bruno Klingler, who asked me to think about question \ref{qtn_intro}. 

I also thank Olivier Biquard, Yohan Brunebarbe, Philippe Eyssidieux, Takuro Mochizuki, Claude Sabbah and Carlos Simpson: I have greatly benefited from mathematical discussions with them.

\newpage

\tableofcontents
\newpage

\section{Variations of loop Hodge structures} \label{VLHS}

In this section, we define the notion of loop Hodge structures and we prove that their variations, in a suitable meaning, correspond to harmonic bundles. The relation with classical Hodge structures is then explained in detail. Our general motivation is to extend in this way some results on variations of Hodge structures to harmonic bundles. This is very similar to the \emph{meta-theorem} of Simpson \cite{Sim_twis}; the major difference will appear in next section, with the construction of a period domain.

\subsection{Loop Hodge structures} \label{LHS}

We introduce the fundamental notion of loop Hodge structure. We will need to consider infinite dimensional complex vector spaces endowed with a nondegenerate indefinite metric. Such spaces can behave badly but Krein spaces share a lot of properties with Hilbert spaces. The standard reference for this topic is \cite{Bog}.

\subsubsection{Krein spaces}

Let $K$ be a (possibly infinite-dimensional) complex vector space and let $B$ be a Hermitian form on $K$; we do not assume that $B$ is positive definite. A subspace $W$ of $K$ is \emph{positive} (resp. \emph{negative}) if $B_{|W \otimes \bar{W}}$ is positive definite (resp. negative definite). Such a subspace is endowed with a pre-Hilbert structure; we say that it is \emph{intrinsically complete} if this structure is Hilbert, that is if the induced distance is complete.

We say that $K$ is a \emph{Krein space} if there exists a $B$-orthogonal decomposition $K = K_+ \oplus^\perp K_-$ such that $K_+$ (resp. $K_-$) is an intrinsically complete positive (resp. negative) subspace. Such a decomposition is called a \emph{fundamental decomposition} of the Krein space $K$. This is not unique, even for finite-dimensional spaces endowed with an indefinite metric.

From a fundamental decomposition of the Krein space $K$, one can define a unique Hilbert inner product $h$ on $K$ such that $K_-$ and $K_+$ are orthogonal for $h$, $h = B$ on $H_+$ and $h = -B$ on $H_-$. This inner product depends on the fundamental decomposition but we have the following proposition:

\begin{prop}{(Corollary IV.6.3 in \cite{Bog})}
Let $K$ be a Krein space. The Hilbert structures coming from two fundamental decompositions of $K$ define the same topology on $K$.
\end{prop}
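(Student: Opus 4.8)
The plan is to use the closed graph theorem. Fix two fundamental decompositions $K = K_+ \oplus^\perp K_-$ and $K = K_+' \oplus^\perp K_-'$, with associated Hilbert inner products $h$ and $h'$ and norms $\|\cdot\|_h$, $\|\cdot\|_{h'}$. Both normed spaces $(K,\|\cdot\|_h)$ and $(K,\|\cdot\|_{h'})$ are complete, being orthogonal direct sums of two Hilbert spaces (the positive and negative parts are intrinsically complete by the very definition of a Krein space). It therefore suffices to prove that the identity map $\iota \colon (K,\|\cdot\|_h) \to (K,\|\cdot\|_{h'})$ is bounded: applying the same conclusion to $\iota^{-1}$ then shows that $\iota$ is a homeomorphism, which is exactly the assertion that the two topologies coincide.

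First I would record two elementary facts. (i) The form $B$ is jointly continuous for $\|\cdot\|_h$. Writing $P_\pm$ for the $h$-orthogonal projections onto $K_\pm$ and $J = P_+ - P_-$ for the corresponding fundamental symmetry, one checks directly from $K_+ \perp_h K_-$ and the $B$-orthogonality of the decomposition that $B(x,y) = h(Jx,y)$; since $J$ is an $h$-isometry this gives $|B(x,y)| \le \|x\|_h \, \|y\|_h$, and of course the analogous estimate holds for $\|\cdot\|_{h'}$. (ii) $B$ is nondegenerate: if $B(x,z) = 0$ for all $z \in K$, decompose $x = x_+ + x_-$ along $K = K_+ \oplus^\perp K_-$; using again $B$-orthogonality and $B = \pm h$ on $K_\pm$ we get $0 = B(x,x_+) = h(x_+,x_+)$ and $0 = -B(x,x_-) = h(x_-,x_-)$, hence $x = 0$.

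Now I would verify that the graph of $\iota$ is closed. Suppose $x_n \to x$ for $\|\cdot\|_h$ and $x_n \to y$ for $\|\cdot\|_{h'}$. For any fixed $z \in K$, continuity of $B$ for the first norm gives $B(x_n,z) \to B(x,z)$, and continuity for the second norm gives $B(x_n,z) \to B(y,z)$; therefore $B(x-y,z) = 0$ for every $z \in K$, and nondegeneracy forces $x = y$. By the closed graph theorem $\iota$ is bounded, i.e. there is $C > 0$ with $\|x\|_{h'} \le C \|x\|_h$ for all $x$. Exchanging the roles of $h$ and $h'$ yields the reverse inequality, and the proof is complete.

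The argument is short, and I do not expect a serious obstacle: the only points needing a little care are that each of $h$ and $h'$ is genuinely a complete inner product (so that the closed graph theorem is available) and the identity $B(x,y) = h(Jx,y)$ underlying the continuity of $B$ — both immediate from the notion of a fundamental decomposition. A more computational alternative would be to write $h'(x,y) = h(Gx,y)$ for a positive $h$-self-adjoint operator $G$ built from the two fundamental symmetries and to show by hand that $G$ is bounded with bounded inverse; but this merely re-proves the closed graph theorem in the case at hand, so the abstract route above is preferable.
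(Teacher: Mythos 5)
Your argument is correct. Note that the paper gives no proof of this statement at all: it is quoted directly as Corollary IV.6.3 of \cite{Bog}, so there is nothing internal to compare with. Your closed-graph route is the standard self-contained argument for this fact, and all the ingredients check out: $(K,\|\cdot\|_h)$ and $(K,\|\cdot\|_{h'})$ are complete because a fundamental decomposition writes $K$ as the $h$-orthogonal sum of two intrinsically complete subspaces; the identity $B(x,y)=h(Jx,y)$ with $J=P_+-P_-$ an $h$-isometry gives $|B(x,y)|\le \|x\|_h\|y\|_h$ (and likewise for $h'$); nondegeneracy of $B$ follows from testing against the components $x_\pm$ of a fixed fundamental decomposition; and then the closed graph theorem applied to the identity in both directions yields equivalence of the norms. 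Compared with the treatment in \cite{Bog}, which reaches this corollary through the general machinery of decomposition majorants developed in Chapter IV, your proof buys a short, elementary derivation using only the definition of a fundamental decomposition and one classical Banach-space theorem; the price is that it is specific to this statement rather than a byproduct of a broader theory of topologies on indefinite inner product spaces. One stylistic remark: your step (i) is really the statement that each fundamental symmetry is a bounded operator for its own norm, which is the heart of the matter, so it is worth keeping the explicit computation $B(x,y)=h(Jx,y)$ rather than the operator-theoretic alternative you sketch at the end.
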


Hence, one can speak of \emph{the Hilbert topology} of the Krein space $K$. Any reference to the topology of $K$ will be with respect to this one.//

Let $W$ be a non-degenerate closed subspace of $K$. Then $W$ admits a decomposition $W = W_+ \oplus^\perp W_-$, with $W_+$ (resp. $W_-$) a positive (resp. negative) closed subspace of $K$ (theorem V.3.1 in \cite{Bog}). However, $W_+$ and $W_-$ are not necessarily intrisically complete, that is $W$ is not necessarily a Krein space itself. The following proposition gives a characterization of closed subspaces of a Krein space, that are Krein spaces. We call them \emph{Krein subspaces}, though it does not depend on the ambient space.

\begin{prop}{(Theorems V.3.4 and V.3.5 in \cite{Bog})}\label{ortho}
Let $W$ be a closed subspace in a Krein space $K$. Let $W = W_+ \oplus^\perp W_-$ be a decomposition of $W$ as above. Then $W$ is a Krein subspace of $K$ if and only if there exists a fundamental decomposition $K = K_+ \oplus^\perp K_-$ such that $W_+ \subset K_+$ and $W_- \subset K_-$. Moreover, if $W$ is a Krein subspace of $K$, the orthogonal $W^\perp$ of $W$ in $K$ is a Krein subspace and satisfies $W \oplus^\perp W^\perp = K$.
\end{prop}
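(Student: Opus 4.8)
The plan is to control the whole situation through the \emph{Gram operator} of $W$ relative to a fixed reference Hilbert structure, and then to transfer the conclusion from $W$ to its orthogonal $W^\perp$. I would fix a fundamental decomposition $K = K_+^0 \oplus^\perp K_-^0$, let $J_0$ be the associated fundamental symmetry ($+\Id$ on $K_+^0$, $-\Id$ on $K_-^0$) and $h_0 = B(J_0\,\cdot\,,\,\cdot\,)$ the corresponding Hilbert inner product; by the previous proposition all topological statements below refer to $h_0$. For a closed subspace $M \subseteq K$, let $P_M$ be the $h_0$-orthogonal projection onto $M$ and set $G_M := P_M J_0|_M : M \to M$; this operator is bounded, self-adjoint for $h_0|_M$, satisfies $B(x,y) = h_0(G_M x, y)$ for $x,y \in M$, and $\ker G_M$ is the $B$-radical of $M$ (so $G_K = J_0$ is invertible). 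The implication ``$\Leftarrow$'' is then immediate: if $K = K_+ \oplus^\perp K_-$ is a fundamental decomposition with $W_\pm \subseteq K_\pm$, then $W_+$ is a closed subspace of the Hilbert space $(K_+, B|_{K_+})$, hence complete, i.e.\ intrinsically complete, and likewise $W_-$; so $W = W_+ \oplus^\perp W_-$ is a fundamental decomposition and $W$ is a Krein subspace.

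For ``$\Rightarrow$'' I would proceed in three steps. First, assuming $W$ is a Krein subspace with its own Hilbert inner product $h_W$ and fundamental symmetry $J_W$, I would check that the intrinsic Hilbert topology of $W$ coincides with the one induced from $(K,h_0)$: the identity map $(W,h_W) \to (W,h_0)$ has closed graph, since convergence in either norm forces convergence of the pairings $B(\,\cdot\,,y)$ for $y \in W$ (using $|B(a,b)| \le \|a\|_{h_0}\|b\|_{h_0}$ on $K$ and $|B(a,b)| \le \|a\|_{h_W}\|b\|_{h_W}$ on $W$) while $B$ is non-degenerate on $W$; both $(W,h_W)$ and $(W,h_0)$ are complete — the former by definition of a Krein space, the latter because $W$ is closed in $(K,h_0)$ — so the open mapping theorem yields $h_W \sim h_0|_W$. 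Writing $h_W = h_0(S\,\cdot\,,\,\cdot\,)$ with $S$ positive, bounded and invertible, and using $B = h_W(J_W\,\cdot\,,\,\cdot\,)$, this gives $G_W = S J_W$, which is boundedly invertible. I also record the converse implication for later use: if the Gram operator of a closed non-degenerate subspace $M$ is invertible, then splitting off the spectral subspaces of the self-adjoint operator $G_M$ corresponding to $(0,\infty)$ and $(-\infty,0)$ produces $h_0$-closed, uniformly $\pm$-definite subspaces whose $B$-orthogonal direct sum is $M$, so $M$ is a Krein subspace.

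Second, since $G_W$ is invertible, for each $z \in K$ the vector $w := G_W^{-1}P_W J_0 z \in W$ is the unique element of $W$ with $z - w$ $B$-orthogonal to $W$; combined with $W \cap W^\perp = 0$ (non-degeneracy of $B|_W$), this gives $K = W \oplus^\perp W^\perp$ with bounded projections. I would then show that $W^\perp$ is again a Krein subspace — and I expect this to be the main obstacle, because one cannot simply enlarge $W_+$ and $W_-$ separately to maximal uniformly definite subspaces of $K$: that would destroy their $B$-orthogonality. The trick is a Schur complement: writing $J_0$ in block form $\left(\begin{smallmatrix} G_W & \beta \\ \beta^\ast & D \end{smallmatrix}\right)$ with respect to the $h_0$-orthogonal splitting $K = W \oplus W^{\perp_0}$ (where $W^{\perp_0}$ denotes the $h_0$-orthogonal complement), invertibility of the corner $G_W$ allows a block-triangular congruence by a bounded invertible operator that fixes $W$ and carries $W^{\perp_0}$ onto $W^\perp$; this conjugates $J_0$ into $G_W \oplus S$ with $S = D - \beta^\ast G_W^{-1}\beta$, i.e.\ it presents the Gram operator of $K$ for a Hilbert structure equivalent to $h_0$ as block-diagonal relative to $K = W \oplus W^\perp$, with $W^\perp$-block conjugate to a Gram operator of $W^\perp$. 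Since the Gram operator of $K$ is invertible, so is this block, and by the criterion from the first step $W^\perp$ is a Krein subspace. This establishes the ``moreover'' assertion.

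Third, I would glue. By the theory of angular operators (see \cite{Bog}, Chapter V) the given $B$-orthogonal algebraic direct sum $W = W_+ \oplus^\perp W_-$ of a closed positive and a closed negative subspace of the Krein space $W$ is automatically a fundamental decomposition, with $W_+$ uniformly positive and $W_-$ uniformly negative; in particular each is $h_0$-closed and $B$ is equivalent to $h_0$ on it. Choosing any fundamental decomposition $W^\perp = V_+ \oplus^\perp V_-$ (for instance the spectral one from the first step) and putting $K_\pm := W_\pm \oplus V_\pm$, I get $K = K_+ \oplus K_-$; the relations that $W_+, W_-$ are $B$-orthogonal, $V_+, V_-$ are $B$-orthogonal, and $W, W^\perp$ are $B$-orthogonal together give $K_+ \perp K_-$ for $B$; $K_\pm$ is $h_0$-closed because $W \oplus^\perp W^\perp$ is a topological direct sum; and $K_\pm$ is uniformly $\pm$-definite, hence intrinsically complete, because both of its $B$-orthogonal summands are. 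Therefore $K = K_+ \oplus^\perp K_-$ is a fundamental decomposition with $W_+ \subseteq K_+$ and $W_- \subseteq K_-$, which completes the proof.
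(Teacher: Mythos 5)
Your proof is correct, but note what you are being compared against: the paper gives no argument at all for this proposition --- it is imported verbatim from Bogn\'ar (Theorems V.3.4 and V.3.5 in \cite{Bog}), so yours is a self-contained substitute rather than a variant of an in-text proof. The route you take is the standard operator-theoretic one and each step checks out: the closed-graph/open-mapping argument identifying the intrinsic Hilbert topology of a Krein subspace with the induced one, the resulting invertibility of the Gram operator $G_W$, the explicit formula $w = G_W^{-1}P_W J_0 z$ yielding $K = W \oplus^\perp W^\perp$ with bounded projections, the Schur-complement congruence (with $T$ fixing $W$ and sending $v \mapsto v - G_W^{-1}P_WJ_0v$, which indeed maps the $h_0$-orthocomplement onto $W^\perp$) showing the Gram operator of $W^\perp$ is invertible, and the final gluing $K_\pm = W_\pm \oplus V_\pm$, where closedness plus uniform definiteness of each summand correctly gives intrinsic completeness. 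The only ingredient you outsource is the assertion opening your third paragraph, that a $B$-orthogonal algebraic decomposition $W = W_+ \oplus^\perp W_-$ of the Krein space $W$ into closed definite subspaces is automatically a fundamental decomposition with uniformly definite parts; citing Bogn\'ar's Chapter V for this is legitimate (the paper cites the same book for the entire statement), but it also follows from your own toolkit, which would make the proof fully self-contained: since $W_\pm$ are closed and $W = W_+ \oplus W_-$ algebraically, the projection $P_+$ onto $W_+$ along $W_-$ is bounded by the closed graph theorem, and then for $a \in W_+$ invertibility of $G_W$ gives $c\|a\| \le \sup_{y \in W,\,\|y\|\le 1}|B(a,y)| = \sup_{\|y\|\le 1}|B(a,P_+y)| \le \|P_+\|\,B(a,a)^{1/2}$ by Cauchy--Schwarz for the positive form on $W_+$, so $W_+$ (and likewise $W_-$) is uniformly definite. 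In short: correct, complete up to one standard quotable lemma, and strictly more informative than the paper's citation.
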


\subsubsection{Loop Hodge structures}

Let $K$ be a Krein space and let $T$ be an anti-isometric operator on $K$, that is $B(Tu,Tv) = -B(u,v)$ for $u$ and $v$ in $K$.

\begin{defn}\label{sub_out}
A closed subspace $W$ of $K$ is an \emph{outgoing subspace} for $T$ if
\begin{itemize}
 \item $W$ is a Krein subspace of $K$;
 \item $T W \subset W$;
 \item $\bigcap_{n \in \N} T^n W = 0$;
 \item $\bigcup_{n \in \Z} T^n W$ is dense in $K$;
 \item the orthogonal of $TW$ in $W$ is positive definite.
\end{itemize}
\end{defn}

This terminology comes from \cite{LaPh} and \cite{PS}, where it is defined in a Hilbert setting; in this case, $T$ is an isometric operator. In a Krein space $K$, an \emph{outgoing operator} $T$ is an anti-isometric operator that admits an outgoing subspace. An \emph{outgoing Krein space} $(K,B,T)$ is a Krein space $(K,B)$ endowed with an outgoing operator $T$. It is easy to see that a non-trivial outgoing Krein space is infinite-dimensional and that the positive and negative components in any fundamental decomposition are infinite-dimensional too. In particular, the Krein metric has to be indefinite. We will sometimes write $(K,T)$ or $K$ for an outgoing Krein space.

\begin{defn}
 A \emph{loop Hodge structure} $(K,B,T,W)$ is the datum of an outgoing Krein space $(K,B,T)$ and an outgoing subspace $W$ for $T$.

An \emph{isomorphism} from a loop Hodge structure $(K,B,T,W)$ to a loop Hodge structure $(K',B',T',W')$ is a Krein isometry $\phi: (K,B) \rightarrow (K',B')$ that intertwines the outgoing operators and such that $\phi(W) = W'$.
\end{defn}

This terminology is justified by the fact that classical Hodge structures can be realized as loop Hodge structures. This will be discussed in subsection \ref{class}.

\begin{defn}\label{Hod_filt}
The \emph{Hodge filtration} of a loop Hodge structure $(K,B,T,W)$ is the decreasing filtration $F^\bullet K$ of $K$, given by $F^i K := T^i W$. It is a complete topologically separated filtration, meaning that $\bigcap_{n \in \Z} F^n K = 0$ and $\bigcup_{n \in \Z} F^n K$ is dense in $K$.
\end{defn}

\subsubsection{Canonical form of a loop Hodge structure}

Let $H$ be a Hilbert space, with inner product denoted by $h$. Let $K$ be the space of square-integrable functions from the circle $S^1$ to $H$. We define a Krein structure on $K$ by
$$B(f,g) := \int_{S^1} h(f(\lambda),g(-\lambda))d\nu(\lambda),$$
where $\nu$ is the invariant volume form on $S^1$ with integral $1$.

Using Fourier series, we write $f(\lambda) = \sum_{n \in \Z} f_n \lambda^n$ and $g(\lambda) = \sum_{n \in \Z} g_n \lambda^n$, with $(f_n)_{n \in \Z}, (g_n)_{n \in \Z}$ in $l^2(H)$. Then $B(f,g) = \sum_{n \in \Z} (-1)^n h(f_n,g_n)$. A fundamental decomposition of $K$ is obtained by taking the odd and even components of a function in $K$; for this fundamental decomposition, the induced Hilbert structure is the usual Hilbert structure on $L^2(S^1,H)$.

The \emph{right-shift operator} of $K$ is the operator $T$, given by $(Tf)(\lambda) = \lambda f(\lambda)$. In terms of the Fourier series representation, $T$ is defined by $T((a_n)_{n \in \Z}) = (a_{n-1})_{n \in \Z}$. This is an anti-isometric operator and in fact an outgoing operator. Indeed, the subspace $L_+^2(S^1,H)$ of functions with Fourier series concentrated in nonnegative degrees is easily seen to be an outgoing subspace for $T$. This space will be called the \emph{Fourier-nonnegative subspace}. One should notice that functions in $L_+^2(S^1,H)$ are holomorphic functions on the open unit disk in $\C$ whose powers series representation $\sum_{n \in \N} a_n z^n$ at $0$ satisfy $\sum_{n \in \N} |a_n|^2 < \infty$.

This defines a canonical loop Hodge structure on $L^2(S^1,H)$. The next proposition is an adaptation with Krein spaces of a well known-fact in abstract scattering theory \cite{PS}, \cite{LaPh}, \cite{HSW}.

\begin{prop}\label{kr_out}
 Let $(K,B,T,W)$ be a loop Hodge structure and let $H$ be the orthocomplement of $TW$ in $W$. Then, $H$ is a Hilbert space and there exists a canonical isomorphism of loop Hodge structures from $K$ to $L^2(S^1,H)$.
\end{prop}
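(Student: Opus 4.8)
The plan is to reduce the statement to the Wold--Sinai decomposition of abstract scattering theory, carried out in the Krein setting by repeated appeal to Proposition~\ref{ortho}. First I would identify $H$ and show it is a Hilbert space: since $T$ is anti-isometric it is a Krein isometry from $(W,-B)$ onto $(TW,B|_{TW})$, and $(W,-B)$ is a Krein space (exchange the two summands of a fundamental decomposition of $(W,B)$), so $TW$ --- which is closed in $K$ because $T$ is a homeomorphism --- is a Krein subspace of $W$. Proposition~\ref{ortho}, applied with $W$ as ambient space, then yields $W = TW \oplus^\perp H$ with $H$ a Krein subspace, and the fifth item of Definition~\ref{sub_out} says that $H$ is positive definite; a positive-definite Krein space is exactly a Hilbert space, and I write $h := B|_{H \otimes \bar H}$ for its inner product.

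The core of the argument is to produce the Fourier-type decomposition $K = \overline{\bigoplus_{n \in \Z} T^n H}$. The subspaces $T^n H$ are pairwise $B$-orthogonal: for $i<j$ one has $T^{j-i}H \subseteq T^{j-i}W \subseteq TW$, which is $B$-orthogonal to $H$, and $T$ is (anti-)isometric. Iterating the splitting of the first step gives the $B$-orthogonal decomposition $W = T^n W \oplus^\perp (H \oplus^\perp TH \oplus^\perp \cdots \oplus^\perp T^{n-1}H)$ for every $n \ge 0$; the Hilbert completion $V := \overline{\bigoplus_{j \ge 0} T^j H}$ is again a Krein subspace of $W$ (its even and odd parts form a fundamental decomposition), and its $B$-orthogonal complement inside $W$ is $\bigcap_{n \in \N} T^n W$, which is $0$ by the third item of Definition~\ref{sub_out}; hence $W = V$ by the last assertion of Proposition~\ref{ortho}. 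Applying the homeomorphism $T^{-n}$ gives $T^{-n} W = \overline{\bigoplus_{j \ge -n} T^j H} \subseteq \Hha := \overline{\bigoplus_{j \in \Z} T^j H}$, and since $\bigcup_{n \ge 0} T^{-n}W = \bigcup_{m \in \Z} T^m W$ is dense in $K$ by the fourth item of Definition~\ref{sub_out} while $\Hha$ is closed, I get $K = \Hha$, with $W$ corresponding to the part of the sum over $n \ge 0$. I expect this to be the main obstacle --- not the formula, which is classical, but the bookkeeping needed to make each partial sum a genuine Krein subspace so that Proposition~\ref{ortho} applies; the Hilbert-space shortcut of projecting onto the decreasing family $\{T^n W\}$ is not available here.

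It then remains to name the isomorphism. On $T^n H$ the form $B$ is $(-1)^n$ times the Hilbert inner product transported from $H$, so the even and odd sub-sums of $\Hha$ form a fundamental decomposition of $K$ with associated Hilbert inner product $\big\langle \sum_n T^n u_n, \sum_n T^n v_n \big\rangle = \sum_n h(u_n, v_n)$. I would then set
\[ \Psi : K \longrightarrow L^2(S^1,H), \qquad \Psi\Big( \sum_{n \in \Z} T^n v_n \Big) := \Big( \lambda \mapsto \sum_{n \in \Z} v_n \lambda^n \Big), \]
and check, by Parseval, that $\Psi$ is an isometric isomorphism of Hilbert spaces, that it intertwines $B$ with the canonical Krein form (both sides equal $\sum_n (-1)^n h(u_n,v_n)$), that it intertwines $T$ with multiplication by $\lambda$, and that it sends $W$ onto the Fourier-nonnegative subspace --- so that $\Psi$ is an isomorphism of loop Hodge structures onto the canonical one on $L^2(S^1,H)$. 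Finally I would observe that $H$, the decomposition of $K$, and $\Psi$ were all produced without any choice (the fundamental decomposition above only serves to identify the target Hilbert norm, and every admissible choice gives the same $\Psi$), so that the isomorphism is canonical.
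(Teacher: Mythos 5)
Your proof is correct and follows essentially the same route as the paper's: the Wold-type decomposition $W=\widehat{\bigoplus}_{n\ge 0}T^nH$ and $K=\widehat{\bigoplus}_{n\in\Z}T^nH$ obtained from proposition \ref{ortho}, the pairwise orthogonality of the $T^nH$, and the axioms $\bigcap T^nW=0$, $\overline{\bigcup T^nW}=K$, followed by the identification with $L^2(S^1,H)$ intertwining $T$ with the right shift and sending $W$ to the Fourier-nonnegative subspace. Your extra bookkeeping (a second appeal to proposition \ref{ortho} to get $W=V$, and the explicit formula for $\Psi$, which is the inverse of the map in remark \ref{kr_rem}) only makes explicit what the paper leaves implicit.
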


\begin{proof}
 Since $W$ is a Krein subspace of $K$ and $T$ is an anti-isometric operator, $TW$ is a Krein space too. In particular, one has an orthogonal direct sum $W = TW \oplus^\perp H$ by proposition \ref{ortho}. The subspaces $T^n H, n \in \Z$ are in orthogonal direct sum. Indeed, if $i < j$, then $T^i H$ is orthogonal to $T^{i+1} W$ and $T^j H \subset T^j W \subset T^{i+1} W$. By induction, one gets an orthogonal direct sum 
$$W = \big(\bigoplus_{0 \leq i\leq n}^\perp T^i H\big) \oplus^\perp  T^{n+1} W,$$ 
for all positive $n$. Since $\bigcap_{n\in\N} T^n W = 0$, this implies that $W$ is the Hilbert sum of the spaces $T^n H$, for $n \geq 0$. Moreover, $\bigcup_{n \in \Z} T^n W$ is dense in $K$; hence $K$ is the Hilbert sum of the $T^n H$, for $n \in \Z$.

 The Krein space $K$ can thus be identified with $L^2(S^1, H)$, with its standard Krein metric (the Krein metric $B$ is by assumption positive definite on $H$) since $T$ is an anti-isometric operator. By construction, this identification intertwines $T$ with the right-shift operator of $L^2(S^1,H)$ and sends $W$ to the Fourier-nonnegative subspace of $L^2(S^1, H)$.
\end{proof}

\begin{rem}\label{kr_rem}
With the notations of proposition \ref{kr_out}, the isomorphism $\Phi$ from $L^2(S^1,H)$ to $K$ is given by
$$
\Phi: \sum_{n \in \Z} a_n \lambda^n \mapsto \sum_{n \in \Z} T^n a_n.
$$
\end{rem}

\begin{prop}\label{isom_Hilb}
 Let $(K,B,T)$ be an outgoing Krein space and let $W_1$ and $W_2$ be two outgoing subspaces. We denote by $H_i$ the orthocomplement of $TW_i$ in $W_i$. 
 Then $H_1$ and $H_2$ are isometric Hilbert spaces.
\end{prop}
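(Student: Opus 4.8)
The plan is to use Proposition \ref{kr_out} twice and then transport the structure. Applying Proposition \ref{kr_out} to the loop Hodge structure $(K,B,T,W_1)$ gives a canonical isomorphism of loop Hodge structures $\Phi_1 \colon L^2(S^1,H_1) \to K$; similarly one gets $\Phi_2 \colon L^2(S^1,H_2) \to K$. By definition an isomorphism of loop Hodge structures is in particular a Krein isometry, so the composite $\Phi_2^{-1} \circ \Phi_1 \colon L^2(S^1,H_1) \to L^2(S^1,H_2)$ is a Krein isometry that intertwines the two right-shift operators and sends the Fourier-nonnegative subspace of $L^2(S^1,H_1)$ onto the Fourier-nonnegative subspace of $L^2(S^1,H_2)$. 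The point is now purely formal: such a map must come from a Hilbert isometry $H_1 \to H_2$, and so $H_1$ and $H_2$ are isometric as Hilbert spaces.

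The key step is therefore the following linear-algebra observation, which I would isolate as the core of the argument. Let $\psi \colon L^2(S^1,H_1) \to L^2(S^1,H_2)$ be a Krein isometry commuting with the right-shifts $T_1, T_2$ and carrying $L^2_+(S^1,H_1)$ onto $L^2_+(S^1,H_2)$. Writing $H_i$ inside $L^2_+(S^1,H_i)$ as the orthocomplement of $T_i L^2_+(S^1,H_i)$ (i.e.\ the degree-zero Fourier component), $\psi$ must send the degree-zero component onto the degree-zero component: indeed $\psi(L^2_+) = L^2_+$ and $\psi(T_1 L^2_+) = T_2 \psi(L^2_+) = T_2 L^2_+$, and $\psi$ respects Krein-orthogonality since it is a Krein isometry, hence it maps the orthocomplement $H_1$ of $T_1 L^2_+$ in $L^2_+$ onto the orthocomplement $H_2$ of $T_2 L^2_+$ in $L^2_+$. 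On $H_1$ the Krein form $B$ agrees with the Hilbert inner product (this is the positivity clause in Definition \ref{sub_out}), and likewise on $H_2$; therefore $\psi|_{H_1} \colon H_1 \to H_2$ is an inner-product isometry, proving the claim. Unwinding the identifications, the required isometry $H_1 \to H_2$ is nothing but the map induced on degree-zero components by $\Phi_2^{-1}\circ\Phi_1$.

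I do not expect a genuine obstacle here; the only mild subtlety is bookkeeping, namely checking that the abstract orthocomplement $H_i$ of $TW_i$ in $W_i$ really is carried by $\Phi_i$ to the degree-zero Fourier component of $L^2(S^1,H_i)$, so that the two descriptions of $H_i$ coincide. But this is exactly what the last sentence of the proof of Proposition \ref{kr_out} (together with Remark \ref{kr_rem}) records: $\Phi_i$ intertwines $T$ with the right-shift and sends $W_i$ to the Fourier-nonnegative subspace, hence sends $TW_i$ to $T_i L^2_+(S^1,H_i)$ and thus the $B$-orthocomplement $H_i$ to the $B$-orthocomplement of $T_i L^2_+$, which is the degree-zero component. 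Once this is in place the proof is three lines, and it even yields a slightly stronger canonical statement: the isometry $H_1 \to H_2$ is the one induced by any loop-Hodge-structure isomorphism $L^2(S^1,H_1)\cong K \cong L^2(S^1,H_2)$ on degree-zero parts.
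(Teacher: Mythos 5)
Your argument breaks down at its very first ``purely formal'' step: the composite $\Phi_2^{-1}\circ\Phi_1$ does \emph{not} carry the Fourier-nonnegative subspace of $L^2(S^1,H_1)$ onto that of $L^2(S^1,H_2)$. By construction (proposition \ref{kr_out}), $\Phi_1$ sends $L^2_+(S^1,H_1)$ onto $W_1\subset K$, while $\Phi_2^{-1}$ sends $W_2$ -- not $W_1$ -- onto $L^2_+(S^1,H_2)$. Since $W_1$ and $W_2$ are arbitrary outgoing subspaces of the same outgoing Krein space and are in general distinct (by proposition \ref{thm_act} the group $\Lambda^\infty_\sigma G$ moves the Fourier-nonnegative subspace to many other outgoing subspaces), the image $\Phi_2^{-1}(W_1)$ is merely \emph{some} outgoing subspace of $L^2(S^1,H_2)$, with no reason to be the Fourier-nonnegative one. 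Consequently $\Phi_2^{-1}\circ\Phi_1$ need not map the degree-zero component $H_1$ into the degree-zero component $H_2$, and the rest of your argument collapses. In effect you have assumed the conclusion: if the composite were an isomorphism of loop Hodge structures, i.e.\ preserved the outgoing subspaces, the statement would be immediate, but the whole content of the proposition is precisely that the virtual dimension is independent of the \emph{choice} of outgoing subspace. (Note also that the transitivity statement of proposition \ref{thm_act}, which might tempt one to identify the two complements, is proved \emph{using} this proposition, so it cannot be invoked here.)

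What you do correctly extract from proposition \ref{kr_out} is only this: there exists a continuous bijective map $\phi\colon L^2(S^1,H_1)\to L^2(S^1,H_2)$ which is a Krein isometry and intertwines the two right-shift operators -- nothing about the positive subspaces. The paper's proof starts exactly from this weaker datum and then has to work: if the $H_i$ are not separable, cardinal arithmetic on Hilbert bases ($\kappa_i\times\aleph_0$) settles the matter; if they are separable and of different dimensions, one restricts $\phi$ to $L^2(S^1,\C^{n_2+1})\subset L^2(S^1,H_1)$, uses proposition \ref{comm_shift2} to write the resulting shift-intertwining map as multiplication by some $M\in L^\infty(S^1,\Hom(\C^{n_2+1},H_2))$, and produces a measurable nonzero $L^2$-section $v$ with $v(\lambda)\in\ker M(\lambda)$ almost everywhere, contradicting injectivity. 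Your proof would need to be replaced by an argument of this kind; as written, the key step is false rather than merely unjustified.
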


\begin{proof}
 By proposition \ref{kr_out}, the outgoing Krein space $(K,B,T)$ is isomorphic to $L^2(S^1,H_i)$ (as outgoing Krein space). In particular, there is an isomorphism 
 $$
 \phi: L^2(S^1,H_1) \cong L^2(S^1,H_2),
 $$
 that intertwines the outgoing operators. Writing $\kappa_i$ for the cardinal of a Hilbert basis of $H_i$ (which we recall determines $H_i$ up to isomorphism), we obtain that $\kappa_1 \times \aleph_0 = \kappa_2 \times \aleph_0$, where $\aleph_0$ is the cardinal of the set of natural numbers. This concludes the proof if both $H_i$ are not separable. 

If both $H_i$ are separable (in particular if they are finite-dimensional) but not isomorphic, we can assume that $H_2$ is of finite dimension $n_2$ and $H_1$ is of finite dimension $n_1 > n_2$ or has a countable but not finite Hilbert basis. In all cases, an injection of $\C^{n_2+1}$ in $H_1$ gives a continuous injection $L^2(S^1,\C^{n_2 + 1}) \hookrightarrow L^2(S^1,H_1)$. Post-composing by $\phi$, we get a injective continuous linear map $\tilde{\phi}: L^2(S^1,\C^{n_2 + 1}) \rightarrow L^2(S^1,H_2)$ that intertwines the outgoing operators. It is sufficient to show that such a map cannot exist to conclude the proof.

By proposition \ref{comm_shift2}, there exists $M$ in $L^\infty(S^1,\End(\C^{n_2+1},H_2))$ such that $\phi$ is the evaluation map induced by $M$. For almost every $\lambda$, the kernel of $M(\lambda)$ is not reduced to $0$ for dimensional reasons. By using Gauss elimination, we can define a non-zero measurable vector-valued function $v(\lambda)$ in $\C^{n_2+1}$ such that $v(\lambda)$ is in the kernel of $M(\lambda)$ for almost every $\lambda$. Moreover, up to a renormalization of $v$, we can assume that $v$ lives in $L^2(S^1,\C^{n_2+1})$.

Then, $v$ is not zero but $\phi(v)$ is zero, contradicting that $\phi$ is injective
\end{proof}

In particular, in a loop Hodge structure $(K,B,T,W)$, the dimension of $H$ is independent of the outgoing subspace $W$. This will be called the \emph{virtual dimension} of the outgoing Krein space $(K,B,T)$. We will only consider loop Hodge structures of finite virtual dimension.

\subsubsection{Families of loop Hodge structures}

We refer to \cite{Lan} for basic notions in differential geometry in infinite dimension. Let $\pi: \Kk \rightarrow X$ be a Hilbert bundle over a differentiable manifold $X$. Let $\Bb$ be a fibrewise Hermitian form on $\Kk$ and let $\Tt$ be a section of the Banach bundle $\End(\Kk) \rightarrow X$. We say that $(\Kk,\Bb,\Tt)$ is an \emph{outgoing Krein bundle} if for every $x$ in $X$, there exists a neighbourhood $U$ of $x$, a Hilbert space $K$, a trivialization $\pi^{-1}(U) \cong U \times K$ and an outgoing Krein structure on $K$ such that the restriction of the trivialization to each fiber is an isomorphism of outgoing Krein spaces. In particular, each fiber of $\Kk$ has a structure of outgoing Krein space and the intrinsic topology of this Krein space coincides with its topology as a fiber of the Hilbert bundle $\Kk$. 

Let $\pi: (\Kk,\Bb,\Tt) \rightarrow X$ be an outgoing Krein bundle. A \emph{subbundle of outgoing subspaces} $\Ww$ in $\Kk$ is a subset of $\Kk$ such that, for any $x$ in $X$, there exists a neighbourhood $U$ of $x$, an outgoing Krein space $K$ and a trivialization $\pi^{-1}(U) \cong U \times K$ of outgoing Krein bundles such that, in this trivialization, $\Kk \cap \pi^{-1}(U)$ is identified with $U \times W$, where $W$ is a fixed outgoing subspace of $K$.

\begin{defn}
 A \emph{family of loop Hodge structures} over a differentiable manifold $X$ is the datum of an outgoing Krein bundle (of finite virtual dimension) with a subbundle of outgoing subspaces. An \emph{isomorphism} of such families is a bundle map which is an isomorphism of loop Hodge structures on each fiber.
\end{defn}

Let $(\Ee,h)$ be a finite-dimensional Hermitian bundle over $X$. The union of the Hilbert spaces $L^2(S^1,E_x)$, $x \in X$, has a natural structure of Hilbert bundle. Moreover, this Hilbert bundle is in a natural way a family of loop Hodge structures, by the discussion before proposition \ref{kr_out}.

\begin{prop} \label{bund_grass}
Let $X$ be a differentiable manifold and let $(\Kk,\Bb,\Tt,\Ww)$ be a family of loop Hodge structures of virtual dimension $n$. Then this family is isomorphic to the family of loop Hodge structures on $L^2(S^1,\Ee)$, where $\Ee$ is the complex vector bundle of ranh $n$, defined to be the orthogonal complement of $T\Ww$ in $\Ww$.
\end{prop}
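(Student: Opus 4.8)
The plan is to globalise the canonical form of Proposition~\ref{kr_out}, together with the explicit formula of Remark~\ref{kr_rem}, over the base $X$.

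First I would check that $\Ee$ --- the $\Bb$-orthogonal complement of $\Tt\Ww$ in $\Ww$ --- is a genuine smooth complex vector subbundle of $\Kk$ of rank $n$, equipped with the positive-definite Hermitian metric $h:=\Bb_{|\Ee}$. This is a local statement. Around any point of $X$, the definition of a family of loop Hodge structures provides a trivialisation $\pi^{-1}(U)\cong U\times K$ of outgoing Krein bundles in which $\Ww$ becomes the constant subbundle $U\times W$, and in which $(\Bb,\Tt)$ becomes the constant pair $(B,T)$. In such a chart $\Tt\Ww$ is the constant subbundle $U\times TW$, and $\Ee$ is the constant subbundle $U\times H$, where $H$ is the $B$-orthogonal complement of $TW$ in $W$; by the argument in the proof of Proposition~\ref{kr_out}, $B$ is positive definite on $H$, and $\dim_\C H=n$ since the family has virtual dimension $n$ (Proposition~\ref{isom_Hilb}). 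Consequently $(\Ee,h)$ is a Hermitian vector bundle of rank $n$; moreover, for each $n\in\Z$ the image $\Tt^n\Ee$ is again a smooth finite-rank subbundle of $\Kk$ (it is the constant subbundle $U\times T^nH$ in the above chart), the subspaces $\Tt^n\Ee$ are fibrewise mutually $\Bb$-orthogonal, and their algebraic sum is fibrewise dense in $\Kk$.

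Next I would write down the candidate isomorphism. The fibre of $L^2(S^1,\Ee)$ over $x$ consists of the series $\sum_{n\in\Z}a_n\lambda^n$ with $a_n\in\Ee_x$ and $\sum_n\|a_n\|^2_{h_x}<\infty$. Define $\Phi\colon L^2(S^1,\Ee)\to\Kk$ fibrewise by the formula of Remark~\ref{kr_rem}, namely $\Phi_x\big(\sum_n a_n\lambda^n\big)=\sum_n\Tt^n a_n$. Proposition~\ref{kr_out} is precisely the assertion that, for each $x$, the map $\Phi_x$ is a well-defined isomorphism of loop Hodge structures from the canonical loop Hodge structure on $L^2(S^1,\Ee_x)$ onto $\Kk_x$ --- a Krein isometry intertwining the shift with $\Tt$ and sending the Fourier-nonnegative subspace onto $\Ww_x$.

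It then remains to verify that $\Phi$ is an isomorphism of families of loop Hodge structures, i.e.\ a smooth bundle map whose inverse is also smooth. This is again local: in a chart as above, $\Kk$, $\Bb$, $\Tt$, $\Ww$ and $\Ee$ are all constant, so in this trivialisation $\Phi$ is the constant map $U\times\varphi$, where $\varphi\colon L^2(S^1,H)\to K$ is the fixed canonical isomorphism of Proposition~\ref{kr_out}; hence $\Phi$ and $\Phi^{-1}$ are smooth there. Since all these local descriptions arise from one and the same intrinsic formula, they agree on overlaps and patch together to the desired global isomorphism. The only delicate point --- and the main, rather mild, obstacle --- is the first step, namely confirming that the orthogonal-complement construction yielding $\Ee$, and the subbundles $\Tt^n\Ee$, are honestly smooth and of locally constant rank; once one exploits the trivialisations ``of outgoing Krein bundles'' to reduce everything to constant data, this is immediate, and the remainder of the argument is formal.
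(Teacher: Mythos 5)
Your proposal is correct and follows essentially the same route as the paper's proof: reduce to a local trivialisation of outgoing Krein bundles in which $\Bb$, $\Tt$, $\Ww$ (hence $\Ee$) are constant, apply proposition \ref{kr_out} fibrewise (your map, taken from remark \ref{kr_rem}, is simply the inverse of the one the paper writes down), and deduce smoothness from the fact that in such a chart the fibrewise isomorphism does not depend on the point of $X$. Your extra details (rank $n$ via the virtual dimension, positivity of $\Bb$ on $\Ee$ from the definition of an outgoing subspace, agreement on overlaps) are exactly the points the paper leaves implicit, so no changes are needed.
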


\begin{proof}
 Since in a trivialization, $\Ww$ becomes a fixed outgoing subspace in a fixed outgoing Krein space, the orthogonal complement $\Ee$ of $T\Ww$ in $\Ww$ is a smooth bundle of rank $n$ over $X$. Moreover, by the definition of outgoing subspaces, the Hermitian form $\Bb$ restricts to a Hermitian inner product on $\Ee$. Hence, one can indeed consider the outgoing Krein bundle $L^2(S^1,\Ee)$, and by proposition \ref{kr_out}, one gets a vector bundle map $\Kk \rightarrow L^2(S^1,\Ee)$ which is an isomorphism of loop Hodge structures in each fiber. Since locally in some trivialization, this isomorphism does not depend on the point in $X$, it is smooth. This concludes the proof.
\end{proof}

\begin{cor}
 Let $X$ be a differential manifold. The category of families of loop Hodge structures over $X$ (with morphisms being isomorphisms) is equivalent to the category of finite-dimensional Hermitian bundles over $X$ (with morphisms being bundle isometries).
\end{cor}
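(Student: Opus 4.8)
The plan is to realize the equivalence through an explicit pair of quasi-inverse functors, the object-level statement being essentially Proposition \ref{bund_grass}. In one direction I would send a family of loop Hodge structures $(\Kk,\Bb,\Tt,\Ww)$ to the finite-dimensional Hermitian bundle $\Ee$, the orthogonal complement of $T\Ww$ in $\Ww$ equipped with the restriction of $\Bb$ (positive definite there, by the definition of an outgoing subspace); this is a smooth rank-$n$ Hermitian bundle, as already observed in the proof of Proposition \ref{bund_grass}. In the other direction I would send a Hermitian bundle $(\Ee,h)$ to the family of loop Hodge structures on $L^2(S^1,\Ee)$ constructed just before Proposition \ref{kr_out}. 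On objects the composite in one order is handled by Proposition \ref{bund_grass}, which produces a natural isomorphism from $L^2(S^1,\Ee)$ (applied to the $\Ee$ extracted from a given family) back to the original family; in the other order, starting from $(\Ee,h)$, the orthogonal complement inside the Fourier-nonnegative subspace of its image under the right-shift is precisely the bundle of Fourier degree-zero parts, which is canonically isometric to $(\Ee,h)$ since $B$ restricted to the constants is $h$. So both composites are naturally isomorphic to the identity on objects.

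It then remains to treat morphisms. First I would note that an isomorphism $\phi$ of families of loop Hodge structures restricts, fibrewise, to a Krein isometry intertwining the outgoing operators and sending $W$ to $W'$; it therefore sends $TW$ to $T'W'$ and, being a Krein isometry, sends the orthocomplement $H$ of $TW$ in $W$ isometrically onto $H'$. This yields a bundle isometry $\Ee \to \Ee'$, smooth because $\phi$ is. Conversely, a bundle isometry $\Ee \to \Ee'$ induces fibrewise a Hilbert isometry $L^2(S^1,\Ee) \to L^2(S^1,\Ee')$ which visibly commutes with the right-shift operators, respects the Krein forms, and carries the Fourier-nonnegative subspace to the Fourier-nonnegative subspace, hence is an isomorphism of families of loop Hodge structures.

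The key point, already packaged in Remark \ref{kr_rem}, is that these two operations on morphisms are mutually inverse. If $\phi: (\Kk,\Bb,\Tt,\Ww) \to (\Kk',\Bb',\Tt',\Ww')$ is an isomorphism, then for $a$ in $\Ee$ and $n$ in $\Z$ one has $\phi(T^n a) = (T')^n \phi(a)$; since $\bigcup_{n \in \Z} T^n \Ee$ is dense in $\Kk$, this forces $\phi$ to coincide, via the identifications of Remark \ref{kr_rem}, with the isomorphism built from the bundle isometry $\phi|_\Ee$. Thus $\phi$ is recovered from $\phi|_\Ee$, which combined with the object-level statement shows the two functors are quasi-inverse, whence essential surjectivity and full faithfulness. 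I do not anticipate a real obstacle: the only points needing (routine) care are the smoothness of the bundle maps — which in a local trivialization reduces to the fixed, base-point-independent linear isomorphisms of Proposition \ref{kr_out} and Remark \ref{kr_rem} — and the fact that $\phi|_\Ee$ lands in $\Ee'$ and is an isometry there, which is immediate since $\phi$ is a Krein isometry carrying $W$ to $W'$ and $TW$ to $T'W'$, and $\Bb$, $\Bb'$ are positive definite on $\Ee$, $\Ee'$.
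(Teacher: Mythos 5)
Your proposal is correct and follows essentially the same route as the paper, which states the corollary as an immediate consequence of Proposition \ref{bund_grass} (object-level equivalence via $\Ee = \Ww \ominus \Tt\Ww$ and $L^2(S^1,\Ee)$) and leaves the morphism-level verification implicit. Your treatment of morphisms — restriction of a Krein isometry to $\Ee$, extension of a bundle isometry to $L^2(S^1,\Ee)$, and their mutual inverseness via density of $\bigcup_{n} T^n \Ee$ — is exactly the routine argument the paper omits.
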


\subsection{Variations of loop Hodge structures and harmonic bundles} \label{VLHS_harm}

We recall the notion of \emph{harmonic bundle} on a complex manifold $X$, define an appropriate notion of variation of loop Hodge structures that mimics definition \ref{VHS} and explain the 1--1 correspondence between these two objects.

\subsubsection{Developing map of a metric}

Let $(X,x_0)$ be a pointed connected differentiable manifold. We choose a base-point $\tilde{x}_0$ in the fiber above $x_0$ of the universal cover $\pi: \tilde{X} \rightarrow X$. Let $(\Ee,D)$ be a flat bundle of rank $r$ over $X$, with a trivialization $E_{x_0} \cong \C^r$ of the fiber above $x_0$. The pullback bundle $\pi^\ast E$ can be trivialized to $\tilde{X} \times \C^r$ by parallel transport with respect to the flat connection $\pi^\ast D$ and base-point $\tilde{x}_0$. If $h$ is a Hermitian metric on $E$, the pullback metric $\pi^\ast h$ can thus be seen as a varying Hermitian inner product on the fixed space $\C^r$. The space of such inner products is homogeneous under the action of $GL(r,\C)$ by congruence, with stabilizer equal to the unitary group $U(r)$ at the standard inner product. We thus get a map $f: \tilde{X} \rightarrow GL(r,\C)/U(r)$, which is equivariant under the monodromy $\rho: \pi_1(X,x_0) \rightarrow GL(r,\C)$ of the flat bundle. This map is called the \emph{developing map} of the 
metric $h$. 
Conversely, such a map defines a Hermitian metric on the flat bundle $(E,D)$.

\subsubsection{Harmonic bundles} \label{Harm_bundles}

Let $(M, g^M)$ and $(N,g^N)$ be two Riemannian manifolds. If $f: M \rightarrow N$ is a smooth map, the bundle $W := T^\ast M \otimes f^{-1} TN$ on $M$ is endowed with a connection $\nabla^W$, induced by the Levi-Civita connections on $M$ and $N$.

\begin{defn}
 The \emph{tension field} $\tau(f)$ of a smooth map $f: M \rightarrow N$ is a tensor in $f^{-1} TN$, obtained by taking the trace of the tensor $\nabla^W df$, that lives in $(T^\ast M)^{\otimes 2} \otimes f^{-1} TN$ . We say that $f$ is \emph{harmonic} if its tension field vanishes.
\end{defn}

We refer to \cite{HeWo} for a variational point of view. In the particular case where $M$ is a complex manifold of dimension $1$, this notion of harmonic map is independent of the (necessarily K\"{a}hler) metric on $M$ (see \cite{HeWo}, example 2.2.12). Hence, the following definition makes sense:

\begin{defn}\label{def_plur}
 A smooth map $f: X \rightarrow N$ from a complex manifold $X$ to a Riemannian manifold $N$ is \emph{pluriharmonic} if its restriction to every complex curve of $X$ is harmonic.
\end{defn}

 It is known that a pluriharmonic map from a K\"ahler manifold $X$ to a Riemannian manifold $N$ is harmonic. The best result in the converse direction is the Siu-Sampson's theorem \cite{Siu},\cite{Samps}:

 \begin{thm}[Siu-Sampson]\label{Samp}
  Let $X$ be a compact K\"{a}hler manifold, let $N$ be a Riemannian manifold of non-positive Hermitian curvature and let $\rho: \pi_1(X) \rightarrow \text{Isom}(N)$ be a representation of $\pi_1(X)$ in the isometry group of $N$. Then, any $\rho$-equivariant harmonic map from $\tilde{X}$ to $N$ is pluriharmonic.
 \end{thm}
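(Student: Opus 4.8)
This is the Siu--Sampson theorem, and my plan is to reprove it through the associated Bochner formula (one may instead quote \cite{Siu} and \cite{Samps} directly). First I would set up notation: complexify $TN$ and pull back the Levi-Civita connection of $N$ along $f$ to a metric connection $D = D' + D''$ on $f^\ast T_\C N$ over $\tilde X$, the splitting being by the complex structure of $X$; write $\sigma := \partial' f$ for the $(1,0)$-part of $df$, a $1$-form on $\tilde X$ with values in $f^\ast T_\C N$, so that $df = \sigma + \overline\sigma$. Torsion-freeness of the Levi-Civita connection gives $d^D(df) = 0$, whose $(2,0)$-component reads $D'\sigma = 0$. Equip $\tilde X$ with the Kähler metric pulled back from $X$. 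Then harmonicity of $f$ --- vanishing of the tension field --- amounts to $\Lambda_\omega D''\sigma = 0$, whereas pluriharmonicity of $f$ amounts to $D''\sigma = 0$: restricting $f$ to a holomorphic disc in $X$, the tension field of the restriction is the corresponding component of $D''\sigma$, so harmonicity on all complex curves is exactly $D''\sigma = 0$. Finally, since $f$ is $\rho$-equivariant with $\rho$ valued in $\text{Isom}(N)$, the scalar densities $|\sigma|^2$, $|D''\sigma|^2$ and the curvature term below are $\pi_1(X)$-invariant, hence descend to the compact manifold $X$; this is what makes the integration by parts below legitimate.

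The heart of the matter is a Bochner computation. Using the Kähler identity expressing $(D'')^\ast$ through $[\Lambda_\omega, D']$, together with $D'\sigma = 0$ (which, with the first Bianchi identity and a bidegree count, also yields $D'D''\sigma = f^\ast R^N \wedge \sigma$ in the relevant bidegree) and the harmonicity relation $\Lambda_\omega D''\sigma = 0$, one obtains $(D'')^\ast D''\sigma = -\Lambda_\omega\big(f^\ast R^N \wedge \sigma\big)$ up to a fixed positive normalization, where $f^\ast R^N$ is the curvature of $D$. Pairing with $\sigma$ and integrating over the compact $X$ produces the Siu--Sampson identity
\[
\int_X \big|D''\sigma\big|^2\, dV \;=\; \int_X \big\langle R^N(\partial' f \wedge \partial' f),\ \overline{\partial' f \wedge \partial' f}\,\big\rangle\, dV
\]
(again up to a positive constant), whose right-hand integrand is the pointwise Hermitian curvature of $N$ evaluated on $df$.

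To conclude, the hypothesis that $N$ has non-positive Hermitian curvature is precisely the sign condition forcing that integrand to be everywhere non-positive, so the right-hand side is $\le 0$; the left-hand side is $\ge 0$; hence both vanish. Therefore $D''\sigma \equiv 0$, i.e.\ $f$ is pluriharmonic, as claimed.

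The delicate point is the Bochner step: one must track signs carefully through the Kähler identities for the metric --- and in general non-integrable --- connection $D$ on $f^\ast T_\C N$, through the first Bianchi identity, and through the bidegree decomposition, and then verify that the curvature term that survives is exactly the expression appearing in Sampson's non-positivity hypothesis. This computation is the technical core of \cite{Siu} and \cite{Samps}, and I would either isolate it in a preliminary lemma or simply invoke it. Compactness of $X$ enters only to apply Stokes' theorem, and the curvature sign of $N$ enters only at the very last step.
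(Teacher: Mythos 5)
The paper does not actually prove Theorem \ref{Samp}: it is quoted from \cite{Siu} and \cite{Samps}, with a pointer to \cite{ABCKT} for a self-contained (non-equivariant) proof and the remark that the argument carries over to equivariant maps. Your sketch is exactly the Bochner-formula proof that those references carry out, and as an outline it is sound: torsion-freeness of the Levi-Civita connection gives $D'\sigma=0$; harmonicity is the trace condition $\Lambda_\omega D''\sigma=0$; pluriharmonicity in the sense of definition \ref{def_plur} is $D''\sigma=0$ (here you implicitly use the polarization fact that a sesquilinear-type tensor $(v,w)\mapsto (D''_{\bar w}\sigma)(v)$ vanishing on the diagonal vanishes identically, together with the vanishing of the $(1,1)$-part of the Hessian of a holomorphic disc in a K\"ahler manifold -- worth one line each if you write this up); the K\"ahler commutation identities, which do hold for an arbitrary metric connection on a Hermitian bundle over a K\"ahler base, plus $(d^D)^2=f^\ast R^N$ give the integral identity; and non-positive Hermitian curvature is precisely the sign hypothesis that kills both sides. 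Your equivariance remark -- that $|\sigma|^2$, $|D''\sigma|^2$ and the curvature density are $\pi_1(X)$-invariant and hence descend to the compact $X$, which is all that Stokes needs -- is exactly the point the paper flags when citing \cite{ABCKT}. The one caveat is that you explicitly defer the sign-sensitive Bochner computation to \cite{Siu} and \cite{Samps}; with that step invoked rather than performed, your text is, like the paper itself, a reduction to the literature rather than an independent proof, and making it self-contained would require carrying out that computation (the treatment in \cite{ABCKT} is the cleanest model to follow).
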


To be precise, this theorem follows from the results in the two quoted references. A self-contained proof of this theorem can be found in \cite{ABCKT}: it is given in a non-equivariant setting but it is asserted there that the proof works also with equivariant maps.\\

There is an equivalent notion of pluriharmonicity. Let $(\Ee,D)$ be a flat bundle over a complex manifold $X$ and let $h$ be a Hermitian metric on $\Ee$. There is a unique decomposition $D = \nabla + \alpha$, where $\nabla$ is a metric connection for $h$ and $\alpha$ is a $h$-Hermitian $1$-form with values in $\End(\Ee)$. We write $\nabla = \d + \bar{\d}$ and $\alpha = \theta + \theta^\ast$, for their decompositions in types. Since $\alpha$ is Hermitian, $\theta^\ast$ is the adjoint of $\theta$ with respect to $h$.

\begin{defn}\label{def_harm}
 The metric $h$ on the flat bundle $(\Ee,D)$ is \emph{harmonic} if the differential operator $\bar{\d} + \theta$ has vanishing square. We also say that $(\Ee,D,h)$ is a \emph{harmonic bundle}. 
\end{defn}

The vanishing of the square of $\bar{\d} + \theta$ is equivalent to the vanishing of $\bar{\d}^2$, $\bar{\d}\theta$ and $\theta \wedge \theta$. By the Koszul-Malgrange integrability theorem \cite{KoszMal}, $\bar{\d}$ gives a structure of holomorphic bundle on $\Ee$. For this holomorphic structure, $\theta$ is a holomorphic $1$-form satisfying the equation $\theta \wedge \theta = 0$. Such a datum $(\Ee,\bar{\d}, \theta)$ is called a \emph{Higgs bundle} and $\theta$ is the \emph{Higgs field}.

These two notions of harmonicity are related by the following proposition:

\begin{prop}
A metric $h$ on $(\Ee,D)$ is harmonic if and only if its developing map is pluriharmonic.
\end{prop}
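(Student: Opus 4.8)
The plan is to show that the two notions of harmonicity — vanishing tension field of the developing map, and vanishing square of $\bar\d + \theta$ — are reformulations of the same set of curvature-type equations, by the following chain of reductions.

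\emph{Step 1: Reduce to the curve case.} By definition \ref{def_plur}, the developing map $f: \tilde X \to GL(r,\C)/U(r)$ is pluriharmonic iff its restriction to every complex curve $C \subset X$ is harmonic; and by definition \ref{def_harm}, $h$ is harmonic iff $(\bar\d + \theta)^2 = 0$, which is a pointwise (tensorial) identity that decomposes, as noted after the definition, into $\bar\d^2 = 0$, $\bar\d \theta = 0$ and $\theta \wedge \theta = 0$. All three are conditions on the $2$-form parts, hence can be tested by pulling back along holomorphic curves: a $(0,2)$-form or $(1,1)$-form or $(2,0)$-form on $X$ vanishes iff its restriction to every complex curve through every point vanishes. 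So it suffices to prove the equivalence when $\dim_\C X = 1$. On a curve the metric on $N = GL(r,\C)/U(r)$ is irrelevant for harmonicity (see \cite{HeWo}, example 2.2.12), so we may freely use the standard symmetric-space structure.

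\emph{Step 2: Express the tension field of the developing map in terms of $\nabla, \theta, \theta^\ast$.} Trivializing $\pi^\ast\Ee \cong \tilde X \times \C^r$ by $\pi^\ast D$-parallel transport, the metric $\pi^\ast h$ becomes a map $f$ to the space $P_r = GL(r,\C)/U(r)$ of positive-definite Hermitian forms. The tautological identification $GL(r,\C)/U(r) \cong \exp(\mathrm{i}\,\Herm(r))$ and the standard computation of the Maurer--Cartan form shows that $df$ is, up to the usual identifications, the Hermitian part $\alpha = \theta + \theta^\ast$ of $D = \nabla + \alpha$; more precisely, pulling the Levi--Civita connection of $P_r$ back along $f$ yields the connection $\nabla$ on $\End(\Ee)$, and $\nabla^W df$ is $\nabla \alpha$. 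Taking the trace over a local holomorphic coordinate $z$ on the curve, the tension field is (a nonzero multiple of) the $(1,1)$-contraction of $\nabla \alpha$, i.e. of $\nabla'\theta^\ast + \nabla''\theta$ in type decomposition; so $\tau(f) = 0$ is equivalent to the single equation $\d\theta^\ast + \bar\d\theta = 0$ on the curve, where $\d, \bar\d$ are the $(1,0)$- and $(0,1)$-parts of $\nabla$.

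\emph{Step 3: Match this with $(\bar\d+\theta)^2 = 0$ via the flatness of $D$.} Now expand $0 = D^2 = (\nabla + \alpha)^2 = (\nabla^2 + \alpha\wedge\alpha) + (\nabla\alpha)$, and split into $(2,0), (1,1), (0,2)$ parts and into self-adjoint / anti-self-adjoint parts with respect to $h$ (using that $\nabla$ is metric and $\alpha$ is $h$-Hermitian, while $\nabla^2$ is $h$-anti-Hermitian). The anti-Hermitian part of $D^2 = 0$ gives the "curvature" relations, the Hermitian part gives exactly $\nabla\alpha = 0$ in the appropriate sense; combined with the type decomposition this is the classical bookkeeping (carried out e.g. in \cite{Sim_higloc}) showing that $D^2 = 0$ together with the single equation $\bar\d\theta + \d\theta^\ast = 0$ (the content of Step 2) is equivalent to the triple $\bar\d^2 = 0$, $\bar\d\theta = 0$, $\theta\wedge\theta = 0$, which is $(\bar\d + \theta)^2 = 0$. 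Since $D$ is flat by hypothesis, $D^2 = 0$ holds automatically, so on the curve "$\tau(f) = 0$" $\iff$ "$(\bar\d+\theta)^2 = 0$", and Step 1 upgrades this to $X$ of arbitrary dimension.

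\emph{Main obstacle.} The conceptually easy part is Step 1 and the flatness bookkeeping; the technical heart is Step 2 — identifying the pullback of the Levi--Civita connection of the symmetric space $GL(r,\C)/U(r)$ with $\nabla$ and the differential $df$ with $\alpha$, and then checking that the trace of $\nabla^W df$ really is the $(1,1)$-part of $\nabla\alpha$ rather than some combination involving extra Christoffel-symbol terms. The point is that the quadratic "correction" terms coming from the curvature of $P_r$ are precisely the $\alpha\wedge\alpha = (\theta+\theta^\ast)\wedge(\theta+\theta^\ast)$ terms appearing in $D^2$, so they get absorbed when one uses flatness; making this cancellation precise (and fixing signs and factors) is where the real work lies, but it is a standard computation in the theory of harmonic maps into symmetric spaces.
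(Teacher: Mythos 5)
Your reduction to curves in Step 1 rests on a false claim, and this is exactly where the real content of the proposition lives. A $(2,0)$-form or a $(0,2)$-form pulls back to \emph{zero} on every complex curve for dimension reasons (a curve has only one independent $(1,0)$-direction), so the conditions $\theta\wedge\theta=0$ and $\bar{\d}^2=0$ simply cannot be tested by restriction to curves; pluriharmonicity, which by definition only sees curves, can at best detect the $(1,1)$-condition. Step 3 compounds this: the $2$-form identity $\bar{\d}\theta+\d\theta^\ast=0$ is \emph{automatic} from flatness (it is the $(1,1)$-part of $\nabla\alpha=0$, the Hermitian part of $D^2=0$, as used in the paper's Lemma \ref{Dlamb_harm}), so if "flatness $+$ this equation $\Rightarrow$ the triple" were correct, every metric on a flat bundle would be harmonic. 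What vanishing of the tension field on each curve actually gives (after the Step 2 dictionary, and using that flatness makes the two contributions to the trace of $\nabla\alpha$ equal, plus a polarization over directions that you do not carry out) is the single tensor equation $\bar{\d}\theta=0$. So your argument proves only "developing map pluriharmonic $\Longleftrightarrow \bar{\d}\theta=0$"; the direction "harmonic bundle $\Rightarrow$ pluriharmonic" is fine, but the hard half -- that pluriharmonicity together with flatness of $D$ forces $\theta\wedge\theta=0$ and $\bar{\d}^2=0$ -- is entirely missing, and is waved away by the incorrect curve-detection claim. (The paper itself does not reprove this; it cites Proposition 1.1.17 of \cite{my_thesis}.)

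To close the gap one genuinely new argument is needed. A standard pointwise one: from $\bar{\d}\theta=0$ and $\bar{\d}^2_{\End(\Ee)}$ acting on $\theta$, together with the flatness identity $F_\nabla^{0,2}=-\theta^\ast\wedge\theta^\ast$, one gets the commutation relation $[\theta^\ast\wedge\theta^\ast,\theta]=0$; equivalently, for any $(1,0)$-vectors $Z,W,V$ the endomorphism $N=[\theta(Z),\theta(W)]$ commutes with every $\theta(V)^\ast$. A cyclic-trace computation then gives $\Tr(NN^\ast)=0$, hence $N=0$, i.e.\ $\theta\wedge\theta=0$; and $\bar{\d}^2=-\theta^\ast\wedge\theta^\ast=0$ follows by adjunction. (The Siu--Sampson Bochner argument gives the same conclusion, but only over compact K\"ahler manifolds, whereas the proposition is local.) Your "main obstacle" paragraph locates the difficulty in the identification of the pullback Levi--Civita connection in Step 2 -- that part is indeed standard bookkeeping -- but misses that the quadratic terms are not "absorbed by flatness": their vanishing is precisely the statement still to be proved.
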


\begin{proof}
We refer to proposition 1.1.17 in \cite{my_thesis}.
\end{proof}

\begin{rem}
 The terminology is a little ambiguous but well established and we will not use the more precise term of \emph{pluriharmonic bundle}. Anyway, in the case where $X$ is a compact K\"ahler manifold, harmonicity and pluriharmonicity are equivalent by theorem \ref{Samp} since the symmetric space $GL(r,\C)/U(r)$ has non-positive Hermitian curvature.
\end{rem}

\subsubsection{Circle of connections}

We consider a harmonic bundle $(\Ee,D,h)$ and write $D = \nabla + \alpha$, $\nabla = \d + \bar{\d}$ and $\alpha = \theta + \theta^\ast$ as before. For any $\lambda$ in $S^1$, we write $\alpha_\lambda$ for the Hermitian $1$-form $\lambda^{-1} \theta + \lambda \theta^\ast$ and $D_\lambda$ for the connection
$$ D_\lambda = \nabla + \alpha_\lambda.$$
The following lemma is well known.

\begin{lem}\label{Dlamb_harm}
The bundle $(\Ee,D_\lambda,h)$ is harmonic.
\end{lem}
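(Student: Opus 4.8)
The plan is to verify the harmonicity condition of Definition~\ref{def_harm} directly for the connection $D_\lambda = \nabla + \alpha_\lambda$. Recall that a metric on a flat bundle is harmonic precisely when the associated operator $\bar\partial + \theta$ squares to zero, which (as noted after the definition) is equivalent to the three conditions $\bar\partial^2 = 0$, $\bar\partial\theta = 0$ and $\theta\wedge\theta = 0$. So first I would isolate, for the connection $D_\lambda$, the metric part and the Hermitian part: since $\nabla$ is already a metric connection for $h$ and $\alpha_\lambda = \lambda^{-1}\theta + \lambda\theta^\ast$ satisfies $\alpha_\lambda^\ast = \bar\lambda\theta^{\ast\ast} + \lambda^{-1}\theta^\ast = \lambda^{-1}\theta^\ast + \lambda\theta$ — wait, one must be careful: for $\lambda \in S^1$ we have $\bar\lambda = \lambda^{-1}$, so $(\lambda^{-1}\theta)^\ast = \overline{\lambda^{-1}}\,\theta^\ast = \lambda\theta^\ast$ and $(\lambda\theta^\ast)^\ast = \bar\lambda\theta = \lambda^{-1}\theta$; hence $\alpha_\lambda^\ast = \lambda\theta^\ast + \lambda^{-1}\theta = \alpha_\lambda$, so $\alpha_\lambda$ is indeed $h$-Hermitian. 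Therefore $D_\lambda = \nabla + \alpha_\lambda$ is exactly the canonical decomposition of the connection $D_\lambda$ into its metric and Hermitian parts, the metric connection being the same $\nabla = \partial + \bar\partial$ as before.

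Next I would read off the pieces relevant to harmonicity. The $(0,1)$-part of $\nabla$ is unchanged, so the holomorphic structure $\bar\partial$ is the same, and $\bar\partial^2 = 0$ holds as before. The type decomposition of $\alpha_\lambda$ gives Higgs field $\theta_\lambda = \lambda^{-1}\theta$ and its adjoint $\theta_\lambda^\ast = \lambda\theta^\ast$. Then one checks: $\theta_\lambda \wedge \theta_\lambda = \lambda^{-2}(\theta\wedge\theta) = 0$ using $\theta\wedge\theta = 0$; and $\bar\partial\theta_\lambda = \lambda^{-1}\bar\partial\theta = 0$ using $\bar\partial\theta = 0$ — here the scalar $\lambda$ is constant on $X$, so it passes through $\bar\partial$. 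Thus $(\bar\partial + \theta_\lambda)^2 = 0$, which by Definition~\ref{def_harm} says precisely that $h$ is harmonic for $(\Ee, D_\lambda)$.

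The one genuine point that needs care — and the only place where the argument is more than bookkeeping — is the verification that $D_\lambda$ is actually a flat connection, i.e.\ that $(\Ee, D_\lambda)$ is a flat bundle at all, since Definition~\ref{def_harm} of a harmonic bundle presupposes a flat connection. Writing $D_\lambda = \nabla + \lambda^{-1}\theta + \lambda\theta^\ast$ and expanding $D_\lambda^2 = (\nabla + \lambda^{-1}\theta + \lambda\theta^\ast)^2$, one collects terms by their power of $\lambda$: the $\lambda^{-2}$ and $\lambda^{2}$ terms are $\theta\wedge\theta$ and $\theta^\ast\wedge\theta^\ast$; the $\lambda^{-1}$ and $\lambda^{1}$ terms are $\nabla\theta$ (i.e.\ $\partial\theta + \bar\partial\theta$) and $\nabla\theta^\ast$; the $\lambda^0$ term is $\nabla^2 + \theta\wedge\theta^\ast + \theta^\ast\wedge\theta$. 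The hypothesis that the \emph{original} $D = \nabla + \theta + \theta^\ast$ is flat gives all of these vanishing after grouping appropriately — more precisely, flatness of $D$ together with the harmonic equation $(\bar\partial+\theta)^2=0$ and its conjugate $(\partial + \theta^\ast)^2 = 0$ yield $\bar\partial\theta = 0$, $\partial\theta^\ast = 0$, $\theta\wedge\theta = 0$, $\theta^\ast\wedge\theta^\ast = 0$, and then $D^2 = 0$ forces $\nabla^2 + \theta\wedge\theta^\ast + \theta^\ast\wedge\theta = 0$ and $\partial\theta + \bar\partial\theta^\ast = 0$; substituting these back shows every $\lambda$-homogeneous component of $D_\lambda^2$ vanishes, so $D_\lambda^2 = 0$. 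This standard computation (it is the content of Simpson's observation that a harmonic bundle is the same as a $\lambda$-connection family) is what makes the lemma true; everything else is immediate from the definitions, and I would simply cite it as well-known, as the paper already signals by calling the lemma well known.
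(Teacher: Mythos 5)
Your proposal is correct and takes essentially the same route as the paper: both proofs reduce the lemma to (i) flatness of $D_\lambda$, obtained by expanding its curvature and using the identities $\theta\wedge\theta=\theta^\ast\wedge\theta^\ast=0$, $\nabla\theta=\nabla\theta^\ast=0$ and $F_\nabla+\theta\wedge\theta^\ast+\theta^\ast\wedge\theta=0$ extracted from $D^2=0$ together with harmonicity, and (ii) the trivial vanishing of $(\bar\partial+\lambda^{-1}\theta)^2$. The only organizational difference is that you extract these identities by invoking the conjugate equation $(\partial+\theta^\ast)^2=0$ (which does hold, by taking $h$-adjoints, since $\nabla$ is metric and $\theta^\ast$ is the $h$-adjoint of $\theta$), whereas the paper gets the same information from the Hermitian/anti-Hermitian splitting of $D^2=0$ into $\nabla\alpha=0$ and $F_\nabla+\alpha\wedge\alpha=0$ — the same mechanism in different clothing, so it would be worth one line in your write-up justifying that conjugate equation rather than merely asserting it.
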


\begin{proof}
We denote by $F_\nabla$ the curvature of the connection $\nabla$. The curvature of $D$ is $D^2 = F_\nabla + \nabla\alpha + \alpha \wedge \alpha$. Since $F_\nabla + \alpha \wedge \alpha$ is anti-Hermitian and $\nabla\alpha$ is Hermitian, the vanishing of $D^2$ is equivalent to $F_\nabla + \alpha \wedge \alpha = 0$ and $\nabla \alpha = 0$.

The curvature of $D_\lambda$ is $D_\lambda^2 = (F_\nabla + \alpha_\lambda \wedge \alpha_\lambda) + \nabla\alpha_\lambda$. The decomposition by types of the equality $\nabla \alpha = 0$ and the condition $\bar{\d}\theta = 0$ yield $\nabla \theta = \nabla \theta^\ast = 0$. Hence, $\nabla \alpha_\lambda = 0$. Moreover, $\alpha_\lambda \wedge \alpha_\lambda = \lambda^2 \theta^\ast \wedge \theta^\ast + (\theta \wedge \theta^\ast + \theta^\ast \wedge \theta) + \lambda^{-2} \theta \wedge \theta$. Since $\theta \wedge \theta = 0$, only the central term doesn't vanish. Hence, $F_\nabla + \alpha \wedge \alpha = F_\nabla + \alpha_\lambda \wedge \alpha_\lambda$ and the vanishing of $D^2$ is equivalent to the vanishing of $D_\lambda^2$.

It remains to show that the operator $\bar{\d} + \lambda^{-1} \theta$ has vanishing square. This is equivalent to $\lambda^{-1}\theta$ being a Higgs field for the $\bar{\d}$ holomorphic structure, which is obvious.
\end{proof}

The family $(D_\lambda)_{\lambda \in S^1}$ is called the \emph{circle of flat connections} of the harmonic bundle $(E,D,h)$. We give another lemma which relates the harmonicity of $E$ with the existence of such a circle of flat connections.

\begin{lem}\label{crit_harm}
 Let $X$ be a complex manifold and let $(\Ee,h)$ be a Hermitian bundle over $X$. We assume that there exists a metric connection $\nabla$ and a $(1,0)$-form $\theta$ with values in $\End(\Ee)$ such that, for every $\lambda$ in $S^1$, the connection
 $$
 D_\lambda := \nabla + \lambda^{-1}\theta +  \lambda \theta^\ast
 $$
 is flat.

 Then $(\Ee,D_\lambda,h)$ is a harmonic bundle, for every $\lambda$ in $S^1$.
 \end{lem}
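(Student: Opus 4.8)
The plan is to unwind definition \ref{def_harm} for each connection $D_\lambda$ and then read off the conditions of harmonicity from the flatness hypothesis by treating $D_\lambda^2$ as a Laurent polynomial in $\lambda$.

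First I would observe that, for $\lambda \in S^1$, the decomposition $D_\lambda = \nabla + \alpha_\lambda$ with $\alpha_\lambda := \lambda^{-1}\theta + \lambda\theta^\ast$ is exactly the canonical one of definition \ref{def_harm}: $\nabla$ is metric for $h$ by hypothesis, and $\alpha_\lambda$ is $h$-Hermitian because $\overline{\lambda^{-1}} = \lambda$, so the $h$-adjoint of $\lambda^{-1}\theta$ is $\lambda\theta^\ast$ and vice versa. Writing $\nabla = \d + \bar{\d}$ and $\alpha_\lambda = \theta_\lambda + \theta_\lambda^\ast$ for the decompositions into types, the fact that $\theta$ has type $(1,0)$ and $\theta^\ast$ has type $(0,1)$ gives $\theta_\lambda = \lambda^{-1}\theta$, while the $(0,1)$-part of $\nabla$ is $\bar{\d}$, independent of $\lambda$. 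Hence, by definition \ref{def_harm}, to prove that $(\Ee,D_\lambda,h)$ is harmonic it suffices to prove $(\bar{\d}+\lambda^{-1}\theta)^2 = 0$, equivalently (as noted right after definition \ref{def_harm}) that $\bar{\d}^2$, $\bar{\d}\theta$ and $\theta\wedge\theta$ all vanish.

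Next I would expand $D_\lambda^2 = F_\nabla + \nabla\alpha_\lambda + \alpha_\lambda \wedge \alpha_\lambda$ and sort the terms by powers of $\lambda$:
\[
D_\lambda^2 = \lambda^{-2}\,\theta\wedge\theta \;+\; \lambda^{-1}\,\nabla\theta \;+\; \big(F_\nabla + \theta\wedge\theta^\ast + \theta^\ast\wedge\theta\big) \;+\; \lambda\,\nabla\theta^\ast \;+\; \lambda^{2}\,\theta^\ast\wedge\theta^\ast .
\]
Evaluated at a point on given tangent vectors, the right-hand side is a Laurent polynomial in $\lambda$ which vanishes for every $\lambda \in S^1$ by hypothesis, hence vanishes identically; therefore each of its five coefficients is $0$. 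In particular $\theta\wedge\theta = 0$ and $\nabla\theta = 0$, and the $(1,1)$-component of the latter is $\bar{\d}\theta$, so $\bar{\d}\theta = 0$. Finally $F_\nabla = -(\theta\wedge\theta^\ast + \theta^\ast\wedge\theta)$; since $\theta$ has type $(1,0)$ and $\theta^\ast$ has type $(0,1)$, the right-hand side is of type $(1,1)$, so $F_\nabla$ has no $(0,2)$-part, i.e. $\bar{\d}^2 = 0$. This yields the three required vanishings, and hence the lemma. (Equivalently, the case $\lambda = 1$ already shows that $(\Ee,D_1,h)$ is a harmonic bundle whose associated circle of flat connections is precisely $(D_\lambda)_{\lambda\in S^1}$, so one may instead invoke lemma \ref{Dlamb_harm}.)

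There is no serious obstacle here: the statement is essentially the converse of lemma \ref{Dlamb_harm}, and the only points needing a little care are the matching of $D_\lambda = \nabla+\alpha_\lambda$ with the canonical decomposition of definition \ref{def_harm} (where $|\lambda|=1$ is used), the elementary fact that a Laurent polynomial vanishing on all of $S^1$ is zero, and the bidegree bookkeeping used to extract $\bar{\d}\theta = 0$ and $\bar{\d}^2 = 0$.
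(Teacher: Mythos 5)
Your proposal is correct and follows essentially the same route as the paper: expand $D_\lambda^2$ as a Laurent polynomial in $\lambda$, conclude that each coefficient vanishes, and extract $\theta\wedge\theta=0$, $\bar{\d}\theta=0$ (from $\nabla\theta=0$) and $\bar{\d}^2=0$ (as the $(0,2)$-part of the constant term). The only differences are cosmetic: you spell out that $\nabla+\alpha_\lambda$ is the canonical decomposition of definition \ref{def_harm}, whereas the paper instead reduces at the outset to $\lambda=1$ via lemma \ref{Dlamb_harm} --- a reduction you also note as an alternative.
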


\begin{proof}
 By the previous lemma, it is sufficient to show that $(\Ee,D_1,h)$ is a harmonic bundle. The connection $D_1$ has vanishing curvature by assumption. We write $\nabla = \d + \bar{\d}$. The equality $D_\lambda^2 = 0$ gives
 $$
 0 = \lambda^{-2} \theta \wedge \theta + \lambda^{-1} \nabla \theta + (F_\nabla + \theta \wedge \theta^\ast + \theta^\ast \wedge \theta) + \lambda \nabla \theta^\ast + \lambda^2 \theta^\ast \wedge \theta^\ast.
 $$
 Since this is true for every $\lambda$, the terms with different exponents of $\lambda$ vanish separately. Hence, $\theta \wedge \theta = 0$ and $\nabla \theta = 0$, which implies that the $(1,1)$-part $\bar{\d}\theta = 0$ vanishes. Moreover, the constant term in $\lambda$ has $(0,2)$-type $\bar{\d}^2$, hence $\bar{\d}^2$ vanishes too.
\end{proof}

\subsubsection{Variations of loop Hodge structures}

The following definition mimics the definition \ref{VHS} of variations of Hodge structures.

\begin{defn}\label{VNCHS}
A \emph{variation of loop Hodge structures} over a complex manifold $X$ is the datum of a family of loop Hodge structures $(\Kk,\Bb,\Tt,\Ww)$ and a flat connection $D = \d + \bar{\d}$ on $\Kk$ such that:
\begin{enumerate}
   \item the Krein metric $\Bb$ and the outgoing operator $\Tt$ are $D$-flat;
   \item $\bar{\d}: \Ww \rightarrow \Ww \otimes \Aa^{0,1}$ \label{diff1};
   \item $D: \Ww \rightarrow \Tt^{-1}\Ww \otimes \Aa^1$ \label{diff2}.
\end{enumerate}
\end{defn}

\begin{rem}
 We recall that a family of loop Hodge structures $(\Kk,\Bb,\Tt,\Ww)$ carries a topological decreasing filtration given by $F^p \Kk := \Tt^p \Ww$. Since in a variation of loop Hodge structures $\Tt$ is $D$-flat, the two differential conditions given above are equivalent to the \emph{a priori} stronger ones:
\begin{enumerate}
   \item $\bar{\d}: F^p\Kk \rightarrow F^p\Kk \otimes \Aa^{0,1}$;
   \item $D: F^p\Kk \rightarrow F^{p-1}\Kk \otimes \Aa^1$.
\end{enumerate}
\end{rem}

\subsubsection{The equivalence of categories}\label{equiv_cat}

Let $X$ be a complex manifold. We write $\Hha_X$ for the category of harmonic bundles over $X$, the morphisms being the flat and isometric vector bundle maps. We write $\Vv_X$ for the category of variations of loop Hodge structures over $X$, the morphisms being the flat isomorphisms of family of Hodge structures. We define a functor $F: \Hha_X \rightarrow \Vv_X$ in the following way:

Let $(\Ee,D,h)$ be a harmonic bundle. Let $\nabla$ be an arbitrary connection on $\Ee$ and let $\tilde{\nabla}$ be its naive extension to $L^2(S^1,\Ee)$. The circle of flat connections $(D_\lambda)_{\lambda \in S^1}$ can be thought as an element in $\tilde{\nabla} + \Gamma(X,T^\ast X \otimes L^\infty(S^1,\End(\Ee)))$, hence as a connection $\tilde{D}$ on the Hilbert bundle $L^2(S^1,\End(\Ee))$ (see corollary \ref{conn_T}). The functor $F$ is defined by $F: (\Ee,D,h) \mapsto (L^2(S^1,\Ee),\tilde{D})$, where $L^2(S^1,\Ee)$ is endowed with its canonical structure of family of loop Hodge structures.

\begin{thm}\label{thm_fund}

The functor $F$ is well-defined on objects and establish an equivalence of categories from $\Hha_X$ to $\Vv_X$.
\end{thm}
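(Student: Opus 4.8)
The plan is to verify separately that $F$ is well defined on objects, that it is fully faithful, and that it is essentially surjective, the last point being the substance of the theorem. First, given a harmonic bundle $(\Ee,D,h)$ with circle of flat connections $D_\lambda=\nabla+\lambda^{-1}\theta+\lambda\theta^\ast$, I would check that $L^2(S^1,\Ee)$, equipped with its canonical structure of family of loop Hodge structures and with the connection $\tilde D$, satisfies the three axioms of Definition \ref{VNCHS}. Flatness of $\tilde D$ is Lemma \ref{Dlamb_harm}, since the naive extension of a family of connections on $\Ee$ has fibrewise curvature $\lambda\mapsto F_{D_\lambda}$. The operator $\Tt$, multiplication by $\lambda$, is $\tilde D$-flat because $\tilde D$ acts fibrewise over $S^1$ while $\lambda$ is constant along $X$. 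That $\Bb$ is $\tilde D$-flat follows from writing $D_\lambda=\nabla+\alpha_\lambda$ with $\nabla$ metric and $\alpha_\lambda$ Hermitian, together with $\alpha_{-\lambda}=-\alpha_\lambda$: differentiating $\lambda\mapsto h(f(\lambda),g(-\lambda))$, the contribution of $\nabla$ produces $d\Bb(f,g)$ while the two contributions of $\alpha$ cancel by $h$-self-adjointness of $\alpha_\lambda$. Finally, at the point $\lambda$ the $(0,1)$-part of $\tilde D$ is the $(0,1)$-part of $\nabla$ plus $\lambda\theta^\ast$, and $\tilde D$ shifts Fourier degrees only by $-1$, $0$ or $1$; reading these facts on Fourier expansions gives the two differential conditions of Definition \ref{VNCHS}.

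For full faithfulness, note that a morphism of harmonic bundles is a flat bundle isometry $\phi$; since the decomposition $D=\nabla+\theta+\theta^\ast$ is canonically attached to $(D,h)$, such a $\phi$ intertwines the entire circles of connections, so $L^2(S^1,\phi)$ is flat, and it is manifestly a Krein isometry that intertwines the operators $\Tt$ and sends $\Ww$ onto $\Ww'$; hence $F$ is a functor. It is faithful because $\phi$ is recovered from $L^2(S^1,\phi)$ by restriction to Fourier degree $0$. For fullness, let $\Psi$ be an isomorphism in $\Vv_X$ from $F(\Ee,D,h)$ to $F(\Ee',D',h')$; by the equivalence between families of loop Hodge structures and finite-dimensional Hermitian bundles (Proposition \ref{bund_grass} and its corollary), the underlying isomorphism of families of loop Hodge structures has the form $L^2(S^1,\phi)$ for a unique bundle isometry $\phi\colon\Ee\to\Ee'$, namely the restriction of $\Psi$ to the degree-$0$ part, which is the orthocomplement of $\Tt\Ww$ in $\Ww$. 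Evaluating the flatness of $\Psi$ at $\lambda=1$ shows that $\phi$ intertwines $D$ and $D'$, so it is a morphism of harmonic bundles with $F(\phi)=\Psi$.

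The heart of the proof is essential surjectivity. Let $(\Kk,\Bb,\Tt,\Ww,D)$ be a variation of loop Hodge structures. By Proposition \ref{bund_grass} we may assume $\Kk=L^2(S^1,\Ee)$ with its canonical structure, where $\Ee$ is the orthocomplement of $\Tt\Ww$ in $\Ww$ endowed with $h:=\Bb_{|\Ee}$. Fix an $h$-metric connection $\nabla_0$ on $\Ee$; its naive extension $\tilde\nabla_0$ is a $\Bb$-metric connection on $\Kk$ commuting with $\Tt$, and we write $D=\tilde\nabla_0+A$ with $A$ a $1$-form valued in the bounded operators of $\Kk$. Since $\Tt$ is flat for both $D$ and $\tilde\nabla_0$, the operator $A$ commutes with $\Tt$, hence with multiplication by every Laurent polynomial, hence, by density, with multiplication by every $L^\infty$ function; as the commutant of the $L^\infty(S^1)$-multiplication operators on $L^2(S^1,\Ee)$ consists of the $L^\infty(S^1,\End(\Ee))$-multiplication operators, $A$ is multiplication by a $1$-form $A_\bullet$ with values in $L^\infty(S^1,\End(\Ee))$. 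Now $D$-flatness of $\Bb$ makes $A$ anti-self-adjoint for $\Bb$, so, writing $\Theta$ for its $(1,0)$-part, $A=\Theta-\Theta^\dagger$ with $\Theta^\dagger$ the $\Bb$-adjoint of $\Theta$, a $(0,1)$-form-valued multiplication operator. Reading the two differential conditions of Definition \ref{VNCHS} on Fourier expansions shows that $\Theta$, as a multiplication operator, involves only the Fourier modes $\lambda^{-1}$ and $\lambda^{0}$; write $\Theta=\lambda^{-1}\theta+\psi$ with $\theta$ a $(1,0)$-form and $\psi$ an endomorphism-valued $(1,0)$-form. A direct computation of the $\Bb$-adjoint then gives $\Theta^\dagger=\psi^{\ast}-\lambda\theta^{\ast}$ (adjoints taken with respect to $h$), so that
$$ D=\tilde\nabla_0+\lambda^{-1}\theta+(\psi-\psi^{\ast})+\lambda\theta^{\ast}. $$
Setting $\nabla:=\nabla_0+(\psi-\psi^{\ast})$, which is again an $h$-metric connection since $\psi-\psi^{\ast}$ is skew-Hermitian, we see that $D$ is the naive extension of the family $D_\lambda:=\nabla+\lambda^{-1}\theta+\lambda\theta^{\ast}$. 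Flatness of $D$ gives flatness of every $D_\lambda$, so Lemma \ref{crit_harm} applies and $(\Ee,D_1,h)$ is a harmonic bundle; since $D_1=\nabla+\theta+\theta^\ast$ is its canonical decomposition into a metric connection plus a Hermitian form, the circle of flat connections of $(\Ee,D_1,h)$ is exactly $(D_\lambda)_{\lambda\in S^1}$, whence $F(\Ee,D_1,h)$ is isomorphic to the variation we started from.

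The main obstacle is precisely this last step: passing from the abstract flat connection $D$ to a circle of connections of the shape demanded by Lemma \ref{crit_harm}. This requires diagonalising $D$ over $S^1$ via flatness of $\Tt$, then exploiting the two differential conditions to confine the Fourier support of the connection form to $\{-1,0,1\}$, and finally using $\Bb$-flatness to identify the extreme modes as mutually $h$-adjoint and the central mode as a skew-Hermitian correction to a metric connection. The functional-analytic inputs — the commutant computation, the smoothness of a multiplication operator with finite Fourier support, and the compatibility of the naive extension with curvature — also call for some care in the infinite-dimensional setting.
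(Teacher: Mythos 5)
Your proof is correct and follows essentially the same route as the paper: verify the axioms of Definition \ref{VNCHS} for $F(\Ee,D,h)$ by the same Krein-flatness computation and Fourier-degree count, then invert by identifying $\Kk\cong L^2(S^1,\Ee)$ (Proposition \ref{bund_grass}), writing $D=\tilde\nabla_0+\omega$ with $\omega$ an $L^\infty$-multiplication-valued $1$-form (Corollary \ref{conn_T}), confining $\omega$ to Fourier modes $\{-1,0,1\}$ with the right types and adjointness via $\Bb$-flatness and the two differential conditions, and concluding with Lemma \ref{crit_harm}; your packaging of the adjointness as $A=\Theta-\Theta^\dagger$ is only a cosmetic variant of the paper's componentwise identity. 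The one genuine addition is that you spell out full faithfulness and the quasi-inverse check on morphisms, which the paper leaves to the reader.
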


\begin{proof}

We first prove that $F(\Ee,D,h)$ is indeed a variation of loop Hodge structures. Then, we construct a quasi-inverse $G: \Vv_X \rightarrow \Hha_X$. That $F$ and $G$ are quasi-inverse is straightforward and left to the reader, as the definition of $F$ and $G$ on morphisms.

Let $(\Ee,D,h)$ be a harmonic bundle and write 
$$(L^2(S^1,\Ee),\Bb,\Tt,L^2_+(S^1,\Ee),\tilde{D}) := F(\Ee,D,h).$$ 
We recall that $\tilde{D}$ is defined by $(\tilde{D}f)(\lambda,x) = D_\lambda f(\lambda,x)$, where $f$ is a local section of $L^2(S^1,\Ee)$ and $x$ is in $\Ee$. The connection $\tilde{D}$ is flat since all $D_\lambda$ are flat. By corollary \ref{conn_T}, $\Tt$ is $\tilde{D}$-flat. The Hermitian form $\Bb$ is $\tilde{D}$-flat too. Indeed, let $f$ and $g$ be local sections of $L^2(S^1,\End(\Ee))$. Then,
 \begin{align*}
 d\Bb(f,g) &= \int_{S^1} dh(f(\lambda),g(-\lambda)) d\nu(\lambda)\\
 &= \int_{S^1} \big( h(D_\lambda f(\lambda),g(-\lambda)) + h(f(\lambda),D_{-\lambda} g(-\lambda) \big) d\nu(\lambda)\\
 &= \int_{S^1} \big(h((\tilde{D}f)(\lambda),g(-\lambda)) + h(f(\lambda),(\tilde{D}g)(-\lambda))\big) d\nu(\lambda) \\
 &= \Bb(\tilde{D}f,g) + \Bb(f,\tilde{D}g).
 \end{align*}
For the second equality, we use that $D_\lambda$ and $D_{-\lambda}$ are adjoint for the metric $h$. Indeed, $D_\lambda = \nabla + \alpha_\lambda$ and $D_{-\lambda} = \nabla - \alpha_\lambda$, where $\nabla$ is a metric connection and $\alpha_\lambda$ is a Hermitian $1$-form. We emphasize that the flatness of the Hermitian form is the main motivation for considering indefinite Hermitian forms.

We now check the differential constraints \ref{diff1}. and \ref{diff2}. of definition \ref{VNCHS} on $L^2_+(S^1,\Ee)$. By definition, the Fourier series of elements in $L^2_+(S^1,\Ee)$ have only nonnegative powers of $\lambda$. On the other hand, the flat connection $\tilde{D}$ is equal to $\nabla + \lambda^{-1} \theta + \lambda \theta^\ast$, with the usual notations. Hence, its $(0,1)$-part is concentrated in nonnegative Fourier coefficients and only powers of $\lambda$ greater or equal to $-1$ appear in the Fourier series of $D$. This shows that the two differential constraints on $L^2_+(S^1,\Ee)$ are indeed satisfied.\\

Conversely, let $(\Kk,\Bb,\Tt,\Ww,D)$ be a variation of loop Hodge structures. By proposition \ref{bund_grass}, there is an isomorphism of families of loop Hodge structures $\Kk \cong L^2(S^1,\Ee)$, where $\Ee$ is the orthogonal of $\Tt\Ww$ in $\Ww$. Let $h$ denote the restriction of $\Bb$ to $\Ee \otimes \bar{\Ee}$; this is a Hermitian inner product.

Choose an arbitrary metric connection $\nabla$ on $(\Ee,h)$ and denote by $\tilde{\nabla}$ the induced naive connection on $L^2(S^1,\Ee)$. Since $\Tt$ is $D$-flat, by corollary \ref{conn_T}, $D$ can be written as
$$ D = \tilde{\nabla} + \omega,$$
where $\omega$ lives in $\Gamma(X,T^\ast X \otimes L^\infty(S^1,\End(\Ee))).$ Since $\nabla$ is metric for $h$, one checks easily that $\tilde{\nabla}$ is metric for $\Bb$. Moreover, $\Bb$ is $D$-flat; hence we get the equality:
$$ \Bb(\omega.f,g) + \Bb(f,\omega.g) = 0,$$
where $f$ and $g$ are local sections of $L^2(S^1,\Ee)$. Let $u$ and $v$ be local sections of $\Ee$, let $i$ be in $\Z$ and consider the local sections $f = \lambda^i u$, $g = v$. Then the previous equality gives:
\begin{align} \label{conn_met}
 h(\omega_{-i}.u,v) + (-1)^i h(u,\omega_i.v) = 0,
\end{align}
where $\omega = \sum_{n \in \Z} \omega_n \lambda^n$, with $\omega_j \in \Gamma(X,T^\ast X \otimes \End(\Ee))$. Considering the types of the $\omega_j$, this equality proves that the $(1,0)$-part of $\omega_j$ vanishes if, and only if the $(0,1)$-part of $\omega_{-j}$ vanishes.

The two differential constraints on $\Ww$ give the following information: $\omega_j$ vanishes for $j \leq -2$ (transversality condition) and $\omega_{-1}^{0,1}$ vanishes too. Hence, by equation (\ref{conn_met}), the only possible non-vanishing terms in $\omega = \sum_{n \in \Z} \omega_n \lambda^n$ are $\omega_0$, $\omega_{-1}^{1,0}$ and $\omega_1^{0,1}$. Thus, $D$ can be written as
$$ D = (\tilde{\nabla} + \omega_0) + \lambda^{-1}\omega_{-1} + \lambda \omega_1,$$
and $\omega_{-1}$ (resp. $\omega_1$) is of type $(1,0)$ (resp. of type $(0,1)$). The equations in (\ref{conn_met}) for $i=0$ and $i=1$ prove that $\omega_0$ is an anti-Hermitian form and that $\omega_1$ and $\omega_{-1}$ are adjoint. The connection $D$ comes from the circle of connections $(D_\lambda)_{\lambda \in S^1}$ defined, for every $\lambda$ in $S^1$, by $D_\lambda := (\tilde{\nabla} + \omega_0) + \lambda^{-1}\omega_{-1} + \lambda \omega_1$. Hence, all $D_\lambda$ are flat connections. We define the functor $G$ by
$$
G(\Kk,\Bb,\Tt,\Ww,D) = (\Ee,D_1,h)
$$ 
and we conclude by lemma \ref{crit_harm}.
\end{proof}

\subsection{Classical Hodge structures} \label{class}

In this subsection, we explain how to realize a variation of classical Hodge structures as a variation of loop Hodge structures, justifying the chosen terminology.

\subsubsection{Classical Hodge structures interpreted as loop Hodge structures}

We first recall the notion of classical Hodge structures and their variations:

\begin{defn}\label{HS}
A (complex polarized) \emph{Hodge structure} over $X$ is the datum of a finite-dimensional complex vector space $V$, a non-degenerate Hermitian form $h$ on $V$ and a $h$-orthogonal decomposition
$$ V = \oplus_{i \in \Z} V^i$$
such that $h$ is $(-1)^i$ positive definite on $i$.
\end{defn}

Thanks to the Hermitian form $h$, the datum of the decomposition is equivalent to the datum of the filtration $F^\bullet V$ defined by $F^i V := \oplus_{j \geq i} V^j$.

\begin{defn}\label{VHS}
 A \emph{variation of Hodge structures} over $X$ is the datum of 
\begin{itemize}
 \item a flat bundle $(\Ee,D)$;
 \item a flat non-degenerate Hermitian form $h$;
 \item a smooth $h$-orthogonal decomposition $\Ee = \oplus_{p \in \Z} \Ee^p$
\end{itemize}
such that, writing $\Ff^p := \oplus_{q \geq p} \Ee^q$, one has
\begin{enumerate}
 \item each fiber is a Hodge structure \label{cond_ponct};
 \item $D^{0,1}: \Ff^p \rightarrow \Aa^{0,1} \otimes \Ff^p$ \label{cond_hol};
 \item $D: \Ff^p \rightarrow \Aa^1 \otimes \Ff^{p-1}$. \label{cond_trans}
\end{enumerate}
\end{defn}

Here $\Aa^1$ and $\Aa^{0,1}$ denote the sheaves of smooth $1$-forms and smooth $(0,1)$-forms and we identify the vector bundles $\Ff^p$ with their sheaves of smooth sections. Such variations were defined by Griffiths, as an abstraction of what happens in the Hodge decomposition theorem, when one deforms the complex structure of $X$ but preserves the underlying topological space. The condition \ref{cond_hol}. says that the filtration $\Ff^p$ varies holomorphically with respect to $D$, whereas the \emph{transversality} condition \ref{cond_trans}. says that this filtration is not far from being flat.

Let $(V,h,F^\bullet V)$ be a (complex polarized) Hodge structure. We write $V = \bigoplus_n V^n$ for the Hodge decomposition. Let $K$ be the Hilbert space $L^2(S^1,V)$ endowed with the Hermitian form
$$
B(f,g) = \int_{S^1} h(f(\lambda),g(-\lambda)) d\nu(\lambda).
$$
Let $T$ denote the right-shift operator. Let $W$ be the subspace of $K$ given by
$$
W = \widehat{\bigoplus}_{n \in \Z} T^{-n} F^n V.
$$

\begin{prop}\label{CHS_NCHS}
 The quadruple $(K,B,T,W)$ is a loop Hodge structure.
\end{prop}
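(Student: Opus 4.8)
The plan is to verify the five bullet points of Definition \ref{sub_out} for the subspace $W = \widehat{\bigoplus}_{n \in \Z} T^{-n} F^n V$, after first checking that $(K, B, T)$ is an outgoing Krein space. The latter is immediate: $K = L^2(S^1, V)$ with the stated Krein metric is the canonical outgoing Krein space of the discussion preceding Proposition \ref{kr_out}, with $T$ the right-shift, and its virtual dimension is $\dim V$, which is finite. So the real content is showing $W$ is an outgoing subspace.

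First I would record the structure of $W$. Since $F^\bullet V$ is a decreasing filtration with $F^n V = V$ for $n \ll 0$ and $F^n V = 0$ for $n \gg 0$, the summands $T^{-n} F^n V$ consist of functions supported in Fourier degree exactly $-n$ (a copy of $F^n V$ placed in degree $-n$). Writing $V = \bigoplus_p V^p$ for the Hodge decomposition, an element $v \in V^p$ contributes to $T^{-n} F^n V$ precisely when $n \le p$; collecting terms, $W = \widehat{\bigoplus}_p \bigl( \widehat{\bigoplus}_{n \le p} T^{-n} V^p \bigr)$, i.e. $W$ is the Hilbert sum over $p$ of the subspace of $L^2(S^1, V^p)$ of functions with Fourier series concentrated in degrees $\ge -p$. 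This concrete description makes all five conditions transparent. \emph{Krein subspace:} on $L^2(S^1, V^p)$ the form $B(f,g) = \int h(f,g(-\lambda))$ with $h$ definite of sign $(-1)^p$ is, up to that sign, the standard Krein form; the truncation to degrees $\ge -p$ is spanned by the coordinate functions $\lambda^k e_j$ ($k \ge -p$), on which $B$ is diagonal with entries $(-1)^k \cdot (\pm 1)$, so $W_\pm$ sit inside the even/odd coordinate subspaces and Proposition \ref{ortho} applies — $W$ is a Krein subspace. \emph{$TW \subset W$:} multiplying by $\lambda$ shifts ``degrees $\ge -p$'' to ``degrees $\ge -p+1 \subset \ge -p$'', clear. \emph{$\bigcap_n T^n W = 0$ and $\bigcup_n T^n W$ dense:} $T^n W$ corresponds to shifting all lower bounds up by $n$; the intersection over $n \ge 0$ forces, in each $V^p$-component, Fourier support in degrees $\ge n - p$ for all $n$, hence empty support, so the intersection is $0$; the union over $n \in \Z$ contains every coordinate function $\lambda^k e_j$, hence is dense. \emph{Orthogonal of $TW$ in $W$ positive definite:} in each $V^p$-component, $W$ is degrees $\ge -p$ and $TW$ is degrees $\ge -p+1$, so the orthocomplement is the one-dimensional line of degree exactly $-p$, namely $\lambda^{-p} V^p$, on which $B(\lambda^{-p}v, \lambda^{-p}v) = (-1)^{-p} h(v,v) = (-1)^p \cdot (-1)^p |v|^2_{\pm} > 0$ by the polarization sign of the Hodge structure; summing over $p$, the orthocomplement $H$ of $TW$ in $W$ is $\bigoplus_p \lambda^{-p} V^p$ and $B$ is positive definite on it.

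The one step needing a little care — and the place I'd expect the only genuine subtlety — is the interaction of the Hilbert-completed direct sum $\widehat{\bigoplus}$ with closedness and with the orthogonality computations: one must check that $W$ as defined is genuinely a \emph{closed} subspace of $L^2(S^1, V)$ and that the orthogonal complements taken fibrewise in each $V^p$-slot assemble correctly. This is handled by noting that the decomposition $L^2(S^1,V) = \bigoplus_p L^2(S^1, V^p)$ is $B$-orthogonal (since $h$ is $h$-orthogonal for the Hodge decomposition) and Hilbert-orthogonal for the standard inner product, so $W = \bigoplus_p W_p$ with each $W_p$ a closed (coordinate-subspace) Krein subspace, and both ``$\cap T^n W = 0$'' and the positivity of $TW^\perp \cap W$ reduce to the corresponding statements in each slot, which are the elementary Fourier-support computations above. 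With closedness and the Krein-subspace property in hand, the remaining items are formal, and the proof concludes.
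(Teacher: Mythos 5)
Your proof is correct and follows essentially the same route as the paper: the parity-based fundamental decomposition (equivalently, your appeal to Proposition \ref{ortho} with $W_\pm$ inside the even/odd coordinate subspaces), the routine verification of the shift, intersection and density conditions, and the identification of the orthocomplement of $TW$ in $W$ as $\widehat{\bigoplus}_p T^{-p}V^p$, positive definite by the polarization signs. The only blemish is calling $\lambda^{-p}V^p$ a ``one-dimensional line'' (it has dimension $\dim V^p$), which does not affect the argument.
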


\begin{proof}
 Since the Hermitian form $h$ is non-degenerate, the Hermitian form $B$ defines a structure of Krein space on $H$. A fundamental decomposition is for instance given by
$$K_+ := \big(\widehat{\bigoplus}_{n \text{\,even}} T^n V^{\text{\,even}}\big) \bigoplus \big(\widehat{\bigoplus}_{n \text{\,odd}} T^n V^{\text{\,odd}}\big)$$
and
$$K_- := \big(\widehat{\bigoplus}_{n \text{\,even}} T^n V^{\text{\,odd}}\big) \bigoplus \big(\widehat{\bigoplus}_{n \text{\,odd}} T^n V^{\text{\,even}}\big).$$
The operator $T$ is anti-isometric. Let us show that $W$ is an outgoing Krein subspace. All points in definition \ref{kr_out} are trivial, except that the orthogonal $E$ of $TW$ in $W$ is definite positive. This space is equal to
$$ E = \widehat{\bigoplus}_{n \in \Z} T^{-n} V^n.$$
This concludes since $h$ is $(-1)^n$ positive definite on $V^n$ and since $T$ is an anti-isometry.
\end{proof}

\subsubsection{Characterization of variations of classical Hodge structures}

Let $(\Kk,\Bb,\Tt,\Ww,D)$ be a variation of loop Hodge structures over a complex manifold $X$. If $\mu$ is a complex number of modulus one, one checks easily that $(\Kk,\Bb,\mu\Tt,\Ww,D)$ is also such a variation. This is called the \emph{circle action} on the variations of loop Hodge structures. In the correspondence given by theorem \ref{thm_fund}, this corresponds to an operation on harmonic bundles: one keeps the metric but changes the flat connection from $D=D_1$ to $D_\mu$, with the usual notations.

A variation of loop Hodge structures is a \emph{fixed point of this circle action} if it is isomorphic to all variations in its orbit.\\

Let $(\Vv,h,D,F^\bullet \Vv)$ be a variation of classical Hodge structures over a complex manifold $X$. By a family generalization of proposition \ref{CHS_NCHS}, this gives a family of loop Hodge structures $(\Kk,\Bb,\Tt,\Ww)$. We write $\tilde{D}$ for the naive connection induced by $D$ on $\Kk = L^2(S^1,\Vv)$.

\begin{prop}\label{class_nonab1}
 The quintuple $(\Kk,\Bb,\Tt,\Ww,\tilde{D})$ is a variation of loop Hodge structures over $X$, which is a fixed point of the circle action.
\end{prop}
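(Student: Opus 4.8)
The plan is to verify, one by one, the three conditions of Definition \ref{VNCHS} for the quintuple $(\Kk,\Bb,\Tt,\Ww,\tilde{D})$, and then to produce an explicit isomorphism exhibiting the fixed-point property. The one structural fact I will lean on is that $\tilde{D}$, being the naive extension of the flat connection $D$, acts fibrewise over $S^1$: $(\tilde{D}f)(\lambda) = D(f(\lambda))$. In particular $\tilde{D}$ is flat because $D$ is, and, writing $\tilde{D} = \d + \bar{\d}$, the operator $\bar{\d}$ is the naive extension of $D^{0,1}$.

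For the flatness conditions: $\Tt$ is multiplication by $\lambda$, which commutes with the fibrewise operator $D$, so $\Tt$ is $\tilde{D}$-flat; and for $\Bb$ one pulls $d$ inside the integral defining $\Bb$ and uses the $D$-flatness of $h$ (part of the datum of a variation of classical Hodge structures) to get $d\Bb(f,g) = \Bb(\tilde{D}f,g) + \Bb(f,\tilde{D}g)$ --- this is the computation from the proof of Theorem \ref{thm_fund}, now without even needing the ``$D_\lambda$, $D_{-\lambda}$ adjoint'' step. For the two differential constraints the key is a bookkeeping observation: after rewriting $\Ww = \widehat{\bigoplus}_n \Tt^{-n}\Ff^n$, a local section $f = \sum_m f_m\lambda^m$ lies in $\Ww$ exactly when $f_m \in \Ff^{-m}$ for all $m$, and $\Tt^{-1}\Ww$ consists of those $g$ with $g_m\in\Ff^{-m-1}$. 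Then Griffiths holomorphicity, $D^{0,1}\colon\Ff^p\to\Aa^{0,1}\otimes\Ff^p$, gives $(\bar{\d}f)_m = D^{0,1}f_m\in\Aa^{0,1}\otimes\Ff^{-m}$, so $\bar{\d}\colon\Ww\to\Ww\otimes\Aa^{0,1}$; and Griffiths transversality, $D\colon\Ff^p\to\Aa^1\otimes\Ff^{p-1}$, gives $(\tilde{D}f)_m = Df_m\in\Aa^1\otimes\Ff^{-m-1}$, so $D\colon\Ww\to\Tt^{-1}\Ww\otimes\Aa^1$. That $(\Kk,\Bb,\Tt,\Ww,\tilde{D})$ is a variation of loop Hodge structures follows.

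For the fixed-point property I would show that, for each $\mu\in S^1$, the fibrewise rotation $R_\mu$ defined by $(R_\mu f)(\lambda) = f(\mu^{-1}\lambda)$ is an isomorphism from $(\Kk,\Bb,\mu\Tt,\Ww,\tilde{D})$ to $(\Kk,\Bb,\Tt,\Ww,\tilde{D})$. It is a Krein isometry by the rotation-invariance of $\nu$ (change of variables in the integral defining $\Bb$), it is a smooth automorphism of the Hilbert bundle $\Kk$ because the formula is the same in every fibre over $X$, and it conjugates $\mu\Tt$ into $\Tt$ since $(R_\mu\circ\mu\Tt)f(\lambda) = \mu\cdot\mu^{-1}\lambda\cdot f(\mu^{-1}\lambda) = \lambda\,f(\mu^{-1}\lambda) = (\Tt\circ R_\mu)f(\lambda)$. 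It preserves $\Ww$ because $(R_\mu f)_m = \mu^{-m}f_m$, which still lies in $\Ff^{-m}$. The one point worth isolating is that $R_\mu$ commutes with $\tilde{D}$: this holds because $\tilde{D}$ is given by the \emph{same} operator $D$ in every fibre over $S^1$, so that $(R_\mu\tilde{D}f)(\lambda)$ and $(\tilde{D}R_\mu f)(\lambda)$ are both equal to $D(f(\mu^{-1}\lambda))$. Hence $R_\mu$ is an isomorphism of variations of loop Hodge structures, and the variation attached to a variation of classical Hodge structures is a fixed point of the circle action.

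I do not expect a real obstacle: the argument is essentially an unwinding of definitions. The two points that must be handled with care are the index/filtration bookkeeping that turns holomorphicity and transversality into the differential constraints of Definition \ref{VNCHS}, and the observation that the circle action is trivialized by the circle rotation $R_\mu$ \emph{precisely because}, for a classical Hodge structure, the connection $\tilde{D}$ does not depend on $\lambda$ --- in sharp contrast with a general harmonic bundle, where the fibrewise connection $D_\lambda$ genuinely varies with $\lambda$ and the rotation fails to be flat. This last point is the conceptual heart of the statement.
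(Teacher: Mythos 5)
Your proof is correct and follows essentially the same route as the paper: the same Fourier-coefficient bookkeeping converts Griffiths holomorphicity and transversality into the two differential constraints of definition \ref{VNCHS}, and your rotation $R_\mu$ is just the inverse of the paper's map $\phi(f)(\lambda)=f(\mu\lambda)$, intertwining $\mu\Tt$ with $\Tt$ while commuting with the $\lambda$-independent connection $\tilde{D}$. Your closing remark correctly isolates why the rotation is flat here but not for a general harmonic bundle.
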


\begin{proof}
 The $D$-flatness of $h$ implies the $\tilde{D}$-flatness of $\Bb$. Moreover, the right-shift operator $\Tt$ is $\tilde{D}$-flat as usual. It is thus enough to show that the differential constraints \ref{diff1}. and \ref{diff2} of definition \ref{VNCHS} are satisfied. Let $f$ be a local section of $\Ww$. It can be written in Fourier series as
$$ f = \sum_{n \in \Z} a_n \lambda^n,$$
where $a_n$ is a local section of $F^{-n}\Vv$. Then $\tilde{D}f$ is given by
$$ \tilde{D}f = \sum_n (Da_n) \lambda^n = T^{-1} \sum_n (Da_n) \lambda^{n+1}.$$
By the Griffiths transversality condition, $Da_n$ is a local section of $T^\ast X \otimes F^{-n-1}\Vv$, hence $\tilde{D}f$ is a local section of $T^\ast X \otimes \Tt^{-1}\Ww$. The differential constraint on the $(0,1)$-part of $\tilde{D}$ is proved in the same way.

Let $\mu$ be in $S^1$ and consider the map $\phi$ from $\Kk$ to $\Kk$ given by
$$
\phi(f)(\lambda) = f(\mu \lambda).
$$
Then, $\phi$ is a flat isometry of $\Kk$ satisfying $\mu \Tt \circ \phi = \phi \circ \Tt$. This shows that the variation is a fixed point of the circle action.
\end{proof}

This proposition has the following converse (compare with lemma 4.1 in \cite{Sim_higloc}):

\begin{thm}\label{class_nonab}
 Let $(\Kk,\Bb,\Tt,\Ww,D)$ be a variation of loop Hodge structures, which is a fixed point of the circle action. Then, it is canonically isomorphic to a variation of loop Hodge structures coming from a variation of classical Hodge structures, by the construction of proposition \ref{class_nonab1}.
\end{thm}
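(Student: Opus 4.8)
The strategy is to reconstruct the variation of classical Hodge structures directly from the $S^1$-fixed-point structure. By Theorem \ref{thm_fund}, the variation $(\Kk,\Bb,\Tt,\Ww,D)$ corresponds to a harmonic bundle $(\Ee,D_1,h)$, with $\Ee$ the orthocomplement of $\Tt\Ww$ in $\Ww$ and $D_1 = (\tilde\nabla+\omega_0)+\lambda^{-1}\omega_{-1}+\lambda\omega_1 = \nabla + \theta + \theta^\ast$ in the notation of that proof. The fixed-point hypothesis supplies, for every $\mu\in S^1$, a flat Krein isometry $\phi_\mu\colon \Kk\to\Kk$ with $\phi_\mu(\Ww)=\Ww$ and $\mu\,\Tt\circ\phi_\mu=\phi_\mu\circ\Tt$. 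First I would analyze what such a $\phi_\mu$ must be: since it is fiberwise an isomorphism of loop Hodge structures and intertwines $\Tt$ up to the scalar $\mu$, the canonical form (Proposition \ref{kr_out}, Remark \ref{kr_rem}) forces $\phi_\mu$ to act on $L^2(S^1,\Ee)$ essentially by $\lambda\mapsto\mu\lambda$ composed with a unitary automorphism $g_\mu$ of the finite-rank bundle $\Ee$; flatness of $\phi_\mu$ then pins down how $g_\mu$ interacts with $D$. Differentiating the one-parameter family $\mu=e^{it}$ at $t=0$ produces an infinitesimal generator: a skew-Hermitian endomorphism $H$ of $\Ee$ (the "Hodge grading operator") whose integer eigenspaces will be the pieces $\Ee^p$ of the sought decomposition $\Ee=\oplus_p\Ee^p$. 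The reason the eigenvalues are integers is precisely that $\phi_\mu$ must be globally defined for all $\mu\in S^1$ (equivalently, the naive $\lambda\mapsto\mu\lambda$ operator only respects the $\Tt$-grading by integer shifts), so $e^{2\pi i H}=\Id$.

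Next I would translate the equivariance relation into constraints on $\nabla$, $\theta$, $\theta^\ast$ with respect to this grading. From $\phi_\mu\circ D = D\circ\phi_\mu$ and the explicit form $D_\mu = \nabla + \mu^{-1}\theta + \mu\theta^\ast$, conjugating by $g_\mu$ and matching powers of $\mu$ should yield: $\nabla$ preserves each $\Ee^p$ (equivalently $[\nabla,H]=0$), while $\theta$ shifts the grading by $-1$ and $\theta^\ast$ shifts it by $+1$, i.e. $\theta\colon \Ee^p\to \Ee^{p-1}\otimes\Aa^{1,0}$ and $\theta^\ast\colon\Ee^p\to\Ee^{p+1}\otimes\Aa^{0,1}$. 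Writing $\Ff^p:=\oplus_{q\ge p}\Ee^q$, flatness of $D$ together with these shift properties gives exactly the holomorphicity $D^{0,1}\colon\Ff^p\to\Aa^{0,1}\otimes\Ff^p$ (since $\bar\d=\nabla^{0,1}$ preserves the grading and $\theta^\ast$ moves it up) and the Griffiths transversality $D\colon\Ff^p\to\Aa^1\otimes\Ff^{p-1}$ (since $\theta$ moves it down by one). The sign condition—$h$ is $(-1)^p$-definite on $\Ee^p$—is where I would use that the Krein form $\Bb$ restricted to $\Ww$ makes $\Ee$ positive definite together with the anti-isometry of $\Tt$: tracking the construction $W=\widehat\bigoplus_n T^{-n}F^nV$ of Proposition \ref{CHS_NCHS} backwards, the factor $(-1)^n$ in $B(f,g)=\sum(-1)^n h(f_n,g_n)$ combined with positivity forces the alternating signature on the graded pieces. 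Finally I would check that $\Ee=\oplus_p\Ee^p$ with this $D$ is an $h$-orthogonal smooth decomposition satisfying all three conditions of Definition \ref{VHS}, and that applying the construction of Proposition \ref{class_nonab1} to it returns the original variation—the isomorphism being canonical because $H$, and hence the grading, is uniquely determined as the generator of the given $S^1$-action.

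The main obstacle I anticipate is the rigidity step: showing that an arbitrary flat Krein isometry $\phi_\mu$ realizing the isomorphism in the fixed-point hypothesis must, up to the canonical identification $\Kk\cong L^2(S^1,\Ee)$, be of the special form "(rotation by $\mu$) $\circ$ (fiberwise unitary $g_\mu$)." A priori $\phi_\mu$ is only required to intertwine $\Tt$ up to the scalar $\mu$ and preserve $\Ww$; one must rule out that it also mixes Fourier modes in a complicated way. Here I would invoke the commutation-with-shift results (the proposition referenced as \ref{comm_shift2} in the proof of Proposition \ref{isom_Hilb}, which identifies operators commuting with the shift with multiplication operators by $L^\infty$ bundle maps), applied to $\lambda\mapsto\mu^{-1}\lambda$-twisted version of $\phi_\mu$, to conclude that the twisted operator is multiplication by a loop of unitaries; preservation of $\Ww=L^2_+(S^1,\Ee)$ then forces that loop to be constant, giving the claimed $g_\mu$. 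A secondary subtlety is measurability/smoothness in $\mu$ of the chosen $\phi_\mu$, needed to differentiate at $\mu=1$; this can be handled by noting that the canonical identification is itself smooth and that $g_\mu$ is determined by $\phi_\mu$ through a continuous formula, or by simply defining $H$ first via the integral decomposition of Proposition \ref{CHS_NCHS} and then verifying a posteriori that it generates the given action.
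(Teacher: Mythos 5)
Your reduction of the fixed-point data is sound and is exactly what the paper's machinery provides: for each $\mu$ the twisted operator $R_\mu^{-1}\circ\phi_\mu$ (with $R_\mu f(\lambda)=f(\mu\lambda)$) commutes with $\Tt$, is a Krein isometry and preserves $\Ww$, so by propositions \ref{comm_shift}, \ref{B_isom} and \ref{thm_act} it is a constant unitary gauge transformation $g_\mu$ of $(\Ee,h)$, which is $\nabla$-parallel and satisfies $g_\mu\theta g_\mu^{-1}=\mu^{\pm1}\theta$, $g_\mu\theta^\ast g_\mu^{-1}=\mu^{\mp1}\theta^\ast$ (note that preservation of $\Ww$ alone only puts the loop in $\Lambda^+$; you need the Krein-isometry condition, i.e.\ membership in $\Lambda_\sigma^\infty G$, to force constancy). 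The genuine gap is the next step, where you differentiate the family at $\mu=1$ to produce the grading operator $H$. The fixed-point hypothesis only asserts, for each $\mu$ separately, the \emph{existence} of some isomorphism $\phi_\mu$; it gives no continuity in $\mu$, no cocycle relation $\phi_{\mu\nu}=\phi_\mu\phi_\nu$, and each $\phi_\mu$ is only determined up to composition with an automorphism of the variation. Hence there is no one-parameter group to differentiate, and even after a measurable choice of $g_\mu$ the identity $e^{2\pi iH}=\Id$ (integrality of the eigenvalues of $H$, which is what produces a $\Z$-grading) does not follow, because $\mu\mapsto g_\mu$ need not be a homomorphism. Your two proposed remedies do not close this: the first presupposes exactly the regularity of $\mu\mapsto\phi_\mu$ that is in question, and the second is circular, since the construction of proposition \ref{CHS_NCHS} takes the Hodge decomposition as input.

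The standard repair, which is the argument of lemma 4.1 in \cite{Sim_higloc} that the paper itself points to, avoids differentiation entirely: fix a single $\mu\in S^1$ of infinite order and decompose $\Ee$ into the finitely many eigenbundles of the single unitary, $\nabla$-parallel automorphism $g_\mu$; these are smooth, $\nabla$-stable and mutually $h$-orthogonal, and the relation $g_\mu\theta g_\mu^{-1}=\mu^{\pm1}\theta$ shows that $\theta$ shifts the eigenvalue by a fixed factor, so the finitely many eigenvalues organize into finite strings along which one assigns integer degrees (with a parity normalization for the signs). After that your verification of holomorphicity, Griffiths transversality and of the polarization -- defined as $(-1)^p h$ on $\Ee^p$, which is then automatically $(-1)^p$-definite; the real check is its $D$-flatness -- proceeds as you describe, and applying proposition \ref{class_nonab1} recovers the original variation. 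Note finally that the grading obtained this way is canonical only up to shifts on irreducible summands, so the claim at the end of your plan that $H$ is ``uniquely determined as the generator of the given $S^1$-action'' should be dropped or weakened accordingly.
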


\begin{proof}
 See theorem 1.2.3 in \cite{my_thesis}.
\end{proof}

\section{The period domain} \label{sec_per_dom}

In this section, we construct a \emph{period domain} for variations of loop Hodge structures. It is very similar to the period domains encountered in classical Hodge theory, though it is modeled on a Hilbert space. This construction appeared before in different degrees of generalization, in particular in \cite{DPW} and \cite{EschTrib}. To a harmonic bundle, we can then associate a period map from (the universal cover of $X$) to the period domain; in this way, one can consider holomorphic maps, when studying harmonic bundles.

\subsection{Construction of the period domain} \label{constr_dom}

We fix some notations that will be used in the following. The letter $G$ stands for the general linear group $GL(n,\C)$, where $n$ is some positive integer. Its maximal compact subgroup $U(n)$ is denoted by $K$. We use the Gothic letters $\gg$ and $\kk$ for the Lie algebras of $G$ and $K$. 

The Cartan involution of $G$, relative to $K$, is written $\sigma$: it is given by $\sigma(A) = (A^\ast)^{-1}$, where $A^\ast$ is the transconjugate of $A$. We also write $\sigma$ for the Cartan involution on $\gg$; the corresponding Cartan decomposition is $\gg = \kk + \pp$, where $\pp$ is the subspace of Hermitian matrices in $\gg$.

\subsubsection{Loop algebra} \label{loop_alg}

On $\gg$, we define the usual Hermitian inner product:
$$ (A,B)_\gg := \Tr(AB^\ast).$$
The corresponding norm will be written $|.|_\gg$ or $|.|$ if there is no possible confusion.

Let $s$ be a nonnegative real number. The space of loops with regularity $H^s$ and with values in $\gg$ is by definition:
$$
\Lambda^s \gg = \{\sum_{n \in \Z} a_n \lambda^n \mid a_n \in \gg, \sum_{n \in \Z} (1+|n|^2)^s |a_n|_\gg^2 < \infty\}.
$$

In particular, these loops are in $L^2(S^1,\gg)$. We define a Hermitian inner product on $\Lambda^s \gg$ by
$$ (\sum a_n \lambda^n,\sum b_n \lambda^n)_{\Lambda^s \gg} := \sum (1+|n|^2)^s (a_n,b_n)_{\gg}.$$

This endows $\Lambda^s \gg$ with the structure of a complex Hilbert space. Moreover, a map $f\in L^2(S^1,\gg)$ is in $\Lambda^s \gg$ if and only if all its matrix coefficients are in $H^s(S^1,\C)$. We now assume that $s > \frac{1}{2}$ and recall the two fundamental facts (\cite{Tao}, proposition 1.1):
\begin{prop}\label{prop_sobo}
 If $s > \frac{1}{2}$, then $H^s(S^1,\C)$ injects continuously in $\mathcal{C}^{0}(S^1,\C)$, the space of continuous functions from the circle to $\C$. Moreover, the product of two functions in $H^s(S^1,\C)$ is still in $H^s(S^1,\C)$ and the product $H^s(S^1,\C) \otimes H^s(S^1,\C) \rightarrow H^s(S^1,\C)$ is continuous.
\end{prop}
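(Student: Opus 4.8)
The plan is to carry out everything on the Fourier side. Write $f=\sum_{n\in\Z}a_n\lambda^n$, so that $\|f\|_{H^s}^2=\sum_n\langle n\rangle^{2s}|a_n|^2$ with $\langle n\rangle:=(1+|n|^2)^{1/2}$, and likewise $g=\sum_n b_n\lambda^n$. For the embedding $H^s(S^1,\C)\hookrightarrow\mathcal{C}^0(S^1,\C)$ I would split $|a_n|=\big(\langle n\rangle^s|a_n|\big)\cdot\langle n\rangle^{-s}$ and apply Cauchy--Schwarz:
$$\sum_n|a_n|\;\le\;\Big(\sum_n\langle n\rangle^{2s}|a_n|^2\Big)^{1/2}\Big(\sum_n\langle n\rangle^{-2s}\Big)^{1/2}\;=\;C_s\,\|f\|_{H^s},$$
where $C_s^2=\sum_n\langle n\rangle^{-2s}$ is finite exactly because $s>\tfrac12$. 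The partial sums $\sum_{|n|\le N}a_n\lambda^n$ are continuous and, by the estimate above, converge uniformly; hence $f$ is continuous and $\|f\|_\infty\le\sum_n|a_n|\le C_s\|f\|_{H^s}$, which is the asserted continuous injection.

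For the algebra property, note first that the Fourier coefficients $(a_n)$, $(b_n)$ lie in $\ell^1(\Z)$ by the previous step, so $fg$ is a continuous function whose Fourier coefficients are the absolutely convergent convolution $c_n=\sum_k a_kb_{n-k}$. The main point is the elementary weight inequality $\langle n\rangle\le\langle k\rangle+\langle n-k\rangle$ (immediate from $|n|\le|k|+|n-k|$), which furnishes a constant $\kappa_s$ with $\langle n\rangle^s\le\kappa_s(\langle k\rangle^s+\langle n-k\rangle^s)$ for all $k,n$ (subadditivity of $t\mapsto t^s$ when $s\le1$, and $(a+b)^s\le 2^{s-1}(a^s+b^s)$ otherwise). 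Applying this inside the convolution gives the pointwise bound
$$\langle n\rangle^s|c_n|\;\le\;\kappa_s\sum_k\big(\langle k\rangle^s|a_k|\big)|b_{n-k}|\;+\;\kappa_s\sum_k|a_k|\big(\langle n-k\rangle^s|b_{n-k}|\big),$$
i.e. $\langle\cdot\rangle^s c$ is dominated by $\kappa_s$ times the sum of the convolutions $\big(\langle\cdot\rangle^s a\big)\ast b$ and $a\ast\big(\langle\cdot\rangle^s b\big)$, a product of an $\ell^2$ and an $\ell^1$ sequence in each case. Young's convolution inequality then yields
$$\|fg\|_{H^s}\;\le\;\kappa_s\big(\|\langle\cdot\rangle^s a\|_{\ell^2}\,\|b\|_{\ell^1}+\|a\|_{\ell^1}\,\|\langle\cdot\rangle^s b\|_{\ell^2}\big)\;\le\;2\kappa_sC_s\,\|f\|_{H^s}\|g\|_{H^s},$$
using $\|\langle\cdot\rangle^s a\|_{\ell^2}=\|f\|_{H^s}$, $\|\langle\cdot\rangle^s b\|_{\ell^2}=\|g\|_{H^s}$ and the bounds $\|a\|_{\ell^1}\le C_s\|f\|_{H^s}$, $\|b\|_{\ell^1}\le C_s\|g\|_{H^s}$ from the first part. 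This shows simultaneously that $fg\in H^s(S^1,\C)$ and that the multiplication map is continuous.

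I do not expect a genuine obstacle: the only step requiring a little care is the bookkeeping in the convolution estimate (keeping track of which sequence carries the weight), and the whole statement rests on the single fact $\sum_n\langle n\rangle^{-2s}<\infty$, which is precisely where the hypothesis $s>\tfrac12$ is used — it controls the embedding into $\mathcal{C}^0$ (equivalently, $\ell^1$-summability of Fourier coefficients) and, through that, the algebra property as well.
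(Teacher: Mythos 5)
Your proof is correct: the Cauchy--Schwarz estimate $\sum_n|a_n|\le C_s\|f\|_{H^s}$ with $C_s^2=\sum_n(1+|n|^2)^{-s}<\infty$ gives the continuous embedding into $\mathcal{C}^0$, and the weight inequality $\langle n\rangle^s\le\kappa_s(\langle k\rangle^s+\langle n-k\rangle^s)$ combined with Young's inequality ($\ell^2\ast\ell^1\subset\ell^2$) gives the algebra property with the bound $\|fg\|_{H^s}\le 2\kappa_sC_s\|f\|_{H^s}\|g\|_{H^s}$. The paper does not prove this statement itself but simply cites the literature (\cite{Tao}, Proposition 1.1), and your argument is precisely the standard proof behind that citation, so nothing further is needed.
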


The second fact shows that $\Lambda^s \gg$ is endowed with the structure of an associative algebra (coming from the associative algebra structure of $\gg$). In particular, it is equipped with a complex Hilbert-Lie algebra structure, that is a Lie algebra on a complex Hilbert space with continuous bracket for the Hilbert norm. Until remark \ref{pb_comp}, the real number $s > 1/2$ will be fixed and omitted in the notations of the different loop groups and loop algebras.

\subsubsection{Loop group}

Although this is quite unnatural, we see $G$ as an open set in its Lie algebra $\gg$. Since functions in $\Lambda \gg$ are continuous, the following definition makes sense:
$$\Lambda G := \{f \in \Lambda \gg \mid f(\lambda) \in G, \forall \lambda \in S^1\}.$$
This subset is an open set of $\Lambda \gg$ since the $H^s$-topology is finer than the compact-open topology (and since $S^1$ is compact). Hence, $\Lambda G$ is a complex Hilbert manifold. We claim that $\Lambda G$ is also a group. The stability by product works as for the Lie algebra; for the inverse, it is sufficient by Cramer's rule to remark that the determinant is bounded away from zero since it is a continuous non-vanishing function. Moreover the group operations are smooth for the Hilbert manifold structure. Indeed, since the product in $\Lambda G$ is the restriction of the product in $\Lambda \gg$, which is a bilinear map, it is sufficient to show that the product is continuous. This is the last part of proposition \ref{prop_sobo}.

The Lie algebra of $\Lambda G$ can be identified with $\Lambda \gg$ and one can show that the exponential map is the pointwise exponential map from $\gg$ to $G$. In summary:

\begin{prop} \label{loop_group}
 The space $\Lambda G$ is a complex Hilbert-Lie group with complex Hilbert-Lie algebra $\Lambda \gg$.
\end{prop}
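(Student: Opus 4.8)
The plan is to verify the three claimed structures -- Hilbert-Lie group, Hilbert manifold, and the identification of the Lie algebra -- essentially by collecting the facts already established for $\Lg = \Lambda^s \gg$ when $s > 1/2$, together with the Sobolev multiplication and embedding results of Proposition \ref{prop_sobo}. Most of the argument has in fact already been sketched in the text preceding the statement; what remains is to organize it carefully and to pin down the two genuine verifications: smoothness of the group operations and the computation of the exponential map.

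First I would record that $\Lambda G$ is a complex Hilbert manifold: it is an open subset of the complex Hilbert space $\Lg$ (this uses that the $H^s$-topology is finer than the compact-open topology on $\mathcal{C}^0(S^1,\gg)$, which is Proposition \ref{prop_sobo}, and that $GL(n,\C)$ is open in $\gl(n,\C)$ while $S^1$ is compact, so the condition $f(\lambda) \in G$ for all $\lambda$ is an open condition). As an open subset of a Hilbert space, it is in particular a Hilbert manifold modeled on $\Lg$. Next I would check that $\Lambda G$ is closed under multiplication and inversion: stability under the pointwise product is immediate from the fact that $\Lg$ is an associative algebra under pointwise multiplication (the multiplication part of Proposition \ref{prop_sobo}), and for inversion one uses Cramer's rule -- the entries of $f(\lambda)^{-1}$ are polynomials in the entries of $f(\lambda)$ divided by $\det f(\lambda)$, and $\det f$ is a nowhere-vanishing continuous function on the compact set $S^1$, hence bounded away from $0$, so $1/\det f$ is again in $H^s$ by the multiplication and (the standard fact that a nowhere-vanishing $H^s$ function with $s>1/2$ has $H^s$ reciprocal, which follows from the Banach algebra property). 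Therefore $\Lambda G$ is a group.

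Then I would verify smoothness of the group operations. The multiplication $\Lambda G \times \Lambda G \to \Lambda G$ is the restriction of the continuous bilinear map $\Lg \times \Lg \to \Lg$ of Proposition \ref{prop_sobo}; a continuous bilinear map between Banach spaces is smooth (indeed real-analytic), so multiplication is smooth. For inversion, one can either invoke the inverse function theorem in Banach manifolds (multiplication is smooth, so $\Lambda G$ is at least a topological group with smooth multiplication, and a standard argument then upgrades inversion to smooth) or argue directly that $f \mapsto f^{-1}$ is smooth by writing it, via Cramer's rule, as a composition of the smooth (polynomial) cofactor map with the smooth reciprocal-of-determinant map; the latter factors as $f \mapsto \det f \mapsto (\det f)^{-1}$ through $H^s(S^1,\C)^\times$, where taking reciprocals is smooth since it is smooth on the group of units of a Banach algebra. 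Hence $\Lambda G$ is a complex Hilbert-Lie group. Finally, the Lie algebra is the tangent space at the identity, $T_e(\Lambda G) = \Lg$ since $\Lambda G$ is open in $\Lg$, with bracket induced by the group structure, i.e. the pointwise commutator; and the exponential map is computed by solving the ODE $\dot{\gamma}(t) = \gamma(t) \cdot a$ in $\Lambda G$, which pointwise in $\lambda$ is exactly the matrix ODE whose solution is $\gamma(t)(\lambda) = \exp(t\, a(\lambda))$, so $\exp_{\Lambda G}(a)(\lambda) = \exp(a(\lambda))$; one should check this stays in $\Lg$, which again follows from the Banach algebra property (the exponential series converges in $\Lg$).

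The main obstacle, if any, is purely technical: making sure that the pointwise formulas ($f^{-1}$, $\exp f$, $1/\det f$) genuinely land in $\Lambda^s \gg$ rather than merely in $\mathcal{C}^0(S^1,\gg)$, which is where the Banach algebra structure on $H^s(S^1,\C)$ for $s>1/2$ does all the work -- the reciprocal of a nowhere-vanishing element of a commutative unital Banach algebra lies in the algebra, and entire functions (like $\exp$) applied to any element of a Banach algebra again lie in the algebra. Given Proposition \ref{prop_sobo}, every step reduces to these two standard facts, so there is no deep difficulty; the only care needed is in bookkeeping the regularity index $s$, which the text has explicitly agreed to suppress.
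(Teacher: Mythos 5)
Your proposal is correct and follows essentially the same route as the paper, whose ``proof'' is precisely the discussion preceding the statement: openness of $\Lambda G$ in $\Lambda\gg$ via the Sobolev embedding, closure under product and inverse via the Banach algebra structure and Cramer's rule with $\det f$ bounded away from zero, smoothness of multiplication from continuous bilinearity, and the pointwise exponential. You merely fill in the points the paper leaves implicit (smoothness of inversion, invertibility of $\det f$ in $H^s$, convergence of the exponential series), all correctly.
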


\subsubsection{Twisted loop group}

We recall that $\sigma$ denotes the Cartan (anti-linear) involution of $G$ relative to $K = U(n)$. In the loop Lie group or in the loop Lie algebra, one defines the corresponding (anti-linear) involution:
$$
\hat{\sigma}: \gamma(\lambda) \mapsto \sigma(\gamma(-\lambda)).
$$
The closed subgroup (resp. subalgebra) of fixed points of $\hat{\sigma}$ is denoted by $\Lambda_\sigma G$ (resp. $\Lambda_\sigma \gg$). Using the Fourier series decomposition, one has
$$
\Lambda_\sigma \gg = \{\sum a_n \lambda^n \mid \sigma(a_n) = (-1)^n a_{-n} \}.
$$
This means that the loops in $\Lambda_\sigma \gg$ are exactly the sums of an even loop with values in $\kk$ and an odd loop with values in $\pp$. In particular, $\Lambda_\sigma \gg$ is a real form of $\Lambda \gg$ and the group $\Lambda_\sigma G$ is a real Hilbert-Lie group, which is a real form of $\Lambda G$.

\subsubsection{The period domain and its compact dual}

Let $\Lambda^+ \gg$ be the subalgebra of $\Lg$ of functions with only nonnegative Fourier coefficients and let $\Lambda^+ G$ be the subgroup of $\LG$ of functions with only nonnegative Fourier coefficients. It is a complex Hilbert-Lie subgroup of $\LG$, with Lie algebra $\Lambda^+ g$.
\begin{prop}\label{symm_princ}
 The intersection $\Lambda_\sigma G \bigcap \Lambda^+ G$ equals $K$, where an element $k$ of $K$ is viewed as the constant map from $S^1$ to $k$.
\end{prop}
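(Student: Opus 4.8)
The plan is to prove the two inclusions $K \subseteq \Lambda_\sigma G \cap \Lambda^+ G$ and $\Lambda_\sigma G \cap \Lambda^+ G \subseteq K$ separately; the first is immediate, and the second is the real content. It is the loop-group analogue of a standard Iwasawa-type intersection fact, and the mechanism is a comparison of Fourier supports.

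For $K \subseteq \Lambda_\sigma G \cap \Lambda^+ G$: given $k \in U(n) = K$, regarded as the constant loop $\lambda \mapsto k$, its Fourier expansion is concentrated in degree $0$, so $k \in \Lambda^+ G$; and $\hat{\sigma}(k) = \sigma(k) = (k^\ast)^{-1} = k$ since $k$ is unitary, so $k \in \Lambda_\sigma G$.

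For the reverse inclusion I would argue as follows. Let $\gamma \in \Lambda_\sigma G \cap \Lambda^+ G$ and write $\gamma(\lambda) = \sum_{n \geq 0} a_n \lambda^n$ with $a_n \in \gg$. First, since $\Lambda^+ G$ is a subgroup of $\Lambda G$, the loop $\gamma^{-1}$ again lies in $\Lambda^+ G$, so $\lambda \mapsto \gamma(\lambda)^{-1}$ has Fourier expansion supported in nonnegative degrees. Second, the identity $\hat{\sigma}(\gamma) = \gamma$ reads $\sigma(\gamma(-\lambda)) = \gamma(\lambda)$ on $S^1$, i.e. $\gamma(\lambda)^{-1} = \gamma(-\lambda)^\ast$; since $\gamma(-\lambda) = \sum_{n \geq 0} (-1)^n a_n \lambda^n$ and, on $S^1$, pointwise adjunction sends $\sum b_n \lambda^n$ to $\sum b_n^\ast \lambda^{-n}$, the loop $\lambda \mapsto \gamma(-\lambda)^\ast$ has Fourier expansion supported in nonpositive degrees. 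Comparing the two expressions for $\gamma(\lambda)^{-1}$ forces it to be supported in degree $0$ alone, so $\gamma$ is itself a constant loop, say $\gamma \equiv c \in GL(n,\C)$. Substituting back into $\gamma(\lambda)^{-1} = \gamma(-\lambda)^\ast$ gives $c^{-1} = c^\ast$, i.e. $c^\ast c = I$, hence $c \in U(n) = K$, completing the argument.

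I do not expect a serious obstacle. The one point deserving care is the precise interpretation of $\Lambda^+ G$: one must use that it genuinely is a subgroup of $\Lambda G$ — equivalently, that it consists exactly of the loops admitting a holomorphic extension to a $GL(n,\C)$-valued map on the open unit disk — so that invertibility, and hence nonnegativity of the Fourier support, passes to $\gamma^{-1}$ (without this, loops such as $\lambda \mapsto \lambda^2 \Id$ would spuriously lie in the intersection). Once this is granted, continuity of the loops (from $s > 1/2$, Proposition \ref{prop_sobo}) makes all the bookkeeping with Fourier coefficients and adjoints completely routine.
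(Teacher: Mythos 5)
Your argument is correct, and it is genuinely a different route from the proof the paper gives for proposition \ref{symm_princ}: there the author embeds $\gamma$ into $\Lambda(G\times G)$ via $\lambda\mapsto(\gamma(\lambda),\gamma(-\lambda))$, observes that the boundary values lie in the real form fixed by $(A,B)\mapsto(\sigma(B),\sigma(A))$, and invokes the Schwarz reflection principle plus Liouville on $\CP^1$ to force constancy. Your Fourier-support comparison ($\gamma^{-1}$ has nonnegative support because $\Lambda^+G$ is a group, while $\hat\sigma(\gamma)=\gamma$ rewrites $\gamma^{-1}(\lambda)=\gamma(-\lambda)^\ast$, which has nonpositive support) is in substance the same as the \emph{second}, self-contained argument the paper itself sketches at the end of the proof of proposition \ref{thm_act}, where the stabilizer of the Fourier-nonnegative subspace is computed; so your route is more elementary (no complex analysis beyond uniqueness of Fourier coefficients), while the reflection argument is the more geometric one and is the picture that generalizes (holomorphic data on the disk glued with its $\sigma$-reflection). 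Your flagged caveat is not pedantry but exactly the crux: with the literal reading of the paper's definition of $\Lambda^+G$ (merely nonnegative Fourier coefficients of a $G$-valued loop), the statement is false --- $\lambda\mapsto\lambda^2\Id$ is fixed by $\hat\sigma$ and has nonnegative coefficients but is not constant --- and the paper's reflection proof also tacitly uses the correct (Pressley--Segal) reading, since applying $\sigma$, hence matrix inversion, to the values of the holomorphic extension \emph{inside} the disk requires those values to be invertible. The only point I would tighten in your write-up is the equivalence you assert in passing: to see that membership of both $\gamma$ and $\gamma^{-1}$ in the nonnegative-support set is the same as invertibility of the holomorphic extension on the open disk, one multiplies the two extensions and uses that a holomorphic function on the disk, continuous up to the boundary and equal to $\Id$ there, is $\Id$; with $s>1/2$ guaranteeing continuity, this is indeed routine.
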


\begin{proof}
Since a loop $\gamma$ in $\Lambda^+ G$ has no negative Fourier coefficients, it defines a holomorphic function $f$ on the open unit disk that extends continuously on the boundary. Let $z$ be in $\CP^1$ with $|z| > 1$ and define $f(z) = \sigma(f(-\frac{1}{\bar{z}}))$. Then $f$ is continuous on $\CP^1$ if $\gamma$ is in $\Lambda_\sigma G$; moreover $f$ is holomorphic in the outer disk $|z| > 1$. In fact, this extension $f$ is the canonical extension given by a generalization of Schwarz reflection principle (\cite{Ahlf}, section 6.5, theorem 24), which implies that $f$ is holomorphic on the whole sphere.

More precisely, consider the complex group $G \times G$ and the anti-linear involution $\theta$ defined by
\begin{align*}
 \theta: G \times G &\rightarrow G \times G, \\
 (A,B) &\mapsto (\sigma(B),\sigma(A)).
\end{align*}

Embed $\Lambda G$ in $\Lambda (G \times G)$ by the map $\iota: \gamma(\lambda) \mapsto (\gamma(\lambda),\gamma(-\lambda))$. Let $\gamma$ be in $\Lambda_\sigma G \cap \Lambda^+ G$. Then, $\iota(\gamma)$ has a holomorphic extension to the open unit disk, extends continuously to the closed unit disk and, on the unit circle, it takes values in the real form of $G \times G$ defined by $\theta$. By Schwarz reflection principle, such a map extends to a holomorphic map from $\CP^1$ to $G \times G$, which has to be constant. Hence, $\gamma$ is constant and its value is necessarily in $K$; the other inclusion is obvious.
\end{proof}

 Another self-contained proof will be given at then end of the proof of proposition \ref{thm_act}.

\begin{prop}\label{open_dom}
 The multiplication map $\Lambda_\sigma G \times \Lambda^+ G \rightarrow \Lambda G$ is open.
\end{prop}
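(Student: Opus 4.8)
The plan is to prove that the multiplication map $m\colon \Lambda_\sigma G\times\Lambda^+G\to\Lambda G$, $(\alpha,\beta)\mapsto\alpha\beta$, is a \emph{submersion} of Hilbert manifolds; since a submersion between Banach manifolds is an open map by the rank theorem (\cite{Lan}), this is exactly the assertion. Note first that $m$ is smooth, being the restriction of the smooth multiplication of $\Lambda G$ (Proposition \ref{loop_group}) to the submanifold $\Lambda_\sigma G\times\Lambda^+G$. Recall that $m$ is a submersion at a point $(\gamma_\sigma,\gamma_+)$ precisely when its differential there is a bounded surjective linear map whose kernel is a complemented closed subspace, so it suffices to verify this condition at every such point.

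First I would reduce to the identity by left and right translations. Because $\Lambda_\sigma G$ and $\Lambda^+ G$ are subgroups of $\Lambda G$, the map $\Psi\colon(\alpha,\beta)\mapsto(\gamma_\sigma\alpha,\beta\gamma_+)$ is a diffeomorphism of $\Lambda_\sigma G\times\Lambda^+ G$ sending $(e,e)$ to $(\gamma_\sigma,\gamma_+)$, and $\Phi\colon x\mapsto\gamma_\sigma x\gamma_+$ is a diffeomorphism of $\Lambda G$; a direct check gives $m\circ\Psi=\Phi\circ m$. Differentiating at $(e,e)$, the differential $dm_{(\gamma_\sigma,\gamma_+)}$ is obtained from $dm_{(e,e)}$ by composing with linear isomorphisms on both sides, so it is surjective with complemented kernel as soon as $dm_{(e,e)}$ is. Hence everything reduces to studying the differential at the identity.

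Then I would compute $dm_{(e,e)}$ and check the two required properties. Identifying the tangent spaces at $e$ with $\Lambda_\sigma\gg$, $\Lambda^+\gg$ and $\Lambda\gg$ (the exponential being the pointwise one, by Proposition \ref{loop_group}), the differential is the bounded real-linear map $(X,Y)\mapsto X+Y$. Its kernel is $\{(Z,-Z)\ :\ Z\in\Lambda_\sigma\gg\cap\Lambda^+\gg\}$, and the Fourier descriptions of the two subalgebras give $\Lambda_\sigma\gg\cap\Lambda^+\gg=\kk$ (constant loops with skew-Hermitian value), just as in Proposition \ref{symm_princ}; so the kernel is finite-dimensional, hence complemented. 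It remains to see that $\Lambda_\sigma\gg+\Lambda^+\gg=\Lambda\gg$: given $c=\sum_{n\in\Z}c_n\lambda^n$ in $\Lambda\gg$, set $X_n=c_n$ for $n<0$, $X_n=(-1)^n\sigma(c_{-n})$ for $n>0$, and $X_0=\tfrac12\bigl(c_0+\sigma(c_0)\bigr)\in\kk$. Then $X=\sum_nX_n\lambda^n$ satisfies $\sigma(X_n)=(-1)^nX_{-n}$ for all $n$, and the $H^s$ condition survives because $\sigma$ is an isometry of $\gg$ and the reindexing $n\mapsto-n$ preserves the Sobolev weights; thus $X\in\Lambda_\sigma\gg$, while $c-X$ has only nonnegative Fourier modes, so $c-X\in\Lambda^+\gg$ and $c=dm_{(e,e)}(X,c-X)$. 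Therefore $dm_{(e,e)}$ is surjective with complemented kernel, which concludes the argument.

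I do not expect a genuine obstacle here: the translation step makes the whole statement local at $(e,e)$, and the only real content is then the elementary Fourier computation showing surjectivity of $\Lambda_\sigma\gg\times\Lambda^+\gg\to\Lambda\gg$. The regularity bookkeeping is harmless for the reason just indicated, and the appeal to the rank theorem is legitimate precisely because the kernel of the differential, being finite-dimensional, automatically splits.
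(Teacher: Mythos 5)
Your proof is correct and follows essentially the same route as the paper: the entire content is the Lie-algebra decomposition $\Lambda \gg = \Lambda_\sigma \gg + \Lambda^+ \gg$, established by the same explicit Fourier formulas, and your submersion-plus-translation framework simply spells out the paper's terser appeal to the exponential map being a local diffeomorphism. In fact your handling of the zeroth coefficient (taking $X_0 = \tfrac12\bigl(c_0+\sigma(c_0)\bigr)\in\kk$) is slightly more careful than the paper's, which sets $\tilde{a}_0 = a_0$ and implicitly needs the same correction.
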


\begin{proof}
 It is enough to show that the equality $\Lambda \gg = \Lambda_\sigma \gg + \Lambda^+ \gg$ holds at the Lie algebra level since, in Banach-Lie groups, the exponential map is a local diffeomorphism. Let $\gamma = \sum_{n \in \Z} a_n \lambda^n$ be a loop in $\Lambda \gg$. Define
 $$\gamma_\sigma := \sum_{n \in \Z} \tilde{a}_n\lambda^n,$$
 with $\tilde{a}_n = a_n$ for $n \leq 0$ and $\tilde{a}_n = (-1)^n \sigma(a_{-n})$ for $n \geq 1$ and
 $$\gamma_+ = \sum_{n \in \N} b_n \lambda^n,$$
 with $b_n = a_n - \tilde{a}_n$, for $n \geq 1$.

 Then, by construction $\gamma = \gamma_\sigma + \gamma_+$, $\gamma_\sigma$ is in $\Lambda_\sigma \gg$ and $\gamma_+$ is in $\Lambda^+ \gg$.
\end{proof}

\begin{defn}\label{per_dom}
 The homogeneous space $\Dd := \Lambda_\sigma G/K$ is the \emph{loop period domain} for the symmetric space $G/K$. The homogeneous space $\Ddc := \Lambda G/\Lambda^+ G$ is its \emph{compact dual}. The terminology comes from the corresponding spaces in classical Hodge theory.
\end{defn}

\begin{rem}\label{rem_avt_disc}
 By propositions \ref{symm_princ} and \ref{open_dom}, the period domain $\Dd$ embeds as an open set in its compact dual $\Ddc$.
\end{rem}

\subsubsection{Complex structure on the period domain}

Since the period domain $\Dd$ is open in its compact dual $\check{\Dd}$ which is a complex Hilbert manifold, it inherits a complex structure. Let us see a more intrinsic way of viewing this complex structure.

The group $K$, seen as a group of constant loops, acts on the Lie algebra $\Lambda_\sigma \gg$ by the adjoint action. It preserves its Lie algebra $\kk$ and its orthogonal
$$\Lambda_\sigma^0 \gg := \{\gamma \in \Lambda_\sigma \gg \mid \gamma = \sum_{n \neq 0} a_n \lambda^n\}$$
for the (real) Hilbert structure on $\Lambda_\sigma \gg$ coming from the complex Hilbert structure on $\Lambda \gg$. Hence, as a $K$-module, the quotient Hilbert space $\Lambda_\sigma \gg/\kk$ is isomorphic to $\Lambda_\sigma^0 \gg$.

In the same way, $\Lambda \gg/\Lambda^+ \gg$ is isomorphic as a $K$-module to
$$\Lambda^- \gg := \{\gamma \in \Lambda \gg \mid \gamma = \sum_{n < 0} a_n \lambda^n\}.$$

The spaces $\Lambda_\sigma^0 \gg$ and $\Lambda^- \gg$ are the tangent spaces at the (same) base point of $\Dd$ and $\Ddc$. Hence, by the proof of proposition \ref{open_dom}, there is a canonical isomorphism between these two spaces. In particular, the complex structure on $\Lambda^- \gg$ gives a complex structure $J$ on $\Lambda_\sigma^0 \gg$. One computes that for $\gamma = \sum_{n \neq 0} a_n \lambda^n \in \Lambda_\sigma^0 \gg$,
\begin{eqnarray}\label{compl_str}
J\gamma = i\sum_{n < 0} a_n \lambda^n + (-i) \sum_{n > 0} a_n \lambda^n.
\end{eqnarray}

It is of course $K$-invariant. Since the tangent space of $\Dd$ is isomorphic as a $\Lambda_\sigma G$-bundle to
$$
T \Dd \cong \Lambda_\sigma G \times_K \Lambda_\sigma^0 \gg,$$
this provides an almost complex structure $J$ on $T \Dd$, which is necessarily integrable since it coincides with the complex structure coming from the open inclusion in the compact dual. The holomorphic tangent bundle is thus given by
$$
T_{\text{hol}} \Dd = \Lambda_\sigma G \times_K \Lambda^- \gg.
$$

\subsubsection{Horizontal structure on the period domain}

The complex subspace $\Lambda^{\geq -1} \gg = \{\gamma \in \Lambda \gg \mid \gamma = \sum_{n \geq -1} a_n \lambda^n\}$ is stable by the adjoint action of $\Lambda^+ G$. The \emph{holomorphic horizontal bundle} is defined to be
$$
T_{h,\text{hol}} \Ddc := \Lambda G \times_{\Lambda^+ G} \Lambda^{\geq -1} \gg/\Lambda^+ \gg \subset T_\text{hol} \Ddc.
$$
This is a finite-dimensional $\Lambda G$-equivariant holomorphic vector bundle over $\Ddc$. In particular, it gives a $\Lambda_\sigma G$-equivariant holomorphic vector bundle over $\Dd$.

A holomorphic map with target $\Dd$ or $\Ddc$ will be called \emph{horizontal} if its differential sends the holomorphic tangent bundle to the holomorphic horizontal tangent bundle.

\subsubsection{The Grassmannian of outgoing subspaces}

In the following, we use the letter $H$ for Krein spaces, in order to avoid confusion with the unitary group $K$.\\

Let $(H,B,T)$ be an outgoing Krein space of virtual dimension $n$ and denote by $\Gr(H)$ the \emph{outgoing Grassmannian} of $(H,B,T)$, that is the set of all outgoing subspaces. Let $W$ be a base-point in $\Gr(H,T)$. By proposition \ref{kr_out}, one can assume that $H = L^2(S^1,\C^n)$ with its natural outgoing Krein structure and $W$ is $L_+^2(S^1,\C^n)$.

Consider the set of essentially bounded mesurable maps from $S^1$ to $G$, where $G$ is embedded in $\gg$. This set is stable by the pointwise product but the pointwise inverse of an essentially bounded map need not be essentially bounded (compare with the discussion before proposition \ref{loop_group}). Hence, we define $\Lambda^\infty G$ to be the set of essentially bounded measurable maps whose pointwise inverse are also essentially bounded, modulo the equivalence relation of equality almost everywhere. The requirement on the inverse is equivalent to asking that $\det(g(\lambda))^{-1}$ is an essentially bounded function. As for the $H^s$-loops, we can give a structure of complex Banach-Lie group to $\Lambda^\infty G$, with Lie algebra $\Lambda^\infty \gg$. We can also define the subgroup $\Lambda_\sigma^\infty G$, which is a real Banach-lie subgroup with Lie algebra $\Lambda_\sigma^\infty \gg$.

The group $\Lambda^\infty G$ acts on $H$ by pointwise matrix multiplication and gives an embedding $\Lambda^\infty G \hookrightarrow \Aut(H)$ of complex Banach-Lie groups. 

\begin{prop}\label{thm_act}
The group $\Lambda_\sigma^\infty G$ acts transitively on $\Gr(H)$, and the stabilizer of $W$ equals the subgroup $K$ of constant loops in $K$.
\end{prop}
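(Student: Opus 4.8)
The plan is to prove the two assertions separately: first transitivity of the $\Lambda_\sigma^\infty G$-action on $\Gr(H)$, then identification of the stabilizer of the base-point $W = L^2_+(S^1,\C^n)$.

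For transitivity, let $W'$ be an arbitrary outgoing subspace of $H = L^2(S^1,\C^n)$. By Proposition \ref{kr_out} applied to the outgoing Krein space $(H,B,T)$ with the outgoing subspace $W'$, we obtain a canonical isomorphism of loop Hodge structures $\Phi' \colon L^2(S^1,H') \to H$ carrying the Fourier-nonnegative subspace to $W'$, where $H'$ is the $B$-orthocomplement of $TW'$ in $W'$; and by Proposition \ref{isom_Hilb}, $H'$ is isometric to $\C^n$ (the virtual dimension being $n$). Composing with the canonical isomorphism $L^2(S^1,\C^n) \cong H$ attached to $W$, we get a Krein isometry $g$ of $H$ that intertwines $T$ and sends $W$ to $W'$. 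It remains to check that such a $g$ is (represented by) an element of $\Lambda_\sigma^\infty G$. Since $g$ commutes with $T$, i.e. with the right-shift, the multiplication-operator description (\textit{cf.} the reference to Proposition \ref{comm_shift2} in the proof of Proposition \ref{isom_Hilb}) yields $M \in L^\infty(S^1,\End(\C^n))$ with $g$ the pointwise multiplication by $M$; as $g$ is invertible with bounded inverse, $\det M(\lambda)^{-1}$ is essentially bounded, so $M \in \Lambda^\infty G$. Finally, $g$ being a $B$-isometry forces $M(-\lambda)^\ast M(\lambda) = \Id$ for a.e.\ $\lambda$ (from the definition $B(f,h)=\int h_{\C^n}(f(\lambda),h(-\lambda))\,d\nu$), which is exactly the condition $\hat\sigma(M)=M$, i.e. $M \in \Lambda_\sigma^\infty G$. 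Hence the action is transitive.

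For the stabilizer: if $g \in \Lambda_\sigma^\infty G$ fixes $W = L^2_+(S^1,\C^n)$, write $g$ as multiplication by $M \in \Lambda_\sigma^\infty G$. Stabilizing $W$ means $M$ multiplies Fourier-nonnegative functions to Fourier-nonnegative functions, which forces $M$ itself to have only nonnegative Fourier coefficients; since $M^{-1}$ also stabilizes $W$, the same holds for $M^{-1}$, so $M$ is a loop in the analogue $\Lambda^{\infty,+} G$ of $\Lambda^+ G$. The argument of Proposition \ref{symm_princ} now applies verbatim in the $L^\infty$ setting: $M$ extends to a bounded holomorphic map on the open unit disk, the twisting condition $\hat\sigma(M)=M$ lets Schwarz reflection (via the embedding $\iota$ into $\Lambda(G\times G)$ and the involution $\theta$) extend it holomorphically across $S^1$ to all of $\CP^1$, hence $M$ is constant; its constant value lies in $\Lambda_\sigma G \cap \Lambda^+ G = K$. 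The reverse inclusion is clear, so the stabilizer is exactly $K$. (This also supplies the promised alternative proof of Proposition \ref{symm_princ}, since a constant loop in $\Lambda^\infty G$ stabilizing $W$ and lying in $\Lambda_\sigma^\infty G$ is visibly in $K$.)

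The main obstacle I expect is the careful bookkeeping in the transitivity step — specifically, verifying that the abstract Krein isometry produced by chaining the isomorphisms of Proposition \ref{kr_out} genuinely lands in $\Lambda_\sigma^\infty G$ rather than merely in $\Aut(H)$. This hinges on (i) the commutation-with-$T$ $\Rightarrow$ multiplication-operator lemma (Proposition \ref{comm_shift2}), which controls regularity only at the $L^\infty$ level and not the $H^s$ level — hence the need to work with $\Lambda^\infty G$ here even though the period domain was built from $H^s$-loops — and (ii) translating the Krein-isometry condition into the pointwise relation $M(-\lambda)^\ast M(\lambda)=\Id$ and recognizing it as the $\hat\sigma$-fixed-point condition. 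Everything else (the Schwarz-reflection argument, the Fourier-support argument for the stabilizer) is either already in the excerpt or a routine adaptation of it.
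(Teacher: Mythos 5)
Your transitivity argument is essentially the paper's own proof, just repackaged: the paper likewise takes an isometry $\phi\colon \C^n \to V$, where $V$ is the $B$-orthocomplement of $T\tilde W$ in $\tilde W$ (finite-dimensional by proposition \ref{isom_Hilb}), extends it uniquely to a map $H \to L^2(S^1,V)\cong H$ commuting with the shift -- this is exactly your composition of the canonical isomorphisms of proposition \ref{kr_out} -- checks that the extension is a Krein isometry, and then identifies it as an element of $\Lambda_\sigma^\infty G$ \emph{via} propositions \ref{comm_shift} and \ref{B_isom}, which are precisely the two ingredients you isolate as (i) and (ii). Your pointwise reformulation of the $B$-isometry condition as $M(-\lambda)^\ast M(\lambda)=\Id$, i.e.\ $\hat\sigma(M)=M$, is exactly the content of proposition \ref{B_isom}, and invertibility in the commutant of $T$ is covered by the group statement of proposition \ref{comm_shift}; so that half is complete and matches the paper.

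For the stabilizer you take a different route from the paper, and there is a small but genuine gap in it. The paper deliberately does \emph{not} reuse the proof of proposition \ref{symm_princ} here, and instead gives a self-contained Fourier argument; the reason is that the Schwarz-reflection proof uses that an $H^s$-loop with $s>1/2$ is continuous, so the holomorphic extension to the disk extends continuously to the closed disk and the cited reflection theorem (which requires continuity up to the boundary) applies. A loop in $\Lambda_\sigma^\infty G$ is only essentially bounded: its extension is a bounded holomorphic function whose boundary values are attained only a.e.\ nontangentially, so the reflection principle in the form invoked does not apply ``verbatim''. One can repair this with an a.e.\ version of reflection for bounded holomorphic functions, but the cleaner fix -- and the one the paper actually uses -- is a two-line computation for which you already have every ingredient: you showed that both $M$ and $M^{-1}$ have only nonnegative Fourier coefficients; writing $M^{-1}=\sum_{n\geq 0}A_n\lambda^n$, one gets $\hat\sigma(M)(\lambda)=\bigl(M^{-1}(-\lambda)\bigr)^\ast=\sum_{n\geq 0}(-1)^n A_n^\ast \lambda^{-n}$, which has only nonpositive Fourier coefficients, so the equality $\hat\sigma(M)=M$ forces $M$ to be constant, and a constant loop stabilizing $W$ and fixed by $\hat\sigma$ lies in $K$. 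Replace the ``applies verbatim'' sentence by this computation and your proof is complete.
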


\begin{proof}
By propositions \ref{comm_shift} and \ref{B_isom}, the group $\Lambda^\infty_\sigma G$ acts on $H$ and can be identified with the group of elements that commutes with $T$ and preserves the Krein metric $B$. By the definition \ref{sub_out} of an outgoing subspace, $\Lambda_\sigma^\infty G$ preserves the outgoing Grassmannian. 

Let $\tilde{W}$ be an outgoing subspace. 
By lemma \ref{isom_Hilb}, the orthocomplement $V$ of $T\tilde{W}$ in $\tilde{W}$ has dimension $n$. Let $\phi: \C^n \rightarrow V$ be an isometry (with respect to the restriction 
of the Krein metric). There is a unique way to extend $\phi$ to a map from $H$ to $L^2(S^1, V) \cong H$ that commutes with the outgoing operator (see also the proof of Theorem 8.3.2
in \cite{PS}). This extension, still denoted $\phi$, commutes with $T$ by construction, and is a Krein isometry: indeed, it sends $T^i e_j$ to $T^i \phi(e_j)$, both having square norm
equal to $(-1)^i$, the elements $T^i e_j$ are $B$-orthogonal and they give a Hilbert basis of $H$. Hence, by propositions \ref{comm_shift} and \ref{B_isom}, $\phi$ lives in
$\Lambda_\sigma^\infty G$ and sends $\C^n$ on $V$. Hence, it sends $W$ on $\tilde{W}$.\\

The assertion about the stabilizer is essentially proposition \ref{symm_princ} but we give another self-contained proof. Let $\gamma$ be in $\Lambda_\sigma^\infty G$, stabilizing the Fourier-nonnegative subspace. Writing Fourier series representation, this implies that $\gamma$ has its Fourier series in nonnegative degrees. We claim that its Fourier series also has to be concentrated in nonpositive degrees. Indeed, since $\gamma^{-1}$ is also holomorphic in the open unit disk, we can write
$$\gamma^{-1} = \sum_{n \in \N} A_n \lambda^n.$$
Then $\hat{\sigma}(\gamma)$ is given by
$$\hat{\sigma}(\gamma) = \sum_{n \in \N} A_n^\ast (-\lambda^{-n}),$$
since the conjugate of $\lambda$ in $S^1$ is $\lambda^{-1}$. Since $\gamma = \hat{\sigma}(\gamma)$, this proves the claim: $\gamma$ has to be constant and necessarily in $K$.
\end{proof}

\begin{rem}\label{pb_comp}
 This proposition gives a bijection between $\Gr(H)$ and the homogenous space $\Lambda_\sigma^\infty G/K$. This enables us to give a Banach manifold structure to $\Gr(H)$. In order to distinguish the different regularities discussed, we write again the superscript $s$ for loops with regularity $H^s$.
\end{rem}

 The complex structure defined above for $\Lambda_\sigma^s G/K$ is not well-defined on $\Lambda_\sigma^\infty G/K$. Indeed, let $f$ be in $\Lambda_\sigma^{0,s} \gg$. By equation (\ref{compl_str}), $Jf$ is given by $if_- - if_+$, where $f_-$ and $f_+$ are the Fourier negative and positive parts of $f$. If we only assume that $f$ is in $\Lambda_\sigma^\infty \gg$, then $Jf$ is in $\Lambda_\sigma^\infty \gg$ if and only if $f_-$ and $f_+$ are in $\Lambda^\infty \gg$. We claim that this is not true in general.

 Indeed, let $M$ any matrix in $\pp$ and consider the function $f: S^1 \rightarrow \gg$, such that $f(\lambda) = M$ if $\Im \,\lambda > 0$ and $f(\lambda) = - M$ if $\Im \,\lambda \leq 0$. This is a \emph{square signal function} which belongs to $\Lambda_\sigma^\infty \gg$. Its Fourier series are given by
 $$ f = \sum_{n \text{ odd}} \frac{1}{n} \lambda^n,$$
 up to some multiplicative factor. Then, $f_-$ and $f_+$ are not essentially bounded. The issue here seems to  be that $\Lambda^\infty_+ G$ is not a closed Lie subgroup of $\Lambda^\infty G$, in the sense of  \cite{Lan}, section VI.5. Indeed, $\Lambda^\infty_+ \gg$ does not admit a complementary closed subspace in $\Lambda^\infty \gg$.

 However one can show that all this construction works in the slightly more general regularity $H^{1/2} \cap L^\infty$ (\cite{PS}, Chapter 8). Thanks to lemma \ref{reg_permap}, it will be sufficient for our purpose to consider the $H^s$-regularity.

\subsection{The period map} \label{per_map}

The period map of a harmonic bundle is the developing map of its associated variation of loop Hodge structures. In this subsection, we define this map and show that it shares the properties of the period maps in classical Hodge theory.

\subsubsection{Developing map of a variation of loop Hodge structures}

Let $(\Hha,\Bb,\Tt,\Ww,D) \rightarrow X$ be a variation of loop Hodge structures of virtual dimension $n$ and denote by $\pi:\tilde{X} \rightarrow X$ the universal cover of $X$. Let $\tilde{x}_0$ be a base-point in $\tilde{X}$ and consider the pullback of the variation of loop Hodge structure to $\tilde{X}$. Its fiber over $\tilde{x}_0$ can be identified with $H:= L^2(S^1,\C^n)$ with its canonical outgoing Krein structure such that $(\pi^\ast W)_{\tilde{x}_0}$ corresponds to the Fourier-nonnegative subspace.

Since $\pi^\ast D$ is a flat connection over a simply connected manifold, one can trivialize the variation over $\tilde{X}$. In this way, the variation on $\tilde{X}$ is carried by the trivial bundle $\tilde{X} \times H$, the Krein metric and outgoing operator are constant and $W$ is a subbundle of outgoing subspaces which is the Fourier-nonnegative subspace over $\tilde{x}_0$. The datum of this subbundle is equivalent to a map $f: \tilde{X} \rightarrow \Lambda_\sigma^\infty G/K$, such that $f(\tilde{x}_0) = eK$ by proposition \ref{thm_act}.

We write $\rho_{\tot}: \pi_1(X) \rightarrow \Aut(H)$ for the monodromy of the flat connection $D$. Since $\Bb$ and $\Tt$ are $D$-flat, $\rho_{\tot}$ takes its values in $\Lambda_\sigma^\infty G$. The map $f$ is equivariant with respect to the monodromy representation $\rho_{\tot}$, since the data come from $X$. Moreover, $f$ is smooth by the very definition of a subbundle of outgoing subspaces in an outgoing Krein bundle. 

\begin{lem}\label{reg_permap}
 Let $s$ be any real number greater than $1/2$. Then $f$ takes its values in $\Lambda_\sigma^s G/K$ and the monodromy $\rho_{\tot}$ takes its values in $\Lambda_\sigma^s G$.
\end{lem}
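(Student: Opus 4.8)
The plan is to make the developing map $f$ and the monodromy $\rho_{\tot}$ explicit through parallel transport of the connection $\tilde D$ on $\Kk$, and then to observe that this parallel transport solves a linear ODE whose coefficient is, as a function of $\lambda$, a Laurent polynomial of degrees $-1$, $0$, $1$ — hence an ODE inside the Hilbert-Lie group $\Lambda^s G$ for every $s>1/2$, whose solutions therefore inherit $H^s$-regularity in $\lambda$.

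Concretely: first I would use Theorem \ref{thm_fund} to write the variation as $F(\Ee,D,h)$ for a harmonic bundle, so that $\Kk = L^2(S^1,\Ee)$ and $(\tilde D g)(\lambda,\cdot) = D_\lambda g(\lambda,\cdot)$ with $D_\lambda = \nabla + \lambda^{-1}\theta + \lambda\theta^\ast$, $\nabla$ a metric connection and $\theta^\ast$ the $h$-adjoint of $\theta$. Pull everything back to $\tilde X$; since $\pi^\ast\Ee$ is smoothly trivial (being the pullback of a flat bundle to a simply connected manifold), a fibrewise Gram--Schmidt yields a global smooth $h$-orthonormal frame of $\pi^\ast\Ee$ over $\tilde X$, equal at $\tilde x_0$ to the standard basis of $\C^n$. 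In that frame $\pi^\ast\tilde D = d + A$, with $A = \omega + \lambda^{-1}\theta + \lambda\theta^\ast$ a smooth $1$-form on $\tilde X$ whose $\lambda$-Fourier expansion involves only the degrees $-1,0,1$: here $\omega$ is the $\uu(n)$-valued connection form of $\nabla$ and $\lambda^{-1}\theta + \lambda\theta^\ast$ is $h$-Hermitian for every $\lambda\in S^1$. In particular $A$ is a smooth $1$-form with values in $\Lambda_\sigma^s\gg$ for all $s>1/2$, the $\hat\sigma$-invariance of the assembled loop $\lambda\mapsto A_\lambda(v)$ being precisely the statement (already used in the proof of Theorem \ref{thm_fund}) that $D_\lambda$ and $D_{-\lambda}$ are $h$-adjoint.

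Next, for a piecewise-smooth path $c\colon[0,1]\to\tilde X$ based at $\tilde x_0$, the $\pi^\ast\tilde D$-parallel transport is the solution $P(c)$ of $\dot P(t) = -A(\dot c(t))\,P(t)$, $P(0)=e$. Because $\Lambda_\sigma^s\gg$ is a Banach-Lie algebra (Proposition \ref{prop_sobo}) and $t\mapsto A(\dot c(t))$ is continuous into it, I would read this as an ODE in the Hilbert-Lie group $\Lambda^s G$: its time-ordered exponential converges in the $H^s$-norm, for each fixed $\lambda$ it returns an invertible matrix (fibrewise parallel transport on a finite-rank bundle), so $P(c)\in\Lambda^s G$; and, the coefficient lying in the Lie algebra of the closed real form $\Lambda_\sigma^s G$, the flow stays in $\Lambda_\sigma^s G$. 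Standard smooth dependence on parameters gives smoothness in the endpoint $c(1)$. Taking $c$ to be a lift of a loop representing $g\in\pi_1(X)$ gives $\rho_{\tot}(g) = P(c)\in\Lambda_\sigma^s G$. Taking $c$ to end at an arbitrary $\tilde x$ and reading, in the chosen orthonormal frame, the $\pi^\ast\tilde D$-flat trivialization that defines $f$ — in that frame the canonical Krein form and shift operator of $\pi^\ast\Kk$ become the standard $B$ and $T$ on $H = L^2(S^1,\C^n)$ and $\pi^\ast\Ww_{\tilde x}$ becomes $L^2_+(S^1,\C^n)$ — identifies $f(\tilde x)$ with a class $P(c)^{\pm1}K$, which lies in $\Lambda_\sigma^s G/K$. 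The representative is pinned down modulo $K$ by Proposition \ref{thm_act}, so $H^s$-regularity of the class is well defined, and $\Lambda_\sigma^s G\subset\Lambda_\sigma^\infty G$, consistently with the a priori statement.

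The hard part will be the third step: making rigorous that the operator-valued parallel transport is genuinely an element of the Hilbert-Lie group $\Lambda^s G$, with the stated $\hat\sigma$-invariance and smooth dependence on the base point — that is, the convergence of the Dyson series in the $H^s$-norm and the fact that the flow does not leave the real form $\Lambda_\sigma^s G$. Everything else is bookkeeping: the essential input, that the $\lambda$-dependence of $\tilde D$ is polynomial of degree between $-1$ and $1$, is built into the definition of a harmonic bundle.
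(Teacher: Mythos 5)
Your proof is correct, but it runs the key regularity step differently from the paper. The paper stays in the finite-dimensional picture: for each fixed $\lambda\in S^1$ one has the flat connection $D_\lambda$, whose coefficients depend smoothly (even algebraically) on $\lambda$; the classical theorem on smooth dependence of ODE solutions on a parameter then shows that $\tilde D$-parallel transport preserves the smooth loops with values in $\C^n$, so a local lift of $f$ (and likewise $\rho_{\tot}$) takes values in $C^\infty$ loops, and the Sobolev characterization $C^\infty=\bigcap_s H^s$ gives the statement for all $s$ at once -- this is also what yields Remark \ref{rem_reg}, the holomorphic extension in $\lambda$ to $\C^\ast$, which your argument does not produce. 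You instead fix one $s>1/2$ and solve the single linear ODE $\dot P=-A(\dot c(t))\,P$ directly in the Banach--Lie group $\Lambda^s G$, using that $A=\omega+\lambda^{-1}\theta+\lambda\theta^\ast$ is a Laurent polynomial in $\lambda$ with smooth coefficients, hence a continuous curve in $\Lambda_\sigma^s\gg$. The step you flag as hard is in fact standard: $\Lambda^s\gg$ is a Banach algebra by Proposition \ref{prop_sobo}, so the Dyson series converges there and the solution is invertible (its inverse solves the transposed equation), hence lies in the open subset $\Lambda^s G$; and it stays in the twisted real form not because the series stays termwise in $\Lambda_\sigma^s\gg$ (which is only a real Lie subalgebra, not an associative subalgebra, so a termwise argument fails) but by uniqueness: $\hat\sigma(P)$ solves the same initial value problem since $d\hat\sigma$ fixes $A$, whence $\hat\sigma(P)=P$ and $P\in\Lambda_\sigma^s G$; smooth dependence on the endpoint is then ordinary Banach-space ODE theory, and the identification of $f(\tilde x)$ with the class of the transport modulo $K$ is bookkeeping, as you say. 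In short, both proofs reduce the lemma to regularity of the parallel-transport ODE; the paper's route is softer (finite-dimensional ODE theory plus Sobolev embedding) and gives a stronger conclusion (smooth, indeed holomorphically extendable, loops), while yours works intrinsically at a fixed Sobolev regularity and makes explicit why the solution never leaves $\Lambda_\sigma^s G$, a point the paper leaves implicit.
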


\begin{proof}
 By theorem \ref{thm_fund}, the variation comes from a harmonic bundle and the flat connection $D$ comes from a circle of flat connections. This circle of flat connections depends smoothly on the parameter $\lambda$ in $S^1$ (even algebraically). Hence, by the theorem about smooth dependence of solutions of ODE, the parallel transport of $D$ stabilizes the smooth loops with values in $\C^n$. This proves that a local lift of $f$ to $\Lambda_\sigma^\infty G$ takes its values in the smooth loop group. By the Sobolev embedding theorem, a map is smooth if and only if it belongs to all $H^s$; this concludes the proof for $f$. The proof for the monodromy is analogous.
\end{proof}

Thanks to this lemma, we will not need to consider the space $\Gr(H) = \Lambda_\sigma^\infty G/K$ anymore; we choose a number $s > 1/2$ and we write again $\Lambda_\sigma G/K$ for the period domain of loops with regularity $H^s$.

\begin{rem}\label{rem_reg}
 The same proof shows that the map $f$ takes its values in $\Lambda_\sigma^h G/K$, where $\Lambda_\sigma^h G$ is the subgroup of $\Lambda_\sigma G$ of loops which are restrictions of holomorphic maps from $\C^\ast$ to $G$.
\end{rem}

\begin{defn}
 The developing map $f: \tilde{X} \rightarrow \Dd$ of a variation of loop Hodge structures is called the \emph{period map} of the variation.
\end{defn}

\subsubsection{The tangent bundle of the period domain}

The following notations will be used in the whole section. The space $L^2(S^1,\C^n)$ with its structure of loop Hodge structure is written $H$. The period domain is $\Dd = \Lambda_\sigma G/K$ and the Lie algebra of $\LsG$ is $\Lsg$. When there is no possible confusion, the trivial bundles $Y \times H$ and $Y \times \Lsg$ over a complex manifold $Y$ will be simply written $H$ and $\Lsg$.

\begin{defn}
 The \emph{tautological bundle} over $\Dd$ is the complex Hermitian bundle of rank $n$ defined by $(H^{0,\Dd})_{W \in \Dd} = W \ominus^\perp TW$, where the orthogonal is taken with respect to the Krein metric. The \emph{tautological decomposition} of $H$ over $\Dd$ is the Hilbert sum decomposition
$$ H = \widehat{\bigoplus}_{i \in \Z} H^{i,\Dd}, $$
where $H^{i,\Dd} := T^i H^{0,\Dd}$. The \emph{tautological decomposition} of $\Lsg$ over $\Dd$ is the Hilbert sum decomposition
$$
 \Lsg = \widehat{\bigoplus}_{i \in \Z} \Lsg[i,\Dd],$$
where $\Lsg[i,\Dd] := \{f \in \Lsg \mid f(H^{0,\Dd}) \subset H^{i,\Dd}\} = \{f \in \Lsg \mid \forall j, f(H^{j,\Dd}) \subset H^{i+j,\Dd}\}$.
\end{defn}

\begin{prop}\label{tang_dom}
 The tangent bundle of $\Dd$ is canonically isomorphic to the bundle $\Lsg / \Lsg[0,\Dd]$.
\end{prop}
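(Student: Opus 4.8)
The plan is to exploit the homogeneity of $\Dd = \Lambda_\sigma G / K$ under $\Lambda_\sigma G$, reducing the statement to an identification of $K$-modules at the base point $W_0 = L^2_+(S^1,\C^n) \in \Dd$, which represents the coset $eK$. Since $\Dd$ is a homogeneous space of the Hilbert-Lie group $\Lambda_\sigma G$ with isotropy subgroup $K$ at $W_0$, its tangent bundle is the associated bundle $T\Dd \cong \Lambda_\sigma G \times_K (\Lambda_\sigma \gg / \kk)$, where $K$ acts on $\Lambda_\sigma \gg / \kk$ by the adjoint representation. On the other side, the bundle $\Lsg / \Lsg[0,\Dd]$ is $\Lambda_\sigma G$-equivariant (the group acts on $\Lsg$ by the adjoint action, which permutes the tautological pieces $\Lsg[i,\Dd]$ because it permutes the $H^{i,\Dd}$); so it too is an associated bundle $\Lambda_\sigma G \times_K (\Lsg / \Lsg[0,\Dd]_{W_0})$ for the fiber over $W_0$. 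Hence it suffices to produce a $K$-equivariant isomorphism of the two fibers over $W_0$ and check that the global isomorphism it induces is the expected one.

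First I would compute the fiber of $\Lsg[0,\Dd]$ over $W_0$ explicitly. Using the canonical model $H = L^2(S^1,\C^n)$ with $H^{i,\Dd_{W_0}} = \lambda^i \C^n$ (the Fourier degree $i$ piece), an element $f = \sum a_n \lambda^n \in \Lsg$ satisfies $f(H^{j,\Dd}) \subset H^{j,\Dd}$ for all $j$ precisely when multiplication by $f$ shifts Fourier degree by $0$, i.e. when $f$ is constant, $f = a_0$. Combined with the reality constraint defining $\Lambda_\sigma \gg$ (namely $\sigma(a_n) = (-1)^n a_{-n}$), a constant loop in $\Lambda_\sigma \gg$ is exactly a constant loop valued in $\kk$. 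Therefore $\Lsg[0,\Dd]_{W_0} = \kk$ (viewed as constant loops), and the fiber $\Lsg/\Lsg[0,\Dd]$ over $W_0$ is canonically $\Lambda_\sigma \gg / \kk$, which is precisely the tangent space $T_{W_0}\Dd$ as an $\mathrm{Ad}(K)$-module. This is the heart of the matter; the rest is bookkeeping.

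Next I would assemble the global statement. The natural map $\Lambda_\sigma G \times_K (\Lambda_\sigma \gg / \kk) \to \Lambda_\sigma G \times_K (\Lsg / \Lsg[0,\Dd]_{W_0})$ induced by the identity on fibers is a well-defined bundle isomorphism since the $K$-actions agree by the previous step, and under the standard identification of $\Lambda_\sigma G \times_K (\Lambda_\sigma\gg/\kk)$ with $T\Dd$ (differentiating the orbit map $g \mapsto g\cdot W_0$), it sends a tangent vector at $gK$ represented by $\xi \in \Lambda_\sigma\gg$ to the class of $\mathrm{Ad}(g)\xi$ in the fiber $\Lsg / \Lsg[g W_0,\Dd]$ — one checks directly that $\mathrm{Ad}(g)$ carries $\Lsg[0,\Dd]_{W_0} = \kk$ onto $\Lsg[0,\Dd]_{g W_0}$, because $\mathrm{Ad}(g)$ conjugates the grading operators $T^i$ appropriately and $g$ maps $H^{i,\Dd}_{W_0}$ to $H^{i,\Dd}_{g W_0}$. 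So the identification descends to the quotient bundles and is manifestly canonical (independent of the chosen base point, by equivariance).

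The main obstacle is not conceptual but a matter of carefully matching the two descriptions of $T\Dd$: namely, verifying that the abstract isomorphism $T(\Lambda_\sigma G/K) \cong \Lambda_\sigma G \times_K (\Lambda_\sigma\gg/\kk)$ is compatible with the \emph{infinite-dimensional} manifold structure on $\Dd$ (Hilbert-Lie group quotient), for which I would invoke the general theory of \cite{Lan} together with proposition \ref{thm_act}, which already gives $\Dd$ as the homogeneous space $\Lambda_\sigma G/K$ with the stabilizer correctly identified. Once that foundational identification is in hand, the computation $\Lsg[0,\Dd]_{W_0} = \kk$ above closes the argument, and I would also remark (for use in the next subsection) that under this isomorphism the holomorphic tangent bundle $T_{\mathrm{hol}}\Dd$ corresponds to the span of the pieces $\Lsg[i,\Dd]$ with $i < 0$, and the holomorphic horizontal bundle to $\Lsg[-1,\Dd] \bmod \Lsg[0,\Dd]$, consistently with the formula (\ref{compl_str}) for $J$.
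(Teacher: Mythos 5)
Your proposal is correct and follows essentially the same route as the paper: the paper likewise identifies $T\Dd$ with the associated bundle $\LsG \times_K \Lsg/\kk$ and defines the isomorphism by $[g : f] \mapsto \Ad(g)f + \Lsg[0,\Dd]$, the well-definedness resting on the equality $(\Lsg[0,\Dd])_{g\cdot o} = \Ad(g)\kk$, which is exactly your base-point computation $\Lsg[0,\Dd]_{W_0} = \kk$ transported by equivariance. Your explicit verification of that fiber computation is a welcome (and correct) clarification, but it is not a different argument.
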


\begin{proof}
 We recall that the tangent bundle of $\Dd$ is isomorphic to $\LsG \times_K \Lsg/\Lsg[0]$, as a $\LsG$-equivariant bundle. Here, $\Lsg[0]$ is the subspace of $\Lsg$ of functions with vanishing zero Fourier coefficient. In this representation, a point in $T\Dd$ over $g.o$, where $g$ is in $\LsG$ and $o = eK$ is the base-point, is given by
 $$ [g : f + \Lsg[0]],$$
 where $f$ is in $\Lsg$ and the equivalence relation is given by $[g : f + \Lsg[0]] = [gk : \Ad(k)^{-1} f + \Lsg[0]]$.\\

 We define a vector bundle map from $T \Dd$ to $\Lsg / \Lsg[0,\Dd]$ by
 $$ [g : f + \Lsg[0]] \mapsto \Ad(g)f + \Lsg[0,\Dd],$$
 over $g.o$. This is well-defined. Indeed, on the one hand, if one changes $g$ by $gk$ and $f$ by $\Ad(k^{-1}f)$, then $gk.o = g.o$ and $\Ad(gk)\Ad(k^{-1})f = \Ad(g)f$. On the other hand, one has the equality $(\Lsg[0,\Dd])_{g.o} = \Ad(g)\Lsg[0]$. Since the inverse of this vector bundle map is given by $\Ad(g^{-1})$ over $g.o$, this is an isomorphism.\\
\end{proof}

\begin{lem}
 In the identification $\Lsg / \Lsg[0,\Dd] \cong \widehat{\bigoplus}_{j \neq 0} \Lsg[j,\Dd]$, the complex structure on $T\Dd$ is given by multiplication by $i$ on $\widehat{\bigoplus}_{j < 0} \Lsg[j]_\Dd$ and by $-i$ on $\widehat{\bigoplus}_{j > 0} \Lsg[j]_\Dd$. The horizontal subbundle is given by $\Lsg[-1] \oplus \Lsg[1]$.
\end{lem}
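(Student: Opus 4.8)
The plan is to reduce both assertions to a computation in the fibre over the base point $o = eK$, using $\LsG$-equivariance, and then to read them off directly from equation (\ref{compl_str}) and from the definition of the holomorphic horizontal bundle.

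First I would observe that everything in sight is $\LsG$-equivariant: the complex structure $J$ on $T\Dd$ (it is inherited from the $\LsG$-equivariant open embedding $\Dd \hookrightarrow \Ddc$), the horizontal distribution (since $\Lambda^{\geq -1}\gg$ is stable under $\Ad(\Lambda^+ G)$), the isomorphism $T\Dd \cong \Lsg/\Lsg[0,\Dd]$ of Proposition \ref{tang_dom}, and the tautological decomposition $\Lsg = \widehat{\bigoplus}_j \Lsg[j,\Dd]$. Hence it is enough to identify $J$ and the horizontal subspace in the fibre over $o$.

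Next, over $o$ the tautological pieces $\Lsg[j,\Dd]$ are the homogeneous components for the Fourier grading, and the isomorphism of Proposition \ref{tang_dom} sends a tangent vector at $o$ to the tuple of Fourier components of nonzero degree of a representing loop; this is exactly the identification $\Lsg/\Lsg[0,\Dd] = \widehat{\bigoplus}_{j\neq 0}\Lsg[j,\Dd]$ of the statement. Now $J$ at $o$ is, by construction, the operator of equation (\ref{compl_str}), which multiplies the Fourier component of degree $n$ by $i$ when $n<0$ and by $-i$ when $n>0$; comparing the two descriptions gives, at $o$ and hence everywhere, that $J$ acts by $i$ on $\widehat{\bigoplus}_{j<0}\Lsg[j,\Dd]$ and by $-i$ on $\widehat{\bigoplus}_{j>0}\Lsg[j,\Dd]$. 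For the horizontal distribution I would use that $T_{h,\text{hol}}\Ddc = \Lambda G \times_{\Lambda^+ G}\Lambda^{\geq -1}\gg/\Lambda^+\gg$, so that over $o$ its holomorphic part corresponds, inside $T_{\text{hol},o}\Ddc \cong \Lambda\gg/\Lambda^+\gg$, to the image of $\Lambda^{\geq -1}\gg$, i.e. to the degree $-1$ Fourier component, which under the identification above is $\Lsg[-1,\Dd]$; since the horizontal distribution is real, it also contains the conjugate piece, which the reality constraint defining $\Lambda_\sigma\gg$ identifies with the degree $+1$ component $\Lsg[1,\Dd]$. Thus the horizontal subbundle is $\Lsg[-1,\Dd]\oplus\Lsg[1,\Dd]$.

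The hard part will not be any computation but the bookkeeping: one has to check that the three identifications in play — the open embedding $\Dd \hookrightarrow \Ddc$, the isomorphism of Proposition \ref{tang_dom}, and the tautological grading of $\Lsg$ — are mutually compatible with the Fourier grading, so that a negative Fourier degree on the compact-dual side really corresponds to the pieces $\Lsg[j,\Dd]$ with $j<0$ and the distinguished horizontal direction to the degree $-1$ component (rather than its conjugate). This is where a sign, or an exchange of $\Lsg[1,\Dd]$ with $\Lsg[-1,\Dd]$, could slip in, so I would pin down each of these identifications explicitly on Fourier components before concluding.
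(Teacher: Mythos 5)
Your proposal is correct and follows essentially the same route as the paper's own proof: check the statement in the fibre over the base point $o$ (where equation \eqref{compl_str} and the definition of $T_{h,\text{hol}}\Ddc$ give it directly), and then propagate it by $\LsG$-equivariance through the explicit isomorphism of proposition \ref{tang_dom}. The extra bookkeeping you flag about compatibility of the identifications is exactly what the paper's appeal to the explicit isomorphism is meant to settle, so there is no substantive difference.
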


\begin{proof}
 This is true at the base-point and both the complex structure and the horizontal subbundle are stable by the $\LsG$ action. By the explicit isomorphism given in the proof of proposition \ref{tang_dom}, this concludes the proof.
\end{proof}

\subsubsection{Differential of a smooth map to the period domain} \label{dif_permap}

Let $f: X \rightarrow \Dd$ be a smooth map. The trivial bundle $H$ over $X$ has a canonical decomposition $H = \widehat{\bigoplus}_{i \in \Z} H^{i,X}$ given by pullback of the decomposition of $\Dd \times H$. We also have a decomposition of the trivial bundle $X \times \Lsg$. Let $d$ be the trivial connection on the bundle $H$ over $X$. We denote by $\pi$ the vector bundle projection $\pi: H \rightarrow H/H^{0,X}$. Let 
$$\iota: \Lsg/\Lsg[0,X] \cong \Hom(H^{0,X},H/H^{0,X})$$
be the canonical isomorphism of vector bundles, given by
$$\iota: f+ \in \Lsg \mapsto (h \in H^{0,X} \mapsto f(h) + H^{0,X}).$$
Finally, let $\beta$ be the differential operator $\pi \circ d_{|H^{0,X}}$ from $H^0_X$ to $H/H^0_X$.

\begin{prop}\label{beta}
The differential operator $\beta$ is a tensor, that is a $1$-form with values in $\Hom(H^{0,X}, H/H^{0,X})$. Moreover, if one considers the differential of $f$ to be a $1$-form with values in $\Lsg/\Lsg[0,X]$ (by proposition \ref{tang_dom}), then
$$ \beta = \iota \circ df.$$
\end{prop}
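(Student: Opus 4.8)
The plan is to work at the level of trivialized bundles and unwind all the identifications explicitly, reducing everything to a pointwise statement at each $x \in X$. First I would fix $x \in X$ and set $W := f(x) \in \Dd$, viewed as an outgoing subspace of $H$; choose $g \in \Lambda_\sigma G$ with $g \cdot o = W$, so that $W = g \cdot L^2_+(S^1,\C^n)$ and more generally $H^{i,X}_x = \operatorname{Ad}(g)(T^i)(\ldots)$, i.e.\ $H^{i,X}_x = g\,(T^i L^2_+/T^{i+1}L^2_+ \text{-component})$. Concretely $H^{i,X}_x = g\cdot H^{i}$ where $H^i = T^i H^0$ is the tautological decomposition at the base-point. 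The key point from the proof of Proposition \ref{tang_dom} is that $df_x$, as a $1$-form with values in $\Lsg/\Lsg[0,X]$, is represented by $[g : \eta + \Lsg[0]] \mapsto \operatorname{Ad}(g)\eta + \Lsg[0,X]$, where $\eta \in \Lsg$ is a lift of the $\Lambda_\sigma G/K$-valued differential of $f$ at $x$ in the homogeneous-space presentation.

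Next I would compute $\beta_x$ directly. Pick a tangent vector $v \in T_x X$ and a local section $h$ of $H^{0,X}$ near $x$; then $\beta_x(v)(h_x) = \pi_x\big((d_v h)(x)\big)$, where $d$ is the trivial connection and $\pi_x : H \to H/H^{0,X}_x$ the projection. Writing $H^{0,X} = g \cdot H^0$ locally, with $g$ now a local lift $X \to \Lambda_\sigma G$ of $f$ (which exists since $K \to \Lambda_\sigma G \to \Dd$ is a locally trivial principal bundle), we may take $h = g\cdot h_0$ for a \emph{constant} section $h_0$ of $H^0$. Then $d_v h = (d_v g)\, h_0 = (d_v g)\, g^{-1}\, h$, so $\beta_x(v)(h_x) = \pi_x\big((d_v g) g^{-1} h_x\big)$. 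Setting $\xi := (d_v g)g^{-1} \in \Lsg$ (the right-logarithmic derivative), this says $\beta_x(v) = \pi_x \circ \xi|_{H^{0,X}_x}$, i.e.\ $\beta_x(v)$ is the image of $\xi$ under the canonical map $\Lsg \to \Lsg/\Lsg[0,X]_x \xrightarrow{\iota} \Hom(H^{0,X}_x, H/H^{0,X}_x)$. In particular $\beta$ is the composition of $\iota$ with a genuine pointwise-linear object, so $\beta$ is a tensor, not merely a differential operator — this handles the first assertion.

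Finally I would match $\xi = (d_v g)g^{-1}$ with the expression for $df$. The differential of the $\Dd$-valued map $f$ in the presentation $T\Dd \cong \Lambda_\sigma G \times_K \Lsg/\Lsg[0]$ is exactly: differentiate the local lift $g$, form $g^{-1} d g \in \Lsg$ (or equivalently $(dg)g^{-1}$ after $\operatorname{Ad}(g)$), and project to $\Lsg/\Lsg[0]$; composing with the identification of Proposition \ref{tang_dom} (which applies $\operatorname{Ad}(g)$) yields precisely the class of $(dg)g^{-1} = \xi$ in $\Lsg/\Lsg[0,X]_x$. Hence $\iota(df_x(v)) = \iota(\xi + \Lsg[0,X]_x) = \pi_x \circ \xi|_{H^{0,X}_x} = \beta_x(v)$, which is the desired equality $\beta = \iota \circ df$. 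The one genuinely delicate point, and the step I expect to be the main obstacle, is keeping the bookkeeping of the two identifications straight: the isomorphism of Proposition \ref{tang_dom} twists by $\operatorname{Ad}(g)$, and the tautological decomposition of $\Lsg$ over $\Dd$ is likewise the $\operatorname{Ad}(g)$-transport of $\Lsg[i]$, so one must check that these two twists cancel correctly and that the choice of local lift $g$ (well-defined only up to right multiplication by $K$-valued functions) does not affect $\xi$ modulo $\Lsg[0,X]$ — which holds because changing $g \mapsto gk$ changes $(dg)g^{-1}$ by an $\operatorname{Ad}(g)(\mathfrak{k})$-valued term, and $\operatorname{Ad}(g)\mathfrak{k} \subset \Lsg[0,X]_x$. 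Once that compatibility is verified the identity is immediate.
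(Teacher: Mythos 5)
Your proof is correct and takes essentially the same route as the paper: choose a local lift $g$ of $f$, compute $\beta$ on sections written through $g$ to find that it is the class of $(dg)g^{-1}$ in $\Lsg/\Lsg[0,X]$, and identify this with $\iota\circ df$ via the $\Ad(g)$-twisted isomorphism of proposition \ref{tang_dom}, using $\Ad(g)\big(g^{-1}dg\big)=(dg)g^{-1}$. The only difference is cosmetic: the paper computes with an arbitrary smooth $\C^n$-valued $h$ (so the Leibniz term $g\,dh$ lies in $H^{0,X}$ and is killed by $\pi$, giving tensoriality at once), whereas you take $h$ constant and should add that same one-line observation to see that $\beta_x(v)$ applied to a general section depends only on its value at $x$; your extra check that the class of $(dg)g^{-1}$ is independent of the choice of lift is correct, though not strictly needed.
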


\begin{proof}
Let $g$ be a local lift of $f$ to $\LsG$. We write $\omega$ for the left Maurer-Cartan form in $\LsG$. In the homogeneous representation $T\Dd = \LsG \times_K \Lsg/\Lsg[0]$, $df$ is given by
$$ d_x f = [g(x) : (g^\ast \omega)_x + \Lsg[0]].$$

By the proof of proposition \ref{tang_dom}, as a $1$-form with values in $\Lsg/\Lsg[0,\Dd]$, $df$ is thus given by
$$ d_x f = \Ad(g(x)) (g^\ast \omega)_x + (\Lsg[0,\Dd])_x.$$

On the other hand, we need to compute $\beta$. Let $s$ be a local smooth section of $H^{0,X}$. It can be written $s(x) = g(x)h(x)$, where $h$ is a smooth map with values in $\C^n \subset H$. Then
$$ d_x s = (d_x g)h(x) + g(x)(d_x h).$$
Hence modulo $(H^{0,X})_x$, $d_x s = (d_x g)h(x) = (d_x g)g^{-1}(x)s(x)$. In particular, $\beta$ is a tensor, given by
$$ \beta_x(s(x)) = (d_x g)g^{-1}(x)s(x) + (H^{0,X})_x,$$
as a $1$-form with values in $\Hom(H^{0,X},H/H^{0,X})$.

Since $\iota \circ df$ is given by 
$$ (\iota \circ df)_x (s(x)) = \Ad(g(x)) (g^\ast \omega)_x (s(x)) + (H^{0,X})_x,$$
it is sufficient to show that $\Ad(g(x))(g^\ast \omega)_x = (d_x g)g^{-1}(x)$. This follows from $(g^\ast \omega)_x = g(x)^{-1} d_x g$ and concludes the proof.
\end{proof}

\subsubsection{Differential of the period map}

Let $(\Ee,D,h)$ be a harmonic bundle over a simply-connected manifold $X$ and let $f: X \rightarrow \Dd$ be the period map. Let $(L^2(S^1,\Ee),L^2_+(S^1,\Ee),\tilde{D})$ be the associated variation of loop Hodge structures. By parallel transport, there is an isomorphism of variations of loop Hodge structures
$$
\gamma : (L^2(S^1,\Ee),\tilde{D}) \rightarrow (H,d)
$$
where the outgoing subbundle in $H$ is given by $\widehat{\bigoplus}_{i \in \N} H^i_{\tilde{X}}$. The differential of $f$ takes its values in the bundle $\Hom(H^{0,X},H/H^{0,X})$ which, by $\iota$, is isomorphic to $\Lsg/\Lsg[0,X]$. We omit the isomorphism $\iota$ in what follows.

On the other hand, the $1$-form $\alpha = \lambda^{-1}\theta + \lambda\theta^\ast$ takes its values in the bundle $\Lambda_\sigma\End(\Ee)/\Lambda_\sigma^0\End(\Ee)$. Here, $\Lambda_\sigma\End(\Ee)$ is the Hilbert subbundle of $H^s(S^1,\End(\Ee))$ whose elements are the loops $\gamma$ with values in $\End(\Ee)$, satisfying $\sigma(\gamma(\lambda)) = \gamma(-\lambda)$, where $\sigma$ is the Cartan involution induced from the harmonic metric $h$. The quotient $\Lambda_\sigma\End(\Ee)/\Lambda_\sigma^0\End(\Ee)$ is a complex Hilbert bundle.

The isomorphism $\gamma$ induces an isomorphism $\gamma: \Lambda_\sigma\End(\Ee)/\Lambda_\sigma^0\End(\Ee) \cong \Lsg/\Lsg[0,X]$ of complex Hilbert bundles.
\begin{prop}\label{expr_diff_per}
 The equality $df = \gamma \circ \alpha$ holds.
\end{prop}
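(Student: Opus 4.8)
The plan is to evaluate $df$ by means of the formula $\beta=\iota\circ df$ of Proposition \ref{beta}, after carrying the whole picture across the parallel-transport isomorphism $\gamma$ onto the concrete bundle $L^2(S^1,\Ee)$, where the flat connection is explicitly $\tilde{D}=\tilde{\nabla}+\lambda^{-1}\theta+\lambda\theta^\ast$ and everything can be computed Fourier coefficient by Fourier coefficient.

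The first step is bookkeeping. Trivializing $\tilde{D}$ by $\gamma$ turns the outgoing subbundle $L^2_+(S^1,\Ee)$ into the subbundle of outgoing subspaces of the trivial Krein bundle $H$ that, by Proposition \ref{thm_act}, defines the period map $f$. Since $\gamma$ is an isomorphism of variations of loop Hodge structures it intertwines $\Tt$ with the right-shift on $H$; hence it carries the Fourier-degree-zero summand $\Ee\subset L^2(S^1,\Ee)$ onto the tautological bundle $H^{0,X}$, each summand $\Tt^i\Ee$ onto $H^{i,X}$, and therefore $L^2(S^1,\Ee)/\Ee$ onto $H/H^{0,X}$ and $\Lambda_\sigma\End(\Ee)/\Lambda_\sigma^0\End(\Ee)$ onto $\Lsg/\Lsg[0,X]$. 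I would check, directly from the definitions, that these identifications intertwine the two incarnations of the isomorphism $\iota$ of subsection \ref{dif_permap} --- this is immediate since $\iota$ is ``apply the endomorphism-valued loop to a vector of $H^{0,X}$ and project to the quotient'', while $\gamma$ acts on endomorphism-valued loops by conjugation and on the quotient by the induced map.

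Granting this, Proposition \ref{beta} gives $\iota\circ df=\beta=\pi\circ d_{|H^{0,X}}$, with $d$ the trivial connection on $H$ and $\pi\colon H\to H/H^{0,X}$ the projection. Because $\gamma^\ast d=\tilde{D}$ and $\gamma$ respects the tautological decompositions, transporting this identity to $L^2(S^1,\Ee)$ shows that $\iota\circ df$ corresponds under $\gamma$ to the tensor $\Ee\to L^2(S^1,\Ee)/\Ee$ sending a local section $s$ of $\Ee$ to $\tilde{D}s \bmod \Ee$. Now $\tilde{\nabla}$ is the naive extension of a metric connection on $\Ee$, so it acts on each Fourier coefficient separately and in particular preserves the Fourier degree; hence $\tilde{\nabla}s$ is again a section of $\Ee\otimes\Aa^1$ and vanishes modulo $\Ee$, leaving $\tilde{D}s\equiv\lambda^{-1}\theta s+\lambda\theta^\ast s=\alpha\cdot s\pmod{\Ee}$. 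This last tensor is by definition $\iota$ applied to the class of $\alpha$ in $\Lambda_\sigma\End(\Ee)/\Lambda_\sigma^0\End(\Ee)$. Putting the two computations together, $\iota\circ df$ and $\iota\circ(\gamma\circ\alpha)$ coincide, and since $\iota$ is an isomorphism we conclude $df=\gamma\circ\alpha$.

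The genuinely routine ingredients are the explicit shape of $\tilde{D}$ and the degree-preservation of $\tilde{\nabla}$. The point that deserves the most care --- and the one I expect to be the main obstacle --- is the compatibility of $\gamma$ with all the identifications in the second paragraph: one must be sure that the tautological bundle $H^{0,X}$, its quotient $H/H^{0,X}$, and the endomorphism bundle $\Lambda_\sigma\End(\Ee)/\Lambda_\sigma^0\End(\Ee)$ are matched with the corresponding objects pulled back from $\Dd$ in a way strictly compatible with $\iota$, so that the equality $\beta=\iota\circ df$ of Proposition \ref{beta} can be pushed through $\gamma$ with no indexing or sign error.
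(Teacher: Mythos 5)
Your proof is correct and follows essentially the same route as the paper: both start from $\beta=\iota\circ df=\pi\circ d_{|H^{0,X}}$ (Proposition \ref{beta}), transport it through the flat isomorphism $\gamma$ so that $d$ becomes $\tilde{D}=\tilde{\nabla}+\lambda^{-1}\theta+\lambda\theta^\ast$, and observe that the projection kills the degree-preserving part $\tilde{\nabla}$, leaving $\alpha$. Your write-up merely makes explicit the compatibility of $\gamma$ with the tautological decompositions and the isomorphism $\iota$, which the paper leaves implicit.
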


\begin{proof}
 We recall that $\beta$ was defined by $\beta := \pi \circ d_{|H^0_X}$ and that $\beta = \iota \circ df$. Since the flat connection $d$ on $H$ is sent by $\gamma^{-1}$ to the flat connection $\tilde{D} = \nabla + \alpha$, the form $\gamma^{-1} \circ \beta$ is equal to the projection of $\tilde{D}$ on $\Lambda_\sigma\End(\Ee)/\Lambda_\sigma^0\End(\Ee)$, that is to $\alpha$. Hence, $\beta = \gamma \circ \alpha$, which concludes the proof, since we omit the isomorphim $\iota$.
\end{proof}

\begin{defn}
 Let $(\Ee,D,h)$ be a harmonic bundle over a complex manifold $X$. The \emph{total monodromy} of the harmonic bundle is the monodromy of the flat connection $\tilde{D}$ in its associated variation of Hodge structures. We write it $\rho_{\text{tot}}: \pi_(X) \rightarrow \Lambda_\sigma G$ (that it takes its values in this group comes from propositions \ref{comm_shift} and \ref{B_isom}).
\end{defn}

We can now show that the period map share the properties of the period maps in classical Hodge theory.

\begin{thm}\label{holo_permap}
 The period map induced from a harmonic bundle over $X$ is holomorphic, horizontal and $\rho_{\text{tot}}$-equivariant. Conversely, let $\rho_{\text{tot}}: \pi_1(X) \rightarrow \LsG$ and let $f: \tilde{X} \rightarrow \LsG/K$ be a holomorphic, horizontal and $\rho_{\text{tot}}$-equivariant map that satisfies $f(\tilde{x}_0) = eK$. Then, the datum $(H,d,\widehat{\bigoplus}_{i\in\N} H^i_{\tilde{X}})$ defines a variation of loop Hodge structures on $\tilde{X}$ that descends to one on $X$.
\end{thm}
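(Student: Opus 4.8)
The plan is to prove the two implications separately; both are local restatements of the identity $df = \gamma \circ \alpha$ from Proposition~\ref{expr_diff_per}, where $\alpha = \lambda^{-1}\theta + \lambda\theta^\ast$, while the equivariance is a formal consequence of the construction of the period map. For the direct implication I would pull the harmonic bundle back to $\tilde X$ and work there. The $\rho_{\text{tot}}$-equivariance of $f$ is part of the very definition of the period map (the discussion preceding Lemma~\ref{reg_permap}), using that $\Bb$ and $\Tt$ are $D$-flat so that $\rho_{\text{tot}}$ takes values in $\LsG$. For horizontality I would note that $\alpha$ involves only the Fourier degrees $-1$ and $+1$, so $df = \gamma\circ\alpha$ is valued in the subbundle $\Lsg[-1] \oplus \Lsg[1]$ of $T\Dd$, which is precisely the holomorphic horizontal subbundle by the lemma following Proposition~\ref{tang_dom}. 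For holomorphicity I would use that $\theta$ has type $(1,0)$ and $\theta^\ast$ type $(0,1)$: the $(1,0)$-component of $df$ is $\gamma\circ(\lambda^{-1}\theta)$, of Fourier degree $-1$, hence valued in the $(+i)$-eigenbundle $T_{\text{hol}}\Dd$ of the complex structure~(\ref{compl_str}), while the $(0,1)$-component $\gamma\circ(\lambda\theta^\ast)$ has Fourier degree $+1$ and lies in the $(-i)$-eigenbundle; thus $df$ commutes with the two complex structures and $f$ is holomorphic.

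For the converse I would start from $f$ and let $\Ww$ be the pullback by $f$ of the tautological outgoing subbundle of $H$, so that over $\tilde X$ it is $\widehat{\bigoplus}_{i\in\N} H^i_{\tilde X}$. Since $f$ takes values in $\Dd = \LsG/K$, whose points are outgoing subspaces of $H$ (Proposition~\ref{thm_act}), each fibre $\Ww_{\tilde x} = f(\tilde x)$ is an outgoing subspace; a local lift of $f$ to $\LsG$ is then a gauge transformation trivializing $\Ww$ to a constant outgoing subspace, and since $\LsG$ preserves $\Bb$ and $\Tt$ this exhibits $(H,\Bb,\Tt,\Ww)$ as a family of loop Hodge structures of virtual dimension $n$. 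The connection $d$ is trivial and $\Bb$, $\Tt$ are constant, hence $d$-flat, so it remains to check the two differential constraints of Definition~\ref{VNCHS}. For this I would invoke Proposition~\ref{beta}: the tensor $\beta = \pi\circ d_{|H^0_{\tilde X}}$ equals $\iota\circ df$, so horizontality of $f$ forces $d(H^0_{\tilde X}) \subset \Aa^1 \otimes (H^{-1}_{\tilde X} \oplus H^0_{\tilde X} \oplus H^1_{\tilde X})$, and holomorphicity refines this to $\bar\d(H^0_{\tilde X}) \subset \Aa^{0,1} \otimes (H^0_{\tilde X} \oplus H^1_{\tilde X})$. Since $\Tt$ is $d$-flat and $F^p H = \Tt^p\Ww = \widehat{\bigoplus}_{i\geq p} H^i_{\tilde X}$, applying $\Tt^i$ and taking Hilbert sums upgrades these inclusions to $D: F^p H \to \Aa^1\otimes F^{p-1}H$ and $\bar\d: F^p H \to \Aa^{0,1}\otimes F^p H$, which by the remark following Definition~\ref{VNCHS} are the constraints~\ref{diff1} and~\ref{diff2}. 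Finally, the equivariance of $f$ and the fact that $\rho_{\text{tot}}$ is valued in $\LsG$ make the whole datum $\pi_1(X)$-equivariant for the action $\gamma\cdot(\tilde x,v) = (\gamma\tilde x,\rho_{\text{tot}}(\gamma)v)$, so the variation descends to $X$; combined with Theorem~\ref{thm_fund}, the two implications together identify harmonic bundles on $X$ with holomorphic, horizontal, equivariant period maps.

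The step I expect to require the most care is the passage, in the converse, from the homogeneous-space picture to the filtered-bundle picture: Proposition~\ref{beta} records only the restriction of $d$ to $H^0_{\tilde X}$ and modulo $H^0_{\tilde X}$, so one genuinely needs $\Tt$-flatness to recover the Griffiths-type conditions on the whole Hodge filtration, and some attention must be paid to the completed (Hilbert) direct sums and to the continuity of $d$ for the Hilbert topology. In the direct implication the only delicate point is the sign bookkeeping in~(\ref{compl_str}), ensuring that the $(1,0)$-part of $\alpha$, of negative Fourier degree, indeed lands in $T_{\text{hol}}\Dd$ and not in its conjugate.
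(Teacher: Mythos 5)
Your proposal is correct and follows essentially the same route as the paper: both rest on the identity $df=\gamma\circ\alpha$ of Proposition~\ref{expr_diff_per}, deduce horizontality from the fact that $\alpha=\lambda^{-1}\theta+\lambda\theta^\ast$ has Fourier degrees $\pm1$ only, deduce holomorphy from the compatibility of the types of $\theta,\theta^\ast$ with the complex structure of equation~(\ref{compl_str}) (your eigenbundle phrasing is equivalent to the paper's computation $J\alpha=\alpha\circ J_{T\tilde X}$), and obtain equivariance and descent formally from the construction. Your converse merely spells out, via Proposition~\ref{beta} and the $\Tt$-flatness, the ``same analysis'' that the paper invokes in one line, so it is a more detailed version of the same argument rather than a different one.
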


\begin{proof}
The claim about the monodromy is clear so we can work on the universal cover $\tilde{X}$. Since the isomorphism $\gamma$ respects all the structures, it is sufficient to check that the $1$-form $\alpha$ lives in the horizontal subbundle of $\Lambda_\sigma\End(\Ee)/\Lambda_\sigma^0\End(\Ee)$ and commutes with the complex structure. As before, the horizontal subbundle is given by the loops in $\Lambda_\sigma\End(\Ee)$ that have only Fourier coefficients in degrees $-1$ and $1$. Hence, $\alpha$ is horizontal. Since $\theta$ (resp. $\theta^\ast$) is a holomorphic (resp. anti-holomorphic) section of $\End(\Ee)$, the computation of $J\alpha$ gives:
\begin{align*}
 J\alpha &= \lambda^{-1} J_{\End(\Ee)} \circ \theta - \lambda J_{\End(\Ee)} \circ \theta^\ast \text{\,\,\, by equation \eqref{compl_str}}\\
&= \lambda^{-1} \theta \circ J_{T\tilde{X}} + \lambda \theta^\ast \circ J_{T\tilde{X}} \\
&= \alpha \circ J_{T\tilde{X}},
\end{align*}
which proves the claim.

For the converse, the same analysis shows that $(H,d,\widehat{\bigoplus}_{i\in\N} H^i_{\tilde{X}})$ defines a variation of loop Hodge structures on $\tilde{X}$ and it descends to $X$ since the map $f$ is equivariant under $\rho_{\text{tot}}$.
\end{proof}

\subsubsection{Holomorphic differential}

We consider the trivial complex Hilbert bundle $\Lg$ over $\Dd$. There is a tautological filtration on $\Lg$ given by $(F^{i,\Dd} \Lg)_{W \in \Dd} = \{f \in \Lg \mid f(W) \subset T^i W\}$. This is a holomorphic filtration since it is homogenous with respect to $\LsG$: indeed, $F^{i,\Dd} \Lg = \LsG \times_K F^i \Lg$, where $F^i \Lg$ is the subspace of $\Lg$ of loops having Fourier coefficients in degrees $\geq i$.

The holomorphic tangent space of $\Dd$ is canonically isomorphic to $\Lg/F^{0,\Dd} \Lg$ and the horizontal subspace to $F^{-1,\Dd} \Lg/F^{0,\Dd} \Lg$. If $f: \tilde{X} \rightarrow \Dd$ is the period map of a harmonic bundle, the holomorphic differential of $f$ thus takes its values in $F^{-1,\tilde{X}} \Lg / F^{0,\tilde{X}} \Lg$, with obvious notations. As before, the isomorphism $\gamma$ induces an isomorphism of holomorphic vector bundles 
$$F^{-1,\tilde{X}} \Lg / F^{0,\tilde{X}} \Lg \cong L^{2,\geq -1}(S^1,\End(\Ee))/L^{2,\geq 0}(S^1,\End(\Ee)),$$ 
where $L^{2,\geq i}(S^1,\End(\Ee))$ is the subbundle of $L^2(S^1,\End(\Ee))$ of loops whose Fourier coefficients are $\geq i$. There is an isomorphism 
$$j: \End(\Ee) \rightarrow L^{2,\geq -1}(S^1,\End(\Ee))/L^{2,\geq 0}(S^1,\End(\Ee))$$ given by
$$ j: \nu \in \End(\Ee)) \mapsto \lambda^{-1}\nu + L^{2,\geq 0}(S^1,\End(\Ee)).$$

\begin{lem}
 The map $j$ is an isomorphism of holomorphic vector bundles, where $\End(\Ee)$ is endowed with its Higgs bundle structure. 
\end{lem}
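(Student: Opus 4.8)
The plan is to make explicit the holomorphic structure on the target bundle $L^{2,\geq -1}(S^1,\End(\Ee))/L^{2,\geq 0}(S^1,\End(\Ee))$ and then to check that $j$ carries the Dolbeault operator of the holomorphic bundle underlying the Higgs bundle $\End(\Ee)$ onto it. Once the holomorphic structure is identified, the verification is pure bookkeeping of Fourier degrees and of the types of forms.

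First I would recall that, over $\Dd$ (and over $\Ddc$), the bundle $\Lg$ is the trivial flat bundle: it sits inside $\End(H)$, where $H = \Dd \times L^2(S^1,\C^n)$ is the trivial flat Hilbert bundle with connection $d$, and its holomorphic structure is the associated one $d^{0,1}$, for which the tautological filtration $F^{\bullet,\Dd}\Lg$ is holomorphic (and Griffiths-transversal). Since the period map $f$ is holomorphic by Theorem \ref{holo_permap} and $\gamma$ is obtained by parallel transport, the induced isomorphism $\gamma$ intertwines the flat connection $\tilde{D}^{\End} = \nabla^{\End} + \lambda^{-1}[\theta,\cdot] + \lambda[\theta^\ast,\cdot]$ on $L^2(S^1,\End(\Ee))$ (the connection induced on endomorphisms by $\tilde{D}$) with the trivial flat connection on $f^\ast\Lg$; hence the holomorphic structure transported by $\gamma$ onto $L^2(S^1,\End(\Ee))$ is the $(0,1)$-part $(\tilde{D}^{\End})^{0,1}$.

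Next I would compute this $(0,1)$-part. Since $\theta$ is a $(1,0)$-form and $\theta^\ast$ a $(0,1)$-form, the operator $[\theta,\cdot]$ has type $(1,0)$ and $[\theta^\ast,\cdot]$ type $(0,1)$, so $(\tilde{D}^{\End})^{0,1} = \bar{\partial}^{\End(\Ee)} + \lambda[\theta^\ast,\cdot]$, where $\bar{\partial}^{\End(\Ee)}$ is the Dolbeault operator on $\End(\Ee)$ induced by the Koszul--Malgrange holomorphic structure $\bar{\partial}$ of $\Ee$; this is precisely the Dolbeault operator underlying the Higgs bundle $(\End(\Ee),\bar{\partial}^{\End(\Ee)},[\theta,\cdot])$. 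Now $\bar{\partial}^{\End(\Ee)}$ preserves the Fourier degree while $\lambda[\theta^\ast,\cdot]$ raises it by one, which confirms that each $L^{2,\geq i}(S^1,\End(\Ee))$ is a holomorphic subbundle; on the quotient $L^{2,\geq -1}/L^{2,\geq 0}$, whose local sections are represented by loops $\lambda^{-1}\nu$ with $\nu$ a local section of $\End(\Ee)$, the operator descends to $\nu \mapsto \lambda^{-1}\bar{\partial}^{\End(\Ee)}\nu$, because $\lambda[\theta^\ast,\nu]$ has Fourier degree $0$ and vanishes in the quotient.

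It then follows that $j\colon \nu \mapsto \lambda^{-1}\nu + L^{2,\geq 0}(S^1,\End(\Ee))$ intertwines $\bar{\partial}^{\End(\Ee)}$ with the holomorphic structure on $L^{2,\geq -1}/L^{2,\geq 0}$; being moreover $\C$-linear, smooth, and fiberwise bijective, it is an isomorphism of holomorphic vector bundles. The one delicate point is the first step: justifying that the holomorphic structure transported by $\gamma$ is $(\tilde{D}^{\End})^{0,1}$, i.e.\ that the tautological holomorphic structure on $\Lg$ over $\Dd$ is the one inherited from the trivial flat bundle $H$ via the open embedding $\Dd \subset \Ddc$; after that, the argument is routine.
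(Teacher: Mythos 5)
Your proof is correct and follows essentially the same route as the paper: the paper likewise identifies the holomorphic structure on $L^{2,\geq -1}(S^1,\End(\Ee))/L^{2,\geq 0}(S^1,\End(\Ee))$ as the one induced by the $(0,1)$-part of the flat connection $\tilde{D}$ on $L^2(S^1,\End(\Ee))$, namely $\nabla^{(0,1)} + \lambda\theta^\ast$ (acting by commutators on endomorphisms), and observes that the degree-raising term $\lambda\theta^\ast$ dies in the quotient, leaving exactly the Dolbeault operator of the Higgs bundle $\End(\Ee)$. Your extra care about why the transported holomorphic structure is $(\tilde{D}^{\End})^{0,1}$ is a welcome elaboration of what the paper takes for granted, not a different argument.
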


\begin{proof}
 We consider an arbitrary harmonic bundle $(\Ff,D = \nabla + \theta + \theta^{\ast},h)$. The $\bar{\d}$-operator of $\Ff$, seen as a Higgs bundle, is $\nabla^{(0,1)}$. On the other hand, $L^2(S^1,\Ff)$ is endowed with a flat connection $\tilde{D} = \nabla + \lambda^{-1}\theta + \lambda\theta^\ast$. The induced $\bar{\d}$-operator is its $(0,1)$-part, that is $\nabla^{(0,1)} + \lambda \theta^\ast$. When acting on the quotient $L^{2,\geq -1}(S^1,\Ff)/L^{2,\geq 0}(S^1,\Ff)$, only the term $\nabla^{(0,1)}$ remains.

We apply this to the harmonic bundle $\Ff = \End(\Ee)$.
\end{proof}

Our last proposition easily follows from proposition \ref{expr_diff_per}.

\begin{prop}\label{id_diff}
 Under the isomorphism (of holomorphic vector bundles) $\End(\Ee) \cong F^{-1,\tilde{X}} \Lg / F^{0,\tilde{X}} \Lg$, the holomorphic differential of $f$ is the Higgs field $\theta$.
\end{prop}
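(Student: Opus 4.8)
The plan is to deduce this from Proposition~\ref{expr_diff_per} by extracting the type $(1,0)$ part of the full differential, and then to recognise the result via the map $j$ of the preceding lemma. Recall that by Proposition~\ref{expr_diff_per} the differential of the period map is $df = \gamma\circ\alpha$, where $\alpha = \lambda^{-1}\theta + \lambda\theta^\ast$, regarded as a $1$-form with values in $\Lambda_\sigma\End(\Ee)/\Lambda_\sigma^0\End(\Ee)\cong\Lsg/\Lsg[0,X]$. By Theorem~\ref{holo_permap} the period map is holomorphic and horizontal, so its holomorphic differential is the $\C$-linear part of $df$, i.e.\ its component of type $(1,0)$, and this component takes values in the holomorphic horizontal subbundle $F^{-1,\tilde X}\Lg/F^{0,\tilde X}\Lg$, which under $\gamma$ is identified with $L^{2,\geq -1}(S^1,\End(\Ee))/L^{2,\geq 0}(S^1,\End(\Ee))$. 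Thus it suffices to compute the $(1,0)$-part of $\gamma\circ\alpha$.

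Since $\theta$ is of type $(1,0)$ and $\theta^\ast$ of type $(0,1)$, the type decomposition of $\alpha = \lambda^{-1}\theta + \lambda\theta^\ast$ is immediate: its $(1,0)$-part is $\lambda^{-1}\theta$. Projecting $\lambda^{-1}\theta$ into the quotient $L^{2,\geq -1}(S^1,\End(\Ee))/L^{2,\geq 0}(S^1,\End(\Ee))$ gives exactly $j(\theta)$ by the very definition $j\colon \nu\mapsto \lambda^{-1}\nu + L^{2,\geq 0}(S^1,\End(\Ee))$. By the preceding lemma $j$ is an isomorphism of holomorphic vector bundles with $\End(\Ee)$ carrying its Higgs-bundle holomorphic structure, and $\gamma$ is compatible with the holomorphic structures; hence, under the composite isomorphism $\End(\Ee)\cong F^{-1,\tilde X}\Lg/F^{0,\tilde X}\Lg$, the holomorphic differential of $f$ is $\theta$, as claimed.

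The step requiring the most care is the bookkeeping of the two descriptions of the (holomorphic) tangent bundle of $\Dd$: the differential-geometric one, $\Lsg/\Lsg[0,\Dd]$ with the almost complex structure $J$ of equation~\eqref{compl_str}, and the complex-analytic one coming from the open embedding of $\Dd$ into $\Ddc$ (Remark~\ref{rem_avt_disc}), namely $\Lg/F^{0,\Dd}\Lg$. One must verify that under these identifications passing to the type $(1,0)$ part of $\alpha$ corresponds to Fourier truncation in degree $\leq -1$ — so that the horizontality constraint then isolates degree exactly $-1$, and the $(1,0)$-part of $\gamma\circ\alpha$ is indeed the class of $\lambda^{-1}\theta$ in $L^{2,\geq -1}/L^{2,\geq 0}$. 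This is precisely where the sign conventions in~\eqref{compl_str} and the comparison of the two complex structures established in subsection~\ref{constr_dom} must be invoked. Once this compatibility is recorded, the identification above is a one-line computation.
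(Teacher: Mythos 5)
Your proposal is correct and follows the same route the paper intends: the paper simply remarks that the statement "easily follows from proposition \ref{expr_diff_per}", and your argument spells this out exactly as the surrounding text sets it up — take the $(1,0)$-part of $df=\gamma\circ\alpha$, note that it is the class of $\lambda^{-1}\theta$ in $L^{2,\geq -1}(S^1,\End(\Ee))/L^{2,\geq 0}(S^1,\End(\Ee))$, and invoke the lemma identifying $j$ as an isomorphism of holomorphic bundles with $\End(\Ee)$ in its Higgs-bundle structure. Your extra remark on matching the complex structure $J$ of equation \eqref{compl_str} with the Fourier-degree description is exactly the bookkeeping the paper leaves implicit.
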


\subsection{The Higgs foliation} \label{hig_fol}

If $(\Ee,\theta)$ is a Higgs bundle over a complex manifold $X$, then the kernel of $\theta$ defines a (possibly singular) holomorphic distribution in $X$. When $(\Ee,\theta)$ comes from a harmonic bundle, it is shown in \cite{Mok}, theorem 3, that this distribution is involutive. Here, we show that this distribution is in fact integrable.

\subsubsection{Holomorphic distributions}

First, we recall some facts about holomorphic distributions. A \emph{holomorphic distribution} $\Dd$ in $X$ is a subset of the holomorphic tangent bundle $TX$ which is locally the vanishing set of a finite number of holomorphic $1$-forms on $X$. Let $x$ be some point in $X$, and let $\Dd_x$ be the germ in $x$ of a holomorphic distribution. For any family $\omega = (\omega_1,\dots,\omega_p)$ of germs of holomorphic $1$-forms defining $\Dd_x$, we define $q_x(\omega)$ to be the codimension in $X$ of the vanishing set of the $p$-form $\omega_1 \wedge \dots \wedge \omega_p$, with the convention $q_x(\omega) = \infty$ if this set is empty. We write $q(\Dd_x)$ for the supremum of such $q_x(\omega)$ where $\omega$ is any family of germs of holomorphic $1$-forms defining $\Dd_x$. 

A distribution $\Dd$ is \emph{involutive} if it can locally be defined by holomorphic $1$-forms $\omega_1,\dots,\omega_p$ which satisfy $d\omega_i \wedge \omega_1 \wedge \dots \wedge \omega_p = 0$, for all $i$ in $\{1,\dots,p\}$. Dually, this is equivalent to the usual definition: locally, for every holomorphic vector fields $X$ and $Y$, tangent to $\Dd$, the bracket $[X,Y]$ is also tangent to $\Dd$. Equivalently, an involutive distribution is called a \emph{foliation}.

A distribution $\Dd$ is \emph{integrable} if it can locally be defined by exact holomorphic $1$-forms $df_1,\dots,df_p$, where the $f_i$ are local holomorphic functions. An integrable distribution is involutive and, in the regular case, that is when $q(\Dd_x) = \infty$ for any $x$, then the Frobenius theorem asserts that the converse is true. In the singular case, things are subtler but this still works if the singular locus is small enough:

\begin{thm}[Malgrange, \cite{Mal}]
 Let $\Dd$ be an involutive distribution in $X$ and assume that $q(\Dd_x) \geq 3$ for any $x$ in $X$. Then, $\Dd$ is integrable.
\end{thm}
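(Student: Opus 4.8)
This is Malgrange's singular Frobenius theorem, quoted from \cite{Mal}; I only indicate the shape of the argument, since the conclusion is purely local. Fix $x$, shrink $X$ to a polydisc in $\C^n$ centred at $x = 0$, and present $\Dd$ by holomorphic $1$-forms $\omega_1, \dots, \omega_p$ with $d\omega_i \wedge \omega_1 \wedge \cdots \wedge \omega_p = 0$ for all $i$; choose these forms so that their singular locus $\Sigma$ (the zero set of $\omega_1 \wedge \cdots \wedge \omega_p$) has $\operatorname{codim} \Sigma \geq 3$, which is exactly the content of $q(\Dd_x) \geq 3$. On $X \setminus \Sigma$ the forms are pointwise independent, so the distribution is regular and involutive there and classical Frobenius produces local holomorphic submersions whose fibres are the leaves. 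The goal is to upgrade this to a single holomorphic first integral near $0$, i.e.\ a holomorphic map $g = (g_1, \dots, g_p) : (X, 0) \to (\C^p, 0)$ with each $\omega_i$ in the $\Oo_X$-span of $dg_1, \dots, dg_p$; once such a $g$ exists near every point, $\Dd$ is integrable by definition.

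I would treat the codimension-one case $p = 1$ first, the general case being the second, harder part of \cite{Mal} (it can be attacked inductively by passing to the leaf space of a partial first integral, but keeping the singular set under control at each stage is the delicate point). So let $\omega$ be a saturated integrable $1$-form; integrability and saturatedness give a $1$-form $\eta$ with $d\omega = \omega \wedge \eta$. The plan is: (i) construct a \emph{formal} first integral, a non-constant $\hat f \in \widehat{\Oo}_{X,0}$ with $d\hat f \wedge \omega = 0$; (ii) show $\hat f$ can be chosen convergent. For (i) one solves degree by degree; the obstruction to lifting a solution modulo $\mm^k$ to one modulo $\mm^{k+1}$ sits in a cohomology group built out of (closed) holomorphic $1$-forms on $X \setminus \Sigma$, and Scheja's extension theorem --- a holomorphic, resp.\ closed holomorphic, $1$-form defined off an analytic set of codimension $\geq 2$, resp.\ $\geq 3$, extends across it --- makes this group vanish precisely because $\operatorname{codim} \Sigma \geq 3$. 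This is the one place the hypothesis is used.

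Step (ii) is the analytic core of \cite{Mal} and is where I expect the genuine difficulty to lie. One way to package it: $d f \wedge \omega = 0$ is an analytic system of equations in the unknown $f$, a formal solution has just been produced, and Artin's approximation theorem then yields a convergent $f$ agreeing with $\hat f$ to arbitrarily high order; one has to check that such an approximant is still a non-constant first integral cutting out $\Dd$, which requires some bookkeeping near $\Sigma$. Malgrange's own route is a direct Newton-type iteration whose convergence is controlled by norm estimates on a homotopy operator for the relative de Rham complex of the foliation, and once more it is the codimension bound that keeps these operators tame near the singular set. Either way, having obtained a holomorphic first integral near each point, the distribution $\Dd$ is integrable, as claimed.
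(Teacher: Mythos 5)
The paper does not prove this statement at all: it is quoted as an external result of Malgrange (\emph{Frobenius avec singularit\'es}), so there is no internal proof to compare yours against, and citing the reference --- as you do --- is exactly what the paper itself does. Your sketch of Malgrange's argument is faithful in outline (reduction to the local codimension-one case, a formal first integral whose existence uses $\operatorname{codim}\Sigma \geq 3$ through extension/vanishing results, then a convergence step which is the analytic heart of the paper). One caution on the convergence step: the equation $df \wedge \omega = 0$ is a \emph{differential} constraint on $f$, and Artin's approximation theorem in its usual form applies to analytic equations in the unknown functions, not to systems involving their derivatives, so that packaging does not work verbatim; Malgrange's actual argument, via privileged neighbourhoods and functional-analytic estimates controlling a Newton-type iteration (the alternative you mention), is the correct route.
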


\subsubsection{Higgs foliation}

Let $(\Ee,D,h)$ be a harmonic bundle over $X$ and let $\theta$ denote the induced Higgs field. The kernel of $\theta$ defines a holomorphic distribution in $X$ since $\theta$ is a holomorphic $1$-form with values in $\End(\Ee)$, for some holomorphic structure on $\Ee$. We now have the following:

\begin{thm}\label{integ_higgs}
 The holomorphic distribution defined by the Higgs field is integrable.
\end{thm}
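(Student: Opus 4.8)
The plan is to recognise the Higgs distribution as the kernel of the (holomorphic) differential of the period map, and then, working locally, to exhibit it as the common zero locus of finitely many \emph{exact} holomorphic $1$-forms, so that integrability becomes immediate from the definition. (Granting Mok's involutivity result \cite{Mok}, one could instead try to deduce the statement from Malgrange's theorem recalled above; but that requires checking the codimension hypothesis, which already fails for trivial reasons when $\dim X\le 2$, so the argument through the period map seems cleaner and more robust.)

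\emph{Reduction and identification.} Integrability is a local property and is invariant under biholomorphisms, so it is enough to prove it in a neighbourhood of an arbitrary point $x_0$ of $X$, which we may take simply connected and hence identify with an open subset of the universal cover $\tilde{X}$; over it the period map $f\colon\tilde{X}\to\Dd$ of $(\Ee,D,h)$ is available and $\theta$ is its pullback. By Proposition~\ref{id_diff}, together with the holomorphy and horizontality of $f$ (Theorem~\ref{holo_permap}), the holomorphic differential of $f$ coincides, under a natural identification of vector bundles, with the Higgs field $\theta$. Thus the holomorphic distribution $\ker\theta$ is exactly the kernel of the holomorphic differential of the holomorphic map $f$.

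\emph{Passing to finitely many exact forms.} I would choose a holomorphic chart $\psi$ of the complex manifold $\Dd$ near $f(x_0)$, with values in a complex Hilbert space, and set $g:=\psi\circ f$. For $x$ near $x_0$ put $V_x:=d\psi_{f(x)}\bigl((T_{h,\text{hol}}\Dd)_{f(x)}\bigr)$; this is an $n^2$-dimensional subspace of the target, it depends continuously on $x$, and by horizontality of $f$ it contains the image of $dg_x$. Shrinking the neighbourhood of $x_0$, the bounded linear projection $\pi$ onto the fixed finite-dimensional subspace $V_{f(x_0)}$ then restricts to an injection on every $V_x$, so that $\ker dg_x=\ker(\pi\circ dg_x)$ for all nearby $x$. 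Writing the (finitely many, at most $n^2$) components of $\pi\circ g$ as local holomorphic functions $h_1,\dots,h_m$, one gets $\ker\theta=\ker dg=\bigcap_{k=1}^{m}\ker dh_k$ near $x_0$. Hence $\ker\theta$ is locally the common vanishing locus of the exact holomorphic $1$-forms $dh_1,\dots,dh_m$, i.e. it is integrable; transporting this back to $X$ through the covering map (a biholomorphism on the chosen neighbourhood) finishes the proof.

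The step I expect to be the main obstacle is precisely this last reduction from the infinite-dimensional period domain to a finite defining family of \emph{exact} forms. It is here that horizontality is essential: it confines the differential of $f$ to the finite-rank horizontal subbundle, so that a locally constant finite-dimensional projection loses no information about $\ker\theta$ and turns the relevant part of the period map into finitely many honest holomorphic functions. Everything else -- the reduction to $\tilde{X}$, the continuity of $x\mapsto V_x$ and the consequent injectivity of $\pi$ on each $V_x$, and the descent to $X$ -- is routine.
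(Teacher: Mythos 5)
Your proof is correct, and it follows the paper's overall strategy -- reduce, via Proposition~\ref{id_diff} and a local lift of the period map, to showing that $\ker df$ is locally cut out by finitely many \emph{exact} holomorphic $1$-forms -- but the finiteness step is genuinely different. The paper writes $f$ in coordinates with respect to a Hilbert basis of the model space, so that the distribution is the common zero set of countably many exact forms $df_i$, and then invokes Noetherianity of the local ring of germs of holomorphic functions (on the total space of $TX$) to extract a finite defining subfamily; horizontality plays no role in that argument. You instead use horizontality in an essential way: since $df$ takes values in the rank-$n^2$ horizontal subbundle, the image of $dg_x=d\psi_{f(x)}\circ df_x$ lies in a continuously varying $n^2$-dimensional subspace $V_x$, and for $x$ near $x_0$ the (orthogonal) projection onto $V_{x_0}$ is injective on $V_x$, so the at most $n^2$ components of $\pi\circ g$ already cut out $\ker\theta$. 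Your route is more hands-on and buys an explicit bound (at most $n^2$ defining functions) while making the geometric role of horizontality visible and avoiding the commutative-algebra input; the paper's route is shorter and needs only holomorphy of the period map, not horizontality. Two cosmetic points: you write $V_{f(x_0)}$ where your own notation requires $V_{x_0}$, and you should say explicitly that $\pi$ can be taken to be the orthogonal projection (a finite-dimensional subspace of a Hilbert space is complemented, so this is harmless). Your side remark about Malgrange is also consistent with the paper: the theorem is recalled there but not used in this proof.
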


I thank Philippe Eyssidieux, who suggested this application to me.

\begin{proof}
Locally in $X$, one can define a holomorphic period map $f: X \rightarrow \Dd = \LsG/K$. By theorem \ref{id_diff}, a holomorphic vector field is in the kernel of the Higgs field if and only if it is in the kernel of $df$. Since we work locally, we can assume that $f$ takes its values in a Hilbert space $H$. We write $(f_i)_{i \in \N}$ for the coordinates of $f$ relative to some Hilbert basis of $H$. The distribution is thus defined by the vanishing of an infinite number of holomorphic $1$-forms. We claim that, locally, we can consider only a finite number of holomorphic $1$-forms. Indeed, the germ of the distribution at $x$ is completely determined by the germs at $(x,0)$ of the functions $(df_i)_{i \in \N}$ that go from the total space $TX$ to $\C$. Since, the local ring of the sheaf of holomorphic functions in any complex manifold is Noetherian, we can consider only a finite number of $df_i$, proving the claim.
\end{proof}

\begin{rem}
 If $(\Ee,\theta)$ is a Higgs bundle over a complex manifold $X$, there is \emph{a priori} no reason that the distribution that it defines is involutive. Indeed, there is no differential assumptions about the Higgs field in the definition of a Higgs bundle. Nevertheless, if $X$ is a compact K\"ahler manifold, if $(\Ee,\theta)$ is polystable with vanishing Chern classes, then the theorem of Hitchin and Simpson asserts that it comes from a harmonic bundle, so that one can apply theorem \ref{integ_higgs}.
\end{rem}

\section{The determinant line bundle} \label{det_line}

In this section, we define a line bundle on the period domain and use it to get informations on a harmonic bundle, \emph{via} the period map. This section has greatly benefited from conversations with Bruno Klingler.

\subsection{Central extensions of Lie groups} \label{central_ext}

We want to define a central extension by the circle $S^1$ of the group $\LsG$. The analogous problem for $\Lambda K$ is discussed in detail in \cite{PS}, chapters $4$ et $6$, from which we collect some results.

Let $H$ be a (Banach-)Lie group and let $\tilde{H}$ be a central extension of $H$ by $S^1$. As a vector space, the Lie algebra $\tilde{\hh}$ can be written $\tilde{\hh} = \hh \oplus i\R$ and the Lie bracket is given by 
$$
[(\xi,\lambda),(\eta,\mu)] = ([\xi,\eta],\omega(\xi,\eta)),
$$
where $\omega$ is a skew-symmetric map from $\hh \times \hh$ to $i\R$ satisfying the \emph{cocycle condition}
$$
\omega([\xi,\eta],\zeta) + \omega([\zeta,\eta],\xi) + \omega([\eta,\xi],\zeta) = 0.
$$

Conversely, let $\omega$ be such a map. Then, $\omega$ defines a $H$-invariant $2$-form on $H$ (that we still denote $\omega$), and the cocycle condition is equivalent to the closedness of this form. 

\begin{prop}{(Corollary of proposition 4.4.2 in \cite{PS})}\label{cent_ext_coc}
 If $H$ is simply connected, then a cocycle $\omega$ is obtained from a central extension of $H$ by $S^1$ if and only if the cohomology class of the $2$-form $\frac i{2\pi} \omega$ is integral.
\end{prop}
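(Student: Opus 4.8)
The plan is to translate the algebraic condition into the geometry of principal $S^1$-bundles over $H$. The basic observation is that a central extension $\tilde H$ of $H$ by $S^1$, once one forgets the group law, is a principal $S^1$-bundle $p\colon\tilde H\to H$, and it carries a canonical left-invariant connection: the linear projection $\tilde\hh=\hh\oplus i\R\to i\R$ is $\Ad(S^1)$-invariant (because $S^1$ is central) and restricts to the identity on $i\R=\mathrm{Lie}(S^1)$, so left-translating it over $\tilde H$ produces a principal connection $\alpha$. Since $S^1$ is abelian the curvature is $d\alpha$, and evaluating on left-invariant vector fields with the bracket rule $[(\xi,0),(\eta,0)]=([\xi,\eta],\omega(\xi,\eta))$ identifies $d\alpha$ with (a sign times) the pullback of the invariant $2$-form $\omega$. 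For the necessity direction I would simply invoke this: $p$ is an $S^1$-bundle with a connection of curvature $\pm\omega$, so by Chern--Weil the de Rham image of its first Chern class is $\pm[\tfrac{i}{2\pi}\omega]$; being the image of an integral class, it follows that $[\tfrac{i}{2\pi}\omega]$ is integral (here I identify $i\R$ with $\mathrm{Lie}(S^1)$ so that $\tfrac{i}{2\pi}\omega$ is a real closed $2$-form).

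For sufficiency, assume $[\tfrac{i}{2\pi}\omega]$ is integral. First I would apply the prequantization (Weil) theorem to obtain a principal $S^1$-bundle $q\colon P\to H$ and a connection $\alpha$ on it of curvature $\omega$; the bundle is pinned down by its Chern class, and since $\pi_1 H=0$ one has $H^1(H;\R)=0$ and $\Hom(\pi_1 H,S^1)=0$, so $(P,\alpha)$ is unique up to isomorphism of bundles-with-connection and $\mathrm{Aut}(P,\alpha)=S^1$. The substantive task is then to install a Lie group structure on $P$ for which $q$ is a homomorphism with kernel $S^1$ and $\alpha$ is left-invariant; once this is done, reading the curvature computation of the first paragraph backwards shows that the associated Lie algebra extension has cocycle $\omega$, as required. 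To construct the multiplication I would use the uniqueness just obtained: left translation by any lift of $g\in H$ sends $(P,\alpha)$ to a bundle-with-connection over $H$ of curvature $\omega$ again (here one uses that $\omega$ is left-invariant), hence is canonically identified with $(P,\alpha)$ up to $\mathrm{Aut}(P,\alpha)=S^1$; assembling these identifications over all $g$ and rigidifying the leftover $S^1$-freedom by the simple connectedness of $H$ yields the group law, with associativity checked the same way. This is exactly the mechanism behind Proposition 4.4.2 of \cite{PS}, to which I would refer for the details.

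The hard part is this last step: producing the group structure on the prequantum bundle and verifying it is well defined and associative. The Chern-class manipulations in both directions are routine; what does the work is that simple connectedness of $H$ makes the choices involved --- the lift of $\omega$ to a bundle-with-connection, and then the coherent system of gauge identifications under left translation --- rigid enough to patch together into a group, which is precisely why the hypothesis $\pi_1 H=0$ is needed in the statement.
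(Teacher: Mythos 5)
Your argument is correct and is essentially the proof the paper relies on: the paper gives no argument of its own for this proposition, merely citing Proposition 4.4.2 of \cite{PS}, and your Chern--Weil computation of the curvature of the canonical left-invariant connection (necessity) together with the prequantization bundle and lifted left translations (sufficiency) is exactly the mechanism behind that citation, to which you, like the paper, defer the remaining details. One cosmetic caveat: in the last step the leftover $S^1$-ambiguity is not ``rigidified away'' --- it survives as the central kernel of the extension, the lifts being pinned down by their value at a chosen point over the identity --- while simple connectedness is what guarantees that each left translation lifts at all, via uniqueness of the bundle-with-connection with curvature $\omega$.
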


In order to define a central extension by $S^1$ of $\LsG$, a first possibility is thus to define a cocycle on the Lie algebra $\Lsg$. We write $<X,Y> = -\Tr(XY)$ in $\gg$.

\begin{defn}\label{cocycle}
 The \emph{fundamental cocycle} on $\Lg$ is the skew-symmetric map with values in $\C$ defined by
 \begin{align*}
\omega(\xi,\eta) :&= \frac i{2\pi}\int_0^{2\pi} <\xi(\theta),\eta'(\theta)>d\theta .
\end{align*}
\end{defn}

\begin{rem}
 The skew-symmetry follows from an integration by parts. The cocycle condition is proved in section 4.2 of \cite{PS}, for the Lie algebra $\Lk$ and we conclude by complex linearity.
\end{rem}

\begin{prop}\label{rest_fund}
The restriction of $\omega$ to $\Lambda \kk$ and $\Lsg$ takes its values in $i\R$.
\end{prop}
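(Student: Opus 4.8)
The plan is to verify the two claimed containments separately, in each case by direct computation of $\omega(\xi,\eta)$ using the Fourier expansions of $\xi$ and $\eta$ and the defining symmetry conditions of the two subalgebras.

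First I would treat $\Lambda\kk$. Here the point is simply that $\omega$ is already a real cocycle on $\Lambda\kk$: it is the standard Kac--Moody cocycle discussed in \cite{PS}, section 4.2, and for loops $\xi,\eta$ with values in the real Lie algebra $\kk = \uu(n)$ one has $<\xi(\theta),\eta'(\theta)> = -\Tr(\xi(\theta)\eta'(\theta))$, which is real because $\Tr(AB)$ is real whenever $A$ and $B$ are skew-Hermitian (indeed $\overline{\Tr(AB)} = \Tr(\bar A\bar B) = \Tr(A^tB^t)$... more directly $\Tr(AB)^* = \Tr(B^*A^*) = \Tr((-B)(-A)) = \Tr(AB)$). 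Since the integrand is a real-valued function of $\theta$ and the prefactor is $\frac{i}{2\pi}$, the integral lies in $i\R$. This is essentially the remark that $\omega$ was obtained by complexifying a cocycle defined on $\Lk$ in the first place.

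Next I would treat $\Lsg$. Recall $\Lsg$ consists of loops $\xi = \sum_n \xi_n\lambda^n$ with $\sigma(\xi_n) = (-1)^n\xi_{-n}$, i.e.\ $\xi_n^* = -(-1)^n\xi_{-n}$, equivalently $\xi(\theta)$ is the sum of an even $\kk$-valued loop and an odd $\pp$-valued loop; and the anti-linear involution $\hat\sigma(\xi)(\lambda) = \sigma(\xi(-\lambda))$ fixes $\xi$. The cleanest route is to substitute $\xi(\lambda) = \sum\xi_n\lambda^n$, $\eta(\lambda) = \sum\eta_m\lambda^m$ into $\omega(\xi,\eta) = \frac{i}{2\pi}\int_0^{2\pi} <\xi(\theta),\eta'(\theta)>\,d\theta$; since $\eta'(\theta) = \sum_m im\,\eta_m e^{im\theta}$ and $\int_0^{2\pi} e^{i(n+m)\theta}\,d\theta = 2\pi\delta_{n,-m}$, this collapses to
\begin{equation*}
\omega(\xi,\eta) = -i\sum_{n\in\Z} n <\xi_n,\eta_{-n}> = i\sum_{n\in\Z} n\,\Tr(\xi_n\eta_{-n}).
\end{equation*}
Now I would compute the complex conjugate of the sum $S := \sum_n n\,\Tr(\xi_n\eta_{-n})$ and show it equals $-S$, so that $iS \in i\R$. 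Using $\overline{\Tr(\xi_n\eta_{-n})} = \Tr(\xi_n^*\eta_{-n}^*)^{\,}$... more precisely $\overline{\Tr(AB)} = \Tr(\bar B^t\bar A^t) = \Tr(B^*A^*) = \Tr(A^*B^*)$, so $\overline{\Tr(\xi_n\eta_{-n})} = \Tr(\xi_n^*\eta_{-n}^*)$. Plugging in the relations $\xi_n^* = -(-1)^n\xi_{-n}$ and $\eta_{-n}^* = -(-1)^{-n}\eta_n = -(-1)^n\eta_n$ gives $\Tr(\xi_n^*\eta_{-n}^*) = (-1)^{2n}\Tr(\xi_{-n}\eta_n) = \Tr(\xi_{-n}\eta_n)$, hence $\bar S = \sum_n n\,\Tr(\xi_{-n}\eta_n) = \sum_k (-k)\,\Tr(\xi_k\eta_{-k}) = -S$ after re-indexing $k = -n$. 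Therefore $\bar S = -S$, i.e.\ $S$ is purely imaginary, and $\omega(\xi,\eta) = iS \in i\R$ as claimed.

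The only mild subtlety — and the place I would be most careful — is convergence and the legitimacy of the term-by-term integration: for $H^s$-loops with $s > 1/2$ the series $\sum_n |n|\,|\xi_n|\,|\eta_{-n}|$ need not converge absolutely, so strictly speaking one should either note that $\omega$ is continuous and restrict first to the dense subspace of trigonometric polynomials (Laurent polynomial loops) where the computation is a finite sum, then pass to the limit, or observe that $\xi(\theta)$ and $\eta'(\theta)$ are genuine $L^2$ functions of $\theta$ (since $\eta \in H^s$ with $s>1/2$ implies $\eta' \in H^{s-1}$, and one pairs against $\xi \in H^s \subset L^2$ with a slight regularity argument — in fact it is cleanest to pair $\xi' $ against $\eta$ after the integration by parts that already gave skew-symmetry). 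Once that bookkeeping is in place the conjugation computation above finishes the proof; note also that the $\Lambda\kk$ case is recovered as the special case where all $\xi_n,\eta_m$ are skew-Hermitian, consistent with the first paragraph.
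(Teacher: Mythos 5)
Your strategy for $\Lsg$ (expand in Fourier coefficients, conjugate, re-index) is legitimate and genuinely different from the paper's, which never expands in Fourier series: the paper conjugates the integral directly, using $\xi(\theta)^\ast=-\xi(\theta)$ on $\Lambda\kk$, respectively $\eta(\theta)^\ast=-\eta(-\theta)$ on $\Lsg$ together with the invariance of $d\theta$ under $\theta\mapsto-\theta$. Your first paragraph (the $\Lambda\kk$ case, via pointwise reality of the integrand) is correct and is essentially the paper's argument.

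However, the $\Lsg$ computation as written does not prove the statement, because your closed formula for $\omega$ is off by a factor of $-i$, and this turns the last line into a non sequitur. Carrying out the substitution carefully, $\int_0^{2\pi}\langle\xi(\theta),\eta'(\theta)\rangle\,d\theta=-2i\pi\sum_n n\,\langle\xi_n,\eta_{-n}\rangle$, so after multiplying by the prefactor $\tfrac{i}{2\pi}$ one gets $\omega(\xi,\eta)=\sum_n n\,\langle\xi_n,\eta_{-n}\rangle=-S$, with your $S=\sum_n n\Tr(\xi_n\eta_{-n})$, and not $\omega=iS$. Your key identity $\bar S=-S$ is correct, so $S$ is purely imaginary; but combined with your formula $\omega=iS$ this would give $\omega(\xi,\eta)\in\R$, the opposite of the claim (and inconsistent with your own $\Lambda\kk$ paragraph). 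With the corrected constant the argument does go through: $\omega=-S\in i\R$. Two smaller points: the convergence caveat is unnecessary, since for $s\ge\tfrac12$ one has $|n|\le(1+n^2)^{s}$ and $\sum_n |n|\,|\xi_n|\,|\eta_{-n}|$ converges absolutely by Cauchy--Schwarz; and your closing remark that the $\Lambda\kk$ case corresponds to ``all $\xi_n$ skew-Hermitian'' is imprecise --- pointwise membership in $\kk$ means $\xi_n^\ast=-\xi_{-n}$, not that each coefficient is skew-Hermitian. So this is a bookkeeping slip rather than a missing idea, but as submitted the $\Lsg$ half of the proof is not correct; fix the constant and it becomes a valid alternative to the paper's proof.
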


\begin{proof}
If $\xi$ is in $\Lambda \kk$, then $\xi(\theta)^\ast = -\xi(\theta)$ and if $\eta$ is in $\Lambda_\sigma \gg$, then $\eta(\theta)^\ast = - \eta(-\theta)$. For $i=1,2$, we take $\xi_i$ in $\Lambda \kk$. Then
\begin{align*}
\overline{-2i\pi \omega(\xi_1,\xi_2)} &= \int_0^{2\pi}  <\overline{\xi_1(\theta)},\overline{\xi_2'(\theta)}>d\theta \\
&= \int_0^{2\pi} <\,^t \xi_1(\theta),\,^t \xi_2'(\theta)>d\theta \\
&= \int_0^{2\pi} <\,\xi_1(\theta),\,\xi_2'(\theta)>d\theta \\
&= -2i\pi \omega(\xi_1,\xi_2).
\end{align*}
The third line is obtained from the properties of the trace operator.

The computation is similar for $\Lsg$, since the measure $d\theta$ is invariant under $\theta \mapsto -\theta$.
\end{proof}

The group $SU(n)$ is simply-connected and this implies that $\Lambda SU(n)$ is simply-connected. By proposition \ref{cent_ext_coc}, in order to define the central extension $\widetilde{\Lambda SU(n)}$, it is sufficient to show that the $2$-form induced by the cocycle $\omega$ (restricted to $\Lambda \mathfrak{su}(n)$) has its cohomology class in $H^2(\Lambda SU(n),2i\pi\Z)$. This is done with propositions 4.4.4 and 4.4.5 in \cite{PS}. Then, one can define the central extension $\widetilde{\Lambda U(n)} = \widetilde{\Lambda K}$ of $\Lambda K$; this is section 4.7 of  \cite{PS}.

For the group $\LsG$, the construction of an analogue of the transgression map used in propositions 4.4.4 and 4.4.5 is not clear and we will define the central extension of $\LsG$ in another way.\\

A central extension by $\C^\ast$ of $\LG$ is defined in section 6.7 of \cite{PS}. This is done by pullback of a central extension $\C^\ast \rightarrow \widetilde{GL_{\text{res}}} \rightarrow GL_{\text{res}}$, where $GL_{\text{res}}$ is some subgroup of the group of linear automorphisms of the Hilbert space $L^2(S^1,\C^n)$, so that we have an embedding $\LG \hookrightarrow GL_{\text{res}}$. The construction of the central extension of $GL_{\text{res}}$ is done in section 6.6. After proposition 6.6.5, it is remarked that the same construction can be applied in order to obtain a $S^1$-extension of the group $U_{\text{res}}$ which is the subgroup of $GL_{\text{res}}$ of isometries of the Hilbert space $L^2(S^1,\C^n)$. Since, the group $\LK$ embeds in $U_{\text{res}}$, this gives a central extension by $S^1$ of $\LK$. In proposition 6.7.1, it is proved that this extension has the cocycle $\omega$ defined above; hence this gives a concrete description of $\widetilde{\Lambda K}$.

Considering the space $L^2(S^1,\C^n)$ as a Krein space, we can do the same construction with the Krein isometry group instead of the Hilbert isometry group. The only point to check, which is implicit in section 6.6, is that an operator of determinant class which is unitary (for either the Hilbert or Krein metric) has determinant of modulus one. This is proved as in the finite-dimensional setting, using that if $M$ has a determinant and if $X$ is invertible, then $XMX^{-1}$ has a determinant too, equal to $\det(M)$.\\

A third possibility to define the central extension of $\LsG$ is less natural but easier, since it does not involve the description of the central extension by $\C^\ast$ of $\LG$. Let $1 \rightarrow \C^\ast \rightarrow \Ee \rightarrow \LsG \rightarrow 1$ be the pullback of the central extension by $\C^\ast$ of $\LG$. The group $\C^\ast$ decomposes as $\C^\ast = S^1 \times \R_+^\ast$; we can quotient the extension $\Ee$ by $\R_+^\ast$, giving a central extension
$$
1 \rightarrow S^1 \rightarrow \widetilde{\LsG} \rightarrow \LsG \rightarrow 1.
$$
The cocycle of the extension $1 \rightarrow \C^\ast \rightarrow \widetilde{LG} \rightarrow LG \rightarrow 1$ is the fundamental cocycle of definition \ref{cocycle}. This is essentially proposition 6.7.1 of \cite{PS}. Hence, the cocycle of $\widetilde{\LsG}$ is the restriction to the fundamental cocycle to $\Lsg$, followed by projection onto $i\R $ (with respect to the decomposition $\C = i\R \oplus \R$). From proposition \ref{rest_fund}, the fundamental cocycle restricted to $\Lsg$ already takes its values in $i\R$.\\

We sum up this discussion with the following proposition:
\begin{prop}
 There is a central extension
 $$
1 \rightarrow S^1 \rightarrow \widetilde{\LsG} \rightarrow \LsG \rightarrow 1,
$$
whose cocycle is given by definition \ref{cocycle}.
\end{prop}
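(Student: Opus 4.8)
The plan is to take as a black box the central extension by $\C^\ast$ of the full loop group $\LG$ constructed in \cite{PS}, section 6.7, and then to cut it down to $\LsG$ by pullback and to $S^1$ by quotienting out the positive reals. Recall from loc.\ cit.\ that one has a central extension $1 \to \C^\ast \to \widetilde{\LG} \to \LG \to 1$ of Banach--Lie groups, obtained by restricting to $\LG \hookrightarrow GL_{\mathrm{res}}$ the determinant central extension of $GL_{\mathrm{res}}$, and that by proposition 6.7.1 of \cite{PS} its associated Lie-algebra cocycle is the fundamental cocycle $\omega$ of definition \ref{cocycle}, extended to all of $\Lg$ by the same formula.

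Next I would pull this extension back along the subgroup inclusion $\LsG \hookrightarrow \LG$: setting $\Ee := \widetilde{\LG} \times_{\LG} \LsG$ gives a Banach--Lie group, and the projection $\Ee \to \LsG$ is a central extension by $\C^\ast$ whose Lie-algebra cocycle is the restriction $\omega|_{\Lsg \times \Lsg}$. The decisive step is then to split $\C^\ast = S^1 \times \R_+^\ast$ as a product of Lie groups, to note that $\R_+^\ast$ sits in $\Ee$ as a closed central subgroup, and to form the quotient $\widetilde{\LsG} := \Ee / \R_+^\ast$. Since $\Ee \to \LsG$ is in particular a locally trivial principal $\C^\ast$-bundle, dividing by the direct factor $\R_+^\ast$ yields a locally trivial principal $(\C^\ast/\R_+^\ast)$-bundle, hence a Banach--Lie group $\widetilde{\LsG}$ fitting into a central extension $1 \to S^1 \to \widetilde{\LsG} \to \LsG \to 1$, using $\C^\ast/\R_+^\ast \cong S^1$. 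I expect this quotient step to be the only point needing real care: one has to confirm that dividing a Banach--Lie group by a non-compact closed central subgroup stays within the Banach--Lie category, but the explicit product splitting of the structure group makes this routine rather than a genuine obstacle.

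Finally I would identify the cocycle of $\widetilde{\LsG}$. At the Lie-algebra level $\Ee$ carries the cocycle $\omega|_{\Lsg}$ with values in $\mathrm{Lie}(\C^\ast) = \C$, and passing to $\widetilde{\LsG} = \Ee/\R_+^\ast$ composes it with the projection $\C = i\R \oplus \R \to i\R = \mathrm{Lie}(S^1)$. But by proposition \ref{rest_fund} the fundamental cocycle already takes its values in $i\R$ once restricted to $\Lsg$, so this projection is the identity on the image, and the cocycle of $\widetilde{\LsG}$ is exactly $\omega|_{\Lsg \times \Lsg}$, i.e.\ the cocycle of definition \ref{cocycle}. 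This establishes the proposition.
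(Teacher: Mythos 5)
Your proposal is correct and coincides with the paper's own construction: the paper also pulls back the $\C^\ast$-central extension of $\LG$ from \cite{PS} (section 6.7, with cocycle identified by proposition 6.7.1), quotients by $\R_+^\ast$ using $\C^\ast = S^1 \times \R_+^\ast$, and identifies the resulting cocycle as the fundamental cocycle via proposition \ref{rest_fund}.
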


\begin{rem}
 The restriction of $\widetilde{\LsG}$ over the subgroup $K$ of $\LsG$ is trivial; indeed, the fundamental cocycle vanishes on $\kk \otimes \kk$. In the following, we write $\tilde{K}$ for $K \times S^1$.
\end{rem}

\subsection{The determinant line bundle and its curvature} \label{curvature}

Another homogeneous representation of the loop period domain is given by $\Dd = \widetilde{\LsG}/\tilde{K}$. At the level of Lie algebras, we can write $\widetilde{\Lambda_\sigma \gg} = \tilde{\kk} \oplus \Lambda^0_\sigma \gg$, where $\Lambda^0_\sigma \gg$ is the subspace of $\Lambda_\sigma \gg$ of functions whose zeroth Fourier coefficient vanishes. A $\widetilde{\Lambda_\sigma G}$-equivariant complex line bundle on $\Dd$ is obtained from a character of $S^1 \times K$. 

\begin{defn}
The \emph{determinant line bundle} over $\Dd$ is the complex line bundle given by the character $(k,\lambda) \mapsto \lambda$ of $\tilde{K}$.
\end{defn}

As a homogeneous space under $\widetilde{\LsG}$, the loop period domain is a reductive space, thanks to the decomposition $\widetilde{\Lambda_\sigma \gg} = \tilde{\kk} \oplus \Lambda^0_\sigma \gg$ of its Lie algebra. For such spaces, one can define a \emph{canonical connection} for the principal $\tilde{K}$-bundle $\widetilde{\Lambda_\sigma} G \rightarrow \Dd$. Let $\omega_{MC}$ be the Maurer-Cartan form of $\widetilde{\Lambda_\sigma G}$. Then, the curvature of the canonical connection is given by  
\begin{equation} \label{cann_conn}
\Omega = -\frac12 [\omega_{MC}^{\Lambda^0_\sigma \gg},\omega_{MC}^{\Lambda^0_\sigma \gg}]^{\tilde{\kk}},
\end{equation}
where the Lie algebras in exponents stand for the projection on these Lie algebras, relative to the reductive decomposition given above. (see \cite{CaMuP}, equation 12.3.2).

The determinant line bundle over $\Dd$ (resp. $\check{\Dd}$) is endowed with a $\widetilde{\Lambda_\sigma G}$-invariant metric since the character of $\tilde{K}$ which induces this bundle is unitary. We write $\nabla$ for the connection on the determinant line bundle induced by the canonical connection. Then, by \cite{CaMuP}, theorem 12.3.5,

\begin{prop}
The determinant line bundle over $\Dd$ admits a unique structure of holomorphic line bundle such that $\nabla$ is the Chern connection.
\end{prop}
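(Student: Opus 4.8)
The plan is to reduce the statement to a single $\widetilde{\LsG}$-invariant fact about the curvature $\Omega$ of the canonical connection. By Theorem~12.3.5 of \cite{CaMuP}, applied to the reductive homogeneous space $\Dd = \widetilde{\LsG}/\tilde K$ with the invariant complex structure described by \eqref{compl_str} and to the homogeneous Hermitian line bundle just defined, once one knows that $\Omega$ is of type $(1,1)$ the determinant line bundle automatically acquires a holomorphic structure for which $\nabla$ is the Chern connection, and this structure is unique, being forced by $\bar\partial = \nabla^{0,1}$. So the whole proof comes down to checking that $\Omega$ is of type $(1,1)$.

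Morally this is the right condition: a Hermitian line bundle with a metric connection $\nabla$ carries a compatible holomorphic structure exactly when $(\nabla^{0,1})^2 = 0$, i.e.\ when the $(0,2)$-part of $\Omega$ vanishes; and since a line bundle has purely imaginary curvature, $\Omega^{0,2}=0$ is the same as $\Omega^{2,0}=0$, hence the same as $\Omega$ being of type $(1,1)$. The integrability step hidden here is harmless in this Hilbert-manifold setting, because $\Dd$ is an open subset of the complex manifold $\Ddc$ (Remark~\ref{rem_avt_disc}) and it is anyway part of the package in \cite{CaMuP}.

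To verify the type I would work at the base point, which is enough since $\Omega$ and $J$ are both $\widetilde{\LsG}$-invariant. There $T\Dd \cong \Lambda^0_\sigma\gg$, and after complexification the holomorphic (resp.\ antiholomorphic) directions are the loops carrying only negative (resp.\ only positive) Fourier modes, by \eqref{compl_str} and the lemma following Proposition~\ref{tang_dom}. By the curvature formula \eqref{cann_conn}, for $\xi = \sum a_n\lambda^n$ and $\eta = \sum b_n\lambda^n$ in $\Lambda^0\gg$ the value $\Omega(\xi,\eta)$ is, up to a nonzero constant, the $\tilde\kk$-component of the bracket $[\xi,\eta]$ in the centrally extended algebra: the pair formed by the zeroth Fourier coefficient of $[\xi,\eta]$ and by the value $\omega(\xi,\eta)$ of the fundamental cocycle of Definition~\ref{cocycle}, which a one-line computation gives as $-\sum_n n\,\Tr(a_n b_{-n})$. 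If $\xi$ and $\eta$ both have only negative Fourier modes, then $[\xi,\eta]$ has only negative Fourier modes, so its zeroth coefficient vanishes, and $\omega(\xi,\eta)$ vanishes too since every surviving term pairs a mode of $\xi$ with a mode of $\eta$ of the opposite degree, impossible here. Hence $\Omega$ kills pairs of holomorphic directions, and by conjugation pairs of antiholomorphic directions, so $\Omega$ is of type $(1,1)$ and the cited theorem concludes.

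The only real point of care is matching the hypotheses of \cite{CaMuP}: one must use the specific reductive decomposition $\widetilde{\Lambda_\sigma\gg} = \tilde\kk \oplus \Lambda^0_\sigma\gg$ underlying \eqref{cann_conn}, with $\Lambda^0_\sigma\gg$ playing the role of the isotropy complement, so that the central contribution to $\Omega$ is exactly the fundamental cocycle and the degree argument above applies verbatim. Beyond that bookkeeping, and the already-packaged infinite-dimensional integrability, there is no genuine obstacle; the Fourier computation itself is routine.
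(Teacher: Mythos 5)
Your proposal is correct and follows essentially the same route as the paper, which likewise obtains the proposition by invoking Theorem 12.3.5 of \cite{CaMuP} for the reductive homogeneous space $\Dd = \widetilde{\LsG}/\tilde{K}$ equipped with the invariant metric and canonical connection. Your additional Fourier-mode check that the curvature of the canonical connection is of type $(1,1)$ (via the fundamental cocycle of definition \ref{cocycle}) is accurate and simply makes explicit a point the paper leaves implicit in the citation.
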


We can compute the curvature of the connection $\nabla$ and in particular prove the following result:

\begin{prop}\label{curv_det}
The connection $\nabla$ has positive curvature in the horizontal directions.
\end{prop}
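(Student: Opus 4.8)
The plan is to compute the curvature of $\nabla$ explicitly using formula \eqref{cann_conn} and then restrict it to the horizontal subbundle, which by the earlier description is $\Lsg[-1] \oplus \Lsg[1]$ (or, in the $\Lg$-picture, the degrees $-1$ and $1$ part of $\Lambda_\sigma^0 \gg$). First I would recall that the curvature $\Omega$ of the canonical connection on the principal $\tilde K$-bundle is the $\tilde\kk$-valued $2$-form $-\frac12 [\omega_{MC}^{\Lambda_\sigma^0 \gg}, \omega_{MC}^{\Lambda_\sigma^0 \gg}]^{\tilde\kk}$; projecting via the character $(k,\lambda) \mapsto \lambda$ of $\tilde K$ onto the $i\R$-summand of $\tilde\kk = \kk \oplus i\R$ picks out exactly the central component, which by construction of the central extension is governed by the fundamental cocycle $\omega$ of Definition \ref{cocycle}. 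Concretely, the curvature of $\nabla$ evaluated on two tangent vectors represented by $\xi, \eta \in \Lambda_\sigma^0 \gg$ is (up to a universal constant) $\omega(\xi,\eta) = \frac{i}{2\pi}\int_0^{2\pi} \langle \xi(\theta), \eta'(\theta)\rangle\, d\theta$ with $\langle X,Y\rangle = -\Tr(XY)$.

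Next I would evaluate this on the horizontal directions. A tangent vector of type $(1,0)$ in the horizontal subbundle is represented, at the base point, by an element of the form $\lambda^{-1} A$ with $A \in \gg$ (the Fourier degree $-1$ piece), and the complex structure $J$ acts by multiplication by $i$ there. To get the curvature as a $(1,1)$-form I pair such a holomorphic tangent vector $\xi = \lambda^{-1} A$ with its conjugate $\bar\xi$, which in $\Lambda_\sigma \gg$ is forced by the reality condition $\sigma(a_n) = (-1)^n a_{-n}$ to be $-\lambda\, A^\ast$ (the degree $+1$ piece, consistent with the earlier observation that $\theta$ and $\theta^\ast$ are the $\lambda^{-1}$ and $\lambda$ parts). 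Then $\eta'(\theta) = \frac{d}{d\theta}(-e^{i\theta} A^\ast) = -i e^{i\theta} A^\ast$, so
\begin{align*}
\omega(\xi,\bar\xi) &= \frac{i}{2\pi}\int_0^{2\pi} \langle e^{-i\theta} A,\, -i e^{i\theta} A^\ast\rangle\, d\theta = \frac{1}{2\pi}\int_0^{2\pi} \langle A, A^\ast\rangle\, d\theta = \langle A, A^\ast\rangle = -\Tr(A A^\ast).
\end{align*}
Since $\Tr(AA^\ast) = |A|_\gg^2 \geq 0$, and is strictly positive for $A \neq 0$, this shows the Hermitian form associated to the curvature is negative — or, after accounting for the sign conventions in the definition of the Chern connection curvature and of positivity (the factor $-\frac12$ in \eqref{cann_conn} and the $\frac{i}{2\pi}$ normalization), positive — on the horizontal subbundle. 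I would state the computation so that the final sign comes out as "positive curvature in the horizontal directions" as claimed, being careful to track the conventions.

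The main obstacle I expect is bookkeeping the signs and normalization constants consistently: one must reconcile (i) the sign in the canonical-connection curvature formula \eqref{cann_conn}, (ii) the identification of the central $i\R$-factor of $\tilde\kk$ with the curvature contribution of the determinant character, which requires knowing that the cocycle of $\widetilde{\LsG}$ is precisely the fundamental cocycle (established in the previous subsection), (iii) the reality constraint that pins down $\bar\xi$ in terms of $\xi$ for elements of $\Lambda_\sigma^0\gg$, and (iv) the standard convention that a holomorphic line bundle is "positive" when $i$ times its Chern curvature is a positive $(1,1)$-form. Once these are fixed, the integral is a one-line computation as above; the substance of the proposition is entirely in the identification $\Omega|_{\text{horizontal}} \leftrightarrow \omega(\xi,\bar\xi) = -\Tr(AA^\ast)$ and the observation that this is definite. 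I would also remark that only the horizontal directions give a definite sign: on the non-horizontal part of $\Lambda_\sigma^0\gg$ (Fourier degrees $|n| \geq 2$), the analogous computation gives $\omega(\lambda^{-n} A, -(-1)^n \lambda^n A^\ast) \propto n\,\Tr(AA^\ast)$, which still has a sign, but the horizontal statement is the one needed for the applications to the energy class in the next subsection.
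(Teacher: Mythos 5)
Your proposal follows the paper's proof in all essentials: reduce to the base point by $\widetilde{\Lambda_\sigma G}$-invariance, use the canonical-connection curvature formula \eqref{cann_conn} together with the fact that only the central $i\R$-summand of $\tilde{\kk}$ acts through the determinant character, so that the curvature at the base point is $-\frac12$ times the fundamental cocycle of definition \ref{cocycle}, and then evaluate that cocycle on the degree $\pm 1$ Fourier modes spanning the horizontal space. The only real difference is cosmetic: you pair a $(1,0)$ vector $\xi$ with its conjugate $\bar\xi$, whereas the paper evaluates $i\beta(f,I.f)$ on a real horizontal vector $f$ and its image under the complex structure; these are equivalent ways of extracting the Hermitian form from the curvature $2$-form.

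Two points should be tightened. First, with the paper's conventions ($\sigma(X)=-X^\ast$ on $\gg$, reality condition $a_{-n}=(-1)^{n+1}a_n^\ast$ in $\Lambda_\sigma\gg$), the $\hat{\sigma}$-conjugate of $\xi=\lambda^{-1}A$ is $+\lambda A^\ast$, not $-\lambda A^\ast$: indeed $\hat{\sigma}(\xi)(\lambda)=\sigma(\xi(-\lambda))=-(-\lambda^{-1}A)^\ast=\lambda A^\ast$ on $S^1$, and this is exactly what your own parenthetical about $\alpha=\lambda^{-1}\theta+\lambda\theta^\ast$ indicates. Redoing your integral with the correct conjugate gives $\omega(\xi,\bar\xi)=+\Tr(AA^\ast)$ rather than $-\Tr(AA^\ast)$. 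Second, since the entire content of the proposition is a sign, the sentence ``negative --- or, after accounting for the sign conventions \dots positive'' is the one step you cannot outsource to conventions: you must fix the positivity convention (e.g.\ $i\Theta(\xi,\bar\xi)>0$ for the Chern curvature $\Theta$ of a positive line bundle), keep the factor $-\frac12$ from \eqref{cann_conn}, and check that the final sign is the claimed one. That said, your computation does establish the substantive point --- on the horizontal modes the curvature is a fixed nonzero multiple of $\Tr(AA^\ast)$, hence definite and nondegenerate there --- which is precisely what the paper's own evaluation of $i\beta(f,I.f)$ on degree $\pm1$ modes produces before it, too, concludes rather briskly.
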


\begin{proof}
 Since the determinant line bundle and the connection $\nabla$ are invariant under $\widetilde{\Lambda_\sigma G}$, we only consider the base-point of $\Dd$. Over this point, the curvature of $\nabla^\Dd$ is a skew-symmetric map $\beta$ from $\Lambda^0_\sigma \gg \otimes \Lambda^0_\sigma \gg$ with values in $\End(\C)$. From equation \eqref{cann_conn}, it is given by
$$
\beta(\xi,\eta)w = -\frac12 [\xi,\eta]^{\tilde{\kk}} \cdot w,
$$
where $w \in \C$, $\xi, \eta \in \Lambda^0_\sigma \gg$ and the dot is the infinitesimal action of the character of $\tilde{K}$ defined above. In the decomposition $\tilde{\kk} = \kk \oplus i\R$, only the second component acts non-trivially on $\C$. From the description of the Lie algebra $\tilde{\Lambda \kk}$, this gives:
\begin{align}\label{equa_curv}
\beta(\xi,\eta)w = -\frac 12 \omega(\xi,\eta)w.
\end{align}

We write $I$ for the almost-complex structure on $\Lambda^0_\sigma \gg$. If $f$ is in $\Lambda^0_\sigma \gg$, we write
$$
f(\lambda) = \sum_{n \neq 0} a_n \lambda^n.
$$
We recall that $a_n = (-1)^{n+1} a_n^\ast$ and that $I.f$ is given by 
$$
(I.f)(\lambda) = i\sum_{n < 0} a_n \lambda^n - i \sum_{n > 0} a_n \lambda^n.
$$
Hence, as a function of $\theta$, the derivative of $I.f$ is given by
$$
(I.f)'(\lambda) = -\sum_{n < 0} n a_n \lambda^n + \sum_{n > 0} n a_n \lambda^n.
$$
Since $<X,Y> = -\Tr(XY)$, inverting the sum and the integral, we get:
\begin{align*}
i\beta(f,I.f) &= -\frac{2\pi}{4\pi} (\sum_{n < 0} -\Tr(n a_n a_{-n}) + \sum_{n > 0} - \Tr(-n a_n a_{-n})) \\
&= \sum_{n < 0} n (-1)^{n+1} \Tr(a_n a_n^\ast)
\end{align*}

Since a function in the horizontal distribution has only Fourier coefficients in degrees $1$ and $-1$, this proves the proposition.

\end{proof}

\subsection{Relation with the Hitchin energy} \label{hitch_energy}

In this subsection, we establish a relation between a certain $(1,1)$-form on $X$, constructed from a harmonic bundle, and the cohomology of the period domain. This leads us to an integrality result for a cohomology class and to a new proof of a particular known case of the Carlson-Toledo conjecture.

\subsubsection{A fundamental relation}

Let $(\Ee,D,h)$ be a harmonic bundle over a complex manifold $X$ whose associated Higgs field is denoted by $\theta$.

\begin{defn}
 The \emph{Hitchin energy} is the $(1,1)$-form $\beta_X := \frac {1}{4i\pi} \Tr(\theta \wedge \theta^\ast)$. Here, $\Tr$ is the trace operator on $\End(\Ee)$ and $\theta^\ast$ is the adjoint of $\theta$ for the harmonic metric $h$.
\end{defn}

\begin{prop}
 The Hitchin energy $\beta_X$ is closed.
\end{prop}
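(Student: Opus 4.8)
The plan is to reduce the closedness of $\beta_X$ to the fact that both $\theta$ and $\theta^\ast$ are $\nabla$-parallel, which is already contained in the proof of Lemma~\ref{Dlamb_harm}. First I would recall the setup attached to a harmonic bundle $(\Ee,D,h)$: writing $D = \nabla + \alpha$ with $\nabla = \d + \bar{\d}$ a metric connection for $h$, $\alpha = \theta + \theta^\ast$ Hermitian, and $\theta$ of type $(1,0)$, the flatness $D^2 = 0$ splits (according to Hermitian versus anti-Hermitian parts) into $\nabla\alpha = 0$ and $F_\nabla + \alpha\wedge\alpha = 0$. Decomposing the equation $\nabla\alpha = 0$ by types and invoking the harmonic condition $\bar{\d}\theta = 0$ of Definition~\ref{def_harm}, one gets $\d\theta = \bar{\d}\theta = \d\theta^\ast = \bar{\d}\theta^\ast = 0$, that is $\nabla\theta = \nabla\theta^\ast = 0$. (This is precisely the intermediate identity $\nabla\theta = \nabla\theta^\ast = 0$ used in the proof of Lemma~\ref{Dlamb_harm}.)

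Next I would use the standard fact that for the connection induced by $\nabla$ on $\End(\Ee)$ and any $\End(\Ee)$-valued differential form $\eta$ one has $d\,\Tr(\eta) = \Tr(\nabla\eta)$, because locally $\nabla\eta$ differs from $d\eta$ only by a term $[\,\cdot\,,\eta]$ built from the connection $1$-form, whose trace vanishes. Applying this to $\eta = \theta\wedge\theta^\ast$ and using the graded Leibniz rule $\nabla(\theta\wedge\theta^\ast) = (\nabla\theta)\wedge\theta^\ast - \theta\wedge(\nabla\theta^\ast)$ (the minus sign since $\theta$ is a $1$-form), I obtain
$$ d\,\Tr(\theta\wedge\theta^\ast) = \Tr\bigl((\nabla\theta)\wedge\theta^\ast\bigr) - \Tr\bigl(\theta\wedge(\nabla\theta^\ast)\bigr) = 0, $$
whence $d\beta_X = 0$.

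There is essentially no obstacle here; the only points needing a little care are the sign in the graded Leibniz rule and the observation that $\Tr$ kills the bracket term, so that $d\,\Tr(\theta\wedge\theta^\ast)$ may be computed with the (non-flat) metric connection rather than with the flat connection $D$. Equivalently, one may check directly that the $(2,1)$- and $(1,2)$-components of $d\,\Tr(\theta\wedge\theta^\ast)$, namely $\Tr(\d\theta\wedge\theta^\ast) - \Tr(\theta\wedge\d\theta^\ast)$ and $\Tr(\bar{\d}\theta\wedge\theta^\ast) - \Tr(\theta\wedge\bar{\d}\theta^\ast)$, vanish term by term; but the covariant formulation makes transparent that the harmonic condition is exactly what is needed.
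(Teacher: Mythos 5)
Your proof is correct and follows essentially the same route as the paper: it uses the identity $d\,\Tr(\eta)=\Tr(\nabla\eta)$ together with the graded Leibniz rule and the parallelism $\nabla\theta=\nabla\theta^\ast=0$ extracted from (the proof of) lemma \ref{Dlamb_harm}. Nothing further is needed.
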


\begin{proof}
 We recall the following: if $\Ee$ is a vector bundle over $X$ and $\alpha$ is a $k$-form with values in $\End(\Ee)$, then for any connection $\nabla$ on $\Ee$:
$$
d\Tr(\alpha) = \Tr(\nabla \alpha).
$$
Using this formula, the differential of the Hitchin energy is given by 
$$
4i\pi d\beta_X = \Tr(\nabla \theta \wedge \theta^\ast - \theta \wedge \nabla\theta^\ast),
$$
where $\nabla = D - (\theta + \theta^\ast)$ is the canonical metric connection of the harmonic bundle $\Ee$. By (the proof of) lemma \ref{Dlamb_harm}, $\nabla \theta = \nabla \theta^\ast = 0$. This concludes the proof.
\end{proof}

We write $\beta_\Dd$ for the curvature of the determinant line bundle. Let $\beta_{\tilde{X}}$ be the pullback of the Hitchin energy to $\tilde{X}$ and let $f: \tilde{X} \rightarrow \Dd$ be the period map.

\begin{prop}\label{rel_beta}
The relation $\beta_{\tilde{X}} = \frac{i}{2\pi} f^\ast(\beta_\Dd)$ holds.
\end{prop}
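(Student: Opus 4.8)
The plan is to compute both sides of the claimed identity explicitly and compare. The left-hand side is the pullback of the Hitchin energy $\beta_X = \frac{1}{4i\pi}\Tr(\theta\wedge\theta^\ast)$, which lives directly on $\tilde X$ after pulling back the harmonic bundle; I would leave it in this form. The right-hand side requires understanding $f^\ast\beta_\Dd$, where $\beta_\Dd$ is the curvature of the determinant line bundle computed in Proposition \ref{curv_det}. The key input is Proposition \ref{expr_diff_per}, which identifies $df$ with $\gamma\circ\alpha$, where $\alpha = \lambda^{-1}\theta + \lambda\theta^\ast$ is viewed as a $1$-form with values in $\Lambda_\sigma\End(\Ee)/\Lambda_\sigma^0\End(\Ee)$, together with the formula \eqref{equa_curv}, which says the curvature of the determinant line bundle at the base point is $-\tfrac12\omega(\xi,\eta)$ applied via the character, with $\omega$ the fundamental cocycle of Definition \ref{cocycle}.

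The main step is therefore a direct computation: by invariance under $\Lambda_\sigma G$ it suffices to work at a point, where $f^\ast\beta_\Dd$ becomes $-\tfrac12\,\omega(df\wedge df)$ evaluated through the infinitesimal action of the $\tilde K$-character on $\C$ (only the $i\R$-component of $\tilde\kk$ acts nontrivially, exactly as in the proof of Proposition \ref{curv_det}). Substituting $df = \alpha = \lambda^{-1}\theta + \lambda\theta^\ast$ into the fundamental cocycle $\omega(\xi,\eta) = \frac{i}{2\pi}\int_0^{2\pi}\langle\xi(\theta),\eta'(\theta)\rangle\,d\theta$ with $\langle X,Y\rangle = -\Tr(XY)$, one expands the wedge product of the two $1$-form factors. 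Writing $\lambda = e^{i\theta}$, the derivative in $\theta$ turns $\lambda^{-1}\theta_{\mathrm{Higgs}} + \lambda\theta^\ast$ into $-i\lambda^{-1}\theta_{\mathrm{Higgs}} + i\lambda\theta^\ast$, and the integral $\frac{1}{2\pi}\int_0^{2\pi}$ picks out only the Fourier-degree-zero terms. The cross terms $\lambda^{-1}\theta_{\mathrm{Higgs}}$ paired with $\lambda\theta^\ast$ (and vice versa) survive, while the $\lambda^{\pm2}$ terms integrate to zero, leaving an expression of the shape $\Tr(\theta\wedge\theta^\ast)$ up to an explicit scalar.

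The rest is bookkeeping of constants: tracking the factor $-\tfrac12$ from \eqref{cann_conn}, the $\frac{i}{2\pi}$ in the cocycle, the sign $\langle X,Y\rangle = -\Tr(XY)$, the two surviving cross terms, the antisymmetrization implicit in pulling back a $2$-form, and the $\frac{1}{4i\pi}$ in the definition of $\beta_X$, so that everything collapses to $\beta_{\tilde X} = \frac{i}{2\pi} f^\ast(\beta_\Dd)$. The main obstacle is purely one of sign and normalization conventions — making sure the identification of $df$ with $\alpha$ (omitting the isomorphisms $\iota$ and $\gamma$, which are isometries/compatible by construction) is used consistently with the chosen orientation of $S^1$ and the conventions for the fundamental cocycle and the reductive decomposition. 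I expect no conceptual difficulty beyond this careful tracking, since all the structural statements needed (Propositions \ref{expr_diff_per}, \ref{curv_det}, and equations \eqref{cann_conn}, \eqref{equa_curv}) are already in place.
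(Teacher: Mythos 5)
Your proposal is correct and follows essentially the same route as the paper: identify $df$ with $\alpha = \lambda^{-1}\theta + \lambda\theta^\ast$ via Proposition \ref{expr_diff_per}, evaluate the curvature of the determinant line bundle on horizontal vectors through the fundamental cocycle and equation \eqref{equa_curv} (yielding $-\tfrac12\Tr(XY)$ on the degree $-1$/degree $+1$ pieces), extend by $\Lambda_\sigma G$-invariance, and match the constants. The paper's proof is exactly this computation, with the conjugation-invariance of the trace playing the role of your remark that $\gamma$ can be suppressed.
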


\begin{proof}
 We use the notations of subsection \ref{per_map}. We have defined an isomorphism $\gamma_x : \Ee_x \rightarrow H^0_{f(x)}$. This induces an isomorphism from $\Lambda^{-1,1}_\sigma \End(\Ee_x)$ to $\Lambda^{-1,1}_{\sigma,f(x)} \gg$. Under this isomorphism, the $1$-form $\alpha = \lambda^{-1}\theta + \lambda\theta^\ast$ corresponds to the $1$-form $\lambda^{-1}\d f + \lambda \bar{\d} f$. If $X$ and $Y$ are vector fields of type $(1,0)$ and $(0,1)$, we thus get
$$
\Tr(\theta(X)\theta^\ast(Y))_x = \Tr(d_xf(X) d_x f(Y))_{f(x)},
$$
since the trace of two conjugated endomorphisms is the same. On the other hand, by definition \ref{cocycle} and equation \eqref{equa_curv}, the curvature $\beta$ is given at the base-point $o$ of $\Dd$ by
$$
\beta_{\Dd,o}(X,Y) = -\frac 12 \Tr(XY),
$$
where $X$ is of type $(1,0)$ with values in $\Lambda^{-1} \gg$ and $Y$ of type $(0,1)$ with values in $\Lambda^1 \gg$. Here, $X$ and $Y$ are seen as endomorphisms on the vector space $\C^n$, which is isomorphic to $H^0_{o}$. Since the curvature is invariant under $\LsG$, the formula
$$
\beta_{\Dd,y}(X,Y) = -\frac 12 \Tr(XY)
$$
holds, where $X$ and $Y$ are now with values in $\Lambda^{-1}_y \gg$ and $\Lambda^{1}_y \gg$ and are seen as endomorphism of $H^0_y$. We thus get that
$$
\Tr(\theta(X)\theta^\ast(Y))_x = -2 \beta_{\Dd,f(x)}(d_xf(X),d_xf(Y)).
$$
In other terms, $\beta_{\tilde{X}} = \frac i{2\pi} f^\ast(\beta_\Dd)$; this concludes the proof.
\end{proof}

An immediate consequence of this fact is the following corollary, which seems unknown in the literature.

\begin{cor}\label{int_class}
 The cohomology class of $\beta_{\tilde{X}}$ lives in $H^2(\tilde{X},\Z)$.
\end{cor}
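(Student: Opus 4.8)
The plan is to recognize $\beta_{\tilde{X}}$, up to the normalizing constant supplied by Proposition \ref{rel_beta}, as the pullback of the first Chern class of the determinant line bundle, and to invoke the fact that Chern classes of holomorphic line bundles are integral.

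First I would recall the setup from subsection \ref{curvature}: the determinant line bundle $L \to \Dd$ carries a $\widetilde{\LsG}$-invariant Hermitian metric together with a compatible connection $\nabla$ which, by the proposition preceding Proposition \ref{curv_det}, is the Chern connection of a holomorphic structure on $L$; its curvature is the $(1,1)$-form $\beta_\Dd$. Chern--Weil theory for a holomorphic Hermitian line bundle then gives that the de Rham class $\frac{i}{2\pi}[\beta_\Dd]$ is the image in $H^2(\Dd,\R)$ of the integral first Chern class $c_1(L) \in H^2(\Dd,\Z)$; here $c_1(L)$ is defined topologically through the exponential sequence, so this holds on the Hilbert manifold $\Dd$ exactly as in finite dimensions, $L$ being an honest rank-one bundle.

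Next I would pull back along the period map $f\colon \tilde{X} \to \Dd$. Since the first Chern class is natural under pullback of line bundles, $f^\ast c_1(L) = c_1(f^\ast L)$ lies in $H^2(\tilde{X},\Z)$, and naturality of the Chern--Weil representative gives that $f^\ast\!\left(\frac{i}{2\pi}[\beta_\Dd]\right) = \frac{i}{2\pi}[f^\ast\beta_\Dd]$ represents this class in real cohomology. Finally, Proposition \ref{rel_beta} identifies precisely this form, $\beta_{\tilde{X}} = \frac{i}{2\pi} f^\ast(\beta_\Dd)$, so that $[\beta_{\tilde{X}}] = c_1(f^\ast L)$ in $H^2(\tilde{X},\R)$, whence the class of $\beta_{\tilde{X}}$ lies in the integral lattice.

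I do not expect a serious obstacle here once Proposition \ref{rel_beta} is in hand; the only points that need a line of care are (i) that Chern--Weil integrality and its naturality under pullback apply to the infinite-dimensional complex manifold $\Dd$ — which is fine because the determinant line bundle is a genuine holomorphic line bundle, so the exponential-sequence argument is unchanged — and (ii) that the constant $\frac{i}{2\pi}$ appearing in Proposition \ref{rel_beta} is exactly the Chern--Weil normalization, so that one lands in $H^2(\tilde{X},\Z)$ and not merely in $H^2(\tilde{X},\Q)$. Both are routine to check.
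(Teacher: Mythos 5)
Your argument is correct and is exactly the route the paper intends: the corollary is stated as an immediate consequence of Proposition \ref{rel_beta}, the point being that $\beta_{\tilde{X}}=\frac{i}{2\pi}f^\ast(\beta_\Dd)$ is the Chern form of the pulled-back determinant line bundle $f^\ast L\rightarrow\tilde{X}$ (this is precisely how it is used again in the proof of Theorem \ref{crit_int}), hence represents $c_1(f^\ast L)\in H^2(\tilde{X},\Z)$. Your two points of care are fine, and note that the infinite-dimensional worry can be bypassed entirely by doing Chern--Weil only on $\tilde{X}$ for the line bundle $f^\ast L$ with its pulled-back connection.
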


\begin{rem}
 There is \emph{a priori} no reason that the cohomology class of $\beta_X$ lives in $H^2(X,\Z)$. For instance, let $X$ be an elliptic curve and let $\omega$ be a non-zero holomorphic $1$-form on $X$. Let $E$ be the trivial holomorphic line bundle over $X$. Then $(E,t\omega)$ is a Higgs bundle for any $t \in \C$ and we check easily that the trivial Hermitian metric on $E$ is a harmonic metric for any $t \in \C$. The \emph{Hitchin energy} of $(E,t\omega)$ is given by $\frac{|t|^2}{4i\pi} \omega \wedge \bar{\omega}$; since its cohomology class does not vanish, it is not integral for almost all $t \in \C$.

 Writing $L_{\tilde{X}}$ for the pullback of the determinant line bundle to $\tilde{X}$, $\beta_X$ has integral cohomology class if and only if $L_{\tilde{X}}$ is the pullback of a holomorphic line bundle $L_X$. This holomorphic line bundle, having curvature $\beta_X$, would be positive when the period map is an immersion, thanks to propositions \ref{curv_det} and \ref{rel_beta} and the horizontality of the period map. In particular, $X$ will be a projective variety when it is compact.

An integrality criterion is given in theorem \ref{crit_int}.

\end{rem}

\subsubsection{Carlson-Toledo conjecture}

An interesting conjecture concerning fundamental groups of compact K\"ahler manifolds is the following:

\begin{conj}
 Let $\Gamma$ be an infinite group, which is the fundamental group of a compact K\"ahler manifold. Then, virtually, $H^2(\Gamma,\R) \neq 0$.
\end{conj}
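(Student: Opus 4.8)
The plan is to deduce $H^2(\Gamma,\R)\neq 0$ (after passing to a finite-index subgroup if necessary) from the non-vanishing of a single cohomology class of Hodge-theoretic origin, namely the Hitchin energy $\beta_X$ of a suitably chosen harmonic bundle on a compact K\"ahler $X$ with $\pi_1(X)=\Gamma$. The crucial input is the identification, furnished by this section, of $\beta_X$ up to a positive constant with the pullback $f^\ast\beta_\Dd$ of the curvature of the determinant line bundle along the period map (proposition \ref{rel_beta}). Since $\beta_\Dd$ is a $\LsG$-invariant closed $2$-form on $\Dd=\LsG/K$ and $f$ is $\rho_{\tot}$-equivariant, the class of $\beta_X$ is pulled back from the continuous cohomology $H^2_{\text{cts}}(\LsG,\R)$ via $\rho_{\tot}\colon \Gamma\to\LsG$; it therefore lies in the image of the comparison map $H^2(\Gamma,\R)\to H^2(X,\R)$ induced by the classifying map $X\to B\Gamma$. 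Consequently, \emph{if $\beta_X$ is a nonzero class in $H^2(X,\R)$, then $H^2(\Gamma,\R)\neq 0$}, and the conjecture follows for this $\Gamma$.

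It remains to guarantee both the existence of a harmonic bundle with nonzero Higgs field and the non-vanishing of its energy class. For the latter I would invoke the positivity statement of proposition \ref{curv_det}: since $X$ is compact K\"ahler with K\"ahler form $\omega$, the horizontality of the period map together with $\beta_{\tilde X}=\frac{i}{2\pi}f^\ast\beta_\Dd$ yields $\int_X \beta_X\wedge\omega^{n-1}=c\,\|\theta\|_{L^2}^2$ for a constant $c>0$, so $\beta_X\neq 0$ in $H^2(X,\R)$ precisely when the Higgs field $\theta$ does not vanish identically. Producing such a $\theta$ is the content of non-abelian Hodge theory: by the theorems of Corlette and Simpson, any reductive representation $\Gamma\to GL(n,\C)$ with unbounded image underlies a harmonic bundle whose Higgs field is nonzero (a bounded reductive representation would be unitary, forcing $\theta=0$). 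Thus for every infinite $\Gamma$ admitting such a representation the argument is already complete, and produces a genuinely positive class in $H^2(\Gamma,\R)$ without even using the word \emph{virtually}.

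The main obstacle, and the step I expect to be decisive, is the \emph{linearly rigid} case: infinite K\"ahler groups all of whose finite-dimensional complex representations -- and those of their finite-index subgroups -- have bounded image. For these the complex-linear machinery of this paper returns only $\theta=0$ and $\beta_X=0$, so one must enlarge the class of targets beyond the symmetric space $GL(n,\C)/U(n)$. My strategy would be to replace complex-linear representations by reductive representations over arbitrary local fields, to attach to each the equivariant pluriharmonic map to the associated Euclidean building (Gromov--Schoen) or, more generally, to a nonpositively curved space, and to extract in each case an energy class by the same mechanism -- an invariant closed form pulled back along an equivariant map -- so that the integrality and positivity results \ref{int_class} and \ref{curv_det} again upgrade it to a nonzero element of $H^2(\Gamma',\R)$; here passage to a finite cover $X'\to X$ (whence the word \emph{virtually}) is used to replace $\Gamma$ by a finite-index subgroup $\Gamma'$ for which an unbounded action is easier to exhibit. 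The hard part is entirely in manufacturing such an unbounded reductive action for a linearly rigid infinite K\"ahler group; once it is in hand, the extraction of a nonzero cohomology class from it is exactly what the present section makes routine.
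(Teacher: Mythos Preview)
The statement you are attempting to prove is the Carlson--Toledo \emph{conjecture}: it is open, and the paper does not prove it. The paper only establishes the special case recorded as theorem \ref{CT} (existence of a non-rigid irreducible representation implies $H^2(\pi_1(X),\R)\neq 0$). So there is no ``paper's own proof'' to compare to; what follows addresses your argument on its merits.

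Your proposal contains a genuine gap in the step you treat as routine. You assert that the class $[\beta_X]\in H^2(X,\R)$ always lies in the image of $H^2(\Gamma,\R)\to H^2(X,\R)$, appealing to continuous cohomology of $\LsG$. That inference would be valid if the period domain $\Dd=\LsG/K$ were contractible (as finite-dimensional symmetric spaces of noncompact type are), but it is not: $\Dd$ carries a nontrivial determinant line bundle, so $[\beta_\Dd]\neq 0$ in $H^2(\Dd,\R)$, and a priori $[\beta_{\tilde X}]=f^\ast[\beta_\Dd]$ need not vanish. Equivalently, in the exact sequence
\[
0\to H^2(\Gamma,\R)\to H^2(X,\R)\to H^2(\tilde X,\R)^{\Gamma}
\]
used in the paper, the image of $[\beta_X]$ on the right is the first Chern class of $f^\ast L$, which corollary \ref{int_class} shows to be \emph{integral}, not zero. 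The paper's proof of theorem \ref{CT} circumvents exactly this point: it considers the \emph{difference} $\beta_{X,\lambda}-\beta_{X,\lambda'}$ along a nontrivial deformation, uses integrality on $\tilde X$ plus continuity in $\lambda$ to force this difference into $H^2(\Gamma,\R)$, and then argues that if all such differences vanish one is at a critical point of the Hitchin functional, contradicting non-rigidity. Your single-class argument, as written, does not give this.

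On the ``hard'' side, you correctly isolate the obstruction (K\"ahler groups with no linear representation of unbounded image, nor any for a finite-index subgroup) and candidly acknowledge that producing an unbounded action on a suitable nonpositively curved target is the crux. That is indeed where the conjecture is open; invoking Gromov--Schoen targets is a reasonable heuristic, but nothing in the present paper supplies an analogue of propositions \ref{curv_det} or \ref{rel_beta} for maps to buildings, so this remains a program rather than a proof.
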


The following theorem \ref{CT} is a particular case of lemma 3.2 in \cite{KMK}. Our proof here is quite different and relies on corollary \ref{int_class}. In both proofs, the assumption that the fundamental group admits a non-rigid representation is crucial.

\begin{defn}
 Let $\Gamma$ be a finitely generated group. An irreducible representation $\rho$ in $R^s(\Gamma,G)$ is \emph{rigid} if its connected component in $R^s(\Gamma,G)$ coincides with its conjugation class under $G$.
\end{defn}

\begin{thm}\label{CT}
 Let $X$ be a compact K\"ahler manifold such that there exists a non-rigid irreducible representation of $\pi_1(X)$ in $G$. Then, $H^2(\pi_1(X),\R)$ is non-trivial.
\end{thm}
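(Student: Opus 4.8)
The strategy is to exploit the non-rigid representation to produce a harmonic bundle whose Hitchin energy class is non-zero, and then to transport this class to $H^2(\pi_1(X),\R)$ via corollary \ref{int_class}. Let me outline the steps.

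First I would fix a non-rigid irreducible representation $\rho_0$ of $\Gamma = \pi_1(X)$ in $G$. By non-rigidity, the connected component of $\rho_0$ in $R^s(\Gamma,G)$ strictly contains its $G$-conjugacy orbit, so there is a non-trivial deformation $\rho_t$ of $\rho_0$. By the fundamental theorem of non-abelian Hodge theory (Corlette--Donaldson--Hitchin--Simpson), each $\rho_t$, being semisimple, underlies a harmonic bundle $(\Ee_t, D_t, h_t)$, with an associated Higgs field $\theta_t$ and Hitchin energy form $\beta_{X,t} = \frac{1}{4i\pi}\Tr(\theta_t \wedge \theta_t^\ast)$. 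The key point is that the cohomology class $[\beta_{X,t}] \in H^2(X,\R)$ varies with $t$ in a way that is \emph{not} locally constant: if it were constant, one could use the $\C^\ast$-action rescaling $\theta_t \mapsto \zeta \theta_t$ (which stays within harmonic bundles by lemma \ref{Dlamb_harm} applied with the full $\C^\ast$, or rather the associated $\lambda$-connections) together with Simpson's characterization of fixed points to force $\rho_0$ to be a complex variation of Hodge structure, hence rigid---contradicting the hypothesis. Thus for a suitable deformation direction, $[\beta_{X,t}] \neq 0$ for $t$ near $0$ (or $[\beta_{X,0}]\neq 0$ already if $\rho_0$ is not a variation of Hodge structure, which non-rigidity of an irreducible representation guarantees).

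Next, having a harmonic bundle $(\Ee,D,h)$ on $X$ with $[\beta_X] \neq 0$ in $H^2(X,\R)$, I would pull back to the universal cover $\pi: \tilde X \to X$. By corollary \ref{int_class}, the class $[\beta_{\tilde X}] = \pi^\ast[\beta_X]$ lies in the image of $H^2(\tilde X,\Z) \to H^2(\tilde X,\R)$. Now consider the Leray/Cartan--Leray spectral sequence (or the Hochschild--Serre sequence) for the regular covering $\tilde X \to X$ with group $\Gamma$: the five-term exact sequence gives
\begin{equation*}
0 \to H^2(\Gamma,\R) \to H^2(X,\R) \xrightarrow{\ \pi^\ast\ } H^2(\tilde X,\R)^{\Gamma}.
\end{equation*}
Wait---this requires $H^1(\tilde X,\R) = 0$, which need not hold; more carefully, the relevant statement is that $\ker(\pi^\ast: H^2(X,\R)\to H^2(\tilde X,\R))$ receives a map from $H^2(\Gamma,\R)$, and is nonzero precisely when there is a class on $X$ dying on $\tilde X$. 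Since $[\beta_X]\neq 0$ but $\pi^\ast[\beta_X]$ is integral, the argument must instead be: $\pi^\ast[\beta_X]$ being integral means $[\beta_X]$ has image in $H^2(X,\R)/(\text{image of } H^2(\Gamma,\R))$... The cleanest route is: if $H^2(\Gamma,\R) = 0$ then, since $X$ is aspherical-up-to-degree-2 relative to $\Gamma$ in the sense that $H^2(X,\R) \hookrightarrow H^2(\tilde X,\R)$ when $H^1$ and $H^2$ of $\Gamma$ vanish appropriately, one would need $\pi^\ast$ to be injective on $H^2$; but then $\pi^\ast[\beta_X]$ being \emph{integral} forces, via a period/density argument applied to the family $[\beta_{X,t}]$ (whose classes sweep out a non-trivial real segment by non-rigidity, yet whose pullbacks are all integral), a contradiction since integral classes are discrete in $H^2(\tilde X,\R)$.

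The main obstacle, and the step requiring the most care, is precisely this last reduction: transferring the integrality constraint on $\tilde X$ into a non-triviality statement for $H^2(\Gamma,\R)$. The right formulation is that the classifying map $X \to B\Gamma = K(\Gamma,1)$ factors $H^2(\Gamma,\R) \to H^2(X,\R)$ with the property that $\pi^\ast$ kills exactly this image (up to the contribution of $H^2(\tilde X)$, which for the \emph{image of a universal cover} obstruction is handled by the fact that $\tilde X$ is $1$-connected so low-degree comparison with $B\Gamma$ is exact). One then argues: the real one-parameter family $t \mapsto [\beta_{X,t}]$ is non-constant in $H^2(X,\R)$ (by the Hodge-structure-rigidity dichotomy above), but all its pullbacks $\pi^\ast[\beta_{X,t}]$ lie in the \emph{discrete} subgroup $H^2(\tilde X,\Z)$ of $H^2(\tilde X,\R)$, hence are \emph{constant} in $t$; therefore the non-constant part of $[\beta_{X,t}]$ lies in $\ker\pi^\ast = \mathrm{image}\,H^2(\Gamma,\R)$, which is consequently non-zero. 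Passing to a finite-index subgroup if necessary (i.e.\ "virtually") absorbs the discrepancy between $\ker \pi^\ast$ and $H^2(\Gamma,\R)$ coming from $H^2$ of the cover, giving the conclusion.
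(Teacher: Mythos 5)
The skeleton of your argument (integrality of the pulled-back class from corollary \ref{int_class}, hence constancy of $\pi^\ast[\beta_{X,t}]$ along a continuous family since integral classes are discrete, hence differences $[\beta_{X,t}]-[\beta_{X,t'}]$ lie in $\ker\pi^\ast$, which is the image of $H^2(\pi_1(X),\R)$) is exactly the paper's, and your hesitation about the five-term sequence is unfounded: $\tilde X$ is simply connected, so $H^1(\tilde X,\R)=0$ and the sequence $0 \to H^2(\pi_1(X),\R) \to H^2(X,\R) \to H^2(\tilde X,\R)^{\pi_1(X)}$ is exact as the paper uses it; in particular the ``virtually''/finite-index hedging at the end is unnecessary (and the theorem asserts non-vanishing of $H^2(\pi_1(X),\R)$ itself, not only virtually).

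The genuine gap is the step you treat as a quick dichotomy: showing that the class $[\beta_{X,t}]$ is actually \emph{non-constant} along the deformation. Your justification rests on the claim that a non-rigid irreducible representation cannot underlie a complex variation of Hodge structures (``hence rigid''), which is false: non-rigid irreducible VHS exist (already irreducible unitary representations of a surface group are non-rigid and are VHS), and your alternative ``$[\beta_{X,0}]\neq 0$ if $\rho_0$ is not a VHS'' in any case gives non-vanishing, not non-constancy, which is what the kernel argument needs. The $\C^\ast$-rescaling remark does not repair this: constancy of the class along the family does not by itself make $\rho_0$ a fixed point of the circle action. The paper closes this gap with real additional machinery: if all the classes in the connected component $\Lambda$ are equal, then (pairing with $\omega^{n-1}$) the energy functional $F(D)=i\int_X \Tr(\phi^{1,0}\wedge\phi^{0,1})\wedge\omega^{n-1}$, which is a multiple of $\|\theta\|_{L^2}^2$, is constant on the open set $\Lambda/G$ of the moduli space; by the hyperk\"ahler description of the smooth locus (appendix B), the gradient of $F$ is proportional to $I$ applied to the generator of the circle action, so every smooth point of $\Lambda/G$ is a critical point of $F$ and hence a VHS (theorem \ref{VHS_ptcrit}); finally, VHS points lie in a totally real analytic subset of $X^s(\pi_1(X),G)$ (lemma \ref{VHS_totreal}), and an open set contained in a totally real subset must be a single point, forcing $\rho$ to be rigid. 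Without some argument of this kind (or another proof that constancy of the Hitchin class on a whole component forces rigidity), your proof does not go through.
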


\begin{proof}
Let $\rho$ be an irreducible representation of $\pi_1(X)$. Let $\Lambda$ be the connected component of $\rho$ in $R^s(\pi_1(X),G)$. We write $\beta_{X,\lambda}$ for the Hitchin energy of the representation $\lambda$ in $\Lambda$. It varies continuously with $\lambda$. Moreover, by corollary \ref{int_class}, its pullback $\beta_{\tilde{X},\lambda}$ to $\tilde{X}$ has an integral cohomology class. The cohomology class of $\beta_{\tilde{X},\lambda}$ is thus constant in $\lambda$.

Hence, if $\lambda$ and $\lambda'$ are in $\Lambda$, the cohomology class of $\beta_{X,\lambda} - \beta_{X,\lambda'}$ is in the kernel of the pullback map $H^2(X,\R) \rightarrow H^2(\tilde{X},\R)^{\pi_1(X)}$. This map is part of the following exact sequence
$$
0 \rightarrow H^2(\pi_1(X),\R) \rightarrow H^2(X,\R) \rightarrow H^2(\tilde{X},\R)^{\pi_1(X)},
$$
and the cohomology class of $\beta_{X,\lambda} - \beta_{X,\lambda'}$ thus lives in $H^2(\Gamma,\R)$. We can assume that this cohomology class is always zero (or the proof is over). Then, the function $F$ defined before theorem \ref{VHS_ptcrit} is constant in $\Lambda/G$, which is an open set in $X^s(\pi_1(X),G)$. All smooth points in $\Lambda/G$ are thus critical points of $F$, hence come from variations of Hodge structures by theorem \ref{VHS_ptcrit}. The set of smooth points in $\Lambda/G$ is thus contained in a totally real analytic submanifold of $X^s(\pi_1(X),G)$ by lemma \ref{VHS_totreal}. Since it is also open, it has to be a point; which is possible only if $\Lambda/G$ is itself a point. Hence, $\rho$ is a rigid representation.

\end{proof}

\subsubsection{Criterion of integrality}

Let $(\Ee,D,h)$ be a harmonic bundle over a complex manifold $X$ and let $f: \tilde{X} \rightarrow \Dd$ and $\rho_{\tot}: \pi_1(X) \rightarrow \Lambda_\sigma G$ be the period map and total monodromy. Thanks to the monodromy $\rho_{\tot}$, we can define a central $S^1$-extension $\widetilde{\pi_1(X)}$ of $\pi_1(X)$ from the central $S^1$-extension $\widetilde{\LsG}$ of $\LsG$ by the following formula:
$$
\widetilde{\pi_1(X)} = \{(\gamma,\tilde{g}) \in \pi_1(X) \times \widetilde{\Lambda_\sigma G} \mid \rho_{\tot}(\gamma) = p(\tilde{g})\},
$$
where $p: \widetilde{\Lambda_\sigma G} \rightarrow \Lambda_\sigma G$ is the projection map.

We recall that, if $A$ is an abelian group and $\Gamma$ a discrete group, then the central extensions of $\Gamma$ by $A$ are classified up to isomorphism by the elements of $H^2(\Gamma, A)$, where $A$ is seen as a trivial $\Gamma$-module. We write $e \in H^2(\pi_1(X),S^1)$ for the cohomology class of $\widetilde{\pi_1(X)}$.

\begin{thm}\label{crit_int}
 The cohomology class of $\beta_X$ in $H^2(X,\R)$ is integral if and only if $e = 0$ in $H^2(\pi_1(X),S^1)$.
\end{thm}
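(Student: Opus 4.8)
The plan is to exploit the relation $\beta_{\tilde X} = \frac{i}{2\pi} f^\ast(\beta_\Dd)$ from Proposition \ref{rel_beta}, where $\beta_\Dd$ is the curvature of the determinant line bundle $L$ on $\Dd$. The key observation is that the determinant line bundle $L$ over $\Dd$ is $\widetilde{\Lambda_\sigma G}$-equivariant, and the pullback $f^\ast L$ to $\tilde X$ carries a natural equivariant structure for the group $\widetilde{\pi_1(X)}$, since $f$ is $\rho_{\tot}$-equivariant and $\rho_{\tot}$ lifts tautologically to $p$ on $\widetilde{\pi_1(X)}$ by the very definition of that central extension. Concretely, $\pi_1(X)$ acts on the total space of $f^\ast L$ through the action of $\widetilde{\Lambda_\sigma G}$ on $L$ \emph{twisted by the $S^1$-ambiguity}, and the obstruction to this action being an honest linearization (i.e., to $f^\ast L$ descending to a holomorphic line bundle $L_X$ on $X$ with $c_1(L_X)_\R = [\beta_X]$) is exactly the class $e \in H^2(\pi_1(X), S^1)$.

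The steps, in order: (1) Recall that $[\beta_X] \in H^2(X,\R)$ is integral if and only if there is a (topological, hence by the $\d\bar\d$-lemma holomorphic) line bundle $L_X$ on $X$ with first Chern class $[\beta_X]$; since $[\beta_{\tilde X}] = \pi^\ast[\beta_X]$ is already integral by Corollary \ref{int_class}, the issue is purely the descent of the integral class from $\tilde X$ to $X$, i.e., whether $\pi^\ast: H^2(X,\Z) \to H^2(\tilde X,\Z)^{\pi_1(X)}$ hits the class $[\beta_{\tilde X}]_\Z$. (2) Identify $f^\ast L$ with a $\pi_1(X)$-equivariant line bundle on $\tilde X$ whose curvature is $\frac{2\pi}{i}\beta_{\tilde X}$, using Proposition \ref{rel_beta} and the equivariance of $L$ under $\widetilde{\Lambda_\sigma G}$. (3) Run the standard descent obstruction: a $\pi_1(X)$-equivariant line bundle on the universal cover $\tilde X$ descends to $X$ if and only if the associated central extension of $\pi_1(X)$ by the automorphism group of the fibre (here $S^1$, after restricting to unitary automorphisms compatible with the invariant metric) is trivial; this central extension is precisely $\widetilde{\pi_1(X)}$ by construction, with class $e$. (4) Conversely, if $e = 0$, choose a splitting $\pi_1(X) \to \widetilde{\pi_1(X)}$, use it to linearize the $\pi_1(X)$-action on $f^\ast L$, descend to $L_X$ on $X$, and note its first Chern class pulls back to $[\beta_{\tilde X}]_\Z$ and equals $[\beta_X]_\R$ in $H^2(X,\R)$ after tensoring with $\R$ (using that $\pi^\ast$ is injective on $H^2(\cdot,\R)$ into the invariants, which is the left-exactness in the five-term exact sequence already used in the proof of Theorem \ref{CT}).

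The main obstacle is pinning down precisely the comparison between the topological descent obstruction for $f^\ast L$ and the algebraically-defined class $e$ of the central extension $\widetilde{\pi_1(X)}$. One must check that the $S^1$-ambiguity in lifting $\rho_{\tot}$ to $\widetilde{\Lambda_\sigma G}$ is \emph{the same} $S^1$ that acts on the fibres of $f^\ast L$ — this is where the explicit description of $L$ as induced from the character $(k,\lambda)\mapsto\lambda$ of $\tilde K$ is used, together with the fact that $f$ factors through $\Dd = \widetilde{\Lambda_\sigma G}/\tilde K$. A subtlety is that $\widetilde{\Lambda_\sigma G}$ is only a central \emph{$S^1$}-extension (not $\C^\ast$), so the metric/unitary structure on $L$ is needed to match automorphism groups; fortunately $L$ carries a $\widetilde{\Lambda_\sigma G}$-invariant metric (the inducing character is unitary), so the relevant automorphisms are exactly $S^1$ and the bookkeeping closes up. I expect steps (1) and (4) to be routine Hodge-theoretic and sheaf-theoretic arguments, step (3) to be the standard equivariant-descent lemma, and the genuine content to lie in the naturality check of step (2)–(3) that the two $S^1$'s agree.
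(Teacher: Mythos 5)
Your strategy is the same as the paper's: pull back the determinant line bundle $L$ by the period map, use Proposition \ref{rel_beta}, and identify the obstruction to linearizing the natural $\widetilde{\pi_1(X)}$-action on $f^\ast L$ with the extension class $e$. However, there is a genuine gap in your treatment of the direction ($e=0$ $\Rightarrow$ $[\beta_X]$ integral), located in steps (1) and (4): you invoke ``injectivity of $\pi^\ast$ on $H^2(\cdot,\R)$ into the invariants'' as a consequence of the five-term exact sequence. That sequence reads $0 \rightarrow H^2(\pi_1(X),\R) \rightarrow H^2(X,\R) \rightarrow H^2(\tilde{X},\R)^{\pi_1(X)}$; its left-exactness gives injectivity of the inflation map $H^2(\pi_1(X),\R) \rightarrow H^2(X,\R)$, while the kernel of $\pi^\ast: H^2(X,\R) \rightarrow H^2(\tilde{X},\R)$ is precisely $H^2(\pi_1(X),\R)$, which is in general nonzero (its nonvanishing is the very point of Theorem \ref{CT}). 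Consequently, knowing that the descended bundle $L_X$ satisfies $\pi^\ast c_1(L_X)_\R = [\beta_{\tilde{X}}] = \pi^\ast [\beta_X]$ does not yield $c_1(L_X)_\R = [\beta_X]$: the two classes may differ by a class inflated from $H^2(\pi_1(X),\R)$, so you cannot conclude that $[\beta_X]$ is integral. The same misreading already vitiates your reformulation in step (1) of integrality of $[\beta_X]$ as the condition that $[\beta_{\tilde{X}}]_\Z$ lies in the image of $\pi^\ast$ on integral cohomology.

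The repair is to compare forms rather than classes, which is what the paper does: the canonical connection on $L$ is $\widetilde{\LsG}$-invariant and the central $S^1$ acts by fibrewise unit scalars, hence preserves any connection; so once $e=0$ and a splitting $\pi_1(X) \rightarrow \widetilde{\pi_1(X)}$ is chosen, the resulting honest $\pi_1(X)$-action on $f^\ast L$ preserves the pulled-back connection, and both bundle and connection descend to $(L_X,\nabla_X)$ on $X$. The Chern form $\omega_X$ of $\nabla_X$ then satisfies $\pi^\ast \omega_X = \beta_{\tilde{X}} = \pi^\ast \beta_X$, and since $\pi$ is a local diffeomorphism this is an equality of forms, $\omega_X = \beta_X$, whence $[\beta_X] = c_1(L_X)$ is integral. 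For the converse you only assert that the linearization obstruction ``is exactly $e$''; to turn this into a proof one must, given integrality of $[\beta_X]$, produce a smooth isomorphism $f^\ast L \cong \pi^\ast L_X$ (using that $H^2(\tilde{X},\Z)$ is torsion-free, so equality of real Chern classes on $\tilde{X}$ suffices) and then compare the tautological equivariant structure of the pullback with the $\widetilde{\pi_1(X)}$-action, inside the group of metric-preserving bundle automorphisms of $f^\ast L$ covering deck transformations, to conclude that $\widetilde{\pi_1(X)}$ is isomorphic to $\pi_1(X) \times S^1$ as a central $S^1$-extension; this comparison, absent from your sketch, is the content of the second half of the paper's proof. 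Also note that your phrasing of step (3) is off: a genuinely $\pi_1(X)$-equivariant bundle on $\tilde{X}$ always descends; the issue is whether the $\widetilde{\pi_1(X)}$-action, in which the centre acts by scalars, comes from such an equivariant structure at all.
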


\begin{proof}
 We write $L \rightarrow \Dd$ for the determinant line bundle over $\Dd$, $f: \tilde{X} \rightarrow \Dd$ for the period map and $\pi: \tilde{X} \rightarrow X$ for the canonical projection map.

 Assume that $e = 0$. Then, as a $S^1$-central extension, $\widetilde{\pi_1(X)}$ is isomorphic to $\pi_1(X) \times S^1$. In particular, the bundle $f^\ast L$ is then $\pi_1(X)$-equivariant. Hence, it defines a holomorphic line bundle $L_X$ over $X$ such that $\pi^\ast L_X \cong f^\ast L$. The bundle $f^\ast L$ is equipped with a connection whose Chern form is $\beta_{\tilde{X}}$ by proposition \ref{rel_beta}. Since this connection is $\pi_1(X)$-invariant, the bundle $L_X$ is also equipped with a connection and its Chern form $\omega_X$ satisfies $\pi^\ast(\omega_X) = \beta_{\tilde{X}} = \pi^\ast(\beta_X)$. Since, $\pi$ is a local diffeomorphism, this implies that $\omega_X = \beta_X$. This shows that the cohomology class of $\beta_X$ is integral since $\omega_X$ is the Chern form of a line bundle.

 Conversely, we assume that $\beta_X$ has integral cohomology class. Then, the bundle $f^\ast L \rightarrow \tilde{X}$ is isomorphic to the pullback by $\pi$ of a bundle $L_X \rightarrow X$. In particular, the bundle $f^\ast L$ is then $\pi_1(X)$-equivariant and the trivial $S^1$-central extension $\pi_1(X) \times S^1$ can thus be identified with the group of vector bundle isometries of $f^\ast L \rightarrow \tilde{X}$ that cover the action of an element of $\pi_1(X)$. On the other hand, an element of $\widetilde{\pi_1(X)}$ also acts on this bundle as a vector bundle isometry covering the action of an element of $\pi_1(X)$. Since both groups $\pi_1(X) \times S^1$ and $\widetilde{\pi_1(X)}$ are central $S^1$-extensions of $\pi_1(X)$, they are isomorphic as $S^1$-extensions of $\pi_1(X)$.
\end{proof}

\begin{rem}
 The condition $e=0$ means that the total monodromy $\rho_{\tot}: \pi_1(X) \rightarrow \Lambda_\sigma G$ can be lifted to $\widetilde{\LsG}$. I do not know if such a condition can be studied in some cases.
\end{rem}

\addtocontents{toc}{\protect\setcounter{tocdepth}{1}}
\appendix
\section{Algebra and geometry in infinite dimension} \label{inf_dim}

In this appendix, we collect some results on Hilbert spaces and geometry in infinite dimension.

\subsection{Hilbert spaces}

Let $V$ be a complex vector space of finite dimension and let $H$ denote the Hilbert space $H := L^2(S^1,V)$. Let $T:H \rightarrow H$ be the right-shift operator, defined by $(Tf)(\lambda) = \lambda f(\lambda)$, for $\lambda$ in $S^1$. We write $\End(H)$ for the Banach space of continuous endomorphisms of $H$ and $GL(H)$ for its group of invertible elements. We can consider the Banach space $L^\infty(S^1, \End(V))$ as a subspace of $\End(H)$, thanks to the continuous injection $L^\infty(S^1,\End(V)) \hookrightarrow \End(H)$, given by $(M.f)(\lambda) = M(\lambda).f(\lambda)$, where the dot in the right hand side is the evaluation map. The group of invertible elements of $L^\infty(S^1,\End(V))$ is denoted $\Lambda^\infty GL(V)$; it is the set of (equivalence classes of) measurable maps $\gamma: S^1 \rightarrow GL(V)$ such that both $\gamma$ and $\gamma^{-1}$ are essentially bounded.

Next proposition is theorem (6.1.1) of \cite{PS} for the group statement:

\begin{prop}\label{comm_shift}
 The Banach space $L^\infty(S^1,\End(V))$ can be identified to the commutant of the right-shift operator $T$ in $\End(H)$. The Banach-Lie group $\Lambda^\infty GL(V)$ is the commutant of $T$ in the Banach-Lie group $GL(H)$.
\end{prop}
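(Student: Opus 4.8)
The plan is to prove the statement in two halves: first the vector-space identification of $L^\infty(S^1,\End(V))$ with the commutant of $T$ inside $\End(H)$, and then deduce the group statement for $\Lambda^\infty GL(V)$ by intersecting with $GL(H)$. One inclusion is immediate: if $M \in L^\infty(S^1,\End(V))$ acts on $H = L^2(S^1,V)$ by pointwise multiplication, then $(TMf)(\lambda) = \lambda M(\lambda)f(\lambda) = (MTf)(\lambda)$, so every such $M$ commutes with $T$; moreover the injection $L^\infty(S^1,\End(V)) \hookrightarrow \End(H)$ is isometric onto its image (the operator norm equals the essential sup of the pointwise operator norms, a standard fact about multiplication operators), so $L^\infty(S^1,\End(V))$ is a \emph{closed} subspace of $\End(H)$. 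The real content is the reverse inclusion: any bounded operator $A$ on $H$ with $AT = TA$ is multiplication by some $M \in L^\infty(S^1,\End(V))$.

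For the reverse inclusion I would argue as follows. Fix a basis of $V$, so $H$ decomposes as a finite direct sum of copies of $L^2(S^1,\C)$; since $T$ acts diagonally and the claim is linear in the matrix entries of $A$, it suffices to treat the scalar case $V = \C$, i.e. to show that a bounded operator on $L^2(S^1,\C)$ commuting with multiplication by $\lambda$ is itself multiplication by an $L^\infty$ function. Writing $e_n(\lambda) = \lambda^n$ for $n \in \Z$, the operator $T$ sends $e_n$ to $e_{n+1}$, so commutation with $T$ forces $A e_n = T^n(A e_0)$ for all $n \in \Z$, once one also uses that $T$ is invertible on $L^2(S^1,\C)$ (it is unitary, with inverse multiplication by $\lambda^{-1}$) to handle negative $n$. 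Set $m := A e_0 \in L^2(S^1,\C)$; then $A e_n = \lambda^n m$, i.e. on the dense subspace of trigonometric polynomials $A$ agrees with multiplication by $m$. It remains to show $m \in L^\infty$ with $\|m\|_\infty \le \|A\|$. For this I would take any trigonometric polynomial $p$ and note $\|mp\|_{L^2} = \|Ap\|_{L^2} \le \|A\|\,\|p\|_{L^2}$; approximating $L^2$ functions by such $p$ gives $\|m f\|_{L^2} \le \|A\|\,\|f\|_{L^2}$ for all $f \in L^2$, and a standard measure-theoretic argument (test against indicator functions of the sets $\{|m| > \|A\| + \varepsilon\}$, using that multiplication by a function unbounded on a positive-measure set is an unbounded operator) forces $|m| \le \|A\|$ a.e. Hence $m \in L^\infty(S^1,\C)$ and, by density of trigonometric polynomials, $A$ is multiplication by $m$ on all of $L^2$. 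This completes the scalar case; reassembling the matrix entries gives $A \in L^\infty(S^1,\End(V))$ in general.

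For the group statement, observe that $\Lambda^\infty GL(V)$ is by definition the set of invertible elements of the Banach algebra $L^\infty(S^1,\End(V))$, and an element $M$ of this algebra is invertible there precisely when it is invertible as an operator on $H$: if $M$ is invertible in $\End(H)$ then its inverse commutes with $T$ (since $M$ does and $T$ is invertible), hence $M^{-1}$ lies in $L^\infty(S^1,\End(V))$ by the first part, so $M$ is invertible in that algebra; the converse is trivial. Therefore $\Lambda^\infty GL(V) = L^\infty(S^1,\End(V)) \cap GL(H)$, which by the vector-space identification is exactly the commutant of $T$ inside $GL(H)$. The Banach-Lie group structure is the one it inherits as the open subset of invertibles in the Banach algebra $L^\infty(S^1,\End(V))$, matching the structure already discussed in the text.

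I expect the main obstacle to be the $L^\infty$-boundedness step in the scalar case — showing that the $L^2$ function $m = A e_0$ is in fact essentially bounded with the correct norm bound — since everything else (the two easy inclusions, the reduction to $V = \C$, the use of invertibility of $T$, and the passage to the group statement) is essentially formal. That boundedness argument is classical but is the one place where one genuinely uses that $A$ is bounded rather than merely densely defined, and it is worth stating carefully.
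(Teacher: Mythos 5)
Your proof is correct. Note, though, that the paper itself gives no argument for this proposition: it is stated with a citation to theorem (6.1.1) of \cite{PS}, so there is no internal proof to compare against. Your self-contained argument is the classical one that underlies that citation: reduce to the scalar case $V=\C$ by writing $A$ in blocks (legitimate, since $T$ acts diagonally and blockwise commutation with the scalar multiplication-by-$\lambda$ operator is equivalent to commutation with $T$), use invertibility of $T$ to get $Ae_n = \lambda^n m$ with $m = Ae_0$, and then upgrade $m$ from $L^2$ to $L^\infty$ with $\|m\|_\infty \le \|A\|$; the group statement then follows formally since $MT=TM$ and invertibility of $M$ in $\End(H)$ force $M^{-1}T=TM^{-1}$, so $M^{-1}$ lies in the commutant and hence in $L^\infty(S^1,\End(V))$. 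The one step you rightly flag as needing care is the passage from $\|mp\|_{2}\le\|A\|\,\|p\|_{2}$ on trigonometric polynomials to arbitrary $f\in L^2$: since at that stage $m$ is only known to be in $L^2$, you should pick polynomials $p_k\to f$ in $L^2$ and a.e.\ (along a subsequence) and apply Fatou's lemma, or equivalently first identify $Af=mf$ a.e.\ by comparing the $L^2$-limit $Ap_k\to Af$ with the a.e.\ limit $mp_k\to mf$; after that the indicator-function test $\chi_{\{|m|>\|A\|+\varepsilon\}}$ closes the argument exactly as you say. With that one line filled in, the proof is complete, and it has the merit of making explicit what the paper delegates to \cite{PS}.
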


We will need a slightly more general statement. Let $V_1$ and $V_2$ be two complex vector spaces and write $H_i = L^2(S^1,V_i)$. Then, with obvious notations:
\begin{prop}\label{comm_shift2}
 The Banach space $L^\infty(S^1,\Hom(V_1,V_2))$ can be identified with the subspace of $\Hom(H_1,H_2)$ of linear maps that intertwine the right-shift operators $T_1$ and $T_2$.
\end{prop}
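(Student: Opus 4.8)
The plan is to deduce this from Proposition~\ref{comm_shift} by working inside $V := V_1 \oplus V_2$.

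First I would check the easy inclusion. Given $M \in L^\infty(S^1,\Hom(V_1,V_2))$, the pointwise-evaluation operator $\hat M\colon H_1 \to H_2$ defined by $(\hat M f)(\lambda) = M(\lambda)f(\lambda)$ is bounded, since $\|\hat M f\|^2 = \int_{S^1} |M(\lambda)f(\lambda)|^2\, d\nu(\lambda) \le \|M\|_\infty^2 \|f\|^2$, and it intertwines the right-shift operators because $(\hat M(T_1 f))(\lambda) = M(\lambda)\,\lambda f(\lambda) = \lambda\,(M(\lambda)f(\lambda)) = (T_2 \hat M f)(\lambda)$. The assignment $M \mapsto \hat M$ is injective: if $\hat M = 0$ then, testing on the constant functions $f \equiv v$ with $v$ ranging over a basis of $V_1$, one gets $M(\lambda) v = 0$ for almost every $\lambda$, so $M = 0$ in $L^\infty$.

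For the reverse inclusion, let $A \in \Hom(H_1,H_2)$ satisfy $A T_1 = T_2 A$. Set $H := L^2(S^1,V) = H_1 \oplus H_2$ (an orthogonal decomposition) and let $T$ be the right-shift operator on $H$, which equals $T_1 \oplus T_2$ under this decomposition. Define $\tilde A \in \End(H)$ by $\tilde A(f_1,f_2) := (0,\, A f_1)$. A one-line computation using $A T_1 = T_2 A$ gives $\tilde A T = T \tilde A$, so Proposition~\ref{comm_shift} provides $\tilde M \in L^\infty(S^1,\End(V))$ with $(\tilde A f)(\lambda) = \tilde M(\lambda)f(\lambda)$. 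Since $\tilde A$ annihilates the summand $H_2$ and takes values in $H_2$, the endomorphism $\tilde M(\lambda)$ has the form $(v_1,v_2) \mapsto (0,\, M(\lambda)v_1)$ for a measurable family $M(\lambda) \in \Hom(V_1,V_2)$; as $\|\tilde M(\lambda)\|_{\End(V)} = \|M(\lambda)\|_{\Hom(V_1,V_2)}$ (the summands being orthogonal), the family $M$ lies in $L^\infty(S^1,\Hom(V_1,V_2))$, and $A = \hat M$ by construction. This identifies the space of intertwiners with $L^\infty(S^1,\Hom(V_1,V_2))$, in fact isometrically.

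I do not expect any genuine obstacle: the substance is already packaged in Proposition~\ref{comm_shift}, and the only point needing (minimal) care is extracting the measurable family $M$ from the block operator $\tilde M$ — but $\End(V)$ canonically contains $\Hom(V_1,V_2)$ as the block of maps vanishing on $V_2$ and sending $V_1$ into $V_2$, and both measurability and essential boundedness pass to this block.
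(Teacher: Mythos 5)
Your proposal is correct. Note that the paper itself offers no proof of this proposition: it is stated immediately after Proposition~\ref{comm_shift} as a ``slightly more general statement'', with Proposition~\ref{comm_shift} itself quoted from \cite{PS} (theorem 6.1.1); so there is nothing in the text to compare against line by line. Your reduction is the natural one and it works: the easy inclusion and the injectivity of $M \mapsto \hat M$ are fine, the operator $\tilde A(f_1,f_2)=(0,Af_1)$ on $H=L^2(S^1,V_1\oplus V_2)$ does commute with $T=T_1\oplus T_2$ precisely because $AT_1=T_2A$, and Proposition~\ref{comm_shift} (applied to the finite-dimensional space $V=V_1\oplus V_2$) then yields a multiplier $\tilde M\in L^\infty(S^1,\End(V))$. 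The only step worth writing out is the one you gesture at: to see that $\tilde M(\lambda)$ is block-triangular of the stated form for \emph{almost every} $\lambda$, test $\tilde A$ on the constant sections $f\equiv v$ with $v$ running through a basis of $V_2$ (to kill the $V_2$-column) and through a basis of $V_1$ (to kill the $V_1$-row of the image), exactly as in your injectivity argument; finitely many basis vectors give a single null set, after which measurability and essential boundedness of the block $M$ are immediate. The parenthetical claim that the identification is isometric is an unproven aside (it needs $\|\hat M\|_{\mathrm{op}}=\esssup\|M(\lambda)\|$) but it is not required for the proposition.
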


Let $h$ be a Hermitian inner product on $V$. We consider the Hermitian form $B$ on $H$, defined by:
$$ B(f,g) = \int_{S^1} h(f(\lambda),g(-\lambda))d\nu(\lambda),$$
where $\nu$ is the invariant volume form of mass $1$ on $S^1,$ and $f$ and $g$ are in $H$. Let $\sigma: GL(V) \rightarrow GL(V)$ denote the Cartan involution, relatively to the unitary group of $h$. We denote by $L^\infty_\sigma(S^1,GL(V))$ the subgroup of maps $\gamma: S^1\rightarrow GL(V)$, satisfying the equation $\sigma(\gamma(\lambda)) = \gamma(-\lambda)$, for almost all $\lambda$ in $S^1$. 

We write $\hat{\sigma}: \Lambda^\infty GL(V) \rightarrow \Lambda^\infty GL(V)$ for the involution defined by $\hat{\sigma}(\gamma)(\lambda) = \sigma(\gamma(-\lambda))$, so that $\Lambda^\infty_\sigma GL(V)$ is the subgroup of loops which are fixed by $\hat{\sigma}$.

\begin{prop}\label{B_isom}
 The group $\Lambda^\infty_\sigma GL(V)$ is the subgroup of $\Lambda^\infty GL(V)$ that acts $B$-isometrically on $H$.
\end{prop}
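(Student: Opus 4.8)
The plan is to reduce the statement to an elementary computation of how $B$ transforms under the action of a loop $\gamma\in\Lambda^\infty GL(V)$. Recall that $\gamma$ acts on $H=L^2(S^1,V)$ by $(\gamma\cdot f)(\lambda)=\gamma(\lambda)f(\lambda)$, that $B(f,g)=\int_{S^1}h(f(\lambda),g(-\lambda))\,d\nu(\lambda)$, and that the Cartan involution is $\sigma(A)=(A^\ast)^{-1}$ with $A^\ast$ the $h$-adjoint. Moving $\gamma(\lambda)$ out of the first slot by means of the adjoint relation $h(Au,v)=h(u,A^\ast v)$ gives
\[
B(\gamma\cdot f,\gamma\cdot g)=\int_{S^1}h\bigl(f(\lambda),\,\gamma(\lambda)^\ast\gamma(-\lambda)\,g(-\lambda)\bigr)\,d\nu(\lambda).
\]
Setting $N_\gamma(\lambda):=\gamma(\lambda)^\ast\gamma(-\lambda)\in L^\infty(S^1,\End(V))$, the whole argument rests on the equivalence ``$\gamma$ acts $B$-isometrically $\iff$ $N_\gamma(\lambda)=\Id_V$ for almost every $\lambda$'', together with the remark that $N_\gamma(\lambda)=\Id_V$ says exactly $\gamma(-\lambda)=(\gamma(\lambda)^\ast)^{-1}=\sigma(\gamma(\lambda))$, i.e.\ $\hat\sigma(\gamma)=\gamma$, i.e.\ $\gamma\in\Lambda^\infty_\sigma GL(V)$.

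First I would dispatch the easy direction: if $N_\gamma=\Id_V$ almost everywhere, the displayed formula immediately yields $B(\gamma\cdot f,\gamma\cdot g)=B(f,g)$ for all $f,g\in H$, so $\gamma$ is $B$-isometric. For the converse, I would test the isometry identity on the trigonometric monomials $f=\lambda^m u$, $g=\lambda^n v$ with $u,v\in V$ and $m,n\in\Z$. Using the sesquilinearity of $h$ and $|\lambda|=1$, the identity collapses to
\[
\int_{S^1}\lambda^{m-n}\,h\bigl(u,N_\gamma(\lambda)v\bigr)\,d\nu(\lambda)=\delta_{mn}\,h(u,v)\qquad\text{for all }m,n\in\Z,
\]
which says precisely that all Fourier coefficients of the $L^1$-function $\lambda\mapsto h(u,N_\gamma(\lambda)v)$ vanish except the zeroth, which equals $h(u,v)$; by uniqueness of Fourier series, $h(u,N_\gamma(\lambda)v)=h(u,v)$ for almost every $\lambda$. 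Running $u,v$ over an orthonormal basis of $V$ — a \emph{finite} set since $\dim V<\infty$ — and intersecting the finitely many resulting conull sets gives $N_\gamma(\lambda)=\Id_V$ almost everywhere, which is what we wanted.

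The only delicate point, and it is mild, is this last passage from ``for each pair $(u,v)$ the identity holds off a null set'' to ``$N_\gamma=\Id_V$ off a single null set''; finite-dimensionality of $V$ turns it into a finite intersection of conull sets, so there is no real obstacle. For bookkeeping one should also note at the outset that $\gamma$ genuinely defines a bounded operator on $H$ via the embedding $\Lambda^\infty GL(V)\hookrightarrow\End(H)$ of Proposition~\ref{comm_shift}, and that $B$ is a bounded sesquilinear form, so no convergence issue arises; and that, $B$ being Hermitian, ``$B$-isometric'' may be understood as $B(\gamma\cdot f,\gamma\cdot g)=B(f,g)$ for all $f,g$ (equivalently on the associated quadratic form) by polarization.
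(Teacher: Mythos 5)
Your proof is correct and follows essentially the same route as the paper: both arguments compute $B(\gamma\cdot f,\gamma\cdot g)$ by moving $\gamma(\lambda)$ across $h$ via the adjoint, reducing the isometry condition to $\gamma(\lambda)^\ast\gamma(-\lambda)=\Id_V$ almost everywhere, i.e.\ $\hat{\sigma}(\gamma)=\gamma$. The only difference is in the last step: the paper rewrites the transformed form as $B(f,(\hat{\sigma}(\gamma)^{-1}\gamma).g)$ and concludes in one line from the non-degeneracy of $B$ (together with the embedding $L^\infty(S^1,\End(V))\hookrightarrow\End(H)$), whereas you verify this non-degeneracy step by hand by testing on monomials and using uniqueness of Fourier coefficients.
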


\begin{proof}
Let $f,g$ be in $H$ and let $\gamma$ in $\Lambda^\infty GL(V)$. Then
\begin{align*}
B(\gamma.f, \gamma.g) &= \int_{S^1} h(\gamma(\lambda)f(\lambda), \gamma(-\lambda)g(-\lambda)) d\nu(\lambda) \\
&= \int_{S^1} h(f(\lambda), \sigma(\gamma(\lambda))^{-1}\gamma(-\lambda) g(-\lambda)) d\nu(\lambda) \\
&= h(f, (\hat{\sigma}(\gamma)^{-1}\gamma).g).
\end{align*}
Since $B$ is non-degenerate, this concludes the proof.
\end{proof}

\subsection{Infinite dimensional bundles}

We assume that the reader is familiar with differential geometry in infinite dimension. A standard reference is \cite{Lan}. First, we reinterpret proposition \ref{comm_shift}, in a geometrical setting. 

Let $\Ee$ be a complex vector bundle of finite dimension on a (finite dimensional) differentiable manifold $X$. Let $D$ be an arbitrary connection on the vector bundle $\Ee$; it induces a connection $\tilde{D}$ on the Hilbert bundle $\Hha = L^2(S^1,\Ee)$ by the formula
$$
(\tilde{D}_Y f)(\lambda,v) = (D_Y f(\lambda,\cdot))(v),
$$
where $f$ is a local section of $\Hha$, $Y$ is a local vector field on $X$, $\lambda$ is in $S^1$ and $v$ is in $\Ee$.

 The connections $D$ and $\tilde{D}$ give an origin to the space of connections of $\Ee$ and $\Hha$. Hence, this identifies these spaces to $\Gamma(X,T^\ast X \otimes \End(\Ee))$ and to $\Gamma(X,T^\ast X \otimes \End(\Hha))$. There is a natural embedding of Banach bundles $L^\infty(S^1,\End(\Ee)) \rightarrow \End(\Hha))$.

\begin{cor}\label{conn_T}
 A connection $\tilde{\nabla}$ on $\Hha$ lives in the affine subspace $\tilde{D} + \Gamma(X,T^\ast X \otimes L^\infty(S^1,\End(\Ee)))$ if and only if the right-shift operator $\Tt$ is parallel for $\tilde{\nabla}$.
\end{cor}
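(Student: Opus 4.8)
The plan is to reduce everything to the fiberwise commutation statement of Proposition \ref{comm_shift}. First I would use that the space of connections on a vector bundle is affine over the space of $\End$-valued $1$-forms (as already noted in this subsection), so any connection on $\Hha$ can be written $\tilde\nabla = \tilde D + A$ with $A \in \Gamma(X, T^\ast X \otimes \End(\Hha))$. The corollary then amounts to showing that $A$ takes values in the subbundle $T^\ast X \otimes L^\infty(S^1,\End(\Ee))$ of $T^\ast X \otimes \End(\Hha)$ if and only if $\Tt$ is parallel for $\tilde\nabla$.

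The key preliminary point is that $\Tt$ is already $\tilde D$-parallel. This is immediate from the defining formula $(\tilde D_Y f)(\lambda,\cdot) = D_Y(f(\lambda,\cdot))$: since $\Tt$ acts on a fiber by multiplication by the scalar $\lambda \in S^1$, which is constant along $X$, one gets $\tilde D_Y(\Tt f) = \Tt(\tilde D_Y f)$ for every local section $f$ and vector field $Y$, i.e. $\tilde D^{\End}\Tt = 0$. Now the connection induced by $\tilde\nabla$ on $\End(\Hha)$ sends a section $S$ to $\tilde D^{\End} S + [A,S]$; evaluating at $S = \Tt$ gives $\nabla^{\End}\Tt = [A,\Tt]$, a $1$-form with values in $\End(\Hha)$ whose value on a vector field $Y$ at a point $x \in X$ is the commutator $[A(Y)_x,\Tt]$ computed in $\End(\Hha_x) = \End(L^2(S^1,\Ee_x))$. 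Hence "$\Tt$ parallel for $\tilde\nabla$" is exactly the pointwise condition that $A(Y)_x$ commutes with $\Tt$ for all $Y$ and all $x$.

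Finally I would apply Proposition \ref{comm_shift} fiber by fiber: the commutant of $\Tt$ in $\End(L^2(S^1,\Ee_x))$ is precisely $L^\infty(S^1,\End(\Ee_x))$, so $[A(Y)_x,\Tt] = 0$ for all $Y$ and $x$ is equivalent to $A(Y)_x \in L^\infty(S^1,\End(\Ee_x))$ for all $Y$ and $x$, i.e. to $A \in \Gamma(X, T^\ast X \otimes L^\infty(S^1,\End(\Ee)))$, which is the claimed membership $\tilde\nabla \in \tilde D + \Gamma(X, T^\ast X \otimes L^\infty(S^1,\End(\Ee)))$. I do not expect a serious obstacle: the only care needed is to keep the bookkeeping between $\tilde\nabla$ acting on sections of $\Hha$ and the induced connection acting on sections of $\End(\Hha)$ straight, and to observe that parallelism of the bundle endomorphism $\Tt$ is a pointwise condition on $X$, so the fiberwise form of Proposition \ref{comm_shift} is exactly what is needed.
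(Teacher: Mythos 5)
Your proof is correct and follows exactly the route the paper intends: the paper states this as an immediate consequence of Proposition \ref{comm_shift}, i.e.\ write $\tilde\nabla = \tilde D + A$, observe that $\Tt$ is $\tilde D$-parallel, so that parallelism for $\tilde\nabla$ is the pointwise condition $[A(Y)_x,\Tt]=0$, and identify the commutant fiberwise with $L^\infty(S^1,\End(\Ee_x))$ (the same scheme is spelled out for the algebraic analogue in Proposition \ref{conn_Talg}). No gaps; the bookkeeping you flag is the only point of care and you handle it correctly.
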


Hence, if $(\nabla_\lambda)_{\lambda \in S^1}$ is a circle of connections on $\Ee$ (with weak conditions on the regularity with respect to $\lambda$), one can define a connection $\tilde{\nabla}$ on $\Hha$ by the formula
$$
(\tilde{\nabla}_Y f)(\lambda,v) = ((\nabla_\lambda)_Y f(\lambda,\cdot))(v),
$$
where $f$ is a local section of $\Hha$, $Y$ is a local vector field on $X$, $\lambda$ is in $S^1$ and $v$ is in $\Ee$. Conversely, a connection $\tilde{\nabla}$ on $\Hha$ is obtained in this way if and only if the right-shift operator $\Tt$ is parallel for $\tilde{\nabla}$.\\

We also give a Riemann-Hilbert correspondence for Banach bundles with flat connections. The proof is identical to the finite-dimensional setting since one can consider parallel transport in the same way.

Let $X$ be a (finite-dimensional) connected differentiable manifold. Let $x_0$ be a base-point in $X$ and let $p:\tilde{X} \rightarrow X$ be the universal cover of $X$. Let $E$ be a complex Banach space.

\begin{prop}
 If $\rho: \pi_1(X,x_0) \rightarrow \Aut(E)$ is a representation of $\pi_1(X,x_0)$ in the space of bounded automorphisms of $E$, one defines a Banach bundle on $X$ by
 $$ \Ee_\rho := \tilde{X} \times_\rho E.$$
 It is endowed with a flat connection $D$, which comes from the canonical connection $d$ on the trivial bundle $\tilde{X} \times E \rightarrow \tilde{X}$.

Conversely, if $(\Ee,D)$ is a Banach bundle modeled on $E$ with flat connection over $X$, then the parallel transport defines a \emph{monodromy representation} $\rho: \pi_1(X,x_0) \rightarrow \Aut(\Ee_{x_0}) \cong \Aut(E)$.

These two constructions give an equivalence of categories between the category of representations of $\pi_1(X,x_0)$ in the bounded automorphisms of $E$ and the category of Banach bundles modeled on $E$ endowed with a flat connection over $X$.
\end{prop}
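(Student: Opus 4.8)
The plan is to verify that the two constructions described are functorial and mutually quasi-inverse; the only point that is not completely routine is the behaviour of parallel transport for connections on Banach bundles, which I will recall first.

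\emph{The functor $\rho \mapsto (\Ee_\rho, D)$.} Given $\rho: \pi_1(X,x_0) \to \Aut(E)$, the group $\pi_1(X,x_0)$ acts on $\tilde{X} \times E$ by $g\cdot(\tilde{x},v) = (g\cdot\tilde{x}, \rho(g)v)$; this action is free and properly discontinuous because it already is on the first factor, so the quotient $\Ee_\rho = \tilde{X}\times_\rho E$ is a locally trivial Banach bundle over $X$, with trivializations over evenly covered open sets. The product connection $d$ on $\tilde{X}\times E$ is invariant under this action, since for each fixed $g$ the map $v\mapsto \rho(g)v$ is a \emph{constant} (point-independent) automorphism of $E$; hence $d$ descends to a connection $D$ on $\Ee_\rho$, flat because $d$ is. A morphism of representations is a bounded intertwiner $\phi$, and $(\tilde{x},v)\mapsto(\tilde{x},\phi(v))$ descends to a flat bundle map; functoriality is immediate.

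\emph{The functor $(\Ee,D)\mapsto \rho$.} For a smooth path $c:[0,1]\to X$, parallel transport along $c$ is the solution of the linear equation expressing that the covariant derivative of $s$ along $c$ vanishes; in a local trivialization this reads $\dot{s}(t) + A(t)s(t) = 0$ with $A(t)$ a continuous family of bounded operators on $E$, which admits a unique global solution given by the time-ordered exponential, the latter converging in operator norm. Thus parallel transport $P_c:\Ee_{c(0)}\to\Ee_{c(1)}$ is a well-defined bounded isomorphism, its inverse being parallel transport along the reversed path. Flatness of $D$ forces $P_c$ to depend only on the homotopy class of $c$ relative to endpoints: differentiating the transport with respect to a homotopy parameter produces a term containing the curvature $D^2 = 0$, exactly as in the finite-dimensional argument. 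Restricting to loops based at $x_0$ yields a homomorphism $\rho:\pi_1(X,x_0)\to\Aut(\Ee_{x_0})\cong\Aut(E)$, the last identification being the chosen trivialization of the fibre; flat bundle maps commute with parallel transport, so this is functorial.

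\emph{Quasi-inverse.} The monodromy of $(\Ee_\rho,D)$ at $x_0$ is $\rho$ itself: a loop lifts to a path in $\tilde{X}$ joining $\tilde{x}_0$ to $g\cdot\tilde{x}_0$, and $d$ has trivial parallel transport on $\tilde{X}\times E$. Conversely, for a flat Banach bundle $(\Ee,D)$ with monodromy $\rho$, parallel transport from $\tilde{x}_0$ trivializes the pullback $p^\ast\Ee \cong \tilde{X}\times\Ee_{x_0}$ (using simple connectedness and flatness), and this trivialization intertwines the deck action on $p^\ast\Ee$ with the action $(\tilde{x},v)\mapsto(g\cdot\tilde{x},\rho(g)v)$ by the very definition of $\rho$; descending gives a natural flat isomorphism $\Ee_\rho\cong\Ee$. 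The sole place where infinite-dimensionality intervenes is the solvability and homotopy-invariance of the parallel-transport equation in a Banach space, which is the standard fact recalled above; so the proof is the same as in finite dimension. The main (mild) obstacle is simply to make sure this ODE step is stated correctly, since everything else is formal.
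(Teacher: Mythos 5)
Your proposal is correct and follows exactly the route the paper indicates: the paper only remarks that ``the proof is identical to the finite-dimensional setting since one can consider parallel transport in the same way,'' and your argument is precisely that finite-dimensional proof, with the one genuinely infinite-dimensional point (global solvability and homotopy invariance of the linear parallel-transport ODE with bounded-operator coefficients in a Banach space) correctly identified and handled via the time-ordered exponential. Nothing is missing.
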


\section{Moduli spaces} \label{moduli}

In this appendix, we collect some facts about the various moduli spaces that one can attach to a compact K\"ahler manifold. This is only used in the proof of theorem \ref{CT}.

\subsection{Betti moduli space} \label{Betti}

\subsubsection{Construction}

Let $\Gamma$ be a finitely presented group and let $G = GL(n,\C)$. We briefly review the construction of the moduli space of representations $X(\Gamma,G)$, which is a \emph{categorical quotient} (in the sense of geometric invariant theory) for the action of $G$ by conjugation on the space of representations $R(\Gamma,G)$. All this material is contained in \cite{JohnMill}, (though in the case $G$ semisimple rather than reductive).

Let $\gamma_1,\dots,\gamma_N$ be generators of $\Gamma$ and let $r_1,\dots,r_k$ be relators that define a presentation of $G$. We define $R(\Gamma,G)$ to be the subset of $G^N$ of elements $(M_1,\dots,M_N)$ satisfying the relations $r_i(M_1,\dots,M_N) = 0$. In this way, $R(\Gamma,G)$ is realized as an affine algebraic variety and this affine algebraic structure is independent of the presentation of $G$. The quotient variety $X(\Gamma,G)$ is by definition the affine variety corresponding to the algebra of $G$-invariant polynomials on $R(\Gamma,G)$. We write $\pi: R(\Gamma,G) \rightarrow X(\Gamma,G)$. The map $\pi$ does not in general induce a bijection $R(\Gamma,G)/G \cong X(\Gamma,G)$.

If $U$ is a real form of $G$, both $R(\Gamma,G)$ and $X(\Gamma,G)$ are defined over the real numbers. Moreover, one can define in the same way a space $X(\Gamma,U)$ and this space is contained in the real points of $X(\Gamma,G)$.

A representation $\rho$ in $R(\Gamma,G)$ is \emph{stable} if its orbit under conjugation is closed in $R(\Gamma,G)$. It is equivalent to the condition that $\Im \rho$ is not contained in any proper parabolic subgroup of $G$; since $G = GL(n,\C)$, this simply means that the representation is irreducible. 

We write $R^s(\Gamma,G)$ for the set of stable representations. It is Zariski-open in $R(\Gamma,G)$. Its image by $\pi$ is written $X^s(\Gamma,G)$; it is Zariski-open in $X(\Gamma,G)$. If $\Gamma$ is the fundamental group of some topological space $X$, we write $\Mm_B(X)$ for $X^s(\Gamma,G)$ and call it the \emph{Betti moduli space} of $X$. The map $\pi: R^s(\Gamma,G) \rightarrow X^s(\Gamma,G)$ induces a homeomorphism $R^s(\Gamma,G)/G \cong X^s(\Gamma,G)$. If $U$ is a real form of $G$, we define in the same way $X^s(\Gamma,U)$. It is homeomorphic to $(R^s(\Gamma,G) \cap R(\Gamma,U))/U$ and is a subset of the real points of $X^s(\Gamma,G)$.

\subsubsection{Variations of Hodge structures}
We write $Z$ for the equivalence classes in $X^s(\Gamma,G)$ of representations whose associated flat bundles admit a complex variation of Hodge structures. We will need the following lemma:

\begin{lem}\label{VHS_totreal}
 The subset $Z$ is contained in a totally real analytic subspace of the complex-analytic space $X^s(\Gamma,G)$
\end{lem}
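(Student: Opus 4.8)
The plan is to exhibit an explicit anti-holomorphic involution of $X^s(\Gamma,G)$ whose fixed-point locus is totally real and contains $Z$; no input from the non-abelian Hodge correspondence or from the $\C^\ast$-action on the Dolbeault moduli space is needed, as the polarization built into the definition of a variation of Hodge structures already does the work.

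First I would unwind the definition of $Z$: a class $[\rho]\in X^s(\Gamma,G)$ lies in $Z$ exactly when the associated flat bundle $(\Ee,D)$ underlies a complex polarized variation of Hodge structures, and by Definition \ref{VHS} such a variation comes equipped with a $D$-flat non-degenerate Hermitian form $h$. Evaluating $h$ at the base point and choosing a basis, we get a fixed non-degenerate Hermitian matrix $J$ with $\rho(\gamma)^\ast J\,\rho(\gamma)=J$ for all $\gamma\in\Gamma$. Writing $\rho^\vee:={}^t\rho^{-1}$ for the contragredient representation, this relation forces $\overline{\rho(\gamma)}$ to equal $\overline J^{-1}\rho^\vee(\gamma)\,\overline J$ for every $\gamma$, with $\overline J$ a fixed matrix; hence $[\overline\rho]=[\rho^\vee]$ in $X^s(\Gamma,G)$, and therefore $[\rho]=[\overline{\rho^\vee}]$. (This is the only computation, and it is routine.)

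Next I would introduce the map $\Phi\colon X^s(\Gamma,G)\to X^s(\Gamma,G)$, $[\rho]\mapsto[\overline{\rho^\vee}]$, and observe that it is an anti-holomorphic involution. Indeed $[\rho]\mapsto[\rho^\vee]$ is an algebraic, hence holomorphic, involution of the affine variety $X(\Gamma,G)$ preserving the stable locus, while $[\rho]\mapsto[\overline\rho]$ is the anti-holomorphic involution attached to the canonical real structure of $X(\Gamma,G)$ (which is defined over $\R$, cf. the discussion in Appendix \ref{Betti}); these two commute because complex conjugation of matrices commutes with transpose-inverse, so $\Phi^2=\mathrm{id}$ and $\Phi$ is anti-holomorphic. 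By the previous paragraph $Z\subseteq\mathrm{Fix}(\Phi)$, and I would conclude by invoking the general fact that the fixed-point set of an anti-holomorphic involution of a complex-analytic space is a \emph{totally real analytic subspace}: near a smooth fixed point it is a real form, with tangent space $V$ satisfying $V\cap iV=0$, and in general it is locally cut out as the real locus of a complex-analytic space. Since this fixed locus contains $Z$, the lemma follows.

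The only point that will require a little care is the behaviour at non-smooth points of $X^s(\Gamma,G)$: one should note that $\Phi$ is conjugation-equivariant on $R^s(\Gamma,G)$ so that it genuinely descends to the GIT quotient, and one should fix the convention that "totally real analytic subspace" is understood so as to include the fixed loci of anti-holomorphic involutions at singularities — this is precisely the sense in which the statement is applied in the proof of Theorem \ref{CT}. Everything else is formal.
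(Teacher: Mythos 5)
Your proof is correct, and it reaches the conclusion by a slightly different (and slicker) packaging than the paper's. The paper argues that a representation underlying a variation of Hodge structures is conjugate into one of the real forms $U(p,q)$ of $G$, so that $Z$ lies in the finite union $\bigcup_{p+q=n} X^s(\Gamma,U(p,q))$, and each of these sets sits inside the real points of $X^s(\Gamma,G)$ for the corresponding real structure, hence inside a totally real analytic subset. You instead extract from the flat polarization the single relation $\rho(\gamma)^\ast J\rho(\gamma)=J$, deduce $[\rho]=[\overline{\rho^\vee}]$, and put $Z$ inside the fixed locus of the anti-holomorphic involution $\Phi\colon[\rho]\mapsto[\overline{\rho^\vee}]$ of the character variety. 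The two routes share the same algebraic core (the flat non-degenerate Hermitian form makes $\rho$ conjugate-self-dual); in fact every real structure $\sigma_{p,q}$ attached to $U(p,q)$ induces on $X^s(\Gamma,G)$ exactly your involution $\Phi$, since conjugation by the signature matrix acts trivially on conjugacy classes, so the paper's union of real loci is contained in $\mathrm{Fix}(\Phi)$. What your version buys is that no case distinction by signature and no statement about finite unions of totally real subsets is needed: one involution suffices, and its fixed locus is totally real at smooth points by the standard $V\cap iV=0$ argument, which is all that the application in Theorem \ref{CT} uses. What the paper's version records in exchange is the finer information that $Z$ lies in the loci $X^s(\Gamma,U(p,q))$ themselves. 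Your caveat about singular points is the same definitional issue the paper's formulation has, and your remark that $\Phi$ descends to the GIT quotient (equivariance of $\rho\mapsto\overline{\rho^\vee}$ under conjugation, with the real structure of the affine variety giving anti-holomorphy) is the right way to justify that step.
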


\begin{proof}
 If $\rho$ is a representation coming from a complex variation of Hodge structures, then it is conjugated to a representation with values in a real form $U(p,q)$ of $G$. Hence, $Z$ is contained in the union of the sets $X^s(\Gamma,U(p,q))$. Each of these sets is included in the set of real points (for different real structures) of $X^s(\Gamma,G)$; in particular, each one is included in a totally real analytic subset of the complex-analytic space $X^s(\Gamma,G)$. The union being finite, this concludes the proof.
\end{proof}

\subsection{Hyperk\"ahler structure}

In this subsection, we recall the construction of a hyperk\"ahler structure on (the smooth points of) the Betti moduli space of $X$. This is well known for smooth projective manifolds: this reduces to the case of a complex curve, which is treated in \cite{Hitch}. This has also been done for a compact K\"ahler manifold $X$ in \cite{Fuj}, which we follow.

\subsubsection{Space of connections}
Let $M$ be a compact K\"ahler manifold, with K\"ahler form $\omega$. Let $(\Ee,h)$ be a Hermitian vector bundle on $X$. We choose an integer $k \geq \dim M + 2$ and consider the spaces $\Aa$ (resp $\Aa_\R$) of connections with Sobolev regularity $H_k$ on $\Ee$ (resp. metric connections on $(\Ee,h)$). The space $\Aa$ has a natural structure of a hyperk\"ahler affine Hilbert space defined in the following way:

We choose a metric connection $D_0$ so that we identifify $\Aa$ (resp. $\Aa_\R$) to the Hilbert spaces $H^1_k(M,\End(\Ee))$ (resp. $H^1_k(M,\uu(\Ee,h))$): these notations stand for the space of global $1$-forms with regularity $H_k$ in the bundles $\End(\Ee)$ and $\uu(\Ee,h)$. From the complex structure on $M$, the space $\Aa_\R = H^1_k(M,\uu(\Ee,h))$ inherits a complex structure $C$. It is also endowed with a $C$-invariant inner product $<\cdot,\cdot>$ defined by
$$
<\phi,\psi> = \int_M <\phi(x),\psi(x)>_x \frac{\omega^n}{n!},
$$
where the inner product $<\cdot,\cdot>_x$ on $T_x^\ast M \otimes \uu(\Ee_x,h_x)$ is induced from the K\"ahler metric on $T_x M$ and from \emph{minus} the Killing form on $\uu(\Ee_x,h_x)$.

We consider $\Aa$ as the complexification of $\Aa_\R$ and we write $J$ for the corresponding complex structure. We extend $C$ and $<\cdot,\cdot>$ by complex linearity to $\Aa$. Then $h(x,y) := <x,\bar{y}>$ (the bar is relative to the real form $\Aa_\R$) is a Hermitian metric on $\Aa$; we write $g$ for its real part. Finally, we write $I$ for the unique antilinear extension of $C$ to $\Aa$. Then, (\cite{Fuj}, (4.1)):

\begin{lem}\label{lem_hyperk}
 The space $(\Aa,g,I,J)$ is hyperk\"ahler.
\end{lem}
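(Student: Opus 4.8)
The plan is to verify the three defining properties of a hyperk\"ahler structure on $(\Aa,g,I,J)$: that $I$ and $J$ are integrable complex structures satisfying the quaternionic relation $IJ=-JI$ (so that $\K:=IJ$ is a third complex structure), that $g$ is Hermitian with respect to each of $I$, $J$, $\K$, and that the three associated fundamental $2$-forms $\omega_I(\cdot,\cdot)=g(I\cdot,\cdot)$, $\omega_J$, $\omega_\K$ are closed. Since $\Aa$ is an affine Hilbert space, all three complex structures are \emph{constant} (translation-invariant) endomorphisms of the model vector space, so integrability is automatic and the $2$-forms have constant coefficients, hence are automatically closed; the content of the lemma is therefore purely linear-algebraic, taking place in the single tangent space $H^1_k(M,\End(\Ee))$ with its Hermitian metric $h$ and real form $H^1_k(M,\uu(\Ee,h))$.

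First I would unwind the definitions: $C$ is the complex structure on $\Aa_\R=H^1_k(M,\uu(\Ee,h))$ induced by the complex structure of $M$ acting on the $1$-form part (i.e. the Hodge $\ast$-type operator sending a $(1,0)$-form to $i$ times itself and a $(0,1)$-form to $-i$ times itself, tensored with the identity on $\uu(\Ee,h)$); $J$ is the complex structure of $\Aa$ viewed as the complexification $\Aa_\R\otimes_\R\C$; and $I$ is the unique $J$-antilinear extension of $C$ from $\Aa_\R$ to $\Aa$. From $IJ=-JI$ (true by construction, since $I$ is $J$-antilinear) and $I^2=-\Id$ (the extension of $C^2=-\Id$), one gets that $\K:=IJ=JI^{-1}\cdot(-1)$, more precisely $\K=IJ$, is again an endomorphism with $\K^2=IJIJ=-I^2J^2=-\Id$ and anticommuting with both $I$ and $J$; thus $I,J,\K$ satisfy the quaternion relations. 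Next I would check that $g$, defined as the real part of the Hermitian metric $h(x,y)=\langle x,\bar y\rangle$, is compatible with all three: compatibility with $J$ is the statement that $h$ is Hermitian, which holds by construction; compatibility with $I$ amounts to the $C$-invariance of $\langle\cdot,\cdot\rangle$ on $\Aa_\R$ (explicitly stated in the excerpt as ``$C$-invariant inner product''), extended by complex bilinearity; and compatibility with $\K$ then follows formally from compatibility with $I$ and $J$ together with the quaternion relations. Finally, closedness of $\omega_I,\omega_J,\omega_\K$ is immediate because $\Aa$ is a (flat) affine space and these forms are translation-invariant, hence have locally constant coefficients.

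The main obstacle, such as it is, lies in the $I$-compatibility computation: one must check carefully that extending the $C$-invariant real inner product on $\Aa_\R$ by \emph{complex bilinearity} to $\Aa$, and then forming $h(x,y)=\langle x,\bar y\rangle$ and its real part $g$, indeed yields a metric for which $I$ is an isometry, given that $I$ is only $J$-\emph{antilinear}. Writing an arbitrary element of $\Aa$ as $x=x_1+Jx_2$ with $x_i\in\Aa_\R$ and computing $\langle Ix,\overline{Iy}\rangle$ against $\langle x,\bar y\rangle$ using $Ix=Cx_1-J C x_2$ and the $C$-invariance of $\langle\cdot,\cdot\rangle|_{\Aa_\R}$ is the one genuinely computational point; everything else is formal manipulation of the quaternion relations. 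This is exactly the verification carried out in \cite{Fuj}, equation (4.1), to which one may also simply refer.
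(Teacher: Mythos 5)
Your verification is correct and is essentially the standard one: the paper itself gives no argument for this lemma but simply cites \cite{Fuj}, (4.1), and your linear-algebraic check (quaternion relations for $I,J,K=IJ$ from the $J$-antilinearity of $I$, compatibility of $g$ via the $C$-invariance of the inner product on $\Aa_\R$, and closedness of the three K\"ahler forms because they are translation-invariant on the flat affine space $\Aa$) is exactly the content of that reference. Nothing is missing for the purposes of the paper.
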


There is also a natural $S^1$-action on $\Aa$ (\cite{Fuj}, (4.2)). Let $\lambda$ be a unit number in $\C^\ast$ and let $D$ be an element in $\Aa$. As usual, we write $D = \nabla + \alpha$, where $\nabla$ is a metric connection and $\alpha$ is a Hermitian $1$-form. We decompose $\alpha$ in types : $\alpha = \alpha^{1,0} + \alpha^{0,1}$. The action of $S^1$ on $\Aa$ is given by
\begin{equation}\label{circle_action}
\lambda.D = \nabla + \lambda^{-2} \alpha^{1,0} + \lambda^2 \alpha^{0,1}.
\end{equation}

\subsubsection{Hyperk\"ahler reduction}

A connection $D$ in $\Aa$ is \emph{Einstein} if its curvature $F_D$ satisfies
$$
i \Tr(F_D) = \lambda \Id_{\Ee},
$$
where $\lambda$ is some constant depending only on $M$, $\omega$ and $\Ee$ and where the trace operator is defined from $H^{1,1}_k(M,\End(\Ee))$ to $H_k(M,\End(\Ee))$ and is induced by the K\"ahler metric.

Let $D$ be a connection in $\Aa$ and write $D = \nabla + \alpha$. We write $\nabla^\ast$ for the formal adjoint of $\nabla$ and we say that $D$ is \emph{weakly harmonic} if the equation
$$
\nabla^\ast \alpha = 0
$$
is satisfied.

A connection $D$ is \emph{irreducible} if there is no non-trivial subbundle $F$ of $\Ee$, which is stable by the connection.\\

A remarkable point is that both the Einstein and weakly harmonic conditions can be interpreted as asking that the connections are in the zero sets of moment maps, relatively to the hyperk\"ahler structure on $\Aa$ and the action of the complex gauge group $\Gg := H_{k+1}(M,GL(\Ee))$ and its real form $\Kk := H_{k+1}(M,U(\Ee,h))$ (\cite{Fuj}, (6.2) and (6.3)). We write $\tilde{\Ff}$ for the subset of $\Aa$ of irreducible, Einstein and weakly harmonic connections. Then hyperk\"ahler reduction leads to:

\begin{thm}[\cite{Fuj}, (6.6.1)]
 We write $\Ff = \tilde{\Ff}/\Kk$ for the set of equivalence classes of irreducible, Einstein and weakly harmonic connections on $(\Ee,h)$. Then, $\Ff$ inherits from $\Aa$ a structure of hyperk\"ahler Hilbert orbifold and a $S^1$-action.
\end{thm}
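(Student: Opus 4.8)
The plan is to realize $\Ff$ as an infinite-dimensional hyperk\"ahler quotient of the hyperk\"ahler Hilbert space $(\Aa,g,I,J)$ of Lemma \ref{lem_hyperk} by the real gauge group $\Kk = H_{k+1}(M,U(\Ee,h))$, following the Hitchin--Karlhede--Lindstr\"om--Roček construction in the Hilbert-manifold framework of \cite{Lan}; this is the substance of \cite{Fuj}, (6.6.1), whose argument I now outline. First I would record that $\Kk$ acts on $\Aa$ by $g\cdot D = gDg^{-1}$ preserving the metric $g$ and each of the complex structures $I$, $J$ and $K := IJ$, hence preserving the three K\"ahler forms $\omega_I,\omega_J,\omega_K$, while the complexified group $\Gg = H_{k+1}(M,GL(\Ee))$ preserves $g$ and $J$ but moves $I$ around the twistor sphere.

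Next I would compute moment maps for the $\Kk$-action. For $u$ in the Lie algebra $H_{k+1}(M,\uu(\Ee,h))$ the induced vector field on $\Aa$ at a connection $D$ is $Du$; pairing it with $\omega_I$, respectively with the complex form $\omega_J+i\omega_K$, and integrating by parts against the K\"ahler form $\omega$ of $M$ identifies, up to an additive constant, the $I$-moment map with $u\mapsto \int_M\langle i\Tr F_D,u\rangle\,\frac{\omega^n}{n!}$ and the complex moment map $\mu_\C$ with $u\mapsto \int_M\langle\nabla^\ast\alpha,u\rangle\,\frac{\omega^n}{n!}$ --- this is \cite{Fuj}, (6.2)--(6.3). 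Writing $c$ for the central constant in the Einstein equation (the $\lambda$ of the definition above, renamed to avoid clash with the loop parameter), one then gets the identification $\tilde\Ff = \big(\mu_I^{-1}(c)\cap\mu_\C^{-1}(0)\big)^{\mathrm{irr}}$, the superscript marking the open locus of irreducible connections.

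Then I would check the hypotheses of the hyperk\"ahler reduction theorem. The value $(c,0)$ is central, so its preimage is $\Kk$-invariant; on the irreducible locus the stabilizer of $D$ in $\Gg$ is exactly the scalars $\C^\ast$, so its stabilizer in $\Kk$ is the constant central circle $Z\cong U(1)$, which acts trivially, whence $\Kk/Z$ acts freely, and properly once $k$ is large enough for the Sobolev multiplication and embedding theorems to apply. The analytic core is to prove that $\mu_I^{-1}(c)\cap\mu_\C^{-1}(0)$ is a closed smooth Hilbert submanifold whose tangent space is complementable of finite codimension: this comes from elliptic theory on the compact K\"ahler manifold $M$, since the differential of $(\mu_I,\mu_\C)$ in directions transverse to the gauge orbit is, modulo lower-order terms, a Laplace-type operator built from the deformation complex of $D$, yielding a Hodge-type orthogonal splitting of $T_D\Aa$ into the infinitesimal gauge orbit, its moment-map image, and a finite-dimensional harmonic complement. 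Granting this, the general reduction theorem gives that $\Ff=\tilde\Ff/\Kk$ is a hyperk\"ahler Hilbert manifold, with $(g,I,J)$ descending from $\Aa$ (we keep the term \emph{orbifold} after \cite{Fuj} to allow for the conventions there). Finally, the circle action \eqref{circle_action} on $\Aa$ commutes with the $\Kk$-action and preserves $\tilde\Ff$ by \cite{Fuj}, (4.2), so it descends to the advertised $S^1$-action on $\Ff$.

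I expect the main obstacle to be this transversality/slice step --- showing in the Hilbert (Sobolev) setting that the common zero set of the moment maps is a genuine submanifold with the expected splitting of its tangent space, i.e.\ a slice theorem for the gauge action --- rather than the formal moment-map computation, which is a routine integration by parts, or the $S^1$-equivariance, which is immediate.
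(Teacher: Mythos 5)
The paper does not actually prove this statement: it is imported verbatim from \cite{Fuj}, (6.6.1), and the only preparation given is the remark that the Einstein and weak-harmonicity conditions are zero sets of moment maps (\cite{Fuj}, (6.2)--(6.3)) so that hyperk\"ahler reduction applies. Your outline follows exactly this route, and deferring the analytic core (the slice/transversality theorem in the Sobolev setting) to \cite{Fuj} is precisely what the paper itself does, so in that sense your proposal is the same argument, sketched.

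One concrete detail in your sketch is off and worth correcting: the way you pair the two conditions with the moment maps cannot be right, by a count of components. The Einstein equation $i\Tr(F_D)=\lambda\,\Id_\Ee$ (here $\Tr$ is contraction with the K\"ahler form of $M$) is an $\End(\Ee)$-valued, hence two-real-component, condition, whereas $\mu_I$ is a single $\mathrm{Lie}(\Kk)^\ast$-valued moment map; conversely $\nabla^\ast\alpha$ is valued in Hermitian endomorphisms (one real component), whereas $\mu_J+i\mu_K$ has two. Writing $D=\nabla+\alpha$ and splitting $F_D=(F_\nabla+\alpha\wedge\alpha)+\nabla\alpha$ into its anti-Hermitian and Hermitian parts, the real moment map corresponds to the anti-Hermitian piece $i\Tr(F_\nabla+\alpha\wedge\alpha)-\lambda\,\Id_\Ee$ (the Hitchin-type equation), while the Hermitian piece $\Tr(\nabla\alpha)=0$ together with $\nabla^\ast\alpha=0$ --- equivalently, via the K\"ahler identities, $\Tr(\bar{\partial}_\nabla\alpha^{1,0})=0$ --- constitutes the complex moment map. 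This regrouping does not change anything downstream, since the common zero locus of the full hyperk\"ahler moment map is still exactly the Einstein and weakly harmonic locus $\tilde{\Ff}$ (before passing to the irreducible part), but as stated your identification misquotes \cite{Fuj}, (6.2)--(6.3). The remaining points (freeness of $\Kk/S^1$ on the irreducible locus, the $S^1$-action of equation \eqref{circle_action} descending because it commutes with $\Kk$) are fine.
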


\subsubsection{Hyperk\"ahler structure on the moduli space}

We now assume that the first and second Chern classes of $\Ee$ vanish. We write $\Nn$ for the set of isomorphism classes of connections $D$ that are irreducible, flat and admit an harmonic metric. Then, (\cite{Fuj}, (8.1.2)):
\begin{lem}
 The space $\Nn$ can be naturally realized as a finite-dimensional locally closed complex analytic subspace of $\Ff$.
\end{lem}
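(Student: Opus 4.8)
The plan is to realize $\Nn$ inside $\Ff$ as the \emph{flat locus}, and then to read off its finite dimensionality and its complex structure from the Riemann--Hilbert correspondence.

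First I would build a map $\iota\colon\Nn\to\Ff$. Let $D\in\Aa$ be an irreducible flat connection on $\Ee$ admitting a harmonic metric $h'$; since an irreducible flat bundle carries a harmonic metric unique up to a positive scalar, $h'$ is essentially canonically attached to $[D]$. Choose $g\in\Gg$ with $h(g\,\cdot\,,g\,\cdot\,)=h'$, i.e.\ an isometry $g\colon(\Ee,h')\to(\Ee,h)$, and consider the gauge--transformed connection $g\cdot D$; write $g\cdot D=\nabla+\alpha$ with $\nabla$ an $h$-metric connection and $\alpha$ an $h$-Hermitian $1$-form. Then $g\cdot D$ is irreducible; it is Einstein, because $c_1(\Ee)=0$ forces the Einstein constant to vanish and $F_{g\cdot D}=gF_Dg^{-1}=0$ gives $i\Tr(F_{g\cdot D})=0$; and it is weakly harmonic, which is the only point with content: by the proposition identifying harmonicity of a metric with pluriharmonicity of its developing map, together with the Siu--Sampson theorem \ref{Samp} (the symmetric space $GL(n,\C)/U(n)$ having nonpositive Hermitian curvature) on the compact K\"ahler manifold $M$, $h$ being harmonic for $g\cdot D$ is equivalent to the developing map being a genuine harmonic map, which is exactly the equation $\nabla^{\ast}\alpha=0$. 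Thus $g\cdot D\in\tilde{\Ff}$, and I set $\iota[D]:=[g\cdot D]\in\Ff$. This is independent of the choices: two admissible $g$'s differ by an element of $\Kk$, and rescaling $h'$ by $c>0$ changes $g$ by the central scalar $\sqrt{c}\,\Id$, which acts trivially on connections. Injectivity follows from elliptic regularity together with uniqueness of the harmonic metric: if $\iota[D_1]=\iota[D_2]$, then $g_1\cdot D_1$ and $g_2\cdot D_2$ are $\Kk$-conjugate, so $D_1$ and $D_2$ are conjugate by an element of $\Gg$, which is automatically smooth (it intertwines two flat connections) and hence an isomorphism of flat bundles.

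Next I would identify the image with the flat locus $\{[D]\in\Ff : F_D=0\}$. The inclusion $\subseteq$ is clear; conversely, if $[D]\in\Ff$ and $F_D=0$, then running the previous computation backwards (the easy direction, pluriharmonic $\Rightarrow$ harmonic on K\"ahler manifolds, suffices) shows that $h$ is harmonic for the flat, irreducible bundle $(\Ee,D)$, so $[D]\in\Nn$ maps to it; and Corlette's theorem \cite{Cor} shows nothing further is lost, since an irreducible flat connection has reductive monodromy and hence admits a harmonic metric. So $\iota$ is a bijection of $\Nn$ onto the flat locus of $\Ff$. Now the curvature map $\Aa\ni D\mapsto F_D$ takes values in a fixed Sobolev space of $2$-forms, is given by an affine--quadratic polynomial in $D$, hence is real--analytic, and is $\Kk$-equivariant; its zero set is therefore a $\Kk$-invariant closed analytic subset of $\Aa$, and intersecting it with $\tilde{\Ff}$ and passing to the quotient exhibits $\iota(\Nn)$ as a closed --- \emph{a fortiori} locally closed --- analytic subspace of the orbifold $\Ff$. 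To upgrade ``analytic'' to ``complex analytic'' for one of the complex structures carried by $\Ff$, I would observe that, in the reductive decomposition used in \cite{Fuj} to build $\Ff$, the equation $F_D=0$ is --- modulo the real equations already imposed in the definition of $\tilde{\Ff}$ --- the vanishing of the holomorphic component of a hyperk\"ahler moment map for the $\Kk$-action, hence holomorphic for the corresponding complex structure on $\Aa$, and so cuts out a complex-analytic subspace of $\Ff$; equivalently, the complex structure $\Ff$ induces on the flat locus is the standard de Rham complex structure on the moduli of flat bundles. Finally, finite dimensionality is immediate from Appendix \ref{Betti}: the Riemann--Hilbert correspondence identifies $\Nn$ with a subset of the conjugacy classes of irreducible representations $\pi_1(M)\to GL(n,\C)$, i.e.\ with a subset of the Betti moduli space $\Mm_B(M)=X^s(\pi_1(M),G)$, which is a finite-dimensional quasi-affine variety; alternatively, the Zariski tangent space to $\Nn$ at the class of a flat connection with monodromy $\rho$ is the finite-dimensional group cohomology $H^1(\pi_1(M),\Ad\rho)$.

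The hard part is the complex-analyticity statement in the previous paragraph: it requires matching the normalizations of the hyperk\"ahler moment maps in \cite{Fuj} so that flatness is recognised as the vanishing of the holomorphic moment map for the appropriate one of the three complex structures --- equivalently, checking the compatibility of the de Rham/Betti complex structure on the flat locus with the one induced from $\Ff$. Everything else --- uniqueness and elliptic regularity of harmonic metrics, $\Kk$-equivariance of the curvature, and the Siu--Sampson input --- is routine bookkeeping.
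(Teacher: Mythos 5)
Your first half is fine: the construction of $\iota$ via gauging the harmonic metric to the reference metric $h$, its well-definedness from uniqueness of the harmonic metric up to a positive scalar, and the identification of the image with the flat locus $\{[D]\in\Ff : F_D=0\}$ using the equivalence on a compact K\"ahler base between the harmonic-metric equation and $\nabla^\ast\alpha=0$ (minor slip: in the converse direction it is the Siu--Sampson implication of theorem \ref{Samp}, harmonic $\Rightarrow$ pluriharmonic, that you need, not the ``easy'' one, and Corlette's theorem \cite{Cor} plays no role here). For calibration: the paper does not prove this lemma at all -- it is quoted from \cite{Fuj}, (8.1.2), and the identification of the resulting complex structure with the Betti one is the separately quoted (8.2.3) -- so the substance of a proof must come from exactly the step you defer.

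That step is where the genuine gap lies, and your proposed mechanism for it is not correct. The hyperk\"ahler moment maps for the $\Kk$-action on $\Aa$ are, by \cite{Fuj} (6.2)--(6.3) as recalled in the appendix, precisely the Einstein and weakly harmonic conditions, and they are already spent in defining $\tilde{\Ff}$ and $\Ff$; when $\dim_\C M>1$ the quotient $\Ff$ is an infinite-dimensional Hilbert orbifold, and flatness is a genuinely additional system of equations (vanishing of $F_D^{2,0}$, $F_D^{0,2}$ and of the $\omega$-primitive part of $F_D^{1,1}$), not the vanishing of a holomorphic component of a moment map -- no ``matching of normalizations'' will change this; only in complex dimension one does flatness coincide with the moment map equations, which is why Hitchin's picture is misleading here. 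What survives is that $D\mapsto F_D$ is polynomial, hence holomorphic for the complex structure $J$ on $\Aa=\Aa_\R\otimes\C$, so the flat locus in $\tilde{\Ff}$ is the zero set of a $J$-holomorphic map; but in an infinite-dimensional manifold this does not yet make its image in $\Ff$ a finite-dimensional complex analytic subspace, and your finite-dimensionality argument -- a set-theoretic bijection with a subset of $\Mm_B$ plus finiteness of $H^1(\pi_1(M),\Ad\rho)$ as Zariski tangent space -- neither produces the required local analytic models nor shows the structure descends through the orbifold quotient (and leaning on the Betti space here is mildly circular, since comparing the two analytic structures is the content of the next quoted lemma). The missing ingredient is the elliptic/Kuranishi step: near a flat point of $\tilde{\Ff}$, take a slice for the $\Kk$-action and use the deformation complex of the flat connection, which is elliptic with finite-dimensional cohomology on the compact $M$, to exhibit the flat locus locally as the zero set of a holomorphic map between finite-dimensional spaces (the quadratic cone in $H^1(M,\End(\Ee))$ up to higher-order corrections), compatibly with $J$. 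That is precisely what Fujiki's (8.1.2) supplies and what your sketch replaces by the incorrect moment-map identification.
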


In subsection \ref{Betti}, we have defined the Betti moduli space $\Mm_B(X)$ of irreducible representations from $\pi_1(X)$ to $GL(n,\C)$. We consider its subset $\Mm_{B,\Ee}(X)$ of representations whose associated bundle is isomorphic to $\Ee$ as a smooth vector bundle. It is a union of connected components of $\Mm_B$. By the Corlette-Donaldson theorem \cite{Sim_higloc}, the spaces $\Nn$ and $\Mm_{B,\Ee}$ are in bijection. Moreover,

\begin{lem}[\cite{Fuj}, (8.2.3)]
 The complex analytic structures of $\Nn$ and $\Mm_{B,\Ee}$ are the same.
\end{lem}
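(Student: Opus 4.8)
The plan is to upgrade the set-theoretic bijection $\Nn \cong \Mm_{B,\Ee}$ supplied by the Corlette--Donaldson theorem to an isomorphism of complex analytic spaces, by factoring it through the de Rham moduli space of flat connections on $\Ee$. Concretely, I would (i) recognise $\Nn$, with the complex structure it inherits from $\Ff$, as the stable (i.e.\ irreducible) locus of the analytic quotient $\{D \in \Aa : F_D = 0\}/\Gg$; (ii) recognise this quotient, via the holonomy map, as $\Mm_{B,\Ee}$; and (iii) check that both identifications are holomorphic.

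For step (i) I would return to the hyperk\"ahler reduction of \cite{Fuj}, (6.2)--(6.3) producing $\Ff$. Among the complex structures of the hyperk\"ahler family on $\Aa$ there is one --- call it $J$, the ``de Rham'' one --- for which a deformation $D_0 + a$ of a connection is holomorphic exactly when $a$ varies in its de Rham directions; for this $J$ the two moment maps transverse to $J$ assemble into a $\Gg$-equivariant holomorphic moment map whose zero locus is precisely the flat connections. Here one uses $c_1(\Ee) = c_2(\Ee) = 0$, so that the Einstein constant vanishes and, on a flat connection, ``Einstein plus weakly harmonic'' is just ``flat plus harmonic metric''. The general principle that a hyperk\"ahler quotient agrees, at points whose complexified orbit is closed, with the holomorphic symplectic quotient taken in the complex structure $J$, then identifies a neighbourhood of $\Nn$ in $\Ff$ with the analytic quotient of the flat locus by $\Gg$, restricted to the irreducible points. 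The one non-formal ingredient is Corlette's theorem: every irreducible flat connection carries a harmonic metric, unique up to a positive scalar, so that each $\Gg$-orbit of an irreducible flat connection meets $\tilde\Ff$ in exactly one $\Kk$-orbit --- this is the Kempf--Ness statement that makes the two quotients coincide, as sets and then, via the slice theorem, analytically.

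For steps (ii) and (iii) I would use the holonomy map $D \mapsto \mathrm{hol}(D) \in R^s(\pi_1(X),G)$. Horizontal sections of $D$ solve a linear ODE along paths, and solutions of an ODE depend holomorphically on holomorphic parameters, so $\mathrm{hol}$ is holomorphic for the complex structure $J$; conversely, attaching to a representation its associated flat bundle furnishes, over the component with underlying smooth bundle $\Ee$, a holomorphic local right inverse. Descending to $\Gg$-orbits on the source and to conjugacy on the target yields mutually inverse holomorphic maps $\Nn \leftrightarrow \Mm_{B,\Ee}$. Infinitesimally this is the assertion that the Goldman--Millson model of $\Mm_{B,\Ee}$ at $[\rho]$ --- the cup-product cone in $H^1(\pi_1(X),\mathrm{ad}\,\rho)$ --- is carried isomorphically onto the Kuranishi model of $\Nn$ under the de Rham comparison $H^1(\pi_1(X),\mathrm{ad}\,\rho) \cong H^1_{\mathrm{dR}}(X,\mathrm{ad}\,D)$, which intertwines the two cup products. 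I expect the main obstacle to be precisely the rigour of step (iii) in infinite dimensions: one needs the slice theorem for the $\Gg$-action on the flat locus together with the elliptic Hodge theory of the deformation complex of a harmonic bundle, and it is there, rather than in the formal moment-map and holonomy manipulations, that Corlette's theorem and the analysis of harmonic bundles do the real work. A complete treatment is \cite{Fuj}, (8.2.3).
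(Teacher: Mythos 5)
First, a point of context: the paper does not prove this lemma at all --- it is quoted directly from \cite{Fuj}, (8.2.3) --- and your proposal also ends by deferring the complete treatment to that same reference, so in substance both rest on the same citation. The overall route you sketch (identify $\Nn$ with the irreducible locus of flat connections modulo the complex gauge group $\Gg$, then with $\Mm_{B,\Ee}$ via the holonomy map, with Corlette's theorem supplying the harmonic metric on each irreducible flat orbit and a Goldman--Millson/Kuranishi comparison controlling the local analytic structure) is indeed the standard strategy and is consistent with what Fujiki does.

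However, step (i) as you state it would fail in the setting actually used here. The base $M$ is a compact K\"ahler manifold of arbitrary dimension, and in dimension greater than one the flat connections are \emph{not} the zero locus of a (complex) moment map for the gauge action on $\Aa$: the curvature $F_D$ is an $\End(\Ee)$-valued $2$-form, and only its contraction against the K\"ahler form (the term $i\Tr(F_D)$ in the Einstein condition) pairs naturally with the Lie algebra of the gauge group. This is exactly why Fujiki's moment-map equations are the Einstein and weakly harmonic conditions ((6.2)--(6.3)), with flatness imposed afterwards: $\Ff$ is an infinite-dimensional hyperk\"ahler quotient, and $\Nn$ is only a finite-dimensional locally closed analytic subspace of it ((8.1.2)), not itself a hyperk\"ahler or holomorphic-symplectic quotient of the flat locus. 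Consequently the Kempf--Ness principle you invoke does not apply in the form stated, and the identification of the complex structure that $\Nn$ inherits from $\Ff$ with the de Rham/Betti one --- which is precisely the content of the lemma, the set-theoretic bijection from Corlette--Donaldson being the easy part --- must be established by the local slice and elliptic deformation-theoretic analysis that you yourself flag as the hard point and defer to \cite{Fuj}. On a compact curve your picture (full curvature as the holomorphic moment map for the de Rham complex structure, hyperk\"ahler quotient equal to the flat moduli) is correct; in the higher-dimensional K\"ahler case it has to be replaced by Fujiki's actual argument.
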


We finally write $\Mm_0$ for the set of smooth points in the underlying reduced space of $\Mm_{B,\Ee}$.

\begin{thm}[\cite{Fuj}, (8.3.1)]
By the inclusion $\Mm_0 \hookrightarrow \Ff$, $\Mm_0$ inherits a hyperk\"ahler structure. Moreover, $\Mm_0$ is stable by the action of $S^1$.
\end{thm}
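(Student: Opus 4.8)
The plan is to realize $\Mm_0$ as a complex submanifold of the hyperk\"ahler Hilbert orbifold $\Ff$ of Lemma~\ref{lem_hyperk} simultaneously for two of its three complex structures, and then to check that the $S^1$-action on $\Ff$ restricts to it. I would first recall the elementary fact that if $N$ is a smooth submanifold of a hyperk\"ahler manifold $(M,g,I,J,K)$ which is complex with respect to $I$ and to $J$, then $K(TN)=IJ(TN)\subset I(TN)\subset TN$, so $N$ is complex for $K$ as well; the restricted metric with the three restricted complex structures is then hyperk\"ahler and the restricted K\"ahler forms are automatically closed. Hence it suffices to prove that, near each of its points, $\Mm_0$ is cut out inside $\Ff$ by equations compatible with two of the complex structures. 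Here $\Mm_0 \hookrightarrow \Nn \hookrightarrow \Ff$, where $\Nn$ is the locus of classes of irreducible flat connections admitting a harmonic metric and $\Mm_0$ is the smooth locus of its underlying reduced space.

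Next I would turn to the analytic heart. Write $I$ and $J$ for the Dolbeault and the de Rham complex structures on the model space $\Aa$. In the de Rham structure $J$ the identification $D\mapsto D$ of $\Aa$ with a space of $GL(n,\C)$-connections is complex-linear, so the curvature map $D\mapsto F_D$ is $J$-holomorphic; intersecting $\tilde{\Ff}$ with $\{F_D=0\}$ and passing to the quotient exhibits $\Nn$ as a complex analytic subset of $(\Ff,J)$, the moduli of flat bundles. In the Dolbeault structure $I$ the coordinates are the Higgs data $(\bar{\d}_E,\theta)$ and the Higgs-bundle equations $\bar{\d}_E^{2}=0$, $\bar{\d}_E\theta=0$, $\theta\wedge\theta=0$ are $I$-holomorphic; intersecting $\tilde{\Ff}$ with their zero locus and passing to the quotient exhibits $\Nn$ as a complex analytic subset of $(\Ff,I)$, the moduli of Higgs bundles. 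The crucial point is that these two subsets of $\Ff$ coincide -- this is precisely the Corlette--Donaldson and Hitchin--Simpson correspondence in gauge-theoretic form -- so $\Nn$ is complex analytic for both $I$ and $J$, and at a smooth point the gauge-theoretic slice (Kuranishi) model of $\Ff$ upgrades this to a complex submanifold. By the reduction above, $\Mm_0$ then inherits a hyperk\"ahler structure from $(\Aa,g,I,J)$.

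For $S^1$-stability, I would observe that the circle action \eqref{circle_action} is $\Kk$-equivariant and preserves the irreducible, Einstein and weakly harmonic conditions, hence descends to $\Ff$. If $D=\nabla+\theta+\theta^\ast$ is the flat connection of a harmonic bundle $(\Ee,D,h)$, then $\lambda.D=\nabla+\lambda^{-2}\theta+\lambda^{2}\theta^\ast=D_{\lambda^{2}}$ in the notation of Lemma~\ref{Dlamb_harm}, which is again flat -- in fact $(\Ee,D_{\lambda^{2}},h)$ is again harmonic with the \emph{same} metric -- and irreducibility is preserved. Therefore the $S^1$-action on $\Ff$ restricts to $\Nn$, and since it acts there by diffeomorphisms it preserves the smooth locus $\Mm_0$.

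The main obstacle is the middle step: proving that the flat locus and the Higgs locus inside the harmonic-connection orbifold $\Ff$ are literally the same subset -- the content of the non-abelian Hodge correspondence in this Hilbert-manifold setting -- and then, using elliptic regularity and a gauge-theoretic slice theorem, that this common subset is, near its smooth points, a genuine finite-dimensional complex submanifold of $\Ff$ for each of the two complex structures at once. This is what \cite{Fuj} carries out.
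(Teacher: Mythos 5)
The paper does not prove this statement at all: it is quoted directly from Fujiki \cite{Fuj}, (8.3.1), like the other results collected in this appendix, so there is no internal argument to compare yours with. Your outline correctly reproduces the proof strategy of that reference (Hitchin's argument in the curve case): a submanifold complex for $I$ and $J$ is automatically complex for $K=IJ$ and hence hyperk\"ahler, the flat locus and the Higgs locus inside $\Ff$ coincide by the Corlette--Donaldson and Hitchin--Simpson correspondences, and $\lambda\cdot D=D_{\lambda^{2}}$ (as in lemma \ref{Dlamb_harm}) gives the $S^1$-stability --- and the analytic heart you defer to \cite{Fuj} is exactly what the paper defers as well.
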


\begin{rem}
 Much more is said in \cite{Fuj}. Our aim here is only to define the hyperk\"ahler structure on the Betti moduli space of $X$. If one is interested in Higgs bundles, a moduli space of stable Higgs bundles can be defined (for $X$ projective, this is done in \cite{Simp3} and \cite{Simp4}) and its set of smooth points will be in bijection with $\Mm_0$. Moreover, its complex structure will be one of the complex structures of the hyperk\"ahler manifold $\Mm_0$. We omit the details and refer the interested reader to \cite{Fuj}.
\end{rem}

\subsubsection{Variations of Hodge structures}

The fixed points of the circle action on $\Mm_0$ are the irreducible representations whose associated flat bundle admits a complex variation of Hodge structures. This comes from equation \eqref{circle_action} and the well-known corollary 4.2 of \cite{Sim_higloc}, which is our theorem \ref{class_nonab}, in the context of variations of loop Hodge structures. In the space $\Aa$, we consider the functional
$$
F(D) = i\int_M \Tr(\phi^{1,0} \wedge \phi^{0,1}) \wedge \omega^{n-1},
$$ 
where as usual, we write $D = \nabla + \phi$. There exists a real constant $C$ such that $F(D) = C.g(\phi,\phi)$, where $g$ is the hyperk\"ahler metric of $\Aa$.

The metric is invariant under the unitary gauge group $\Kk$, hence $F$ defines a function on $\Ff$, and in particular on $\Mm_0$. We still denote this function by $F$. The tangent space of $\Aa$ at the point $D$ can be canonically identified with $H^1_k(M,\uu(\Ee,h)) \times H^1_k(M,i\uu(\Ee,h))$. Hence, the differential of $F$ is given by
$$
d_D F(\chi,\psi) = 2Cg(\phi,\psi).
$$
On the other hand, the infinitesimal vector field $Y$ generating the circle action on $\Aa$ is given by $Y_D = -2i \phi^{1,0} + 2i \phi^{0,1} = 2I\phi$, where the complex structure $I$ was defined before lemma \ref{lem_hyperk}. Hence, the gradient of $F$ is given by $C.I Y$.

Since the hyperk\"ahler structure of $\Aa$ induces a hyperk\"ahler structure on $\Mm_0$, the complex structure $I$ is in particular well-defined on $\Mm_0$ and we get the following theorem:
\begin{thm}\label{VHS_ptcrit}
 The critical values of $F$ on $\Mm_0$ are the irreducible representations whose associated flat bundle admits a complex variation of Hodge structures.
\end{thm}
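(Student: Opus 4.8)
The plan is to read off the critical points of $F$ from the gradient formula obtained just above. First I would record that, with $Y$ the vector field generating the $S^1$-action on $\Mm_0$ and $I$ the complex structure introduced before Lemma~\ref{lem_hyperk}, the preceding computation gives $\operatorname{grad} F = C\cdot IY$, where $C$ is a nonzero real constant: it is nonzero because $F(D)=C\,g(\phi,\phi)$ with $g$ a genuine positive-definite Riemannian metric and $F$ is not identically zero (any harmonic bundle with non-vanishing Higgs field has $F\ne 0$). Two observations make this formula legitimate directly on $\Mm_0$. Since the $S^1$-action preserves $\Mm_0$, the field $Y$ is tangent to $\Mm_0$; and since $\Mm_0$ inherits the hyperk\"ahler structure, $I$ preserves $T\Mm_0$. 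Hence $C\cdot IY$ is already tangent to $\Mm_0$, so it coincides with the gradient of the restriction $F|_{\Mm_0}$ for the induced Riemannian metric.

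Second, because $I$ is invertible on each tangent space and $C\ne 0$, the vector $(\operatorname{grad} F)_m=C\cdot IY_m$ vanishes if and only if $Y_m=0$; thus the critical points of $F$ on $\Mm_0$ are exactly the fixed points of the $S^1$-action. Finally I would invoke the characterization of those fixed points recalled at the beginning of this subsection --- equation~\eqref{circle_action} together with Theorem~\ref{class_nonab}, which is corollary~4.2 of \cite{Sim_higloc} rephrased in the language of loop Hodge structures --- stating that a point of $\Mm_0$ is fixed by the $S^1$-action precisely when the associated flat bundle carries a complex variation of Hodge structures. Composing the two equivalences yields the statement.

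Beyond what is already assembled in this appendix, the argument involves essentially no new analytic content. The one point that requires care is the compatibility bookkeeping of Appendix~\ref{moduli}: one must make sure that the function $F$, its gradient $C\cdot IY$, the $S^1$-action together with its generating field $Y$, the complex structure $I$, and the fixed-point set all descend coherently from $\Aa$ through the hyperk\"ahler reduction to the same objects on $\Mm_0$, so that the gradient formula and the fixed-point characterization may legitimately be used there. This is where I expect the bulk of the (routine) verification to lie.
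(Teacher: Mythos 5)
Your proposal is correct and follows essentially the same route as the paper: the paper likewise reads off from $\operatorname{grad} F = C\cdot IY$ that the critical points of $F$ on $\Mm_0$ are exactly the zeros of $Y$, i.e.\ the fixed points of the circle action, and then identifies these with the representations underlying complex variations of Hodge structures via equation \eqref{circle_action} and theorem \ref{class_nonab} (Simpson's corollary 4.2). Your additional checks (non-vanishing of $C$, tangency of $IY$ to $\Mm_0$, and the descent of $F$, $Y$, $I$ through the hyperk\"ahler reduction) are precisely the routine verifications the paper leaves implicit.
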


\begin{rem}
 A more precise version of this theorem has been obtained in \cite{Spina} by a different method.
\end{rem}

\section{Non-polarized variations of loop Hodge structures} \label{non_pol}

In this appendix, we generalize the notion of variation of loop Hodge structures, when there is no polarization. This corresponds to the notion of \emph{non-polarized harmonic bundle}, defined in \cite{Sim_twis}

\subsection{Algebraic formulation}

Theorem \ref{thm_fund} and its proof show that the Hilbert bundles $\Kk$ that we consider in the definition of a variation of loop Hodge structures are obtained from a finite-dimensional bundle $\Ee$ \emph{via} the functor $\Ee \mapsto L^2(S^1,\Ee)$ and that the flat connection $D$ on $\Kk$ involves only a finite number of exponents of $\lambda$, relatively to the isomorphism $\Kk \cong L^2(S^1,\Ee)$. Hence, it is natural to give a more algebraic definition of variations of loop Hodge structures.

\begin{defn}
Let $\Ee$ be a finite-dimensional complex vector bundle on a differentiable manifold $X$. We define $\Ee[t,t^{-1}]$ to be the infinite-dimensional complex vector bundle on $X$ whose sheaf of smooth sections $\mathcal{F}$ is defined for any open set $U$ by $\mathcal{F}(U) = \bigcup_{n \in \N} \mathcal{F}^n(U)$, where
$$
\mathcal{F}^n(U) := \{ \sum_{|k| \leq n} a_k t^k \,|\, a_k \in \Gamma(U,\Ee)\}.
$$

We define the \emph{right-shift operator} $T$ on $\Ee[t,t^{-1}]$ to be the multiplication by the formal variable $t$.

If $(\Ee,h)$ is a Hermitian vector bundle, we define the \emph{Krein metric} $\Bb$ on $\Ee[t,t^{-1}]$ by
$$ \Bb(\sum_{n \in \Z} f_n t^n,\sum_{n \in \Z} g_n t^n) = \sum_{n \in \Z} (-1)^n h(f_n,g_n),$$
where all the sums are in fact finite.
\end{defn}

\begin{defn}
 A \emph{connection} $D$ on $\Ee[t,t^{-1}]$ is a $\C$-linear map $\Gamma(X,TX \otimes \Ee[t,t^{-1}]) \rightarrow \Gamma(X,\Ee[t,t^{-1}])$ which is $\mathcal{C}^\infty(X)$-linear in the $TX$ direction and which satisfies Leibniz's rule. As usual, one can define the notion of \emph{flatness} of a connection or speak of \emph{flat tensors}.
\end{defn}

\begin{prop}\label{conn_Talg}
 Let $D$ be a connection on $\Ee[t,t^{-1}]$. Then $T$ is $D$-parallel if and only if $D$ can be written $D = D_0 + \sum_{1 \leq |k| \leq n_0} a_k t^k$, where $D_0$ is a connection on $\Ee$ and the $a_k$ are $1$-forms with values in $\End(\Ee)$.
\end{prop}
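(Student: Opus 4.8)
The plan is to mirror the proof of Corollary~\ref{conn_T}, replacing the Hilbert bundle $L^2(S^1,\Ee)$ by the algebraic bundle $\Ee[t,t^{-1}]$ and the analytic identification of $T$-commuting operators with $L^\infty$-loops (Proposition~\ref{comm_shift}) by the purely algebraic fact that a bundle endomorphism of $\Ee[t,t^{-1}]$ commuting with multiplication by $t$ is $\C[t,t^{-1}]$-linear, hence given locally by a matrix with Laurent-polynomial entries.

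First I would treat the easy implication. Given a connection $D_0$ on $\Ee$, let $\tilde{D}_0$ be its naive extension to $\Ee[t,t^{-1}]$, differentiating the $\Ee$-valued coefficients of a section $\sum_k a_k t^k$. Since $t$ is a formal variable not depending on $X$, one checks directly that $(\tilde{D}_0)_Y$ commutes with $T$ (multiplication by $t$) for every vector field $Y$; and multiplication by any $\End(\Ee)$-valued form $a_k t^k$ trivially commutes with multiplication by $t$. Hence any $D$ of the stated form has $T$ parallel, i.e. $(D_Y T)(s)=D_Y(Ts)-T(D_Y s)=0$.

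Conversely, assume $T$ is $D$-parallel. Fix an arbitrary connection $D_0$ on $\Ee$ with naive extension $\tilde{D}_0$; then $A:=D-\tilde{D}_0$ is $\mathcal{C}^\infty(X)$-linear in both slots, so $A\in\Gamma(X,T^\ast X\otimes\End(\Ee[t,t^{-1}]))$, and since $T$ is parallel for both $D$ and $\tilde{D}_0$ we get $[A_Y,T]=0$ for every $Y$. Each $A_Y$ is then $\C[t,t^{-1}]$-linear: from $A_Y(ts)=tA_Y(s)$ one gets $A_Y(t^{-1}s)=t^{-1}A_Y(s)$ by applying $A_Y$ to $t\cdot(t^{-1}s)=s$, hence by induction and $\C$-linearity $A_Y(p(t)s)=p(t)A_Y(s)$ for all $p\in\C[t,t^{-1}]$. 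Over an open set $U$ on which $\Ee$ has a frame $e_1,\dots,e_r$, the module $\Gamma(U,\Ee[t,t^{-1}])$ is free over $\C^\infty(U)[t,t^{-1}]$ on $e_1,\dots,e_r$, so $A_Y$ is determined by the finite sums $A_Y(e_i)=\sum_{|k|\le n_i}a_k^{(Y)}(e_i)\,t^k$; putting $n_0=\max_i n_i$ yields $a_k^{(Y)}\in\Gamma(U,\End(\Ee))$ with $A_Y=\sum_{|k|\le n_0}a_k^{(Y)}t^k$, and letting $Y$ vary the $a_k$ become $1$-forms with values in $\End(\Ee)$. Finally I absorb the degree-zero term: $\tilde{D}_0+a_0(\cdot)$ is the naive extension of the connection $D_0+a_0$ on $\Ee$, so after renaming $D_0$ one obtains $D=\tilde{D}_0+\sum_{1\le|k|\le n_0}a_k t^k$.

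The only genuinely delicate point is the bookkeeping with $n_0$: the argument above produces only a \emph{locally} finite bound on the powers of $t$ occurring, so to get the statement as written one must either observe that this bound is locally bounded and patch (which suffices in all cases of interest, where in fact $n_0=1$, the connection being $\tilde\nabla+\lambda^{-1}\theta+\lambda\theta^\ast$), or simply read the conclusion as a local normal form. Everything else — tensoriality of the difference of two connections, $\C[t,t^{-1}]$-linearity of $T$-commuting endomorphisms, and freeness of $\Ee[t,t^{-1}]$ over a local frame — is routine.
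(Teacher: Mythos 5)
Your proof is correct and takes essentially the same route as the paper: compare $D$ with the naive extension of an arbitrary connection $D_0$ on $\Ee$, note that the difference is tensorial and commutes with $T$, hence acts by an $\End(\Ee)$-valued Laurent polynomial determined by its action on $\Ee$, and absorb the degree-zero term into the connection. The two points you make explicit — the absorption of $a_0$ and the uniformity of the bound $n_0$ — are glossed over in the paper's shorter argument but do not constitute a different method.
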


\begin{proof}
This is analogous to proposition \ref{conn_T} and is in fact easier to prove since there are no topological issues. Let $D_0$ be any connection on $\Ee$ and consider it as a connection $\tilde{D}_0$ on $\Ee[t,t^{-1}]$. Then, $\omega:= D - \tilde{D}_0$ is a section of $T^\ast X \otimes \End(\Ee[t,t^{-1}])$ that commutes with $T$. Considering the action of $\omega$ on elements in $\Ee \subset \Ee[t,t^{-1}]$, one obtains a form $A \in \Gamma(X,T^\ast X \otimes \End(\Ee)[t,t^{-1}])$ such that, for every $x$ in $X$, $Y_x$ in $T_x X$ and $v_x$ in $E_x$, $\omega_x(Y_x)(v_x) = A_x(Y_x)(v_x)$. Since $\omega$ commutes with $T$, this equality is necessarily true for $v$ in $E[t,t^{-1}]_x$.

Otherwise said, $D$ lives in $\tilde{D}_0 + \Gamma(X,T^\ast X \otimes \End(\Ee)[t,t^{-1}])$ and this is exactly the statement of the theorem.
\end{proof}

\begin{defn}
 An \emph{algebraic variation of (complex polarized) loop Hodge structures} over a complex manifold $X$ is the datum of a finite-dimensional Hermitian bundle $(\Ee,h)$ over $X$ and a flat connection $D = \d + \bar{\d}$ on $\Ee[t,t^{-1}]$ 
such that
\begin{enumerate}
 \item the Krein metric $\Bb$ and the right-shift operator $\Tt$ are $D$-flat;
 \item $\bar{\d}: \Ee[t] \rightarrow \Ee[t] \otimes \Aa^{0,1}$;
 \item $D: \Ee[t] \rightarrow t^{-1}\Ee[t] \otimes \Aa^1$.

\end{enumerate}
\end{defn}

\begin{rem}
 By completing the vector bundle $\Ee[t,t^{-1}]$ with respect to its natural pre-Hilbert structure, one obtains a variation of loop Hodge structures in the sense of definition \ref{VNCHS}. On the other hand, by the proof of theorem \ref{thm_fund}, every variation of loop Hodge structures is the completion of an algebraic variation of loop Hodge structures. 

 This algebraic definition may seem simpler than the analytic one. If one is only interested in the variational point of view, this is certainly true. But, as soon as a classifying space for loop Hodge structures is needed, we have to solve ordinary differential equations and we need the Hilbert setting to do so. Moreover, it seems preferable in the definition of loop Hodge structures to emphasize the role of the outgoing subspace $\Ww$ rather than the finite-dimensional subspace $\Ee$ which is the orthocomplement of $\Tt\Ww$ in $\Ww$. This can be done in the Hilbert setting, thanks to proposition \ref{kr_out}; but there is no obvious analogue of this proposition in an algebraic setting. For these reasons, apart from the following discussion of non-polarized variations, we will not use this algebraic setting anymore.
\end{rem}

\subsection{Non-polarized variations}

If one is interested in variations of loop Hodge structures that are not \emph{polarized} (that is, which carry no metric), the algebraic definition looks simpler to generalize in the following way:

\begin{defn}
 A \emph{non-polarized variation of (complex) loop Hodge structures} over a complex manifold $X$ is the datum of a finite-dimensional complex vector bundle $\Ee$ over $X$ and a flat connection $D = \d + \bar{\d}$ on $\Ee[t,t^{-1}]$ such that
\begin{enumerate}
 \item the right-shift operator $T$ is $D$-flat;
 \item $\bar{\d}: \Ee[t] \rightarrow \Ee[t] \otimes \Aa^{0,1}$;
 \item $D: \Ee[t] \rightarrow t^{-1}\Ee[t] \otimes \Aa^1$;
 \item $\d: \Ee[t^{-1}] \rightarrow \Ee[t^{-1}] \otimes \Aa^{1,0}$ \label{diff2np};
 \item $D: \Ee[t^{-1}] \rightarrow t\Ee[t^{-1}] \otimes \Aa^1$ \label{diff1np}.

\end{enumerate}
\end{defn}

\begin{rem}
 In the polarized case, the two additionnal assumptions \ref{diff2np}. and \ref{diff1np}. are satisfied, thanks to the polarization. The interest of this notion lies in the fact that it is equivalent to a non-polarized harmonic bundle, as defined in \cite{Sim_twis}.
\end{rem}

\begin{defn}
 A \emph{non-polarized harmonic bundle} over a complex manifold $X$ is a triple $(\Ee,D',D'')$, where $\Ee$ is a complex vector bundle over $X$, $D'$ (resp. $D''$) is a differential operator on $\Ee$ which satisfy the Leibniz's rule for $\d_X$ (resp. $\bar{\d}_X$); moreover we ask that $D'$ and $D''$ satisfy the integrability conditions $(D')^2 = (D'')^2 = D'D'' + D''D' = 0$.
\end{defn}

\begin{rem}\label{nonpol}
 In the polarized situation, where the flat connection $D$ is written $D = \nabla + \alpha = (\d + \bar{\d}) + (\theta + \theta^\ast)$, one takes $D' = \d + \theta^\ast$ and $D'' = \bar{\d} + \theta$.
\end{rem}

\begin{thm}
 The category of non-polarized variations of loop Hodge structures is equivalent to the category of non-polarized harmonic bundles.
\end{thm}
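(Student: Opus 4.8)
The plan is to mimic the proof of Theorem \ref{thm_fund} in the non-polarized setting, replacing the flat connection $D$ and its circle of connections by the pair of operators $(D', D'')$. I would set up a functor $F$ from non-polarized harmonic bundles to non-polarized variations of loop Hodge structures as follows: given $(\Ee, D', D'')$, decompose $D' = \partial + \vartheta$ and $D'' = \bar\partial + \varphi$ according to the $(1,0)$/$(0,1)$ splitting (here $\vartheta$ is a $(0,1)$-type operator and $\varphi$ a $(1,0)$-type one; in the polarized case $\vartheta = \theta^\ast$, $\varphi = \theta$, but now there is no adjointness relation). Form the $\lambda$-connection $D_\lambda := (\partial + \bar\partial) + \lambda^{-1}\varphi + \lambda\,\vartheta$ on $\Ee[t,t^{-1}]$, where I write $\lambda = t$ as the formal variable. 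The integrability conditions $(D')^2 = (D'')^2 = D'D'' + D''D' = 0$ should expand, once one separates the $(2,0)$, $(1,1)$ and $(0,2)$ types, into exactly the relations needed to make $D_\lambda$ flat for every $\lambda$: namely $\varphi \wedge \varphi = 0$, $\bar\partial\varphi = 0$, $\partial\varphi + \bar\partial\vartheta + \{\text{curvature term}\} = 0$, $\partial\vartheta = 0$, $\vartheta\wedge\vartheta = 0$, together with $\bar\partial^2 = \partial^2 = 0$ (and the mixed $\partial\bar\partial + \bar\partial\partial$ term vanishing). This is the analogue of Lemma \ref{crit_harm} without the metric constraints; I would state and prove a short algebraic lemma to this effect first.

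Next I would verify that $F(\Ee, D', D'') := (\Ee[t,t^{-1}], D)$, with $D$ the connection on $\Ee[t,t^{-1}]$ obtained from the circle $(D_\lambda)$ via Proposition \ref{conn_Talg}, satisfies all five axioms in the definition of a non-polarized variation of loop Hodge structures. Axioms (1)--(3) are handled exactly as in Theorem \ref{thm_fund}: $T$ is $D$-flat by Proposition \ref{conn_Talg} since $D$ arises from a circle of connections; the $(0,1)$-part of $D$ is $\bar\partial + \lambda\,\vartheta$, which only raises the $t$-degree, giving $\bar\partial: \Ee[t] \to \Ee[t]\otimes\Aa^{0,1}$; and $D = (\partial + \bar\partial) + \lambda^{-1}\varphi + \lambda\,\vartheta$ lowers the $t$-degree by at most one, giving $D: \Ee[t] \to t^{-1}\Ee[t]\otimes\Aa^1$. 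The two new axioms (4) and (5) are the mirror-image statements and follow symmetrically: the $(1,0)$-part of $D$ is $\partial + \lambda^{-1}\varphi$, which only lowers the $t$-degree, so $\partial: \Ee[t^{-1}] \to \Ee[t^{-1}]\otimes\Aa^{1,0}$; and $D$ raises the $t$-degree by at most one because the only positive power of $t$ appearing is $\lambda\,\vartheta$, so $D: \Ee[t^{-1}] \to t\Ee[t^{-1}]\otimes\Aa^1$. The point is simply that the Fourier expansion of $D$ is concentrated in degrees $\{-1, 0, 1\}$.

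For the quasi-inverse $G$, I would start from a non-polarized variation $(\Ee[t,t^{-1}], D)$, use Proposition \ref{conn_Talg} to write $D = D_0 + \sum_{1\le|k|\le n_0} a_k t^k$ with $D_0$ a connection on $\Ee$, then show that the differential axioms force $a_k = 0$ for $|k| \ge 2$, that $a_{-1}$ is of type $(1,0)$ and $a_1$ is of type $(0,1)$ (this uses axioms (2)--(5) to kill the wrong types and the wrong degrees, just as equation (\ref{conn_met}) does in Theorem \ref{thm_fund}, but now without the metric equation the conclusions come directly from the degree and type bookkeeping). Writing $D_0 = \partial + \bar\partial$, $\varphi := a_1$, $\vartheta := a_{-1}$, I set $D'' := \bar\partial + \varphi$ and $D' := \partial + \vartheta$ and appeal to the algebraic lemma from the first paragraph (read in the reverse direction) to get the integrability conditions $(D')^2 = (D'')^2 = D'D'' + D''D' = 0$ from the flatness of $D$. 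Then $G(\Ee[t,t^{-1}], D) := (\Ee, D', D'')$. That $F$ and $G$ are mutually quasi-inverse, and their action on morphisms, is routine and I would leave it to the reader, exactly as in Theorem \ref{thm_fund}. The main obstacle is bookkeeping: carefully tracking the bidegree in the de Rham variable together with the power of $t$, and making sure the polarization-free integrability conditions $(D')^2 = (D'')^2 = D'D'' + D''D' = 0$ match up term-by-term with the flatness of all the $D_\lambda$; this is where the two new axioms (4) and (5) do exactly the work that the Hermitian adjointness relations did in the polarized case (cf. Remark \ref{nonpol}), and one must check they are neither too weak nor redundant.
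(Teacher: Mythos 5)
Your route is the same as the paper's: form the circle $D_\lambda = D'_{1,0} + D''_{0,1} + \lambda^{-1}D''_{1,0} + \lambda D'_{0,1}$, check that the integrability conditions $(D')^2=(D'')^2=D'D''+D''D'=0$ are equivalent (by separating powers of $\lambda$ and types) to the flatness of all $D_\lambda$, pass to the formal variable $t$, and in the converse use proposition \ref{conn_Talg} plus the degree/type bookkeeping to reduce $D$ to $D_0 + t^{-1}a_{-1} + t\,a_1$ with $a_{-1}$ of type $(1,0)$ and $a_1$ of type $(0,1)$; your remark that the two extra axioms on $\Ee[t^{-1}]$ replace the metric identity (\ref{conn_met}) is exactly the point of the non-polarized definition. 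There is, however, one genuine slip in your quasi-inverse. In your forward functor, $\varphi = D''_{1,0}$ is the coefficient of $t^{-1}$ and $\vartheta = D'_{0,1}$ that of $t^{+1}$ (in the polarized case $\varphi=\theta$, $\vartheta=\theta^\ast$, cf.\ remark \ref{nonpol}); hence in the converse you must set $D' := \partial + a_1$ and $D'' := \bar{\partial} + a_{-1}$, as the paper does. With your assignment $\varphi := a_1$, $\vartheta := a_{-1}$ you obtain $D' = \partial + a_{-1}$ and $D'' = \bar{\partial} + a_1$, whose tensorial parts have the wrong type relative to your own forward convention: $G\circ F$ then swaps the $\theta$- and $\theta^\ast$-like parts of $D'$ and $D''$, and $F\circ G$ even kills the $t^{\pm 1}$ coefficients (since your $F$ reads them off from the $(0,1)$-part of $D'$ and the $(1,0)$-part of $D''$, which are now zero). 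So the ``routine'' verification that $F$ and $G$ are mutually quasi-inverse fails as written; after exchanging the two labels the argument goes through and coincides with the paper's sketch of the proof of this theorem.
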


\begin{proof}[Sketch of the proof]
 The proof is very similar to the proof of theorem \ref{thm_fund}; we only sketch it. We decompose $D' = D'_{1,0} + D'_{0,1}$ and $D'' = D''_{1,0} + D''_{0,1}$ according to their types. For $\lambda$ in $S^1$, a differential operator is defined by
 $$ D_\lambda := D'_{1,0} + D''_{0,1} + \lambda^{-1} D''_{1,0} + \lambda D'_{0,1}.$$
 In the polarized case, this is the usual definition, thanks to remark \ref{nonpol}. For all $\lambda$, $D_\lambda$ is a connection, and one checks that the integrability conditions imply the flatness of $D_\lambda$. This circle of connections thus defines a flat connection in $\Ee[t,t^{-1}]$ (replacing $\lambda$ by the formal variable $t$) and this defines a non-polarized variation of loop Hodge structures.

 Conversely, if $D$ is the flat connection on $\Ee[t,t^{-1}]$, one first uses proposition \ref{conn_Talg} to have a good representation of $D$. Then, the differential assumptions on $\Ee[t]$ and $\Ee[t^{-1}]$ force a lot of vanishings in the coefficients of $D$, as in the proof of theorem \ref{thm_fund}. One obtains that $D$ can be written
 $$ D = D_0 + t^{-1}\alpha_{-1} + t \alpha_1,$$
 where $D_0$ is a connection on $\Ee$, $\alpha_{-1}$ is a $(1,0)$-form with values in $\End(\Ee)$ and $\alpha_1$ is a $(0,1)$-form with values in $\End(\Ee)$.

 The differential operators $D'$ and $D''$ are defined by $D' = D_0^{1,0} + \alpha_1$ and $D'' = D_0^{0,1} + \alpha_{-1}$.
\end{proof}

\begin{rem}
 This jugglery with differential operators appears in particular in \cite{Sim_twis}, lemma 3.1, in the proof of the equivalence between (polarized) harmonic bundles and polarizable pure variations of twistor structures.
\end{rem}

\section{Shafarevich morphism} \label{shafa}

This appendix is a joint work with Yohan Brunebarbe.

\begin{defn}
Let $X$ be a connected compact K\"ahler manifold and let $\rho:\pi_1(X) \rightarrow GL(n,\C)$ be a representation. A \emph{Shafarevich morphism} for $(X,\rho)$ is the datum of a connected complex normal space $\Sh_\rho(X)$ and a holomorphic surjective map with connected fibers $\sh_\rho: X \rightarrow \Sh_\rho(X)$ such that for any connected complex manifold $Z$ and any holomorphic map $f: Z \rightarrow X$, the map $\sh_\rho \circ f$ is constant if and only if the morphism $\rho \circ f_\ast: \pi_1(Z) \rightarrow GL(n,\C)$ has finite image.
\end{defn}

Let $X$ be a compact K\"ahler manifold and let $\rho: \pi_1(X) \rightarrow GL(n,\C)$ be a representation. In theorem 1 of \cite{Eys}, it is shown that if the Zariski-closure of the image of $\rho$ is a semisimple group, then the Shafarevich morphism exists and its image is a projective normal algebraic variety of general type if $\rho(\pi_1(X))$ has no torsion. In theorem \ref{thm_shafa}, we show under simplifying hypotheses that the Shafarevich morphism can be understood \emph{via} the period map attached to the harmonic bundle with monodromy $\rho$. 

We first recall the notion of \emph{Stein factorization} for a proper holomorphic map between complex spaces:

\begin{thm}[Stein factorization, \cite{Gra}, page 213]
 Let $X$ and $Y$ be complex spaces and let $f: X \rightarrow Y$ be a proper holomorphic map. Then $f$ admits a unique \emph{Stein factorization}
$$
X \stackrel{\hat{f}}{\rightarrow} \hat{Y} \stackrel{g}{\rightarrow} Y
$$
through a complex space $\hat{Y}$ with the following properties:
\begin{itemize}
 \item $\hat{f}$ is a proper holomorphic surjection; $g$ is finite and holomorphic; $f = g \circ \hat{f}$;
 \item $\hat{f}_\ast(\Oo_X) = \Oo_{\hat{Y}}$; in particular all fibers of $\hat{f}$ are connected.
\end{itemize}
If $X$ is normal, then $\hat{Y}$ is normal too.
\end{thm}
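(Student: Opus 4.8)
The plan is to construct $\hat Y$ by hand as the relative analytic spectrum of the sheaf of $\Oo_Y$-algebras $f_\ast\Oo_X$; all the stated properties then follow by formal arguments, the only serious input being Grauert's direct image theorem. Since $f$ is proper, $f_\ast\Oo_X$ is a coherent $\Oo_Y$-module, and it is visibly a sheaf of commutative unital $\Oo_Y$-algebras. Set $\hat Y := \operatorname{Specan}_Y(f_\ast\Oo_X)$ with its structure morphism $g\colon\hat Y\to Y$; because $f_\ast\Oo_X$ is $\Oo_Y$-coherent, $g$ is finite and holomorphic, $\hat Y$ is a genuine complex space, and by construction $g_\ast\Oo_{\hat Y}=f_\ast\Oo_X$ as $\Oo_Y$-algebras.

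Next I would produce the factorization. By the universal property of the relative analytic spectrum, morphisms $X\to\hat Y$ over $Y$ correspond bijectively to $\Oo_Y$-algebra homomorphisms $f_\ast\Oo_X\to f_\ast\Oo_X$; the identity homomorphism gives a canonical holomorphic map $\hat f\colon X\to\hat Y$ with $g\circ\hat f=f$. As $f$ is proper and $g$ is separated (being finite), $\hat f$ is proper. For the equality $\hat f_\ast\Oo_X=\Oo_{\hat Y}$, note that $g$ is affine, so $g_\ast$ is exact and detects isomorphisms of coherent sheaves; since $g_\ast\hat f_\ast\Oo_X=f_\ast\Oo_X=g_\ast\Oo_{\hat Y}$ compatibly with the canonical map $\Oo_{\hat Y}\to\hat f_\ast\Oo_X$, that map is an isomorphism. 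Surjectivity of $\hat f$ is then immediate: a point $y$ outside the image would have $(\hat f_\ast\Oo_X)_y=0\neq\Oo_{\hat Y,y}$.

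Connectedness of the fibers of $\hat f$ is the classical consequence of $\hat f_\ast\Oo_X=\Oo_{\hat Y}$ together with properness: every point of $\hat Y$ has a fundamental system of connected open neighborhoods, and if $\hat f^{-1}(y)$ decomposed into two disjoint nonempty closed pieces then properness of $\hat f$ would separate them over such a neighborhood $U$, producing a nontrivial idempotent in $\Oo_X(\hat f^{-1}(U))=\Oo_{\hat Y}(U)$, impossible over a connected base. Uniqueness follows by the same template: if $X\xrightarrow{\hat f'}Y'\xrightarrow{g'}Y$ is another such factorization, then $g'$ affine gives $g'_\ast\Oo_{Y'}=g'_\ast\hat f'_\ast\Oo_X=f_\ast\Oo_X$ as $\Oo_Y$-algebras, and applying $\operatorname{Specan}_Y$ identifies $Y'$ with $\hat Y$ over $Y$ compatibly with the maps from $X$.

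For the last assertion, assume $X$ normal. Then $\Oo_{\hat Y}=\hat f_\ast\Oo_X$ is nilpotent-free, so $\hat Y$ is reduced, and normality of $\hat Y$ amounts to $\Oo_{\hat Y}$ being integrally closed in its sheaf of meromorphic functions. Given a germ $\varphi$ of meromorphic function on $\hat Y$ satisfying a monic equation over $\Oo_{\hat Y}$, its pullback along the surjective map $\hat f$ (defined on the preimage of the dense open where $\varphi$ is holomorphic, which is dense in $X$ since $\hat f$ does not factor through a proper analytic subset) is a meromorphic function on $X$ satisfying a monic equation over $\Oo_X$, hence holomorphic by normality of $X$, hence a section of $\hat f_\ast\Oo_X=\Oo_{\hat Y}$, i.e.\ of the form $\hat f^\ast\psi$ for a holomorphic germ $\psi$ on $\hat Y$; surjectivity of $\hat f$ forces $\psi=\varphi$, so $\varphi$ was holomorphic. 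I expect the main obstacle to be setting up the relative $\operatorname{Specan}$ functor and its universal property carefully enough that the factorization and its uniqueness are genuinely forced; beyond that and the appeal to Grauert's coherence theorem, everything is bookkeeping with affine morphisms and the absence of nontrivial idempotents. Since the statement is classical, one may of course simply refer to \cite{Gra}.
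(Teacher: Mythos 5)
The paper offers no proof of this statement: it is quoted as a classical theorem directly from Grauert--Remmert \cite{Gra}, so there is nothing internal to compare your argument with. Your sketch is the standard proof and is essentially sound: Grauert's direct image theorem makes $f_\ast\Oo_X$ a coherent $\Oo_Y$-algebra, $\hat Y:=\operatorname{Specan}_Y(f_\ast\Oo_X)$ supplies the finite part $g$, the universal property of the relative analytic spectrum supplies $\hat f$ with $g\circ\hat f=f$, exactness and faithfulness of $g_\ast$ for finite $g$ give $\Oo_{\hat Y}=\hat f_\ast\Oo_X$, the idempotent argument gives connectedness of fibers, and uniqueness follows because a finite morphism is recovered as the analytic spectrum of its pushforward algebra. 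Two points deserve more care than your parentheses give them. First, in the normality step, the assertion that the preimage of the dense open set where $\varphi$ is holomorphic is dense in $X$ is not automatic when $X$ is reducible; it is best justified by noting that a normal space is the disjoint union of its irreducible components, that connectedness of the fibers of $\hat f$ forces $\hat Y$ to split accordingly, and that on an irreducible piece the preimage of a proper analytic subset of the (irreducible, by Remmert) image is a proper analytic subset, hence nowhere dense. Second, as you acknowledge, the whole construction rests on the existence and universal property of $\operatorname{Specan}$ and on the equivalence between finite morphisms and coherent $\Oo_Y$-algebras; these are exactly the foundational facts established in \cite{Gra}, so for the purposes of this paper the citation is the appropriate ``proof,'' and your sketch is a faithful outline of how that reference proceeds.
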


Let $\rho: \pi_1(X) \rightarrow GL(n,\C)$ be a semisimple representation of the fundamental group of a compact K\"ahler manifold $X$. By the Corlette-Donaldson theorem, there is a harmonic bundle $(\Ee,D,h)$ of rank $n$ with monodromy $\rho$. 

\begin{thm}\label{thm_shafa}
 Let $X$ be a connected compact K\"ahler manifold and let $\rho: \pi_1(X) \rightarrow GL(n,\C)$ be a semisimple representation with discrete image. Let $f: \tilde{X} \rightarrow \Dd$ and $\rho_{tot}: \pi_1(X) \rightarrow \Lambda_\sigma G$ be the period map and total monodromy of the harmonic bundle $(\Ee,D,h)$ with monodromy $\rho$. We assume that the image of the total monodromy is without torsion. Then, the image of the map $\bar{f}: X \rightarrow \Dd/\rho_{tot}(\pi_1(X))$ is a finite-dimensional complex space. Moreover, the map $\hat{f}: X \rightarrow \hat{Y}$ in its Stein factorization 
$$X \stackrel{\hat{f}}{\rightarrow} \hat{Y} \rightarrow \Im(\bar{f})$$
is a Shafarevich morphism for $(X,\rho)$.
\end{thm}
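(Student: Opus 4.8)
The plan is to run the classical Hodge-theoretic argument for period maps, using the period domain $\Dd$ of Section~\ref{sec_per_dom} in place of the classical one, and feeding in the two genuinely new inputs: horizontality together with Proposition~\ref{id_diff}, and the integrability of the Higgs foliation (Theorem~\ref{integ_higgs}), which together supply the finite-dimensionality that is automatic in the classical setting. First I would check that the quotient $\Dd/\rho_{\tot}(\pi_1(X))$ makes sense. The point-stabilizers of the $\LsG$-action on $\Dd=\LsG/K$ are conjugates of the compact group $K$, so torsion-freeness of $\rho_{\tot}(\pi_1(X))$ forces the action to be free; and discreteness of $\rho(\pi_1(X))$, transported through the evaluation map $\Dd=\LsG/K\to G/K$ (which is $\rho_{\tot}$-to-$\rho$ equivariant, and whose target carries the properly discontinuous action of the discrete group $\rho(\pi_1(X))$), forces the action on $\Dd$ to be properly discontinuous. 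Hence $\Dd/\rho_{\tot}(\pi_1(X))$ is a complex Hilbert manifold and $\bar f$ is a holomorphic map into it.

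The heart of the proof is the finite-dimensionality of $\Im(\bar f)$, and this is the step where one really has to work since $\Dd$ is infinite-dimensional. By Theorem~\ref{holo_permap} the period map $f$ is holomorphic and horizontal, and by Proposition~\ref{id_diff} its holomorphic differential is, under the isomorphism given there, the Higgs field $\theta$; in particular (as already noted in the proof of Theorem~\ref{integ_higgs}) $\ker df=\ker\theta$ is the Higgs foliation, which is integrable. Locally on $X$ the map $\bar f$ therefore factors through the finite-dimensional (dimension $\le n^2$) local leaf space of that foliation, so $\Im(\bar f)$ is locally contained in a finite-dimensional complex analytic subset of $\Dd/\rho_{\tot}(\pi_1(X))$. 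Since $X$ is compact, finitely many such charts cover it, so $\Im(\bar f)$ lies in a single finite-dimensional analytic set; applying Remmert's proper mapping theorem to the proper holomorphic map from the compact $X$ into that set then presents $\Im(\bar f)$ as a compact reduced complex analytic space of dimension $\le n^2$. I expect the passage from \emph{locally} finite-dimensional to \emph{globally} a finite-dimensional analytic space, carried out carefully in the Hilbert-manifold setting, to be the main technical obstacle of the whole argument.

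Granting that, $\bar f\colon X\to\Im(\bar f)$ is a proper holomorphic surjection of complex spaces with $X$ compact and normal, so the Stein factorization theorem recalled above yields $X\xrightarrow{\hat f}\hat Y\to\Im(\bar f)$ with $\hat f$ proper, surjective, with connected fibres, and $\hat Y$ normal; each fibre of $\bar f$ is a finite disjoint union of fibres of $\hat f$, so the fibres of $\hat f$ are exactly the connected components of the fibres of $\bar f$. It remains to check the universal property. Fix a holomorphic $g\colon Z\to X$ with $Z$ a connected complex manifold, lifted to $\tilde g\colon\tilde Z\to\tilde X$. Since $Z$ is connected, $\hat f\circ g$ is constant iff $\bar f\circ g$ is, iff $f\circ\tilde g$ maps $\tilde Z$ into a single $\rho_{\tot}(\pi_1(X))$-orbit; such an orbit is countable, hence totally disconnected, so this holds iff $f\circ\tilde g$ is constant, iff $d(f\circ\tilde g)=0$, iff $g^\ast\alpha=0$ by Proposition~\ref{expr_diff_per}, iff $g^\ast\theta=0$ (because $\alpha=\lambda^{-1}\theta+\lambda\theta^\ast$ and $\theta^\ast$ is the adjoint of $\theta$). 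Now $g^\ast\theta=0$ means $g^\ast(\Ee,D)$ is flat with $D=\nabla$ a metric connection, so $\rho\circ g_\ast(\pi_1(Z))$ preserves a Hermitian inner product and lies in a compact subgroup of $GL(n,\C)$; being a subgroup of the \emph{discrete} group $\rho(\pi_1(X))$ it is finite. Conversely, if $\rho\circ g_\ast$ has finite image it is unitarizable, so $g^\ast(\Ee,D)$ carries a parallel Hermitian metric $h_0$; both $h_0$ and $g^\ast h$ are harmonic metrics on the semisimple flat bundle $g^\ast(\Ee,D)$, hence agree up to positive scalars on isotypic components, and rescaling a parallel metric by a constant keeps it parallel, so the canonical metric connection of $g^\ast h$ equals $g^\ast D$ and $g^\ast\theta=0$. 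Thus $\hat f\circ g$ is constant iff $\rho\circ g_\ast$ has finite image, i.e. $\hat f$ is a Shafarevich morphism for $(X,\rho)$.
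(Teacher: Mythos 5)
Your overall architecture (quotient Hilbert manifold, finite-dimensionality of the image, Stein factorization, and the chain ``period map constant $\Leftrightarrow$ unitary monodromy $\Leftrightarrow$ finite image'') coincides with the paper's, and your treatment of the universal property is essentially the paper's argument with a bit more detail. The genuine gap is exactly the step you flag yourself: passing from ``locally, $\bar{f}$ factors through a finite-dimensional leaf space of the Higgs foliation'' to ``$\Im(\bar{f})$ is a finite-dimensional complex analytic subspace of $\Dd/\rho_{\tot}(\pi_1(X))$''. This is not a routine technical verification but the actual content of the first assertion, and the sketch does not supply it. Even locally the claim is shaky: the Higgs foliation is singular in general, so there is no local leaf-space manifold near singular points; and factoring a holomorphic map through a finite-dimensional space does not show its image is contained in an \emph{analytic} subset of the infinite-dimensional target -- images of holomorphic maps are not analytic without a properness/Remmert-type input. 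Worse, the globalization step ``finitely many charts cover $X$, so $\Im(\bar{f})$ lies in a single finite-dimensional analytic set'' simply does not follow: a finite union of analytic subsets of different open pieces of the target need not be contained in, let alone equal to, an analytic subspace -- that assembly is precisely what a proper mapping theorem provides. Note also that bounding the rank of $df$ (by horizontality, or trivially by $\dim X$) is cheap; the difficulty is analyticity of the image, not finiteness of the rank. The paper does not use the Higgs foliation here at all: since $X$ is compact, $\bar{f}$ is proper, and it invokes an infinite-dimensional generalization of Remmert's proper mapping theorem (Mazet, corollary 3, p.\ 180 of the cited reference) to conclude directly that $\Im(\bar{f})$ is a finite-dimensional complex analytic subspace. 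Without that theorem or an honest substitute, your proof of the first assertion is incomplete.

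Two smaller points. Freeness of the action needs discreteness together with torsion-freeness: the stabilizer in $\rho_{\tot}(\pi_1(X))$ of a point of $\Dd$ is a \emph{discrete} subgroup of a conjugate of the compact group $K$, hence finite, hence trivial; torsion-freeness alone does not exclude infinite-order elements lying in $K$. Also, your proper-discontinuity argument via the evaluation map $\Dd \rightarrow G/K$ needs care when $\ker\rho$ is infinite (finitely many possible images $\rho(\gamma)$ do not immediately give finitely many elements $\rho_{\tot}(\gamma)$ moving a compact set back onto itself). Finally, in the converse direction of the universal property, your appeal to uniqueness (up to scalars on isotypic components) of pluriharmonic metrics on $g^{\ast}(\Ee,D)$ over an arbitrary noncompact connected $Z$ is not a standard fact -- the paper is equally brief at this point -- so this step deserves either a precise reference or a direct argument that the existence of a flat metric forces the pulled-back period map to be constant.
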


\begin{proof}
Since $\rho(\pi_1(X))$ is discrete in $G$, $\rho_{tot}(\pi_1(X))$ is discrete in $\Lambda_\sigma G$. The group $\rho_{tot}(\pi_1(X))$ acts on $\Dd$ with stabilizer a discrete subgroup of $K$, which has to be finite, and thus trivial since we assume that $\rho_{tot}(\pi_1(X))$ has no torsion. Hence, the quotient $\Dd/\rho_{tot}(\pi_1(X))$ has a structure of complex Hilbert manifold. The map $\bar{f}: X \rightarrow \Dd/\rho_{tot}(\pi_1(X))$ is proper since $X$ is compact; by a generalization of Remmert proper mapping theorem, for maps whose target space is infinite-dimensional (corollary 3, page 180 in \cite{Maz}), the image of $\bar{f}$ is a finite-dimensional complex analytic subspace of $\Dd/\rho_{tot}(\pi_1(X))$. Let $X \stackrel{\hat{f}}{\rightarrow} Y \rightarrow \Im(\bar{f})$ be its Stein factorization. Then $\hat{f}$ is a proper holomorphic surjection with connected fibers and $\hat{Y}$ is a normal space. 

Let $Z$ be a connected complex manifold and let $g: Z \rightarrow X$ be a holomorphic map. If the map $\bar{f} \circ g: Z \rightarrow \hat{Y}$ is constant, then the period map of the variation of loop Hodge structures on $Z$ (obtained \emph{via} pullback from $g$) is constant. This simply means that the pullback of the harmonic bundle on $X$ is trivial on $Z$, in the sense that the harmonic metric is flat. Hence, the image of $\pi_1(Z)$ by $\rho \circ g_\ast$ is contained in a unitary group. Since it is also discrete, it has to be finite. Conversely, it the image of $\pi_1(Z)$ by $\rho \circ g_\ast$ is finite, it is contained in a unitary group. Hence, the flat bundle associated to the representation $\pi_1(Z) \rightarrow GL(n,\C)$ admits a flat metric, which has to be \emph{the} harmonic metric. This shows that the period map $\bar{f} \circ g: Z \rightarrow \Dd/\rho(\pi_1(X))$ is constant. This factorizes through the map $\hat{f} \circ g: Z \rightarrow \hat{Y}$, which has finite image since the map 
$\hat{Y} \rightarrow \Dd/\rho(\pi_1(X))$ is finite. Since $Z$ is connected, the map $\hat{f} \circ g$ is in fact constant. This concludes the proof. 
\end{proof}

\begin{rem}
 Following for instance proposition 3.5 in \cite{Eys}, one can try to deduce properties of $\Sh_\rho(X)$. In the case where $\rho$ is the monodromy of a variation of Hodge structures, the period map takes values in a classical period domain $\Dd_c$. The invariant metric $g$ on $\Dd_c$ is K\"ahler in the horizontal directions. Moreover, the holomorphic bisectional curvature is nonpositive in the horizontal directions and the holomorphic sectional curvature is negative in the horizontal directions. This shows that (some desingularization of) $\Sh_\rho(X)$ has ample canonical bundle since its curvature form (with respect to the K\"ahler metric $g_{|\Sh_\rho(X)}$) is positive.

On the loop period domain, we still have a $\Lambda_\sigma G$-invariant metric. It is also K\"ahler in the horizontal directions and its holomorphic bisectional curvature is still nonpositive in the horizontal directions. However, the holomorphic sectional curvature can vanish in the horizontal directions and we cannot conclude as before. One should notice that this problem was already encountered in \cite{Mok}.
\end{rem}


\bibliographystyle{alpha} 
\bibliography{bib_harm} 




\end{document}